\renewcommand{\epsilon}{\varepsilon}
\numberwithin{equation}{section}
\newtheoremstyle{thmlemcorr}{10pt}{10pt}{\itshape}{}{\bfseries}{.}{10pt}{{\thmname{#1}\thmnumber{ #2}\thmnote{ (#3)}}}
\newtheoremstyle{thmlemcorr*}{10pt}{10pt}{\itshape}{}{\bfseries}{.}\newline{{\thmname{#1}\thmnumber{ #2}\thmnote{ (#3)}}}
\newtheoremstyle{defi}{10pt}{10pt}{\itshape}{}{\bfseries}{.}{10pt}{{\thmname{#1}\thmnumber{ #2}\thmnote{ (#3)}}}
\newtheoremstyle{remexample}{10pt}{10pt}{}{}{\bfseries}{.}{10pt}{{\thmname{#1}\thmnumber{ #2}\thmnote{ (#3)}}}
\newtheoremstyle{ass}{10pt}{10pt}{}{}{\bfseries}{.}{10pt}{{\thmname{#1}\thmnumber{ A#2}\thmnote{ (#3)}}}
\theoremstyle{thmlemcorr}
\newtheorem{theorem}{Theorem}
\numberwithin{theorem}{section}
\newtheorem{lemma}[theorem]{Lemma}
\newtheorem{corollary}[theorem]{Corollary}
\newtheorem{proposition}[theorem]{Proposition}
\theoremstyle{thmlemcorr*}
\newtheorem{theorem*}{Theorem}
\newtheorem{lemma*}[theorem]{Lemma}
\newtheorem{corollary*}[theorem]{Corollary}
\newtheorem{proposition*}[theorem]{Proposition}
\newtheorem{problem*}[theorem]{Problem}
\newtheorem{conjecture*}[theorem]{Conjecture}
\newtheorem{questionn*}[theorem]{Open Problem}
\theoremstyle{defi}
\newtheorem{definition}[theorem]{Definition}
\theoremstyle{remexample}
\newtheorem{remark}[theorem]{Remark}
\newtheorem{example}[theorem]{Example}
\theoremstyle{ass}
\newcommand{\ZZ}{\mathbb{Z}}
\newcommand{\Ecal}{\mathcal{E}}
\newcommand{\Fcal}{\mathcal{F}}
\newcommand{\Gcal}{\mathcal{G}}
\newcommand{\Ical}{\mathcal{I}}
\newcommand{\Lcal}{\mathcal{L}}
\newcommand{\Mcal}{\mathcal{M}}
\newcommand{\Xcal}{\mathcal{X}}
\newcommand{\Ycal}{\mathcal{Y}}
\newcommand{\NN}{\mathbb{N}}
\DeclareMathOperator{\inte}{int}
\DeclareMathOperator{\essinf}{ess\,inf}
\DeclareMathOperator{\esssup}{ess\,sup}
\DeclareMathOperator{\diverg}{div}
\DeclareMathOperator{\curl}{curl}
\DeclareMathOperator{\dist}{dist}
\DeclareMathOperator{\supp}{supp}
\DeclareMathOperator*{\prox}{prox}
\DeclareMathOperator{\Lip}{Lip}
\newcommand{\dd}{\;\mathrm{d}}
\newcommand{\N}{\mathbb{N}}
\newcommand{\R}{\mathbb{R}}
\newcommand{\RR}{\mathbb{R}}
\newcommand{\XX}{\mathbb{X}}
\newcommand{\loc}{\mathrm{loc}}
\newcommand{\weakly}{\rightharpoonup}
\newcommand{\weaklystar}{\overset{*}\rightharpoonup}
\newcommand{\epsi}{\epsilon}
\newcommand{\ffi}{\varphi}
\newcommand{\argmin}{\operatorname{argmin}}
\newcommand\restr[2]{{\left.\kern-\nulldelimiterspace #1\vphantom{\big|}\right|_{#2}}}
\DeclareMathOperator{\arginf}{arginf}
\DeclareMathOperator{\Div}{div}
\newcommand*{\medcup}{\mathbin{\scalebox{1.25}{\ensuremath{\cup}}}}%
\newcommand{\scrL}{\mathscr{L}} 
\newcommand{\scrP}{\mathscr{P}}
\newcommand{\scrS}{\mathscr{S}} 
\newcommand{\scrV}{\mathscr{V}}   
 \def\leq{\leqslant}
\def\geq{\geqslant}
\DeclareSymbolFontAlphabet{\mathbb}{AMSb}
\DeclareSymbolFontAlphabet{\mathbbl}{bbold}
\title[Dyadic partition-based training schemes for TV/TGV denoising]{Dyadic partition-based training schemes for TV/TGV denoising}
\author[Elisa Davoli]{Elisa Davoli}
\address{TU Wien, Institute of Analysis and Scientific Computing, Wiedner Hauptstrasse 8-10, 1040 Vienna, Austria}
\email{elisa.davoli@tuwien.ac.at}
\author[Rita Ferreira]{Rita Ferreira}
\address{King Abdullah University of Science and Technology (KAUST), CEMSE Division, Thuwal 23955-6900, Saudi Arabia}
\email{rita.ferreira@kaust.edu.sa}
\author[Irene Fonseca]{Irene Fonseca}
\address{Carnegie Mellon University, 5000 Forbes Avenue, Pittsburgh, PA 15213, USA}
\email{fonseca@andrew.cmu.edu}
\author[Jos\'e A. Iglesias]{Jos\'e A. Iglesias}
\address{Department of Applied Mathematics, University of Twente, P.O. Box 217, 7500 AE Enschede, The Netherlands}
\email{jose.iglesias@utwente.nl}
\subjclass{68U10, 26B30, 49J10, 94A08.}
\keywords{total variation, total generalized variation, discontinuous weights, spatially-dependent regularization parameters, box constraint, bilevel optimization}
\begin{document}

\begin{abstract}  
Due to their ability to handle discontinuous images while having a well-understood behavior, regularizations with total variation (TV) and total generalized variation (TGV) are some of the best-known methods in image denoising. However, like other variational models including a fidelity term, they crucially depend on the choice of their tuning parameters. A remedy is to choose these automatically through multilevel approaches, for example by optimizing performance on noisy/clean image pairs. In this work, we consider such methods with space-dependent parameters which are piecewise constant on dyadic grids, with the grid itself being part of the minimization. We prove existence of minimizers for fixed discontinuous parameters under mild assumptions on the data, which lead to existence of finite optimal partitions. We further establish that these assumptions are equivalent to the commonly used box constraints on the parameters. On the numerical side, we consider a simple subdivision scheme for optimal partitions built on top of any other bilevel optimization method for scalar parameters, and demonstrate its improved performance on some representative test images when compared with constant optimized parameters.
\end{abstract}
  
\maketitle

\tableofcontents 

\section{Introduction}\label{sect:intro} 

A fundamental problem in image processing is the restoration of a given ``noisy'' image. 
Images are  often deteriorated due to several factors occurring, for instance, in the
process of transmission or acquisition, such as blur caused by motion or a deficient lens adjustment. 

A well-established and successful approach for image restoration is
hinged on variational PDE methods, where minimizers of certain energy functionals provide the sought ``clean'' and ``sharp'' images. In the particular case where the degradation consists
of additive noise, 
these energy functionals  usually take the form
\begin{equation}
\label{eq:efir}
\begin{aligned}
E(u):= \Vert u - u_\eta\Vert^p_X + R_\alpha(u)\quad \text{for \(u\in\widetilde X\)},
\end{aligned}
\end{equation}
where \(u_\eta\) represents the given noisy image and \(\widetilde X\) is the class of possible reconstructions of \(u_\eta\). The first  term in \eqref{eq:efir},  \(\Vert u - u_\eta\Vert^p_X\), is the fidelity or data fitting term that, in a minimization process, controls the distance between \(u\) and \(u_\eta\) in some space \(X\). The second term, \(R_\alpha(u)\), is the so-called filter term, and is responsible for the regularization of the images. The parameter \(\alpha\) is often called a tuning or regularization parameter, and accounts for a balance between the fidelity and filter terms.

A milestone approach in imaging denoising is due to Rudin, Osher, and Fatemi \cite{RuOsFa92}, who proposed (in a discrete setting, later extended to a function space framework in \cite{AcVo94, ChLi97}) an energy functional of the type \eqref{eq:efir} with \(X:=L^2(Q)\), \(p:=2\), \(\widetilde X := BV(Q)\), and \(R_\alpha(u) := \alpha TV(u,Q)\) with \(\alpha>0\), where \(Q\subset\RR^2\) is the image domain  and \( TV(u,Q)\) is the total variation in \(Q\) of a function of bounded variation \(u\in BV(Q)\). Precisely, given an observed noisy version \(u_\eta \in L^2(Q)\) of a true image, the ROF or TV model consists in finding a reconstruction of the original clean image as the solution of the minimization problem
\begin{equation}
\label{eq:ROF-intro}
\begin{aligned}
\min_{u\in BV(Q)} \Big\{\Vert u - u_\eta\Vert^2_{L^2(Q)} +\alpha TV(u,Q)\Big\}.
\end{aligned}
\end{equation}
A striking feature of this model is that it removes noise while preserving images' edges. This model has been extended in several ways, including higher-order and vectorial settings to address color images, and gave rise to numerous related filter terms seeking to overcome some of its drawbacks, such as blurring and the staircasing effect (see, for instance, \cite{AuKo06,ChKaSh01, BrHo20} for an overview).     

In a nutshell, the TV model yields functions $u$ that best  fit the data, measured in terms of the \(L^2\) norm, and whose gradient (total variation) is low so that noise is removed. The choice of the parameter \(\alpha\) plays a decisive role in the success of this and similar variational approaches, as it balances the fitting and regularization features of such models. In fact, higher values of \(\alpha \) in \eqref{eq:ROF-intro} lead to an oversmoothed reconstruction of \(u_\eta\) because the total variation has to be ``small'' to compensate for high values of \(\alpha\);  conversely, lower values of \(\alpha \) in \eqref{eq:ROF-intro} inhibit
noise removal and, in particular, the reconstructed image  provided by \eqref{eq:ROF-intro} converges to \( u_\eta\) as \(\alpha\to0\) (see \cite{DeScVa16}). 

In principle, the ``optimal'' parameter \(\alpha\) needs to be chosen individually for each noisy image, which makes such models require additional information to be complete. To address this issue, a partial automatic selection of an  ``optimal'' parameter \(\alpha\) was proposed in \cite{DeScVa16,DeScVa17} (see also \cite{ChPoRaBi13,ChRaPo14,Do12,TaLiAdFr07}) in the flavour of Machine Learning optimization schemes. This automatic selection is based on a bilevel optimization scheme searching for the optimal \(\alpha\) that minimizes the distance, in some space, between the reconstruction of a  noisy image and the original clean image. In this setting, both the noisy image, \(u_\eta\), and the original clean image, \(u_c\),  are known a priori and called the training data. The rationale is to use the same parameter \(\alpha\) to reconstruct noisy images that are \textit{qualitatively similar} to that of the training scheme and corrupted by a similar type and amount of noise, and are thus expected to require a similar balance between fitting and regularization effects. 

In the context of the TV model in \eqref{eq:ROF-intro}, one such bilevel optimization scheme reads as follows.
Here, and in the sequel, \(\RR^+\) stands for the set of positive real numbers, \((0,\infty)\). Moreover, for minimization problems over $\R^+$ or $\R^+ \times \R^+$, we write $\arginf$ instead of $\argmin$ to include the case where the infimum would be attained at the boundary of these open sets.
\begin{flalign}
& (\scrL\!\scrS)_{TV}  \textit{ \bf  TV learning scheme}&
\label{lsTV} \end{flalign}

\textbf{Level 1.}
Find
\begin{equation*}
\begin{aligned}
\bar \alpha={\rm arg inf}\left\{\int_Q |u_c-u_\alpha|^2\dd x:\,\alpha\in \RR^+\right\};
\end{aligned}
\end{equation*}

\textbf{Level 2.} Given \(\alpha\in \RR^+\), find
\begin{equation*}
\begin{aligned}
u_\alpha= \argmin\left\{\int_{Q}|u_\eta-u|^2\dd x+\alpha TV(u,Q):\,u\in BV(Q)\right\}.
\end{aligned}
\end{equation*}

This approach yields a unified way of identifying the best fitting parameters for every class of training data lying in the same $L^2$-neighborhood. However,  the learning scheme \eqref{lsTV} does not address  a major drawback of the TV and similar models using scalar regularization parameters.  In fact, it does not take into account  possible inhomogeneous noise  (occurring, e.g., in parallel acquisition in magnetic resonance imaging \cite{DiEtAl07}) and other local features in a  given deteriorated image that would benefit from an adapted treatment.

 A solution to this issue consists in resorting to adaptive methods and varying fitting parameters instead. The mathematical literature in this direction is vast, from which we single out the following contributions: \cite{KuPo12, Po12} for results in the finite-dimensional case and for optimal image filters, \cite{DeSc13} for bilevel learning in function spaces and development of numerical optimization, \cite{DaLi18, DaFoLi23, LiLu18, Li19, LiLu19} for a study of optimal regularizers, \cite{LiSc19} for a bilevel analysis of novel classes of semi-norms,  \cite{PaPaRaVi22} for an approach via Young measures, and \cite{CaEtAl17, HiPa19, DeVi23, CrFe22} and the references therein for an overview. 

A relevant question in image reconstruction (as pointed out in \cite{Li17}, among others) is the possibility of adapting the fitting parameters to the specific features of a given class of noisy images by performing, e.g., a stronger regularization in areas which have been highly deteriorated and by tuning down the filtering actions in portions that, instead, have been left unaffected.

Here, starting from the ideas in \cite{Li17}, we propose space-dependent learning schemes that locally search for the optimal level of refinement and the optimal regularization parameters. The optimal level of refinement translates into finding an optimal partition of the noisy image's domain that takes into account its local features. Precisely, as before, \(Q=(0,1)^2\) represents the  images' domain. We say that \(\mathscr{L}\) is an admissible partition of \(Q\) if it consists of dyadic squares, each of which we often denote by \(L\) (see Section~\ref{sect:glo} for a more detailed description of these partitions). Note that an admissible partition might be more or less refined in different
parts of the domain. We denote by \(\mathscr{P}\)  the class of all such admissible partitions \(\scrL\) of \(Q\). Finally, let   $(u_\eta,u_c)\in BV(Q)\times BV(Q)$ be a training pair of noisy and clean images. The first space-dependent learning scheme that we propose to restore \(u_\eta\), based on the a priori knowledge of \(u_c\), is as follows.

\begin{flalign}
& (\scrL\!\scrS)_{{TV\!}_\omega}  \textit{ \bf Weighted-TV learning scheme}& \label{lsTVomega} \end{flalign}

\begin{itemize}[leftmargin=20mm]

\item[\textbf{Level 3.}] \text{(optimal local training
parameter)} Fix  \(\mathscr{L}\in \mathscr{P}\); for each \(L\in \mathscr{L}\),
find
\begin{equation}
\label{eq:alpha-L}
\alpha_L:=\inf\left\{\arginf\left\{\int_L |u_c-u_{\alpha,L}|^2\,\dd x\!:\,\alpha\in
\RR^+\right\}\right\},
\end{equation}
where, for \(\alpha\in
\RR^+\),
\begin{equation}
\label{eq:ROF}
\begin{aligned}
u_{\alpha,L}:= \argmin\left\{\int_{L}|u_\eta-u|^2\dd x+\alpha TV(u,L)\!:\,u\in
BV(L)\right\}.
\end{aligned}
\end{equation}

\item[\textbf{Level 2.}] \text{(space-dependent image denoising)} For each  \(\mathscr{L}\in \mathscr{P}\),
find%
\begin{equation}\label{eq:minprS2a}
\begin{aligned}
u_{\mathscr{L}}:=\argmin\bigg\{\int_Q|u_\eta-u|^2\dd x +TV_{\omega_\mathscr{L}}(u,Q)\!:\,u\in
BV_{\omega_{\mathscr{L}}}(Q)\bigg\},
\end{aligned}
\end{equation}
where we consider the  piecewise constant weight
 \(\omega_{\mathscr{L}}\) defined by %
\begin{equation}
\label{eq:weight}
\begin{aligned}
\omega_{\mathscr{L}}(x):=\sum_{L\in \mathscr{L} }\alpha_L
\chi_L(x)  \quad \text{with \(\alpha_L\) given by Level~3},
\end{aligned}
\end{equation}
and    \(BV_{\omega_{\mathscr{L}}}\) is the space of  \(\omega_{\mathscr{L}}\)-weighted
$BV$-functions (see Section~\ref{sect:L2}).
\textit{}\smallskip

\item[\textbf{Level 1.}] \text{(optimal partition and 
image restoration)}
Find
\begin{equation*}
\begin{aligned}
u^*\in \argmin\left\{\int_Q|u_c-u_{\mathscr{L}}|^2\dd x:\,\mathscr{L}\in
\mathscr{P}\right\} \quad \text{with \(u_{\mathscr{L}}\) given by Level~2}.
\end{aligned}
\end{equation*}
\end{itemize}

\begin{remark}\label{rmk:onTVw} \textbf{(i)}~We observe that by taking the infimum in \eqref{eq:alpha-L}, the corresponding parameter \(\alpha_L\)   is always well defined. On the other hand, if \(TV(u_\eta,L) >\ TV(u_c,L)\) and $\displaystyle \Vert u_\eta - u_c\Vert^2_{L^2(L)}
<\Vert[  u_\eta]_L - u_c\Vert^2_{L^2(L)}$ as in \cite{DeScVa16}, with \([  u_\eta]_L:=\frac{1}{|L|}\int_L u_\eta\dd x \),  we prove in Theorem~\ref{thm:onalpha} that there
exists   \(\tilde \alpha_L\in(0,\infty)\) satisfying
\begin{equation*}
\tilde\alpha_L\in\argmin\left\{\int_L |u_c-u_{\alpha,L}|^2\,\dd x\!:\,\alpha\in
\RR^+\right\}
\end{equation*}
(see \cite{DeScVa16} for similar statements), in which case 
 the infimum on such \(\tilde\alpha_L\) as in  \eqref{eq:alpha-L} may be regarded as a choice criterium on the optimal parameter.    

\textbf{(ii)}~We  refer to Section~\ref{sect:L2} for the definition and discussion
of the space  \(BV_{\omega_{\mathscr{L}}}\) of  \(\omega_{\mathscr{L}}\)-weighted
$BV$-functions, as introduced     in \cite{Ba01}. In particular, using the
results   in \cite{Ba01}
 (and also  \cite{Ca08,Ca10}),   we prove under appropriate conditions that
\({u}_{\mathscr{L}} \in BV(Q)\) and
\begin{equation}\label{eq:VarBVw}
\begin{aligned}
TV_{\omega_\mathscr{L}}({u}_{\mathscr{L}},Q) = \int_Q 
\omega_{\mathscr{L}}^{sc^-}\!(x) \dd |D{u}_{\mathscr{L}}|(x),
\end{aligned}
\end{equation}
where \(\omega_\mathscr{L}^{sc^-}\) denotes the lower-semicontinuous envelope
of $\omega_\mathscr{L}$. We further mention the works in \cite{AtJeNoOr17,HiHoPa18} addressing the study of inverse  problems that include a weighted-\(TV\) model of the form of the one in \eqref{eq:minprS2a}.

\end{remark}

The existence of solutions to the learning scheme \((\scrL\!\scrS)_{{TV\!}_\omega}\)
in \eqref{lsTVomega} is intimately related to the existence of a stopping criterion for
the refinement of the admissible partitions or, in other words, a lower bound
on the size of the dyadic squares \(L\in \scrL\), with \(\scrL\in\scrP\).
This notion is made precise in the following definition.

\begin{definition}[stopping criterion for
the refinement of the admissible partitions]\label{def:stop}
We say that a condition \((\scrS)\) on \(\scrP\)is a
stopping criterion for
the refinement of the admissible partitions if there exist \(\kappa\in \NN\) and \(\mathscr{L}_1, ...
, \mathscr{L}_\kappa \in \mathscr{P}\) such that
\begin{equation*}
\begin{aligned}
\argmin\left\{\int_Q|u_c-u_{\mathscr{L}}|^2\dd x:\,\mathscr{L}\in
\mathscr{P}\right\} = \argmin\left\{\int_Q|u_c-u_{\mathscr{L}_i}|^2\dd x:\,i\in
\{1,...,\kappa\}\right\}
\end{aligned}
\end{equation*}
provided that  \((\scrS)\) holds, where \(u_{\mathscr{L}}\)  and \(u_{\mathscr{L}_i}\) are given by \eqref{eq:minprS2a}. In
this case, we write \( \bar{ \mathscr{P}}:=\medcup_{i=1}^\kappa
\{\scrL_i\}
\).
\end{definition}

We refer to Section~\ref{sect:box} for examples of stopping criteria as in Definition~\ref{def:stop}, from which we highlight the box-constraint that we discuss next.

\begin{remark}[box constraint as a stopping criterion]\label{rmk:box}
To prove the existence of a solution to the learning scheme \((\scrL\!\scrS)_{{TV\!}_\omega}\) in \eqref{lsTVomega}, we  adopt the usual box-constraint approach
in which we replace \(\alpha\in\RR^+\) by 
\begin{equation}\label{eq:boxc}
\begin{aligned}
\alpha\in \bigg[c_0,\frac{1}{c_0}\bigg]\quad \text{ for some } c_0\in (0,1).
\end{aligned}
\end{equation}
In this case, the analog of \eqref{eq:alpha-L} becomes
\begin{equation}
\label{eq:box-min}
\bar \alpha_L=\inf\left\{\arginf\left\{\int_L |u_c-u_{\alpha,L}|^2\dd x:\,\alpha\in
\big[c_0,\tfrac{1}{c_0}\big]\right\}\right\}.
\end{equation}
Under some assumptions on the training
data, we prove in Subsection~\ref{sect:box} (see Theorem~\ref{thm:equiv} below) that this box constraint is equivalent to  the existence of a stopping criterion for the refinement of the admissible partitions as in Definition~\ref{def:stop}.
\end{remark}

\begin{theorem}[Equivalence between box constraint and
stopping criterion]\label{thm:equiv}
Consider the learning scheme \((\scrL\!\scrS)_{{TV\!}_\omega}\)
in \eqref{lsTVomega}. The two following statements hold:\begin{itemize}
\item[(a)] If we replace    \eqref{eq:alpha-L} 
 by \eqref{eq:box-min}, then there exists a
stopping criterion \((\scrS)\)  for
the refinement of the admissible partitions as in Definition~\ref{def:stop}. 
\item[(b)] Assume that
there exists a
stopping criterion \((\scrS)\)  for
the refinement of the admissible partitions as in Definition~\ref{def:stop}
such that the training data
  satisfies for all \(L\in\medcup_{\scrL\in\bar\scrP} \scrL\), with
  \(\bar\scrP\) as in Definition~\ref{def:stop}, the   conditions
\begin{itemize}[leftmargin=12mm]
\item[(i)] $TV(u_c,L) < TV(u_\eta,L)$;

\item[(ii)] $\displaystyle \Vert u_\eta - u_c\Vert^2_{L^2(L)}
<\Vert[u_\eta]_L - u_c\Vert^2_{L^2(L)} $, where  \([  u_\eta]_L=\frac{1}{|L|}\int_L
u_\eta\dd x. \)
\end{itemize}
Then, there exists \(c_0\in\RR^+\) such that the optimal
solution \(u^*\) provided by  \((\scrL\!\scrS)_{{TV\!}_\omega}\)
with  \(\scrP\) replaced by  \(\bar \scrP\)
coincides with the optimal solution
\(u^*\) provided by  \((\scrL\!\scrS)_{{TV\!}_\omega}\)
with  \eqref{eq:alpha-L} 
replaced by \eqref{eq:box-min}.
   \end{itemize}

\end{theorem}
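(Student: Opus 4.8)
The plan is to prove the two implications separately; the technical core is a uniform \emph{wash-out} estimate at small dyadic scales, which drives part (a) and is reused in part (b). For part (a), fix \(c_0\in(0,1)\) and replace \eqref{eq:alpha-L} by \eqref{eq:box-min}, so that every admissible weight obeys \(c_0\le\omega_{\scrL}\le 1/c_0\). First I would show there is \(N=N(c_0,u_\eta)\in\NN\) such that, for every dyadic square \(L\) of side length \(\le 2^{-N}\) and every \(\alpha\ge c_0\), the minimizer \(u_{\alpha,L}\) of \eqref{eq:ROF} is the constant \([u_\eta]_L\). This is an optimality-condition computation: \(u\equiv[u_\eta]_L\) solves \eqref{eq:ROF} iff \(\tfrac{2}{\alpha}(u_\eta-[u_\eta]_L)\) lies in the subdifferential of \(TV(\cdot,L)\) at a constant, i.e.\ iff \(\langle u_\eta-[u_\eta]_L,\,w\rangle\le\tfrac{\alpha}{2}\,TV(w,L)\) for all \(w\in BV(L)\); by the scale-invariant two-dimensional embedding \(\|w-[w]_L\|_{L^2(L)}\le C_{\mathrm S}\,TV(w,L)\) — here the exponent \(2=\tfrac{d}{d-1}\) for \(d=2\) is what makes \(C_{\mathrm S}\) independent of \(|L|\) — this holds as soon as \(\alpha\ge 2C_{\mathrm S}\|u_\eta-[u_\eta]_L\|_{L^2(L)}\). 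Since \(\|u_\eta-[u_\eta]_L\|_{L^2(L)}^2\le\int_L|u_\eta|^2\dd x\) and \(A\mapsto\int_A|u_\eta|^2\dd x\) is absolutely continuous, \(\sup\{\|u_\eta-[u_\eta]_L\|_{L^2(L)}:L\text{ dyadic},\ |L|\le 4^{-N}\}\to 0\) as \(N\to\infty\); taking \(N\) with this supremum \(\le c_0/(2C_{\mathrm S})\) gives the claim, and in particular \(\int_L|u_c-u_{\alpha,L}|^2\dd x=\|u_c-[u_\eta]_L\|_{L^2(L)}^2\) is constant in \(\alpha\) on \([c_0,1/c_0]\), so \(\bar\alpha_L=c_0\) for every such \(L\).

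The stopping criterion then follows by coarsening. Given \(\scrL\in\scrP\), I would form \(\scrL^\sharp\) by replacing, inside every level-\(N\) dyadic square \(D\) that \(\scrL\) subdivides strictly (so all of \(\scrL\)'s pieces inside \(D\) have side \(\le 2^{-N-1}\)), those pieces by \(D\) itself. On each such \(D\) the previous step gives \(\omega_{\scrL}\equiv c_0\) and, since \(|D|=4^{-N}\), also \(\bar\alpha_D=c_0\); hence \(\omega_{\scrL^\sharp}=\omega_{\scrL}\) as functions on \(Q\), so \(u_{\scrL^\sharp}=u_{\scrL}\) by \eqref{eq:minprS2a} and the two partitions have the same Level-1 value. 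As there are only finitely many admissible partitions all of whose squares have side \(\ge 2^{-N}\), enumerating them as \(\scrL_1,\dots,\scrL_\kappa\) and letting \((\scrS)\) be ``\eqref{eq:alpha-L} is replaced by \eqref{eq:box-min}'' shows the Level-1 infimum over \(\scrP\) is attained and equals its minimum over this finite family \(\bar\scrP=\{\scrL_1,\dots,\scrL_\kappa\}\), which is a stopping criterion in the sense of Definition~\ref{def:stop}.

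For part (b), let \((\scrS)\) and \(\bar\scrP=\{\scrL_1,\dots,\scrL_\kappa\}\) be as assumed, and note \(\bigcup_{\scrL\in\bar\scrP}\scrL\) is a finite set of dyadic squares. On each such \(L\), conditions (i)–(ii) and Theorem~\ref{thm:onalpha} ensure \(\argmin\{\int_L|u_c-u_{\alpha,L}|^2\dd x:\alpha\in\RR^+\}\) is a nonempty subset of \((0,\infty)\) bounded away from \(0\) and \(+\infty\): boundedness above follows from (ii), since the wash-out value \(\|u_c-[u_\eta]_L\|_{L^2(L)}^2\) strictly exceeds \(\|u_c-u_\eta\|_{L^2(L)}^2\) and so cannot be the minimum, ruling out large \(\alpha\); boundedness below is part of Theorem~\ref{thm:onalpha} (via (i)). I would then pick \(c_0\in(0,1)\) so small that simultaneously (I) \([c_0,1/c_0]\) contains all of these finitely many argmin sets — so \(\bar\alpha_L=\alpha_L\) on every \(L\in\bigcup_{\scrL\in\bar\scrP}\scrL\), hence \(\omega_{\scrL}\) and (by strict convexity of \eqref{eq:minprS2a}, the unique) \(u_{\scrL}\) are unchanged for each \(\scrL\in\bar\scrP\) when passing from \eqref{eq:alpha-L} to \eqref{eq:box-min}; and (II) the level \(N(c_0,u_\eta)\) from part (a) is at least the largest refinement level occurring in \(\bar\scrP\), so \(\bar\scrP\subseteq\bar\scrP'\), where \(\bar\scrP'\) is the finite family that part (a) produces for this \(c_0\). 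Then: by part (a), running \((\scrL\!\scrS)_{{TV\!}_\omega}\) with \eqref{eq:box-min} over \(\scrP\) reduces to optimizing Level~1 over \(\bar\scrP'\); by (I) its Level-1 values on \(\bar\scrP\) coincide with those of the scheme run with \eqref{eq:alpha-L} over \(\bar\scrP\); and by the assumed stopping criterion the latter's optimum over \(\scrP\) — hence over \(\bar\scrP'\supseteq\bar\scrP\), up to the coarsening identification of part (a) — is already attained within \(\bar\scrP\). Chaining these equalities gives that the optimal Level-1 value, and the corresponding restored image, produced by the two schemes coincide, i.e.\ the claimed identity of \(u^*\).

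The step I expect to be the main obstacle is the uniform wash-out estimate in part (a): everything hinges on the threshold \(2C_{\mathrm S}\|u_\eta-[u_\eta]_L\|_{L^2(L)}\) being controllable simultaneously over \emph{all} dyadic squares of a given small size, which requires both the scale-invariance of the critical two-dimensional embedding (in dimension \(d\ne 2\), or with an \(L^p\)-fidelity forcing a subcritical Sobolev exponent, the threshold would pick up a genuine power of \(|L|\) and this mechanism would break) and the absolute continuity of \(A\mapsto\int_A|u_\eta|^2\dd x\). A secondary difficulty, in part (b), is to certify that shrinking the box to \([c_0,1/c_0]\) does not create a new Level-1-optimal partition outside \(\bar\scrP\); this is exactly what the combination of the part-(a) coarsening (which confines the competition to the finite family \(\bar\scrP'\)) and the hypothesis that (i)–(ii) hold on \emph{all} squares of \(\bar\scrP\) (so the box is inactive there and the two schemes literally agree) is meant to rule out.
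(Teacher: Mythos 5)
Your argument follows the paper's proof essentially step for step: your ``wash-out'' estimate is precisely Proposition~\ref{prop:Ja257} (which the paper imports from \cite[Proposition~2.5.7]{Ja12} together with the scale invariance of the two-dimensional Poincar\'e--Wirtinger constant in $BV$), your coarsening of sub-threshold dyadic squares is the construction in the proof of Theorem~\ref{thm:solL1}, and your choice of $c_0$ in part (b) from the bounds $c_L\le\alpha^*_L<C_Q\Vert u_\eta\Vert_{L^2(L)}$ of Theorem~\ref{thm:onalpha} is exactly the paper's definition of $c_0$. The only differences are cosmetic: you prove the wash-out lemma inline via the subdifferential characterization rather than citing it, and you perform the coarsening in a single pass down to level $N$ instead of merging one dyadic level at a time.
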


Next, we state our main theorem regarding existence of solutions for the learning scheme \((\scrL\!\scrS)_{{TV\!}_\omega}\)
in \eqref{lsTVomega}. We state this result under the box-constraint condition. However, in view of Theorem~\ref{thm:equiv}, this result holds true under any stopping criterion for
the refinement of the admissible partitions, and in particular if the the training data satisfies the conditions (i) and (ii) above.

\begin{theorem}[Existence of solutions to \((\scrL\!\scrS)_{{TV\!}_\omega}\)]\label{thm:TVomega}
There exists an  optimal solution
\(u^*\) to the learning scheme \((\scrL\!\scrS)_{{TV\!}_\omega}\)
in \eqref{lsTVomega}, whenever \eqref{eq:alpha-L} is replaced by \eqref{eq:box-min} for some fixed $c_0 \in (0,1).$ 

\end{theorem}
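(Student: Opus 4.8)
The plan is to run the direct method level by level; the only genuinely non-elementary point is that Level~1 ranges over the infinite family $\scrP$, and that is precisely what Theorem~\ref{thm:equiv}(a) is for. I would begin by checking that Level~3 is well posed under the box constraint. For a fixed dyadic square $L$ and $\alpha\in[c_0,\tfrac1{c_0}]$, the functional in \eqref{eq:ROF} is convex and coercive on $BV(L)$ and strictly convex in its dependence on $u$ through the $L^2$ fidelity, so by the standard ROF argument (lower semicontinuity of $TV(\cdot,L)$ together with $BV$ compactness) $u_{\alpha,L}$ exists and is unique. A comparison of the energies of $u_{\alpha,L}$ and $u_{\alpha',L}$ shows that $\alpha\mapsto u_{\alpha,L}$ is continuous from $[c_0,\tfrac1{c_0}]$ into $L^2(L)$, hence $\alpha\mapsto\int_L|u_c-u_{\alpha,L}|^2\dd x$ is continuous on the compact interval $[c_0,\tfrac1{c_0}]$; its set of minimizers is therefore nonempty and compact, and $\bar\alpha_L$ defined by \eqref{eq:box-min} belongs to $[c_0,\tfrac1{c_0}]$. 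Consequently $\omega_\scrL=\sum_{L\in\scrL}\bar\alpha_L\chi_L$ satisfies $c_0\leq\omega_\scrL\leq\tfrac1{c_0}$ a.e.\ for \emph{every} $\scrL\in\scrP$, no matter how fine $\scrL$ is.

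Next I would show that Level~2 is well posed for each fixed $\scrL\in\scrP$. Because $c_0\leq\omega_\scrL\leq\tfrac1{c_0}$, the weighted space $BV_{\omega_\scrL}(Q)$ coincides with $BV(Q)$ with equivalent seminorms (Section~\ref{sect:L2}, following \cite{Ba01}), and $c_0\,TV(u,Q)\leq TV_{\omega_\scrL}(u,Q)\leq\tfrac1{c_0}\,TV(u,Q)$. Hence the functional $u\mapsto\int_Q|u_\eta-u|^2\dd x+TV_{\omega_\scrL}(u,Q)$ is bounded below by $0$, coercive on $BV(Q)\cap L^2(Q)$ (the fidelity controls $\|u\|_{L^2(Q)}$ and $TV_{\omega_\scrL}$ controls $TV(\cdot,Q)$), and weak-$*$ sequentially lower semicontinuous: the fidelity term is weakly $L^2$-lower semicontinuous, while $TV_{\omega_\scrL}(\cdot,Q)$, being defined by duality as in \cite{Ba01} (equivalently, by \eqref{eq:VarBVw}, as the integral of the lower-semicontinuous envelope $\omega_\scrL^{sc^-}$ against $|Du|$), is a supremum of weakly continuous linear functionals and thus weak-$*$ lower semicontinuous. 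A minimizing sequence is then bounded in $BV(Q)\cap L^2(Q)$; extracting a weak-$*$ convergent subsequence and passing to the limit yields a minimizer $u_\scrL\in BV(Q)$, and strict convexity of the fidelity term makes it unique. Thus $u_\scrL$ in \eqref{eq:minprS2a} is well defined for every $\scrL\in\scrP$.

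Finally, for Level~1, I would invoke Theorem~\ref{thm:equiv}(a): since \eqref{eq:alpha-L} has been replaced by the box-constrained \eqref{eq:box-min}, there is a stopping criterion $(\scrS)$ in the sense of Definition~\ref{def:stop}, so there exist $\kappa\in\NN$ and $\scrL_1,\dots,\scrL_\kappa\in\scrP$ with $\argmin\{\int_Q|u_c-u_\scrL|^2\dd x:\scrL\in\scrP\}=\argmin\{\int_Q|u_c-u_{\scrL_i}|^2\dd x:i\in\{1,\dots,\kappa\}\}$. The right-hand side is the argmin of the finite set of real numbers $\{\int_Q|u_c-u_{\scrL_i}|^2\dd x\}_{i=1}^\kappa$, each entry being well defined by the previous step, and is therefore nonempty. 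Choosing any minimizing index $i^*$ and setting $u^*:=u_{\scrL_{i^*}}$ produces an optimal solution of $(\scrL\!\scrS)_{{TV\!}_\omega}$.

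I expect the main obstacle to be Level~2: establishing that the weighted-$TV$ problem with the discontinuous (but, thanks to the box constraint, uniformly bounded) weight $\omega_\scrL$ is genuinely well posed — existence, uniqueness, and the lower semicontinuity required by the direct method. This rests on the properties of $BV_{\omega_\scrL}$ and of $TV_{\omega_\scrL}$ developed in \cite{Ba01,Ca08,Ca10} and on the representation \eqref{eq:VarBVw}, and some care is needed because $\omega_\scrL$ jumps across the dyadic grid lines, which carry positive $|Du|$-mass in general. The continuity of $\alpha\mapsto u_{\alpha,L}$ used at Level~3, although standard, must also be stated precisely so that the argmin in \eqref{eq:box-min} is an attained minimum; everything else — coercivity from the box bounds, the direct method, and the finite minimization at Level~1 — is routine once Theorem~\ref{thm:equiv}(a) is in hand.
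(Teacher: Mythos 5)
Your proposal is correct and follows essentially the same route as the paper: Level~3 well-posedness via continuity of $\alpha\mapsto u_{\alpha,L}$ on the compact box (Subsection~\ref{sect:TVL3} and Corollary~\ref{cor:Icont}), Level~2 via the direct method for the weighted ROF functional with the uniform bounds $c_0\leq\omega_\scrL\leq 1/c_0$ (Theorem~\ref{thm:S2.1} and Corollary~\ref{cor:L2}), and Level~1 via the reduction to finitely many partitions furnished by the box constraint (Theorem~\ref{thm:solL1}, which is also the content behind Theorem~\ref{thm:equiv}(a) that you cite). The only cosmetic difference is that you route Level~1 through Theorem~\ref{thm:equiv}(a) rather than citing Theorem~\ref{thm:solL1} directly, but these are the same statement.
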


The proofs of Theorems~\ref{thm:equiv} and \ref{thm:TVomega} are presented in Section~\ref{sect:wTV}, where we also explore alternative
stopping criteria.

As shown in \cite[Theorem~2.4.17]{Ja12}, given a positive, bounded, and Lipschitz
continuous function \(\omega:Q\to(0,\infty)\) with \(\nabla \omega \in BV(Q;\RR^2)\), the solution of \eqref{eq:minprS2a} with \(\omega_{\mathscr{L}}\) replaced by \(\omega\) may exhibit jumps inherited from the weight
\(\omega\)  
that are not present in the data \(u_\eta\), see Figure \ref{fig:parameterdiscont} for a numerical example. Because \(\omega_{\mathscr{L}}\) in Level~2 is constructed using the local optimal parameters given by Level~3, we heuristically expect that, in most applications, these extra jumps do not induce clearly visible artifacts. However, this possible issue has led us to consider two alternative adaptive space-dependent learning schemes. 

First, we consider a learning scheme based on    \((\scrL\!\scrS)_{{TV\!}_\omega}\)
in \eqref{lsTVomega} with  \(\omega_{\mathscr{L}}\) replaced by a smooth regularization  \((\omega_\epsi)_{\mathscr{L}}\) (see the regularized weighted TV learning scheme 
\((\scrL\!\scrS)_{{TV\!}_{\omega_\epsi}}\) in \eqref{lsTVomegae} below). 
Second, using the fact that the minimizer in \eqref{eq:ROF} coincides with  
\begin{equation*}
\begin{aligned}
 \argmin\left\{\frac1\alpha\int_{L}|u_\eta-u|^2\dd x+ TV(u,L)\!:\,u\in
BV(L)\right\},
\end{aligned}
\end{equation*}
we consider the weighted-fidelity learning scheme 
\((\scrL\!\scrS)_{{TV-Fid}_\omega}\)  in \eqref{lsFido} below, where the weight appears in the fidelity term. Let us point out that a detailed analysis of the differences arising between weighted-fidelity and weighted-regularization parameter for TV has been carried out in the one-dimensional case in \cite{HiPaRa17}.

We begin by describing the regularized scenario.

\begin{flalign}
& (\scrL\!\scrS)_{{TV\!}_{\omega_\epsi}}  \textit{ \bf Regularized
weighted-TV learning scheme}&
\label{lsTVomegae} \end{flalign}

\begin{itemize}[leftmargin=20mm]

\item[\textbf{Level 3.}] \text{(optimal local training
parameter)} Fix  \(\mathscr{L}\in \mathscr{P}\); for each \(L\in
\mathscr{L}\),
find
\begin{equation}
\label{eq:alpha-LL}
\alpha_L=\inf\left\{\arginf\left\{\int_L |u_c-u_{\alpha,L}|^2\,\dd
x\!:\,\alpha\in
\RR^+\right\}\right\},
\end{equation}
where, for \(\alpha\in
\RR^+\),
\begin{equation*}
\begin{aligned}
u_{\alpha,L}:= \argmin\left\{\int_{L}|u_\eta-u|^2\dd x+\alpha TV(u,L)\!:\,u\in
BV(L)\right\}.
\end{aligned}
\end{equation*}

\item[\textbf{Level 2.}] \text{(space-dependent image denoising)}
For each
 \(\mathscr{L}\in \mathscr{P}\) and for \(\epsi>0\)
 fixed,
find%
\begin{equation*}
\begin{aligned}
u^\epsi_{\mathscr{L}}:=\argmin\bigg\{\int_Q|u_\eta-u|^2\dd x
+TV_{\omega^\epsi_{\mathscr{L}}}(u,Q)\!:\,u\in
BV_{\omega^\epsi_{\mathscr{L}}}(Q)\bigg\},
\end{aligned}
\end{equation*}
where we consider a  regularized weight
 \(\omega^\epsi_{\mathscr{L}}:Q\to[0,\infty)\) of \(\omega_{\mathscr{L}}\)
in \eqref{eq:weight} such that
\begin{equation}
\label{eq:regweight}
\begin{aligned}
\omega^\epsi_{\mathscr{L}} \in C^1(Q) \enspace \text{ and } \enspace
\omega^\epsi_{\mathscr{L}} \nearrow \omega_{\mathscr{L}} \text{
as \(\epsi\to0^+\) and a.e.~in \(Q\)}.
\end{aligned}
\end{equation}

\item[\textbf{Level 1.}] \text{(optimal partition and 
image restoration)}
Find
\begin{equation*}
\begin{aligned}
u^*_\epsi\in \argmin\left\{\int_Q|u_c-u^\epsi_{\mathscr{L}}|^2\dd
x:\,\mathscr{L}\in
\mathscr{P}\right\} \quad \text{with \(u^\epsi_{\mathscr{L}}\)
given by Level~2}.
\end{aligned}
\end{equation*}
 \end{itemize}

For each \(\epsi>0\) fixed, similar results to those regarding
the learning scheme \((\scrL\!\scrS)_{{TV\!}_\omega}\)
in \eqref{lsTVomega}   hold for the learning scheme 
\((\scrL\!\scrS)_{{TV\!}_{\omega_\epsi}}\) in \eqref{lsTVomegae}.
A natural question is whether a sequence of optimal solutions
of the latter, \(\{u^*_\epsi\}_\epsi\), converge in some sense
to an optimal solution of the former, \(u^*\), as \(\epsi\to0^+\).
This turns out to be an interesting mathematical question (see Remark~\ref{Q:opnebp}), which
we partially address in the following proposition.

\begin{proposition}[On the energies in \((\scrL\!\scrS)_{{TV\!}_{\omega_\epsi}}\)
as \(\epsi\to0^+\)]\label{thm:Gconvergence}
 Under the setup of the learning schemes   \((\scrL\!\scrS)_{{TV\!}_\omega}\)
and
  \((\scrL\!\scrS)_{{TV\!}_{\omega_\epsi}}\) above,  fix 
 \(\mathscr{L}\in \mathscr{P}\) and let \(E_\scrL:L^1(Q)\to[0,\infty]\)
and \(E_\scrL^\epsi:L^1(Q)\to[0,\infty]\) be  the functionals
defined for \(u\in L^1(Q)\) by  
\begin{equation*}
\begin{aligned}
&E_\scrL[u]:=\begin{cases}
\displaystyle\int_Q|u_\eta-u|^2\dd x +TV_{\omega_\mathscr{L}}(u,Q)
&\text{if } u\in
BV_{\omega_{\mathscr{L}}}(Q),\\
+\infty &\text{otherwise},
\end{cases}\\
&E_\scrL^\epsi[ u]:=\begin{cases}
\displaystyle\int_Q|u_\eta-u|^2\dd x +TV_{\omega^\epsi_{\mathscr{L}}}(u,Q)
&\text{if
} u\in
BV_{\omega^\epsi_{\mathscr{L}}}(Q),\\
+\infty &\text{otherwise}.
\end{cases}
\end{aligned}
\end{equation*}
If \eqref{eq:regweight} holds, then
\begin{equation}\label{eq:Glimsup}
\begin{aligned}
&\Gamma(L^1(Q))-\limsup_{\epsi\to0^+} E_\scrL^\epsi \leq E_\scrL.
\end{aligned}
\end{equation}

\end{proposition}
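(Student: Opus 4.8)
The plan is to prove \eqref{eq:Glimsup} by exhibiting an explicit recovery family — in fact the constant one. Recall that establishing $\Gamma(L^1(Q))-\limsup_{\epsi\to0^+}E_\scrL^\epsi\leq E_\scrL$ amounts to showing that for every $u\in L^1(Q)$ there is a family $\{u_\epsi\}_{\epsi>0}\subset L^1(Q)$ with $u_\epsi\to u$ in $L^1(Q)$ as $\epsi\to0^+$ and $\limsup_{\epsi\to0^+}E_\scrL^\epsi[u_\epsi]\leq E_\scrL[u]$. If $E_\scrL[u]=+\infty$ there is nothing to prove, so I would assume $u\in BV_{\omega_{\mathscr{L}}}(Q)$, in which case $E_\scrL[u]=\int_Q|u_\eta-u|^2\dd x+TV_{\omega_\mathscr{L}}(u,Q)<+\infty$, and take $u_\epsi:=u$ for every $\epsi>0$; then trivially $u_\epsi\to u$ in $L^1(Q)$.

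The crux is then the monotonicity estimate $TV_{\omega^\epsi_{\mathscr{L}}}(u,Q)\leq TV_{\omega_\mathscr{L}}(u,Q)$ for every $\epsi>0$. First I would upgrade the a.e.\ bound $\omega^\epsi_{\mathscr{L}}\leq\omega_{\mathscr{L}}$ — which holds because $\omega^\epsi_{\mathscr{L}}\nearrow\omega_{\mathscr{L}}$ a.e.\ by \eqref{eq:regweight} — to the pointwise bound $\omega^\epsi_{\mathscr{L}}\leq\omega_{\mathscr{L}}^{sc^-}$ on all of $Q$: since $\omega^\epsi_{\mathscr{L}}\in C^1(Q)$ and $\omega_{\mathscr{L}}=\sum_{L\in\mathscr{L}}\alpha_L\chi_L$ is constant equal to $\alpha_L$ on the open, positive-measure interior of each dyadic square $L\in\mathscr{L}$, continuity forces $\omega^\epsi_{\mathscr{L}}\leq\alpha_L$ on $\overline L$ for each $L$, hence $\omega^\epsi_{\mathscr{L}}(x)\leq\min\{\alpha_L:\,x\in\overline L\}=\omega_{\mathscr{L}}^{sc^-}(x)$ at every $x\in Q$. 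Combining this with the representation \eqref{eq:VarBVw} — applicable to the $\omega^\epsi_{\mathscr{L}}$ since they are continuous, with $(\omega^\epsi_{\mathscr{L}})^{sc^-}=\omega^\epsi_{\mathscr{L}}$ — or, more directly, with the monotonicity of the weighted total variation with respect to the weight recalled in Section~\ref{sect:L2}, yields $TV_{\omega^\epsi_{\mathscr{L}}}(u,Q)=\int_Q\omega^\epsi_{\mathscr{L}}\dd|Du|\leq\int_Q\omega_{\mathscr{L}}^{sc^-}\dd|Du|=TV_{\omega_\mathscr{L}}(u,Q)<+\infty$ (here $u\in BV(Q)$ because $\omega_{\mathscr{L}}^{sc^-}$ is bounded below by the positive constant $\min_{L}\alpha_L$). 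In particular $u\in BV_{\omega^\epsi_{\mathscr{L}}}(Q)$, so $E_\scrL^\epsi[u]$ is given by its first branch.

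With these ingredients the conclusion is immediate: for every $\epsi>0$,
\[
E_\scrL^\epsi[u]=\int_Q|u_\eta-u|^2\dd x+TV_{\omega^\epsi_{\mathscr{L}}}(u,Q)\leq\int_Q|u_\eta-u|^2\dd x+TV_{\omega_\mathscr{L}}(u,Q)=E_\scrL[u],
\]
and letting $\epsi\to0^+$ gives $\limsup_{\epsi\to0^+}E_\scrL^\epsi[u_\epsi]=\limsup_{\epsi\to0^+}E_\scrL^\epsi[u]\leq E_\scrL[u]$, which is precisely \eqref{eq:Glimsup}.

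I do not expect a serious obstacle here: this $\Gamma$-limsup bound is the ``easy half'', precisely because the monotone approximation $\omega^\epsi_{\mathscr{L}}\nearrow\omega_{\mathscr{L}}$ built into \eqref{eq:regweight} lets the target function serve as its own recovery sequence. The only mildly delicate point is the passage from the a.e.\ inequality between weights to the everywhere inequality against the lower semicontinuous envelope $\omega_{\mathscr{L}}^{sc^-}$, which is where the $C^1$ regularity of $\omega^\epsi_{\mathscr{L}}$ and the simple geometry of the dyadic partition are used. The genuinely hard matching $\Gamma$-liminf inequality — where the jumps created by the discontinuous limit weight interact with $|Du|$ — is the one the authors only ``partially address'', consistent with this proposition being phrased as an inequality rather than a full $\Gamma$-convergence.
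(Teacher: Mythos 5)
Your proposal is correct and follows essentially the same route as the paper: take the constant recovery family $u_\epsi:=u$ and observe that $\omega^\epsi_{\mathscr{L}}\leq\omega_{\mathscr{L}}$ a.e.\ forces $TV_{\omega^\epsi_{\mathscr{L}}}(u,Q)\leq TV_{\omega_{\mathscr{L}}}(u,Q)$, hence $E_\scrL^\epsi[u]\leq E_\scrL[u]$ pointwise. The detour through the everywhere bound $\omega^\epsi_{\mathscr{L}}\leq\omega_{\mathscr{L}}^{sc^-}$ and the integral representation is sound but unnecessary, since the a.e.\ inequality between weights already nests the admissible test functions in the supremum defining the weighted total variation, which is exactly the one-line argument the paper uses.
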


Inequality \eqref{eq:Glimsup} states, roughly speaking, that the asymptotic behavior of the functionals $E_\scrL^\epsi$ is bounded from above by $E_\scrL$, for it can be equivalently expressed as
$$\inf\left\{\limsup_{\epsi\to 0^+} E_\scrL^\epsi[u_{\epsi}]\colon\, u_{\epsi}\to u\quad\text{strongly in }L^1(Q)\right\}\leq E_\scrL[u]$$
for every $u\in L^1(Q)$. The proof of this proposition and an analytical discussion of
the learning scheme 
\((\scrL\!\scrS)_{{TV\!}_{\omega_\epsi}}\) in \eqref{lsTVomegae}
can be found in Section~\ref{sect:regTV}, while the corresponding
numerical scheme is detailed in Section~\ref{sect:numerics}.

Next, we study the weighted-fidelity learning scheme 
\((\scrL\!\scrS)_{{TV-Fid}_\omega}\)   motivated above. 
\begin{flalign}
& (\scrL\!\scrS)_{{TV-Fid}_\omega}  \textit{ \bf Weighted-fidelity learning scheme}&
\label{lsFido} \end{flalign}

\begin{itemize}[leftmargin=20mm]

\item[\textbf{Level 3.}] \text{(optimal local training
parameter)} Fix  \(\mathscr{L}\in \mathscr{P}\); for each \(L\in \mathscr{L}\),
find
\begin{equation}
\label{eq:alpha-Lf}
\alpha_L=\inf\left\{\arginf\left\{\int_L |u_c-u_{\alpha,L}|^2\,\dd x\!:\,\alpha\in
\RR^+\right\}\right\},
\end{equation}
where, for \(\alpha\in
\RR^+\),
\begin{equation}
\label{eq:ROFf}
\begin{aligned}
u_{\alpha,L}:= \argmin\left\{\int_{L}\frac1\alpha|u_\eta-u|^2\dd x+ TV(u,L)\!:\,u\in
BV(L)\right\}.
\end{aligned}
\end{equation}

\item[\textbf{Level 2.}] \text{(space-dependent image denoising)} For each
 \(\mathscr{L}\in \mathscr{P}\),
find%
\begin{equation*}
\begin{aligned}
u_{\mathscr{L}}:=\argmin\bigg\{\int_Q\frac{1}{\omega_\mathscr{L}}|u_\eta-u|^2\dd x +TV(u,Q)\!:\,u\in
BV_{\omega_{\mathscr{L}}}(Q)\bigg\},
\end{aligned}
\end{equation*}
where, similarly to \eqref{eq:weight},  
 \(\omega_{\mathscr{L}}\) is defined by %
\begin{equation*}
\begin{aligned}
\omega_{\mathscr{L}}(x):=\sum_{L\in \mathscr{L} }\alpha_L
\chi_L(x)  \quad \text{with \(\alpha_L\) given by Level~3}.
\end{aligned}
\end{equation*}

\item[\textbf{Level 1.}] \text{(optimal partition and 
image restoration)}
Find
\begin{equation*}
\begin{aligned}
u^*\in \argmin\left\{\int_Q|u_c-u_{\mathscr{L}}|^2\dd x\colon\,\mathscr{L}\in
\mathscr{P}\right\} \quad \text{with \(u_{\mathscr{L}}\) given by Level~2}.
\end{aligned}
\end{equation*}

\end{itemize}
Once more, similar results to those regarding the learning
scheme \((\scrL\!\scrS)_{{TV\!}_\omega}\)
in \eqref{lsTVomega}   hold for the learning scheme 
\((\scrL\!\scrS)_{{TV-Fid}_\omega}\) in \eqref{lsFido}.  In particular, the box constraint here is essential to guarantee that \textbf{Level~2} of the scheme is well posed. This analysis is undertaken in Section~\ref{sect:regTV}, while the corresponding numerical study is addressed in Section~\ref{sect:numerics}.

 The last theoretical result of this paper concerns replacing the $TV$ term in our space-dependent bilevel learning schemes with a higher-order regularizer. A well-known drawback of the ROF model is the possible occurrence of staircasing effects whenever two neighboring areas of an image are both smoothed out and an abrupt spurious discontinuity is produced in the denoising process. To counteract this effect a canonical solution (among others like the use of Huber-type smoother approximations of the total variation as in \cite{Bu15}) consists in resorting to higher-order derivatives in the regularizer (see, e.g., \cite{Ch00, Da09, Pa13, BrHo20}). We consider here the total generalized variation ($TGV$) model introduced in \cite{Br10}, which is considered to be one of the most effective image-reconstruction models among those involving mixed first- and higher-order terms, cf. \cite{Pa13,BrKuVa13,PoSc15,IgWa22} for some theoretical results about its solutions. 
 
 For a function $u\in BV(Q)$ and $\alpha=(\alpha_0,\alpha_1)\in \R^+\times \R^+$, the second-order $TGV$ functional is given by
\begin{equation}\label{eq:tgv}
TGV_{\alpha_0,\alpha_1}(u):={\min}\big\{\alpha_0|D u-v|(Q)+\alpha_1|\Ecal v|(Q): v\in BD(Q)\big\},
\end{equation}
 where, as before, $Du$ denotes the distributional gradient of $u$, $|\mu|(Q)$ is the total variation on \(Q\) of a Radon   measure \(\mu\), $\Ecal$ is the symmetric part of the distributional gradient, and $BD$ indicates the space of vector-valued functions with bounded deformation, cf. \cite{Te83}. In this setting, our learning scheme reads as follows.
\begin{flalign}
& (\scrL\!\scrS)_{{TGV\!}_\omega}  \textit{ \bf Weighted-TGV learning scheme}&
\label{lsTGVomega} \end{flalign}

\begin{itemize}[leftmargin=20mm]

\item[\textbf{Level 3.}] \text{(optimal local regularization parameter)} 
Fix \(\mathscr{L}\in \mathscr{P}\); for each \(L\in \mathscr{L}\),
find
\begin{equation}
\label{eq:alpha-Lg-TGV}
\begin{gathered}
\alpha_L=\big((\alpha_L)_0,(\alpha_L)_1\big):=\inf\left\{\arginf\left\{\int_L |u_c-u_{\alpha,L}|^2\,\dd x\!:\,\alpha=(\alpha_0,\alpha_1)\in
\RR^+\times \RR^+\right\}\right\},
\end{gathered}
\end{equation}
where, for \(\alpha=(\alpha_0,\alpha_1)\in
\RR^+\times\RR^+\),
\begin{equation}
\label{eq:TGV}
\begin{aligned}
u_{\alpha,L}:= \argmin\left\{ \int_{L}|u_\eta-u|^2\dd x+ TGV_{\alpha_0,\alpha_1}(u,L)\!:\,u\in
BV(L)\right\},
\end{aligned}
\end{equation}
and where the infimum in \eqref{eq:alpha-Lg-TGV} is meant with respect to the lexicographic order in $\R^2$.

\item[\textbf{Level 2.}] \text{(space-dependent TGV image denoising)} For each
 \(\mathscr{L}\in \mathscr{P}\),
find%
\begin{equation}\label{eq:minprS2g}
\begin{aligned}
u_{\mathscr{L}}:=\argmin\bigg\{\int_Q  |u_\eta-u|^2\dd x +TGV_{\omega_{\mathscr{L}}^0,\omega_{\mathscr{L}}^1}(u,Q)\!:\,u\in
BV_{\omega_{\mathscr{L}}^0}(Q)\bigg\},
\end{aligned}
\end{equation}
where, for $i\in\{0,1\}$, the weight
 \(\omega_{\mathscr{L}}^i\) is defined by %
\begin{equation*}
\begin{aligned}
\omega_{\mathscr{L}}^i(x):=\sum_{L\in \mathscr{L} }(\alpha_L)_i\,
\chi_L(x)  \quad \text{with \(\alpha_L\) given by Level~3}.
\end{aligned}
\end{equation*}
In the expression above, 
\begin{align}
\label{eq:TGVomega-def}
TGV_{\omega_{\mathscr{L}}^0,\omega_{\mathscr{L}}^1}(u,Q):=\inf_{v\in BD_{\omega_{\mathscr{L}}^1}(Q)}\left\{\scrV_{\omega_{\mathscr{L}}^0}(Du-v,Q)+{\scrV_{\omega_{\mathscr{L}}^1}}(\Ecal v,Q)\right\},
\end{align}
where the quantities $\scrV_{\omega_{\mathscr{L}}^0}$ and $\scrV_{\omega_{\mathscr{L}}^1}$ are weighted counterparts to the classical total variation of Radon measures. We refer to Sections~\ref{sect:glo} and \ref{sect:wTGV} for the precise definition and properties of these quantities. In particular,
we will prove that
\begin{equation}\label{eq:VarBDw1}
\scrV_{\omega_{\mathscr{L}}^0}(Du-v,Q)=\int_Q (\omega_{\mathscr{L}}^0)^{\textrm{sc}^-}\mathrm{d}|Du-v|,
\end{equation}
and 
\begin{equation}\label{eq:VarBDw2}
\scrV_{\omega_{\mathscr{L}}^1}(\Ecal v,Q)=\int_Q (\omega_{\mathscr{L}}^1)^{\textrm{sc}^-}\mathrm{d}|\Ecal v|,
\end{equation}
where
\(BV_{\omega_{\mathscr{L}}^0}\) is the space of  \(\omega_{\mathscr{L}}^0\)-weighted
$BV$-functions (see Subsection~\ref{sect:L2}) and $BD_{\omega_{\mathscr{L}}^1}$ is the space of \(\omega_{\mathscr{L}}^1\)-weighted $BD$-functions (see Section \ref{sect:wTGV}).

\item[\textbf{Level 1.}] \text{(optimal partition and 
image restoration)}
Find
\begin{equation*}
\begin{aligned}
u^*\in \argmin\left\{\int_Q|u_c-u_{\mathscr{L}}|^2\dd x:\,\mathscr{L}\in
\mathscr{P}\right\} \quad \text{with \(u_{\mathscr{L}}\) given by Level~2}.
\end{aligned}
\end{equation*}
\end{itemize}

Analogously to $(\scrL\!\scrS)_{{TV-Fid}_\omega}$, we can also consider a weighted-fidelity TGV scheme, which we  use in our numerical results and describe next.
\begin{flalign}
& (\scrL\!\scrS)_{{TGV-Fid}_\omega}  \textit{ \bf TGV weighted-fidelity learning scheme}&
\label{lsTGVfidomega} \end{flalign}
With $\alpha_0,\alpha_1 \in \RR^+$ fixed throughout:
\begin{itemize}[leftmargin=20mm]

\item[\textbf{Level 3.}] \text{(optimal local training
parameter)} Fix  \(\mathscr{L}\in \mathscr{P}\); for each \(L\in \mathscr{L}\), find
\begin{equation}
\label{eq:lambdatgv-Lg}
\lambda_L=\inf\left\{\arginf\left\{\int_L |u_c-u_{\lambda,L}|^2\,\dd x\!:\,\lambda\in
\RR^+\right\}\right\},
\end{equation}
where, for \(\lambda\in
\RR^+\),
\begin{equation*}
\begin{aligned}
u_{\lambda,L}:= \argmin\left\{\lambda \int_{L}|u_\eta-u|^2\dd x+ TGV_{\alpha_0, \alpha_1}(u,L)\!:\,u\in
BV(L)\right\}.
\end{aligned}
\end{equation*}

\item[\textbf{Level 2.}] \text{(space-dependent image denoising)} For each
 \(\mathscr{L}\in \mathscr{P}\),
find%
\begin{equation*}
\begin{aligned}
u_{\mathscr{L}}:=\argmin\bigg\{\int_Q \omega_\mathscr{L} |u_\eta-u|^2\dd x +TGV_{\alpha_0, \alpha_1}(u,Q)\!:\,u\in
BV_{\omega_{\mathscr{L}}}(Q)\bigg\},
\end{aligned}
\end{equation*}
where 
 \(\omega_{\mathscr{L}}\) is defined by %
\begin{equation*}
\begin{aligned}
\omega_{\mathscr{L}}(x):=\sum_{L\in \mathscr{L} }\lambda_L
\chi_L(x)  \quad \text{with \(\lambda_L\) given by Level~3}.
\end{aligned}
\end{equation*}

\item[\textbf{Level 1.}] \text{(optimal partition and 
image restoration)}
Find
\begin{equation*}
\begin{aligned}
u^*\in \argmin\left\{\int_Q|u_c-u_{\mathscr{L}}|^2\dd x:\,\mathscr{L}\in
\mathscr{P}\right\} \quad \text{with \(u_{\mathscr{L}}\) given by Level~2}.
\end{aligned}
\end{equation*}
\end{itemize}

As in the case of our learning schemes for the weighted total variation, the analysis of $(\mathscr{L}\mathscr{S})_{TGV_\omega}$ and $(\mathscr{L}\mathscr{S})_{TGV-{\rm Fid}_\omega}$ is performed under a box constraint assumption, which for the first case reads as
\begin{equation}
\label{eq:bc-TGV}
\alpha = (\alpha_0, \alpha_1) \in \left[c_0,\frac{1}{c_0}\right]\times \left[c_1,\frac{1}{c_1}\right].
\end{equation}

Our main result for the weighted-$TGV$ scheme is the following.
\begin{theorem}[Existence of solutions to \((\scrL\!\scrS)_{{TGV\!}_\omega}\)]
\label{thm:TGVomega}
There exists an optimal solution $u^\ast$ to the learning scheme \((\scrL\!\scrS)_{{TGV\!}_\omega}\) in \eqref{lsTGVomega} with the minimization in \eqref{eq:alpha-Lg-TGV} restricted by \eqref{eq:bc-TGV}.
\end{theorem}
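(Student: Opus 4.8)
The plan is to run the direct method through the three nested levels, working from the innermost problem outward; the only genuinely new analytic ingredient beyond the weighted-$TV$ analysis is the lower semicontinuity and coercivity of the weighted $TGV$ functional in \eqref{eq:TGVomega-def}, which is supplied by the results of Section~\ref{sect:wTGV}, notably the representations \eqref{eq:VarBDw1}--\eqref{eq:VarBDw2}. \emph{Level~3.} Fix a dyadic square $L$ and $\alpha=(\alpha_0,\alpha_1)$ in the box $[c_0,1/c_0]\times[c_1,1/c_1]$. The functional in \eqref{eq:TGV} is strictly convex in $u$ and coercive on $BV(L)$: the fidelity controls $\|u\|_{L^2(L)}$, hence $\|u\|_{L^1(L)}$, while $TGV_{\alpha_0,\alpha_1}(\cdot,L)$ dominates the $BV$-seminorm up to the $L^1$-norm, uniformly for $\alpha$ in the box since its entries are bounded away from $0$ and $\infty$. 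As $TGV_{\alpha_0,\alpha_1}(\cdot,L)$ is convex, one-homogeneous and lower semicontinuous for $L^1(L)$-convergence (being, by \eqref{eq:tgv}, the infimum over $v\in BD(L)$, attained, of the sum of two weak-$*$ lower semicontinuous measure functionals of $Du-v$ and $\Ecal v$), the direct method gives a unique minimizer $u_{\alpha,L}$. Standard stability of minimizers of strictly convex functionals under the continuous perturbation $\alpha\mapsto TGV_{\alpha_0,\alpha_1}$ then shows that $\alpha\mapsto u_{\alpha,L}$ is continuous into $L^2(L)$: along $\alpha_n\to\alpha$ the minimizers are bounded in $BV(L)\cap L^2(L)$, any $L^1$-cluster point minimizes the limit functional, uniqueness forces convergence of the full sequence, and a comparison of energies upgrades it to strong $L^2$-convergence. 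Hence $\alpha\mapsto\int_L|u_c-u_{\alpha,L}|^2\dd x$ is continuous on the compact box, its minimizing set is nonempty and compact, and the lexicographic infimum of a nonempty compact subset of $\R^2$ is attained; this makes $\alpha_L=((\alpha_L)_0,(\alpha_L)_1)$ well defined and, in particular, a minimizer.

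\emph{Level~2.} Fix $\scrL\in\scrP$. By Level~3 the weights $\omega_\scrL^0,\omega_\scrL^1$ are piecewise constant on the finite partition $\scrL$, with $\omega_\scrL^0\in[c_0,1/c_0]$ and $\omega_\scrL^1\in[c_1,1/c_1]$ a.e. Using \eqref{eq:VarBDw1}--\eqref{eq:VarBDw2}, the objective of \eqref{eq:minprS2g} equals $\int_Q|u_\eta-u|^2\dd x+\inf_v\big(\int_Q(\omega_\scrL^0)^{\mathrm{sc}^-}\dd|Du-v|+\int_Q(\omega_\scrL^1)^{\mathrm{sc}^-}\dd|\Ecal v|\big)$. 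Since $(\omega_\scrL^0)^{\mathrm{sc}^-}\geq c_0>0$, a minimizing pair $(u_k,v_k)$ with $v_k$ near-optimal is bounded in $L^2(Q)$ by the fidelity and in $BV(Q)\times BD(Q)$ by the weighted $TGV$--$BV$ coercivity of Section~\ref{sect:wTGV}; extracting subsequences, $u_k\to u_\scrL$ strongly in $L^1$ and weakly in $L^2$, and $v_k\weaklystar v$ in $BD(Q)$. Weak-$L^2$ lower semicontinuity of the fidelity together with weak-$*$ lower semicontinuity of $\scrV_{\omega_\scrL^0}$ and $\scrV_{\omega_\scrL^1}$ (again Section~\ref{sect:wTGV}) shows $u_\scrL$ solves \eqref{eq:minprS2g}; strict convexity of the fidelity gives uniqueness. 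In particular $u_\scrL\in BV(Q)$, so the Level-1 objective $\int_Q|u_c-u_\scrL|^2\dd x$ is finite for every $\scrL\in\scrP$.

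\emph{Level~1.} It remains to attain $\inf\{\int_Q|u_c-u_\scrL|^2\dd x:\scrL\in\scrP\}$. As $\scrP$ is countable, it suffices to establish a \emph{stopping criterion} in the sense of Definition~\ref{def:stop}: a side-length threshold $2^{-N}$ below which refinement does not change the infimum, so that the latter reduces to a minimum over the finitely many partitions of depth $\leq N$, each with a well-defined $u_\scrL$ by Level~2. This is the $TGV$ counterpart of Theorem~\ref{thm:equiv}(a), obtained by the same scaling mechanism used there for $TV$: on a dyadic square of side $\delta$, rescaling to the unit square shows that $TGV_{\alpha_0,\alpha_1}(\cdot,L)$ scales like $\delta$ (both the $|Du-v|$ and $|\Ecal v|$ terms) while $\int_L|u_\eta-u|^2\dd x$ scales like $\delta^2$, so for $\delta$ small and $\alpha$ in the box the fidelity is negligible against the regularizer; combined with the scale-invariant planar Poincar\'e--Sobolev inequality $\|w-[w]_L\|_{L^2(L)}\leq C\,|Dw|(L)$ for $w\in\{u_\eta,u_c\}\subset BV(Q)$ (whose gradient measures, being non-atomic in dimension two, satisfy $\sup_{|L|=\delta^2}(|Du_\eta|+|Du_c|)(L)\to0$ as $\delta\to0$), one checks that subdividing any square below the critical scale cannot strictly decrease the Level-1 objective. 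Hence any minimizing partition can be coarsened to one of depth $\leq N$ without increasing the cost, and existence follows, in the spirit of the proof of Theorem~\ref{thm:TVomega}.

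The main obstacle is this last step. In the $TV$ case the denoising problem behaves well under local refinement, but for $TGV$ the Level-2 problem is globally coupled and carries the auxiliary field $v\in BD$, whose symmetrized gradient interacts with the jumps of $\omega_\scrL^1$ along the dyadic interfaces created by a subdivision. One must therefore control $v$ near a subdivided square and rule out that the extra degrees of freedom in $\omega_\scrL$ gained at a fine scale produce a genuine decrease of the Level-1 cost; making this comparison between split and unsplit partitions quantitative and uniform in the remainder of the partition is the technical heart of the proof, and is exactly where the structural results on weighted $BD$ and weighted $TGV$ from Section~\ref{sect:wTGV} are used in full.
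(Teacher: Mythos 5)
Your Levels~2 and~3 are essentially the paper's argument (Subsection~\ref{sect:TGVL3} and Proposition~\ref{prop:L2tgv}), but Level~1 --- which you yourself identify as ``the technical heart of the proof'' --- is not actually carried out, and this is a genuine gap. Your proposed route is to compare the Level-1 \emph{costs} of a split and an unsplit partition via a scaling heuristic (``the fidelity is negligible against the regularizer''), and you then correctly observe that making this comparison rigorous would require controlling the auxiliary field $v\in BD$ near the new dyadic interfaces created by $\omega_\scrL^1$; you leave that unresolved. Two concrete problems: first, the scaling claim itself is off --- under $x\mapsto \delta x$ the two terms of $TGV$ scale differently ($|Du-v|(L)$ like $\delta$, $|\Ecal v|(L)$ like $\delta^0$ for the natural rescaling of $v$), which is exactly why the paper's threshold \eqref{eq:l2boundforconst} carries the factor $c_1/(C_Q^{BD}|L|^{1/2})$; second, and more importantly, no amount of ``fidelity is negligible'' by itself tells you what happens to the Level-1 objective when you refine, because the weight gains new degrees of freedom.

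The missing idea is Proposition~\ref{prop:loc-affine} (supported by the rigidity estimate of Lemma~\ref{lem:rotangle}, which is needed to absorb the kernel of $\Ecal$ appearing in the Korn--Sobolev inequality \eqref{eq:sobineqbd}): once $\|u_\eta\|_{L^2(L)}$ falls below the threshold \eqref{eq:l2boundforconst}, the Level-3 minimizer $u_{\alpha,L}$ equals the affine projection $\langle u_\eta\rangle_L$ for \emph{every} $\alpha$ in the box, so the infimum convention in \eqref{eq:box-min-TGV} forces $\bar\alpha_L=(c_0,c_1)$ on every sufficiently small square. Consequently, merging four sub-threshold children into their parent leaves the weights $\omega_\scrL^0,\omega_\scrL^1$ \emph{literally unchanged}, hence the Level-2 minimizer and the Level-1 cost are identical --- no comparison of energies, no coarsening of minimizing sequences, and no control of $v$ across interfaces is needed, because no new interfaces in the weight are ever created below the critical scale. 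This is exactly the mechanism of Theorem~\ref{thm:solL1} in the $TV$ case, transplanted to $TGV$ in Theorem~\ref{thm:finiteTGV}, and it is what reduces $\scrP$ to a finite family. Without this (or an equivalent statement that the optimal parameters stabilize below a fixed scale), your Level-1 step does not close.
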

Analogously,  we infer the ensuing theorem for the $TGV$ with weighted fidelity.
\begin{theorem}[Existence of solutions to \((\scrL\!\scrS)_{{TGV-Fid}_\omega}\)]\label{thm:FIDomegaTGV}
For every $c\in (0,1)$, there exists an optimal solution
\(u^*\) to the learning scheme \((\scrL\!\scrS)_{{TGV-Fid}_\omega}\)
in \eqref{lsTGVfidomega} with the minimization in \eqref{eq:lambdatgv-Lg} 
restricted by the box constraint $\lambda\in \left[c,\frac1c\right]$. 
\end{theorem}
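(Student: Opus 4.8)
The plan is to follow the same bottom-up scheme used for Theorems~\ref{thm:TVomega} and \ref{thm:TGVomega}: first show each of the three levels of $(\scrL\!\scrS)_{{TGV-Fid}_\omega}$ in \eqref{lsTGVfidomega} is well posed for the given box-constrained minimization, and then reduce Level~1 to a finite minimization by exhibiting a stopping criterion for the refinement of admissible partitions. Throughout, $\alpha_0,\alpha_1>0$ are fixed, so $TGV_{\alpha_0,\alpha_1}(\cdot,\Omega)$ is the classical second-order total generalized variation, which on any bounded Lipschitz domain $\Omega$ is equivalent, modulo addition of the $L^1(\Omega)$-norm, to the $BV(\Omega)$-norm, is lower semicontinuous with respect to weak-$*$ $BV$ (equivalently $L^1$) convergence, and vanishes exactly on affine functions. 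These facts, together with the strict convexity and weak-$L^2$ lower semicontinuity of the fidelity, are the engine of the direct method below. The box constraint $\lambda\in\left[c,\tfrac1c\right]$ enters twice: it turns the $\arginf$ in \eqref{eq:lambdatgv-Lg} into a minimization over a compact interval, and — as announced in the text preceding the theorem — it keeps the fidelity weight $\omega_\scrL=\sum_{L\in\scrL}\lambda_L\chi_L$ bounded above and bounded below away from $0$, which is precisely what makes Level~2 coercive.

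For \textbf{Level~3}: fixing $L\in\scrL$ and $\lambda\in\left[c,\tfrac1c\right]$, the functional $u\mapsto\lambda\int_L|u_\eta-u|^2\dd x+TGV_{\alpha_0,\alpha_1}(u,L)$ is proper, strictly convex and coercive on $BV(L)$ — comparing with the constant competitor $[u_\eta]_L$ bounds the energy of a minimizing sequence by $\tfrac1c\|u_\eta-[u_\eta]_L\|_{L^2(L)}^2$, hence bounds it in $L^2(L)$ and in $TGV$, hence in $BV(L)$ — so the direct method yields a unique minimizer $u_{\lambda,L}$. Next I would prove that $\lambda\mapsto u_{\lambda,L}$ is continuous on $\left[c,\tfrac1c\right]$: if $\lambda_n\to\lambda$ in this interval the preceding bounds are uniform in $n$, a weak-$*$ $BV$ limit of $\{u_{\lambda_n,L}\}$ minimizes the limit functional (the fidelities $\lambda_n\int_L|u_\eta-\cdot\,|^2$ $\Gamma$-converge to $\lambda\int_L|u_\eta-\cdot\,|^2$), so by uniqueness it equals $u_{\lambda,L}$ and the whole sequence converges, in particular in $L^2(L)$. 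Hence $\lambda\mapsto\int_L|u_c-u_{\lambda,L}|^2\dd x$ is continuous on the compact set $\left[c,\tfrac1c\right]$, its $\arginf$ is a nonempty closed subset of $\left[c,\tfrac1c\right]$, and $\lambda_L:=\inf(\arginf(\cdots))\in\left[c,\tfrac1c\right]$ is well defined (indeed realizes the infimum). For \textbf{Level~2}: since $c\le\omega_\scrL\le\tfrac1c$ we have $BV_{\omega_\scrL}(Q)=BV(Q)$ and $c\,\|u_\eta-u\|_{L^2(Q)}^2\le\int_Q\omega_\scrL|u_\eta-u|^2\dd x$, so along a minimizing sequence the weighted fidelity controls the $L^2(Q)$-distance to $u_\eta$ and $TGV_{\alpha_0,\alpha_1}$ controls the rest, giving a $BV(Q)$-bound; lower semicontinuity (of the weighted fidelity by convexity and the inherited weak-$L^2$ convergence, of $TGV_{\alpha_0,\alpha_1}$ classically) plus strict convexity produces a unique $u_\scrL\in BV(Q)$. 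Since $Q\subset\R^2$, $BV(Q)\hookrightarrow L^2(Q)$, so the Level~1 objective $\int_Q|u_c-u_\scrL|^2\dd x$ is finite for every $\scrL\in\scrP$.

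It remains to handle \textbf{Level~1}, namely to show $\scrL\mapsto\int_Q|u_c-u_\scrL|^2\dd x$ attains its infimum over the infinite family $\scrP$, and this is the step I expect to be the main obstacle. As for $(\scrL\!\scrS)_{{TV\!}_\omega}$ in \eqref{lsTVomega}, the plan is to produce a stopping criterion in the sense of Definition~\ref{def:stop} by adapting the argument of Theorem~\ref{thm:equiv}(a) to the present scheme. The mechanism I would exploit is the mismatch between the scaling of the fidelity (like $\delta^2$ on a dyadic square of side $\delta$) and that of the first-order part of $TGV_{\alpha_0,\alpha_1}$ (like $\delta$): once $\delta$ drops below a threshold $\delta_0$ depending only on $u_\eta$, $\alpha_0$, $\alpha_1$ and on the uniform upper bound $\tfrac1c$ for $\lambda$, the fidelity in the Level~3 problem is negligible compared with the regularizer uniformly in $\lambda\in\left[c,\tfrac1c\right]$, so $u_{\lambda,L}$ is driven towards the affine $L^2(L)$-projection of $u_\eta$ (which is $\lambda$-independent) and the local selection pins $\lambda_L$ to a boundary value of the box for all sub-threshold squares. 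Consequently $\omega_\scrL$ is unchanged when the sub-threshold squares of any partition are coarsened, $u_\scrL$ is unchanged as well, and the infimum in Level~1 is attained within the finite family $\bar\scrP\subset\scrP$ of partitions of depth at most some $k_0$; the minimum of $\scrL\mapsto\int_Q|u_c-u_\scrL|^2\dd x$ over the finite set $\bar\scrP$ is then realized at some $\scrL^*$, and $u^*:=u_{\scrL^*}$ is the desired optimal solution. Making the sub-threshold degeneracy of Level~3 quantitatively precise — in particular ruling out that a vanishing but nonzero fidelity could still select, for some arbitrarily small square, a genuinely different non-affine $u_{\lambda,L}$ and thereby an interior $\lambda_L$ — is the delicate point, and is exactly where the scaling estimates for $TGV_{\alpha_0,\alpha_1}$ must be carried out carefully.
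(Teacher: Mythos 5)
Your overall architecture (well-posedness of Levels 3 and 2 by the direct method, then reduction of Level 1 to a finite family of partitions via a stopping criterion) is the right one, and your Levels 3 and 2 are essentially sound. The paper itself gets there more economically: it observes that for a dyadic square $L$ the weighted-fidelity problem with parameter $\lambda$ coincides with the weighted-regularizer problem with parameters $\big(\tfrac{\alpha_0}{\lambda},\tfrac{\alpha_1}{\lambda}\big)$, so the box $\lambda\in\left[c,\tfrac1c\right]$ becomes a box in $(\alpha_0,\alpha_1)$ restricted to a ray, and Theorem~\ref{thm:FIDomegaTGV} follows wholesale from the analysis already carried out for \((\scrL\!\scrS)_{{TGV\!}_\omega}\) (Corollary~\ref{thm:onalpha-TGV1} for Level~3, Proposition~\ref{prop:L2tgv} for Level~2, and Theorem~\ref{thm:finiteTGV} for Level~1). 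Your direct treatment of each level is a legitimate alternative, but it forgoes this shortcut and therefore has to supply the hard estimate itself.

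That hard estimate is where your proposal has a genuine gap. The entire difficulty of Level~1 is concentrated in the claim that on sub-threshold squares the minimizer $u_{\lambda,L}$ is affine (equal to $\langle u_\eta\rangle_L$) for \emph{every} $\lambda$ in the box, so that $\lambda_L$ is pinned to the boundary and coarsening leaves $\omega_\scrL$ and $u_\scrL$ unchanged. You correctly flag this as ``the delicate point,'' but the scaling heuristic you offer (fidelity $\sim\delta^2$ versus first-order $TGV$ $\sim\delta$) does not by itself rule out a non-affine minimizer with very small $TGV$ selected by a small but nonzero fidelity. The paper closes this with Proposition~\ref{prop:loc-affine}: the subgradient optimality condition and one-homogeneity of $TGV$ give the upper bound \eqref{eq:tgvupperbound}, while a chain of a Korn--Poincar\'e inequality in $BD$ \eqref{eq:sobineqbd}, the compactness Lemma~\ref{lem:rotangle} comparing $|Du|(L)$ with $|Du-R_M|(L)$ uniformly over skew-symmetric $M$, and the scale-invariant two-dimensional Sobolev embedding \eqref{eq:sobineqbv} give the matching lower bound \eqref{eq:tgvlowerbound3}; a non-affine minimizer then forces $\|u_\eta\|_{L^2(L)}$ to exceed a threshold that is bounded away from zero uniformly in $|L|$, contradicting $\|u_\eta\|_{L^2(L)}\to0$ as $|L|\to0$. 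Without this proposition (or the rescaling reduction to it), your Level-1 argument does not go through, so the proof as written is incomplete precisely at its decisive step.
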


Also in the case of weighted-$TGV$ learning schemes, we provide a connection between stopping criteria and existence of a box constraint. To be precise, we show that if \eqref{eq:bc-TGV} is imposed, then a stopping criterion can be naturally imposed on the schemes. Concerning the converse implication, we show that if a suitable stopping criterion is enforced, then  $(\alpha_L)_0$ and $(\alpha_L)_1$ are both always bounded from below by a positive constant, and that they cannot simultaneously blow up to infinity.
The weaker nature of this latter implication is due to  one main reason: the upper bound established on the optimal parameters for the weighted $TV$ scheme is hinged upon a suitable Poincar\'e inequality for the total variation functional, cf. Proposition \ref{prop:Ja257}; in the $TGV$ case, the analogous argument only provides a bound from above for the minimum between $(\alpha_L)_0$ and $(\alpha_L)_1$, and thus does not allow to conclude the existence of a uniform upper bound on either component, cf. Proposition \ref{prop:loc-affine}.
We refer to Subsection \ref{sub-l1tgv} for a discussion of this issue and for the details of this argument.  For completeness, we mention that a result related to Proposition~\ref{prop:loc-affine} has been proven in \cite[Proposition 6]{PaVa15}. In Proposition~\ref{prop:loc-affine}, we make this study quantitative and keep track of the dependence on the cell size through the Poincar\'e constant. 

The results we present suggest a number of possible directions and questions for future research. One possible avenue is the formulation of similar schemes with piecewise constant weights in the case of Mumford--Shah regularizations, relating to the Ambrosio--Tortorelli scheme of \cite{FoLi17} which explicitly allows for discontinuous weights. Another is an investigation of the relation and apparent discrepancy between our results concluding stopping of the refinement of partitions, in which parameter variations at very fine scales are not advantageous, and numerical results in the literature where wildly varying parameter maps appear in the optimization, such as in \cite{KoEtAl23}.

The paper is organized as follows: in Section \ref{sect:glo}, we collect some notation which will be employed throughout the paper. The focus of Sections \ref{sect:wTV} and \ref{sect:regTV} is on our weighted-$TV$ scheme, as well as on the two variants thereof, including a regularization of the weight and a weighted fidelity, respectively.
Section \ref{sect:wTGV} is devoted to the study of our weighted-$TGV$ learning scheme and of the corresponding $TGV$ scheme with weighted fidelity. Section \ref{sect:numerics} contains some numerical results for the various learning schemes presented in the paper and a comparison of their performances.

\section{Glossary}\label{sect:glo}
Here we collect some notation that will be used throughout the paper, and introduce some energy functionals that will be studied.

We start by addressing our admissible partitions of the unit cube \(Q=(0,1)^2\) into dyadic squares. For \(\kappa\in\NN_0\), let
\begin{equation*}
\begin{aligned}
Z_\kappa:=\left\{2^{-\kappa}z \in [0,1)^2\colon z\in \ZZ^2 \right\}.
\end{aligned}
\end{equation*}
For instance, \(Z_0=\{(0,0)\}\) and \(Z_1=\{(0,0),(0,\frac12) , (\frac12,0),(\frac12,\frac12)\}.\) Note that \(Z_k\) has cardinality \(2^k\times 2^k\), which allow us to write \(Z_\kappa = \cup_{\iota=1}^{4^\kappa}z_\iota^{(\kappa)}\), where  \(z_\iota^{(\kappa)}=2^{-\kappa}z_\iota\) for a convenient \( z_\iota\in \ZZ^2\). Then, for each  \(\kappa\in\NN_0\) and \(\iota\in\{1,...,4^\kappa\}\), we consider the dyadic square
\begin{equation*}
\begin{aligned}
Q_\iota^\kappa:=\left(z_\iota^{(\kappa)}+\left(0,\frac{1}{2^\kappa}\right]^2 \right) \cap Q.
\end{aligned}
\end{equation*}
For each \(\kappa\in\NN_0\) fixed, we have that  \(Q_{\iota_1}^\kappa \cap Q_{\iota_2}^\kappa=\emptyset\) for every \(\iota_1, \iota_2 \in\{1,...,4^\kappa\}
\) with  \(\iota_1\not=\iota_2\); moreover, \(Q=\cup_{\iota=1}^{4^\kappa} Q_\iota^\kappa\). In particular, \(\mathscr{L}:=\{Q_\iota^\kappa\colon \iota \in\{1,...,4^\kappa\}\}\) provides an example of an admissible partition of \(Q\). More generally, recalling that we denote by \(\mathscr{P}\)  the class of all 
admissible partitions \(\scrL\) of \(Q\) consisting of dyadic squares as above, then if  \(\scrL\in\mathscr{P}\) and \(L\in \scrL\) are arbitrary,  there exist  \(\kappa\in \NN_0\) and   \(\iota\in\{1,...,4^\kappa\}\) such that \(L=Q_\iota^\kappa\).

The setting of our work is a two-dimensional one, mainly due to  the scale invariance
of the constant in the two-dimensional Poincar\'e--Wirtinger
inequality
in \(BV\), as discussed in the proof of Proposition~3.1. This invariance   is crucial to prove existence of solutions for our schemes (see, for instance, Theorem~\ref{thm:solL1}). However, there are
some theoretical results concerning the weighted-\(BV\) and  weighted-\(TGV\) spaces that hold
in any dimension \(n\in\mathbb{N}\), for which reason we state such results
in \(\mathbb{R}^n\).

In what follows,  \(\Omega\subset \RR^n\) is an open and bounded  set and \(\XX\) stands for either \(\RR\), \(\RR^{n},\) or \(\RR^{n\times n}_{sym}\), where the latter is the space of  all \(n\times n\) symmetric matrices and \(n\in\NN\). We denote by \(\Mcal(\Omega;\XX)\) the space of all finite Radon measures in \(\Omega\) with values on \(\XX\), and by \(|\mu|\in\Mcal(\Omega;\RR^+_0) \) the total variation of \(\mu \in\Mcal(\Omega;\XX) \), which is defined for each measurable set \(B\subset \Omega\) by
\begin{equation*}
\begin{aligned}
|\mu|(B):= \sup\bigg\{\sum_{i=1}^\infty |\mu(B_i)|: \,\, \{B_i\}_{i\in\NN} \text{ is a partition of } B\bigg\}.
\end{aligned}
\end{equation*}
Using the Riesz
representation theorem, \(\Mcal(\Omega;\XX)\)  can be identified with the dual of \(C_0(\Omega;\XX')\),  the closure with respect to the supremum norm of the  set of all continuous functions on \(\Omega\) with compact support. In particular,  the total variation of a Radon measure  \(\mu\in \Mcal(\Omega;\XX)\) is alternatively given by
\begin{equation}\label{eq:var}
\begin{aligned}
|\mu|(B) = \sup \bigg \{ \int_B \varphi (x)\cdot\! \dd\mu(x):\ \varphi\in C_0(B;\XX'), \,\, \Vert \varphi\Vert_{L^\infty(B;\XX')} \leq 1 \bigg\}, \enspace B\subset \Omega \text{ measurable,}
\end{aligned}
\end{equation}
where \(\cdot\) represents the duality product between an element of \(\XX'\) and an element of \(\XX\). With the trivial identification of column vectors with row vectors, we will often write \(\XX\) in place of \(\XX'\).

In the case in which \(\mu = Du\in \Mcal(\Omega;\RR^n) \) for some \(u\in BV(\Omega)\), a density argument shows that \eqref{eq:var} is equivalent to
\begin{equation}\label{eq:varBV}
\begin{aligned}
|Du|(B) =\sup \bigg \{ \int_B u(x)\,  \Div \varphi (x) \dd x:\ \varphi\in \Lip_c(B;\RR^n),
\,\, \Vert \varphi\Vert_{L^\infty(B;\RR^n)}  \leq 1 \bigg\},
\end{aligned}
\end{equation}
and we often write \(TV(u,B)\) in place of \(|Du|(B)\).
In the preceding expression, and throughout this manuscript, \(\Lip_c(B;\XX)\) represents the space of all \(\XX\)-valued Lipschitz functions with compact support in \(B\).

Similarly, in the case in which \(\mu = \Ecal v\in \Mcal(\Omega;\RR^{n\times n}_{sym}) \) for some \(v\in BD(\Omega)\) and \(\Ecal\) the symmetrical part of the distributional derivative,
then  \eqref{eq:var}  is equivalent to
\begin{equation}\label{eq:varBD}
\begin{aligned}
|\Ecal v|(B) =\sup \bigg \{ \int_B v(x) \cdot \Div \varphi (x) \dd x:\ \varphi\in
\Lip_c(B;\RR^{n\times n}_{sym}),
\,\, \Vert \varphi\Vert_{L^\infty(B;\RR^{n\times n}_{sym})}  \leq 1 \bigg\},
\end{aligned}
\end{equation}  
where $({\mathrm{div}}\, \varphi)_j = \sum_{k=1}^n \frac{\partial \varphi_{jk}}{\partial
x_k}$  for each $j\in\{1,...,n\}$.

At the core of the present manuscript are weighted versions of the spaces of bounded variation and of bounded deformation. These weighted versions rely on a generalization of \eqref{eq:varBV} and \eqref{eq:varBD} that cannot be derived directly from the Riesz
representation theorem, and thus need a careful analysis to prove the variational identities stated in \eqref{eq:VarBVw} and \eqref{eq:VarBDw1}--\eqref{eq:VarBDw2},      addressed in Sections~\ref{sect:wTV} and \ref{sect:wTGV}, respectively.

 Given a Radon measure \(\mu \in\Mcal(\Omega;\XX) \) and a locally integrable function \(\omega:\Omega\to[0,\infty)\), we define the \(\omega\)-weighted variation of \(\mu\) on \(\Omega\), written \(\scrV_\omega (\mu,\Omega)\), by
\begin{equation}
\label{eq:VarWeight}
\begin{aligned}
\scrV_\omega (\mu,\Omega):=\sup \bigg \{ \int_{\Omega} \varphi (x)\cdot\! \dd\mu(x):\ \varphi\in \Lip_c(\Omega;\XX'),
\,\, | \varphi| \leq \omega \bigg\}.
\end{aligned}
\end{equation}
As before, if  \(\mu = Du\in \Mcal(\Omega;\RR^n) \) for some \(u\in BV(\Omega)\),
then
  \eqref{eq:VarWeight}  is equivalent to
\begin{equation*}
\begin{aligned}
\scrV_\omega (Du,\Omega) =\sup \bigg \{ \int_{\Omega} u(x)\,  \Div \varphi (x) \dd x:\ \varphi\in
\Lip_c(\Omega;\RR^n),
\,\, | \varphi| \leq \omega \bigg\},
\end{aligned}
\end{equation*}
which we often represent by \(TV_\omega(u,\Omega)\), and we define
\begin{equation*}
\begin{aligned}
BV_{\omega}(\Omega):=\left\{ u\colon
\Omega\to\R \text{ measurable:} \int_\Omega |u(x)|\, \omega(x)\dd x<\infty \,\text{ and } TV_{\omega}(u,\Omega)<\infty\right\}.
\end{aligned}
\end{equation*}
Also,  if  \(\mu  = Du-v:= Du-v\Lcal^n\lfloor\Omega \in \Mcal(\Omega;\RR^n) \) for some \(u\in BV(\Omega)\) and \(v\in L^1(\Omega;\RR^n)\),
then \eqref{eq:VarWeight}  is equivalent to
\begin{equation*}
\begin{aligned}
\scrV_\omega (Du-v,\Omega) =\sup \bigg \{ \int_{\Omega} \big(u(x)\,  \Div \varphi (x) +v(x) \cdot \varphi(x)\big) \dd x:\
\varphi\in
\Lip_c(\Omega;\RR^n),
\,\, | \varphi| \leq \omega \bigg\}.
\end{aligned}
\end{equation*}
Moreover, if  \(\mu = \Ecal v\in \Mcal(\Omega;\RR^{n\times n}_{sym})
\) for some \(v\in BD(\Omega)\), then
\eqref{eq:VarWeight}  is equivalent to%
\begin{equation*}
\begin{aligned}
\scrV_\omega (\Ecal v,\Omega) =\sup \bigg \{ \int_{\Omega} v(x) \cdot \Div \varphi (x) \dd x:\ \varphi\in
\Lip_c(\Omega;\RR^{n\times n}_{sym}),
\,\, | \varphi| \leq \omega \bigg\},
\end{aligned}
\end{equation*}
and we define
\begin{equation*}
\begin{aligned}
BD_{\omega}(\Omega):=\left\{ v\colon
\Omega\to\R \text{ measurable:} \int_\Omega |v(x)|\, \omega(x)\dd x<\infty
\,\text{ and } \scrV_\omega (\Ecal v,\Omega)<\infty\right\}.
\end{aligned}
\end{equation*}

 The energy functional associated with the analogue to the ROF's model, where we use a weighted-TV regularizer on \(\Omega\subset \RR^2\) instead of the total variation (TV), is denoted by (see Theorem \ref{thm:S2.1})
\begin{equation*}
\begin{aligned}
E[u]:= \int_\Omega|u_\eta-u|^2\dd x +TV_{\omega}(u,\Omega).
\end{aligned}
\end{equation*}
To highlight the dependence on a partition \(\scrL\) of \(Q\) made of dyadic cubes, the extension of the preceding functional (for a weight \(\omega_\scrL\) and \(\Omega=Q\)) to \(L^1(Q)\)
is represented by

\begin{equation*}
\begin{aligned}
&E_\scrL[u]:=\begin{cases}
\displaystyle\int_Q|u_\eta-u|^2\dd x +TV_{\omega_\mathscr{L}}(u,Q)
&\text{if } u\in
BV_{\omega_{\mathscr{L}}}(Q),\\
+\infty &\text{otherwise}.
\end{cases}
\end{aligned}
\end{equation*}
Moreover, for the \(\epsi\)-dependent regularized weight \(\omega^\epsi_\scrL\), introduced in \eqref{eq:regweight}, the  energy above is written as \begin{equation*}
\begin{aligned}
&E_\scrL^\epsi[ u]:=\begin{cases}
\displaystyle\int_Q|u_\eta-u|^2\dd x +TV_{\omega^\epsi_{\mathscr{L}}}(u,Q)
&\text{if
} u\in
BV_{\omega^\epsi_{\mathscr{L}}}(Q),\\
+\infty &\text{otherwise}.
\end{cases}
\end{aligned}
\end{equation*}
The two preceding functionals are introduced  in Proposition~\ref{thm:Gconvergence}, where we address the relationship between the weighted-TV and the regularized weighted-TV learning schemes in \eqref{lsTVomega} and \eqref{lsTVomegae}, respectively.

For a fixed image domain \(\Omega \subset \RR^2\), the optimal tuning parameter \(\alpha\) in Level~3 of any of the $TV$ learning schemes addressed here is found by minimizing  the cost function \(I:(0,\infty)\to\RR\) defined by
\begin{equation}\label{eq:glosI}
\begin{aligned}
&I(\alpha):=\int_\Omega
|u_c-u_\alpha|^2\dd x \text{ for } \alpha\in(0,+\infty),
\end{aligned}
\end{equation}
where \(u_c\) is the clean image and \(u_\alpha\) is the reconstructed image obtained as the minimizer of the denoising model in aforementioned Level~3. In our analysis, we make use of the extension \(\widehat I:[0,+\infty]\to[0,+\infty]\) of \(I\)  to the closed interval \([0,+\infty] \)  defined for \(\bar\alpha\in [0,+\infty]\) by\begin{equation}\label{eq:lscenvI}
\begin{aligned}
&\widehat I(\bar \alpha):= \inf\Big\{\liminf_{j\to\infty} I(\alpha_j)\!:\,
(\alpha_j)_{j\in\NN}\subset (0,+\infty), \, \alpha_j \to \bar\alpha
\text{ in } [0,+\infty]\Big\},
\end{aligned}
\end{equation}
which can be seen as the   lower-semicontinuous
envelope of \(I\)
on the closed interval \([0,+\infty]\). As it turns out, \(\widehat I\) is actually a continuous function on \(
[0,+\infty]\) (cf.~Corollary~\ref{cor:Icont}).
The study of existence of minimizers for \(I\) and the characterization of \(\widehat I\) for the  weighted-TV  learning scheme in \eqref{lsTVomega} is addressed in  Theorem~\ref{thm:onalpha}, Lemma~\ref{lem:ConvMinTV}, and Corollary~\ref{cor:Icont}. This study relies on the convergence of minimizers of the family, parametrized by \(\alpha\in(0,\infty)\), of energy functionals associated with ROF's  model,
\begin{equation*}
\begin{aligned}
&F_\alpha [u]:=\begin{cases}\displaystyle
\int_{\Omega}|u_\eta-u|^2\dd x+\alpha TV(u,\Omega) &\text{if
} u\in BV(\Omega),\\
+\infty &\text{otherwise.}
\end{cases}\quad 
\end{aligned}
\end{equation*}
In turn, this convergence analysis naturally involves the extreme points \(\bar \alpha = 0\) and   \(\bar\alpha=+\infty\), which are associated with the energies
\begin{equation*}
\begin{aligned}
F_0 [u]:=\begin{cases}\displaystyle
\int_{\Omega}|u_\eta-u|^2\dd x &\text{if
} u\in L^2(\Omega),\\
+\infty &\text{otherwise,}
\end{cases}
\quad \text{and}\quad
F_{\infty} [u]:=\begin{cases}\displaystyle
\int_{\Omega}|u_\eta-c|^2\dd x    &\text{if
} u\equiv c\in\RR, \text{ }\\
+\infty &\text{otherwise,}
\end{cases}
\end{aligned}
\end{equation*}
respectively (we remark that, since the local parameters in each dyadic square are constant, this analysis also applies for the weighted-TV  learning scheme in \eqref{lsTVomega}). 

Regarding the \(TGV\) case, 
 to obtain the existence of optimal parameters for Level~3 of the schemes \eqref{lsTGVomega} and \eqref{lsTGVfidomega}, stated in  Theorem~\ref{thm:onalpha-TGV},  we are led to study $\Gamma$-convergence of the family of functionals, parametrized by $\alpha=(\alpha_0,\alpha_1)\in(0,+\infty)^2$, defined as
\begin{equation*}
\begin{aligned}
&G_\alpha [u]:=\begin{cases}
\int_{\Omega}|u_\eta-u|^2\dd x+ TGV_{\alpha_0,\alpha_1}(u,\Omega) &\text{if
} u\in BV(\Omega),\\
+\infty &\text{otherwise.}
\end{cases}\quad 
\end{aligned}
\end{equation*}
In this case, the \(\Gamma\)-convergence result is more involved because it includes different combinations of $\bar{\alpha}_i = 0$, $\bar{\alpha}_i \in \R^+$, or $\bar{\alpha}_i = +\infty$ for $i=0$ and $i=1$.  The expressions for the ensuing limits can be found in the statement of Lemma \ref{lem:gammaTGV}. 

The characterization of the extension to the closed interval \([0,+\infty]^2\) of the $TGV$ analog of \eqref{eq:glosI}, denoted by $J(\alpha)$ for $\alpha=(\alpha_0, \alpha_1)$, is contained in Lemma \ref{lem:Ilsc-TGV}.

In the sequel, we use both the average of a function $u:\Omega \to \RR$ on a subdomain $L \subset \Omega$,
\begin{equation*}
\begin{aligned}
\lbrack u\rbrack_L:= \frac{1}{|L|}\int_L  u(x)\dd x,
\end{aligned}
\end{equation*}
and its projection onto affine functions $\langle u\rangle_L$, which is the unique solution to the minimum problem
$$\min\left\{\int_L |u-v|^2\dd x:\,v\text{ is affine in }L\right\},$$
where in both cases the subscript may be omitted when $L = \Omega$.

\section{Analysis of the Weighted-TV learning scheme  \texorpdfstring{\((\scrL\!\scrS)_{{TV\!}_\omega}\)}{TV-w}}
\label{sect:wTV}

Here, we prove existence of solutions to the weighted-TV learning scheme,  
\((\scrL\!\scrS)_{{TV\!}_\omega}\), introduced in \eqref{lsTVomega}. We analyze each level in the three subsequent subsections. In
particular, we prove Theorem~\ref{thm:TVomega} in Subsection~\ref{sect:L1}. Then,
in Subsection~\ref{sect:box}, we prove Theorem~\ref{thm:equiv} and we provide different examples of stopping criteria for
the refinement of the admissible partitions introduced in Definition~\ref{def:stop}. 
\subsection{On Level 3}\label{sect:TVL3}
In this section, we discuss the main features of Level 3, and variants thereof,  of the learning scheme 
\((\scrL\!\scrS)_{{TV\!}_\omega}\)  in \eqref{lsTVomega}.

As we mentioned in Remark~\ref{rmk:onTVw}, the parameter \(\alpha_L\) in \eqref{eq:alpha-L} is uniquely determined by definition, with \(\alpha_L\in [0,+\infty]\). Then, in view of Theorem~\ref{thm:onalpha} (see Subsection~\ref{sect:box}), if \(L\in\scrL\) is such that
\begin{equation}
\begin{aligned}\label{eq:dataonL}
TV(u_c,L) < TV(u_\eta,L) \quad \text{ and } \quad \Vert u_\eta - u_c\Vert^2_{L^2(L)}
<\Vert[  u_\eta]_L - u_c\Vert^2_{L^2(L)},
\end{aligned}
\end{equation}
then
\begin{equation*}
\begin{aligned}
\arginf\left\{\int_L |u_c-u_{\alpha,L}|^2\,\dd x\!:\,\alpha\in
\RR^+\right\} = \argmin \left\{\int_L |u_c-u_{\alpha,L}|^2\,\dd x\!:\,\alpha\in
\big[c_L,C_Q\Vert u_\eta\Vert_{L^2(L)}\big]\right\},
\end{aligned}
\end{equation*}
where \(c_L\) and \(C_Q\) are positive constants, with \(c_Q\)  depending only on \(Q\). In 
particular, we have that \(\alpha_L\in
\big[c_L,C_Q\Vert u_\eta\Vert_{L^2(L)}\big].\)
Furthermore, because each partition
$\mathscr{L\in \scrP}$ is finite, it follows that if \eqref{eq:dataonL} holds for all \(L\in \scrL\), then 
\begin{equation*}
\begin{aligned}
\displaystyle\alpha_L\in K_{\mathscr{L}}:=\Big[\min_{L\in\scrL} c_L, C_Q\max_{L\in\scrL}\Vert u_\eta\Vert_{L^2(L)}\Big] \subset (0,+\infty)
\end{aligned}
\end{equation*}
 for every $L\in \mathscr{L}$, which yields a natural box constraint for a fixed partition. Note, however, that the box constraint given by the compact
set $K_{\mathscr{L}}$  may vary according to the choice of
the partition $\mathscr{L}$.

Finally, if we consider Level~3 with \eqref{eq:alpha-L} 
replaced by \eqref{eq:box-min}, then the minimum
\begin{equation*}
\begin{aligned}
\min_{\alpha\in   [c_0,\frac{1}{c_0}]} \int_L |u_c-u_{\alpha,L}|^2\,\dd x
\end{aligned}
\end{equation*}
exists  as the minimum of a lower semicontinuous function (see Corollary~\ref{cor:Icont} in Subsection~\ref{sect:box}) on a compact set. In particular, \(\bar \alpha_L\)  is uniquely determined, with
\begin{equation*}
\begin{aligned}
\bar \alpha_L\in \Big[c_0,\frac{1}{c_0}\Big] \text{ for all \(L\in\scrL\) and \(\scrL\in\scrP\).} 
\end{aligned}
\end{equation*}

\subsection{On Level 2}
\label{sect:L2}

Here, we discuss existence and uniqueness of solutions to the minimization problem in \eqref{eq:minprS2a}. A key step in this discussion is the study of the space  \(BV_{\omega}(\Omega)\) of  \(\omega\)-weighted $BV$-functions in an open set \(\Omega\subset \RR^n\), where the  weight \(\omega:\Omega\to [0,\infty)\)  is assumed to be a  locally integrable function. We adopt the approach introduced  in \cite{Ba01}, and further analyzed in  \cite{Ca10,Ca08}.

Given a \(\omega\)-weighted locally integrable function in \(\Omega\),  \(u\in L^1_{\omega,\loc}(\Omega)\), where
\begin{equation}\label{eq:L1wloc}
\begin{aligned}
L^1_{\omega,\loc}(\Omega):= \bigg\{ v:
\Omega\to\R \text{ measurable:} \int_K |v(x)|\, \omega(x)\dd x<\infty  \text{ for all compact \(K\subset\Omega\)}\bigg\},
\end{aligned}
\end{equation}
we define its  \(\omega\)-weighted  
total variation in \(\Omega,\) \(TV_{\omega}(u,\Omega)\),  by  
\begin{equation}
\begin{aligned}\label{eq:TVwei}
TV_{\omega}(u,\Omega):= \sup\bigg\{ \int_\Omega u\,\diverg \ffi\dd x\!: \, \ffi
\in \Lip_c(\Omega;\RR^2), \, |\ffi| \leq \omega\bigg\}
\end{aligned}
\end{equation}
(see also Section~\ref{sect:glo}). Accordingly, we define the space  \(BV_{\omega}(\Omega)\) of  \(\omega\)-weighted
$BV$-functions in \(\Omega\)  by
\begin{equation*}
\begin{aligned}
BV_{\omega}(\Omega):=\big\{ u\in
L^1_\omega(\Omega)\!:\, TV_{\omega}(u,\Omega)<\infty\big\},
\end{aligned}
\end{equation*}
endowed with the semi-norm
\begin{equation}\label{eq:normBV}
\begin{aligned}
\Vert u\Vert_{BV_{\omega}(\Omega)}:=\Vert u\Vert_{L^1_{\omega}(\Omega)} +TV_{\omega}(u,\Omega), \quad \text{where }  \Vert u\Vert_{L^1_{\omega}(\Omega)} := \int_\Omega |u(x)|\, \omega(x)\dd x.
\end{aligned}
\end{equation}

Clearly, if \(\omega\equiv1\), then we recover the usual space \(BV\) of functions of bounded variation. Moreover,
if \(\omega>0\) (Lebesgue)-a.e.~in \(\Omega\)  and \(\omega \)
belongs to the global Muckenhoupt class \(A_1\), meaning that
there is \(c>0\) such that for (Lebesgue)-a.e.~ \(x\in\Omega\)
and for every ball \(B(x,r)\subset\Omega\), we have  %
\begin{equation}
\label{eq:Muck}
\begin{aligned}
\omega(x)\geq c\lbrack \omega\rbrack_{B(x,r)},
\end{aligned}
\end{equation}
then expression in \eqref{eq:normBV}  defines a norm in \(BV_{\omega}(\Omega)\).
Next, we collect some properties of   \(BV_{\omega}(\Omega)\), proved in \cite{Ba01,Ca10,Ca08}, that will be used in our analysis.

\begin{theorem}\label{thm:ppBVw}
Let \(\Omega\subset \RR^n\) be an open set and let  
\(\omega:\Omega\to [0,\infty)\)   be a  locally integrable function. Then, the following hold:
\begin{itemize}
\item[(i)]The map \(u\mapsto TV_{\omega}(u,\Omega)\) is lower-semicontinuous with respect to the (strong) convergence in \(L^1_{\omega,\loc}(\Omega)\).

\item[(ii)] Given  \(u\in
L^1_{\omega,\loc}(\Omega)\), we have that \(TV_{\omega}(u,\Omega)=TV_{\omega^{sc^-}}(u,\Omega)\), where  \(\omega^{sc^-}\) denotes the lower-semicontinuous envelope
of $\omega$.

\item[(iii)] Assume that \(\omega\) is lower-semicontinuous
and strictly positive everywhere in \(\Omega\). Then, we have that \(u\in
L^1_{\loc}(\Omega)\) and \(TV_{\omega}(u,\Omega)<\infty\) if and only if \(u\in BV_\loc(\Omega)\) and \(\omega\in L^1(\Omega;\vert Du\vert)\). If any of these two equivalent conditions hold, then we have
\begin{equation*}
\begin{aligned}
TV_{\omega}(u,B)= \int_B \omega(x)\dd|Du|(x) 
\end{aligned}
\end{equation*}
for every Borel set \(B\subset \Omega\).
\end{itemize}
\end{theorem}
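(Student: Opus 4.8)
The three assertions are essentially those of \cite{Ba01} (see also \cite{Ca08,Ca10}); the plan is to reproduce their arguments in the form adapted to our notation, with the bulk of the work in (iii). Parts (i) and (ii) are quick. For (ii), the key observation is that every $\varphi\in\Lip_c(\Omega;\RR^n)$ is continuous, so $x\mapsto|\varphi(x)|$ is lower semicontinuous; hence if $|\varphi|\le\omega$, then by maximality of the lower-semicontinuous envelope we automatically have $|\varphi|\le\omega^{sc^-}$. Thus the families of admissible test fields in the suprema \eqref{eq:TVwei} defining $TV_{\omega}(u,\Omega)$ and $TV_{\omega^{sc^-}}(u,\Omega)$ coincide (the inclusion ``$\supseteq$'' being trivial since $\omega^{sc^-}\le\omega$), and the two quantities are equal. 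For (i), since each functional $u\mapsto\int_\Omega u\,\diverg\varphi\,\dd x$ with $\varphi$ admissible is continuous with respect to $L^1_{\omega,\loc}(\Omega)$-convergence (see \cite{Ba01}), the map $TV_\omega(\cdot,\Omega)$ is a pointwise supremum of continuous functionals and therefore lower semicontinuous.

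\textbf{Setting up (iii).} Assume now $\omega$ lower semicontinuous and strictly positive on $\Omega$. On any compact $K\subset\Omega$, lower semicontinuity forces $c_K:=\min_K\omega>0$; testing \eqref{eq:TVwei} on $\Omega'\Subset\Omega$ with fields $c_K\psi$, where $\psi\in\Lip_c(\Omega';\RR^n)$, $\|\psi\|_\infty\le1$ and $K=\overline{\Omega'}$, yields $|Du|(\Omega')\le c_K^{-1}TV_\omega(u,\Omega)$. Hence $TV_\omega(u,\Omega)<\infty$ implies $u\in BV_\loc(\Omega)$, which is one of the two equivalent conditions. Conversely, once $u\in BV_\loc(\Omega)$, the integration-by-parts identity $\int_\Omega u\,\diverg\varphi\,\dd x=-\int_\Omega\varphi\cdot\dd Du$ holds for every $\varphi\in\Lip_c(\Omega;\RR^n)$ (approximate $\varphi$ by $C^1_c$ fields), so that
\[
TV_\omega(u,\Omega)=\sup\Big\{\int_\Omega\varphi\cdot\dd Du\ :\ \varphi\in\Lip_c(\Omega;\RR^n),\ |\varphi|\le\omega\Big\},
\]
and the bound $\int_\Omega\varphi\cdot\dd Du\le\int_\Omega|\varphi|\,\dd|Du|\le\int_\Omega\omega\,\dd|Du|$ gives at once the easy inequality $TV_\omega(u,B)\le\int_B\omega\,\dd|Du|$ for every Borel $B\subset\Omega$. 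This already settles the implication ``$u\in BV_\loc(\Omega)$ and $\int_\Omega\omega\,\dd|Du|<\infty$'' $\Rightarrow$ ``$u\in L^1_\loc(\Omega)$ and $TV_\omega(u,\Omega)<\infty$''.

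\textbf{The reverse inequality in (iii).} It remains to prove $\int_B\omega\,\dd|Du|\le TV_\omega(u,B)$. Since $\omega$ is lower semicontinuous and nonnegative, I would use its Moreau--Yosida approximations $\omega_k(x):=\inf_{y\in\Omega}\big(\omega(y)+k|x-y|\big)$, which are $k$-Lipschitz, nonnegative, and increase pointwise to $\omega$. For fixed $k$, the $\RR^n$-valued measure $\omega_k\,Du$ is Radon with total variation $\omega_k|Du|$, so by the Riesz representation of the total variation of a measure and the fact that each $\psi\omega_k$ is a continuous compactly supported field with $|\psi\omega_k|\le\omega_k$,
\[
\int_B\omega_k\,\dd|Du|=\sup\Big\{\int_B\psi\cdot\dd(\omega_k\,Du):\psi\in C_c(B;\RR^n),\ \|\psi\|_\infty\le1\Big\}\le\sup\Big\{\int_B\varphi\cdot\dd Du:\varphi\in C_c(B;\RR^n),\ |\varphi|\le\omega_k\Big\}.
\]
A mollification argument---replacing a mollified $\varphi$ by $\varphi_m\cdot\omega_k/(\omega_k+\varepsilon_m)$, a Lipschitz factor that restores the constraint $|\varphi_m|\le\omega_k\le\omega$ while keeping $\varphi_m\to\varphi$ uniformly (here it is essential that $\omega_k$ is Lipschitz)---shows the last supremum does not exceed $TV_\omega(u,B)$. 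Hence $\int_B\omega_k\,\dd|Du|\le TV_\omega(u,B)$, and letting $k\to\infty$ with monotone convergence gives $\int_B\omega\,\dd|Du|\le TV_\omega(u,B)$. Combining the two inequalities yields $TV_\omega(u,B)=\int_B\omega\,\dd|Du|$ first for open $B$ and then, since both sides define Borel measures agreeing on open sets, for all Borel $B$; in particular $TV_\omega(u,\Omega)<\infty$ forces $\omega\in L^1(\Omega;|Du|)$, which closes the equivalence and delivers the stated formula.

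\textbf{Main obstacle.} The delicate point is precisely this reverse inequality in (iii): one must realize $\int_B\omega\,\dd|Du|$ as a supremum over \emph{Lipschitz, compactly supported} test fields obeying the pointwise bound $|\varphi|\le\omega$, which requires approximating simultaneously the merely measurable direction $\dd Du/\dd|Du|$ and the merely lower-semicontinuous weight $\omega$ without ever violating the constraint or losing compact support. The reduction to the Lipschitz weights $\omega_k$ together with the monotone passage to the limit is what makes this tractable, and it is where we lean on \cite{Ba01,Ca08,Ca10}. Parts (i) and (ii), by contrast, are essentially immediate from the definition of $TV_\omega$ and of the lower-semicontinuous envelope.
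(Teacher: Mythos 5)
Your parts (ii) and (iii) are essentially correct, and it is worth noting that the paper itself proves Theorem~\ref{thm:ppBVw} only by citation to \cite{Ba01,Ca08,Ca10}, so your reconstruction is where the actual mathematics lives. The argument for (ii) (a continuous $\varphi$ with $|\varphi|\leq\omega$ automatically satisfies $|\varphi|\leq\omega^{sc^-}$ by maximality of the lower-semicontinuous envelope) is exactly the one in \cite[Prop.~2.1.1]{Ca08}, and your treatment of (iii) — the easy inequality via integration by parts, and the reverse inequality via the $k$-Lipschitz Moreau--Yosida approximations $\omega_k\nearrow\omega$ used to build admissible test fields, followed by monotone convergence — is the same device the paper deploys explicitly for the $BD$ analogues in Section~\ref{sect:wTGV} (see the proof of Theorem~\ref{lem:tv-cont}(iii), where $\omega_k\psi$ is used as a test field and one passes to the limit in $k$). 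A small simplification there: you do not need the extra mollification with the factor $\omega_k/(\omega_k+\varepsilon_m)$; for $\psi\in\Lip_c(B;\RR^n)$ with $|\psi|\leq 1$ the field $\omega_k\psi$ is already Lipschitz, compactly supported, and satisfies $|\omega_k\psi|\leq\omega_k\leq\omega$, which gives $\int_B\omega_k\dd|Du|\leq TV_\omega(u,B)$ directly.

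There is, however, a genuine gap in your one-line proof of (i). You claim that each functional $u\mapsto\int_\Omega u\,\diverg\varphi\dd x$ with $|\varphi|\leq\omega$ is continuous with respect to $L^1_{\omega,\loc}(\Omega)$-convergence; this is false when $\omega$ is allowed to degenerate, which the theorem permits ($\omega$ is only assumed nonnegative and locally integrable, without the Muckenhoupt $A_1$ condition of \cite{Ba01} under which your claim would hold). Concretely, take $n=1$, $\Omega=(-1,1)$, $\omega(x)=|x|$, $u_j=j^{3/2}\chi_{(0,1/j)}$ and $u=0$: then $\int_K|u_j-u|\,\omega\dd x\leq \tfrac12 j^{-1/2}\to 0$ for every compact $K$, so $u_j\to 0$ in $L^1_{\omega,\loc}(\Omega)$, yet for $\varphi(x)=x\eta(x)$ with $\eta$ a cutoff equal to $1$ on $[-\tfrac12,\tfrac12]$ one has $|\varphi|\leq\omega$ and $\int_\Omega u_j\varphi'\dd x=j^{1/2}\to+\infty\neq 0$. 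Since a linear functional is lower semicontinuous if and only if it is continuous, you also cannot rescue the argument by downgrading "continuous" to "lower semicontinuous": the "pointwise supremum of continuous functionals" structure proves lower semicontinuity of $TV_\omega(\cdot,\Omega)$ with respect to $L^1_{\loc}$-convergence (where $\diverg\varphi\in L^\infty$ with compact support does the job), but not with respect to the $L^1_{\omega,\loc}$-convergence asserted in the statement; bridging the two requires an additional argument (this is precisely what \cite[Rem.~1.3.2]{Ca08}, cited by the paper, is for). In the paper's actual applications the weight $\omega_\scrL$ is bounded above and below by positive constants, so the two notions of convergence coincide and your argument suffices there, but as a proof of the theorem as stated, (i) is incomplete.
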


\begin{proof}
The proof of $(i)$--$(iii)$ may be found in \cite{Ba01} under the additional assumption that \(\omega\) satisfies  a Muckenhoupt $A_1$ condition in \eqref{eq:Muck} (see \cite{Ba01} for the details). Without assuming this extra assumption on \(\omega\), the proof of $(i)$ may be found in \cite[Proposition~1.3.1 and Remark~1.3.2]{Ca08}; the proof of  $(ii)$ follows from  \cite[Proposition~2.1.1 and Theorem~2.1.2]{Ca08}; finally, $(iii)$ is shown in \cite[Theorem~2.1.5]{Ca08}. 
\end{proof}

The existence and uniqueness of  solutions of Level~2 of the learning scheme \((\scrL\!\scrS)_{{TV\!}_\omega}\)
in \eqref{lsTVomega} with  \eqref{eq:alpha-L} 
replaced by \eqref{eq:box-min} are hinged on the following theorem.
\begin{theorem}\label{thm:S2.1}
Let \(v\in L^2(\Omega)\) and let   
\(\omega:\Omega\to (0,\infty)\) be an \(L^\infty\) function with
\(0<\essinf_\Omega \omega \leq \esssup_\Omega \omega<\infty\).
 Then, there exists a unique \(\bar u \in 
BV_{\omega}(\Omega)\) satisfying
\begin{equation*}
\begin{aligned}
\int_\Omega|v-\bar u|^2\dd x +TV_{\omega}(\bar u,\Omega)=\min_{u\in
BV_{\omega}(\Omega)}\bigg\{\int_\Omega|v-u|^2\dd x +TV_{\omega}(u,\Omega) \bigg\}.
\end{aligned}
\end{equation*}
Moreover, denoting by  \(\omega^{sc^-}\)  the lower-semicontinuous envelope
of $\omega$, we have     \(\bar u\in BV_{\omega}(\Omega)\cap BV(\Omega) \cap 
BV_{\omega^{sc^-}}(\Omega) \) and 
\begin{equation*}
\begin{aligned}
TV_{\omega}(\bar u,\Omega)= \int_\Omega \omega^{sc^-}(x)\dd|D\bar u|(x). \end{aligned}
\end{equation*}
\end{theorem}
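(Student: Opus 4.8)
The plan is to establish existence and uniqueness by the direct method in the calculus of variations, and then to identify $TV_\omega$ with the weighted integral against $|D\bar u|$ using the structural results of Theorem~\ref{thm:ppBVw}. First I would observe that under the hypothesis $0 < \essinf_\Omega \omega \le \esssup_\Omega \omega < \infty$, writing $m := \essinf_\Omega \omega$ and $M := \esssup_\Omega \omega$, the pointwise constraint $|\varphi| \le \omega$ in the definition \eqref{eq:TVwei} sandwiches $TV_\omega$ between multiples of the ordinary total variation: for any $u \in L^1_{\loc}(\Omega)$,
\begin{equation*}
m\, TV(u,\Omega) \le TV_\omega(u,\Omega) \le M\, TV(u,\Omega),
\end{equation*}
the lower bound by testing with $\varphi$ of the form $m\psi$ with $\|\psi\|_\infty \le 1$, the upper bound since every admissible $\varphi$ satisfies $\|\varphi\|_\infty \le M$. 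In particular $BV_\omega(\Omega) = BV(\Omega)$ as sets (for $\Omega$ bounded the weighted $L^1$ norm is also comparable to the usual one), with equivalent norms.

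Next I would run the direct method. The functional $u \mapsto \int_\Omega |v-u|^2\,dx + TV_\omega(u,\Omega)$ is proper (finite at $u \equiv [v]_\Omega$ say, or at any $BV$ competitor) and bounded below by $0$. Take a minimizing sequence $(u_j)$. The fidelity term controls $\|u_j\|_{L^2(\Omega)}$, hence $\|u_j\|_{L^1(\Omega)}$ since $\Omega$ is bounded; together with the uniform bound on $TV_\omega(u_j,\Omega) \ge m\, TV(u_j,\Omega)$ this gives a uniform $BV(\Omega)$ bound. By compactness of $BV(\Omega) \embed\embed L^1(\Omega)$ a subsequence converges strongly in $L^1(\Omega)$, hence also in $L^1_{\omega,\loc}(\Omega)$ since $\omega \in L^\infty$, to some $\bar u \in BV(\Omega)$. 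Lower semicontinuity of $\int_\Omega |v-u|^2\,dx$ (weak $L^2$ lower semicontinuity, using that the $L^1$-convergent subsequence is $L^2$-bounded and hence weakly $L^2$-convergent to $\bar u$) combined with Theorem~\ref{thm:ppBVw}(i) (lower semicontinuity of $TV_\omega(\cdot,\Omega)$ along $L^1_{\omega,\loc}$ convergence) shows that $\bar u$ is a minimizer. Uniqueness is immediate from strict convexity: the fidelity term is strictly convex in $u$ and $TV_\omega(\cdot,\Omega)$ is convex (it is a supremum of linear functionals).

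For the final assertions, the membership $\bar u \in BV_\omega(\Omega) \cap BV(\Omega)$ is already contained in the equality of spaces above, and $\bar u \in BV_{\omega^{sc^-}}(\Omega)$ follows from Theorem~\ref{thm:ppBVw}(ii), which gives $TV_{\omega}(\bar u,\Omega) = TV_{\omega^{sc^-}}(\bar u,\Omega) < \infty$ (and $\omega^{sc^-} \le \omega$ keeps the weighted $L^1$ norm finite). Finally, since $\omega^{sc^-}$ is lower semicontinuous and, being $\ge m > 0$, strictly positive everywhere on $\Omega$, Theorem~\ref{thm:ppBVw}(iii) applies to $\omega^{sc^-}$ and yields
\begin{equation*}
TV_\omega(\bar u,\Omega) = TV_{\omega^{sc^-}}(\bar u,\Omega) = \int_\Omega \omega^{sc^-}(x)\,d|D\bar u|(x),
\end{equation*}
as claimed. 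The only mildly delicate point — and the one I would be most careful about — is the passage to the limit in the fidelity term: one must justify that the $L^1$-convergent minimizing subsequence converges weakly in $L^2$, which uses the a priori $L^2$ bound from the energy, so that $\int_\Omega|v-\cdot|^2$ is sequentially weakly lower semicontinuous along it; everything else is a direct application of the cited structural theorem.
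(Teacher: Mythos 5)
Your proposal is correct and follows essentially the same route as the paper: the direct method with a minimizing sequence, compactness in $BV(\Omega)$ obtained from the lower bound on $\omega$, lower semicontinuity of $TV_\omega$ from Theorem~\ref{thm:ppBVw}(i), strict convexity of the fidelity term for uniqueness, and items (ii)--(iii) of Theorem~\ref{thm:ppBVw} for the representation $TV_\omega(\bar u,\Omega)=\int_\Omega \omega^{sc^-}\dd|D\bar u|$. The only cosmetic difference is that you obtain the coercivity bound $TV_\omega \ge m\,TV$ directly from the definition by rescaling test functions, whereas the paper routes it through the integral representation against $\omega^{sc^-}$; both arguments are sound.
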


\begin{proof}
For \(u\in
BV_{\omega}(\Omega)\), set
\begin{equation*}
\begin{aligned}
E[u]:= \int_\Omega|v-u|^2\dd x +TV_{\omega}(u,\Omega),
\end{aligned}
\end{equation*}
and let 
\begin{equation*}
\begin{aligned}
\mathscr{m}:= \inf_{u\in
BV_{\omega}(\Omega)} E[u].
\end{aligned}
\end{equation*}
Note that \(0\leq \mathscr{m} \leq E[0] = \Vert v\Vert_{L^2(\Omega)}^2\), and consider \((u_n)_{n\in\NN} \subset BV_{\omega}(\Omega)\) such that
\begin{equation}\label{eq:minseqL2}
\begin{aligned}
\mathscr{m} = \lim_{n\to\infty} E[u_n].
\end{aligned}
\end{equation}

By hypothesis, there exist \(c_1\), \(c_2\in\RR^+\)  such that for  a.e.~\(x\in\Omega\), we
have
\begin{equation}\label{eq:bddomega}
\begin{aligned}
c_1 \leq \omega(x) \leq  c_2.
\end{aligned} 
\end{equation}
Consequently, for all \(x\in\Omega\),
\begin{equation}\label{eq:bddomegasc-}
\begin{aligned}
c_1 \leq \omega^{sc^-}(x) \leq  c_2.
\end{aligned} 
\end{equation}
Then, in view of \eqref{eq:minseqL2} and Theorem~\ref{thm:ppBVw}~$(ii)$--$(iii)$, for all \(n\in\NN\) sufficiently large, we have 
\begin{equation*}
\begin{aligned}
\mathscr{m} + 1 &\geq\int_\Omega|v-u_n|^2\dd x +TV_{\omega}(u_n,\Omega) = \int_\Omega|v-u_n|^2\dd x +TV_{\omega^{sc^-}}(u_n,\Omega)\\
 &= \int_\Omega|v-u_n|^2\dd x + \int_\Omega \omega^{sc^-}(x)\dd|D u_n|(x) \geq \int_\Omega|v-u_n|^2\dd x + c_1|Du_n|(\Omega).
\end{aligned}
\end{equation*}

Thus, extracting a subsequence if necessary (not relabeled), there exists \(\bar u\in BV(\Omega)\) such that
\begin{equation*}
\begin{aligned}
u_n \weaklystar \bar u \text{ in } BV(\Omega), \quad u_n \weakly \bar u \text{ in } L^2(\Omega), \quad  u_n \to \bar u \text{ in } L^1(\Omega).
\end{aligned}
\end{equation*}
Moreover, by \eqref{eq:bddomega}--\eqref{eq:bddomegasc-} and Theorem~\ref{thm:ppBVw}, we have also     \(\bar u\in BV(\Omega) \cap
BV_{\omega^{sc^-}}(\Omega) \), with
\begin{equation*}
\begin{aligned}
TV_{\omega}(\bar u,\Omega)= \int_\Omega \omega^{sc^-}(x)\dd|D\bar u|(x),
\end{aligned}
\end{equation*}
and    
\begin{equation*}
\begin{aligned}
\mathscr{m}&\leq E[\bar u] = \int_\Omega|v-\bar u|^2\dd x  +TV_{\omega}(\bar u,\Omega)
\\&\leq \liminf_{n\to\infty} \bigg( 
\int_\Omega|v-u_n|^2\dd x +TV_{\omega^{sc^-}}(u_n,\Omega) \bigg) 
 =\lim_{n\to\infty} E[u_n]=\mathscr{m}.
\end{aligned}
\end{equation*}
Because \(|\cdot|^2\) is strictly convex, \(\bar u\) is the unique minimizer of \(E[\cdot]\) over \(
BV_{\omega}(\Omega)\). 
\end{proof}

\begin{corollary}\label{cor:L2}
There exists a unique solution
\(u_\scrL\in BV_{\omega_\scrL}(\Omega)\cap
BV(\Omega) \cap 
BV_{\omega^{sc^-}_\scrL}(\Omega)\) to Level~2 of the learning scheme \((\scrL\!\scrS)_{{TV\!}_\omega}\)
in \eqref{lsTVomega} with  \eqref{eq:alpha-L} 
replaced by \eqref{eq:box-min}, where \(\omega_\mathscr{L}^{sc^-}\) denotes the lower-semicontinuous
envelope
of $\omega_\mathscr{L}$.
Moreover,
\begin{equation*}
\begin{aligned}
\min\bigg\{\int_Q|u_\eta-u|^2\dd x +TV_{\omega_\mathscr{L}}(u,Q)\!:\,u\in
BV_{\omega_{\mathscr{L}}}(Q)\bigg\}=\int_Q|u_\eta-u_\scrL|^2\dd x + \int_\Omega \omega^{sc^-}_\scrL(x)\dd|Du_\scrL|(x).
\end{aligned}
\end{equation*}
\end{corollary}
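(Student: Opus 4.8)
The plan is to deduce Corollary~\ref{cor:L2} directly from Theorem~\ref{thm:S2.1}, applied with $\Omega=Q$, $v=u_\eta$, and $\omega=\omega_\scrL$; essentially all that must be done is to check that $\omega_\scrL$ meets the hypotheses of that theorem.

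First I would note that $v=u_\eta\in L^2(Q)$: since the training pair satisfies $(u_\eta,u_c)\in BV(Q)\times BV(Q)$ and $Q\subset\R^2$, the continuous Sobolev embedding $BV(Q)\hookrightarrow L^{n/(n-1)}(Q)=L^2(Q)$ (valid for $n=2$) gives $u_\eta\in L^2(Q)$, as required in Theorem~\ref{thm:S2.1}.

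Next I would verify that $\omega_\scrL$ is an $L^\infty$ weight bounded away from zero. By construction $\omega_\scrL=\sum_{L\in\scrL}\bar\alpha_L\chi_L$, and, under the box constraint \eqref{eq:box-min}, each $\bar\alpha_L$ lies in $[c_0,\tfrac{1}{c_0}]$, as recorded at the end of Subsection~\ref{sect:TVL3}. Since $\scrL\in\scrP$ is a finite partition of $Q$ into half-open dyadic squares, $\omega_\scrL$ is a simple function taking only finitely many values, all in $[c_0,\tfrac{1}{c_0}]$. Hence $0<c_0\le\essinf_Q\omega_\scrL\le\esssup_Q\omega_\scrL\le\tfrac{1}{c_0}<\infty$, so $\omega_\scrL\colon Q\to(0,\infty)$ is an $L^\infty$ function satisfying the assumption of Theorem~\ref{thm:S2.1}.

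With these two checks in place, Theorem~\ref{thm:S2.1} provides a unique minimizer $\bar u$ of $u\mapsto\int_Q|u_\eta-u|^2\dd x+TV_{\omega_\scrL}(u,Q)$ over $BV_{\omega_\scrL}(Q)$, with $\bar u\in BV_{\omega_\scrL}(Q)\cap BV(Q)\cap BV_{\omega_\scrL^{sc^-}}(Q)$ and $TV_{\omega_\scrL}(\bar u,Q)=\int_Q\omega_\scrL^{sc^-}(x)\dd|D\bar u|(x)$. Setting $u_\scrL:=\bar u$ — which is precisely the object defined by the $\argmin$ in Level~2, equation~\eqref{eq:minprS2a}, when $\omega_\scrL$ is built from \eqref{eq:box-min} — and substituting this expression for $TV_{\omega_\scrL}(u_\scrL,Q)$ into the value of the minimum yields the stated energy identity (with $\Omega=Q$). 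I do not expect any genuine obstacle: the entire content is that finiteness of $\scrL$ together with the box constraint turns $\omega_\scrL$ into an admissible weight for Theorem~\ref{thm:S2.1}, after which the corollary is a bookkeeping specialization of that theorem.
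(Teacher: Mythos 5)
Your proposal is correct and coincides with the paper's own proof: both verify that under the box constraint \eqref{eq:box-min} the weight $\omega_\scrL$ satisfies $c_0\leq\omega_\scrL\leq 1/c_0$ (via the analysis of Subsection~\ref{sect:TVL3}) and then apply Theorem~\ref{thm:S2.1} with $\Omega=Q$ and $v=u_\eta$. Your extra remark that $u_\eta\in L^2(Q)$ follows from the two-dimensional embedding $BV(Q)\hookrightarrow L^2(Q)$ is a detail the paper leaves implicit but is entirely consistent with its argument.
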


\begin{proof}
Using the analysis in Subsection~\ref{sect:TVL3}, the function \(\omega_\scrL\)
in \eqref{eq:weight} satisfies the bounds \(c_0 \leq\omega_\scrL\leq
\tfrac1{c_0} \)  in \(Q\), which, together with Theorem~\ref{thm:S2.1},
concludes the proof. \end{proof}

\begin{remark}\label{rmk:L2bc}Recalling once again the analysis in Subsection~\ref{sect:TVL3},
the previous corollary still holds if we assume that  \eqref{eq:dataonL}
holds for all \(L\in \scrL\)  instead of replacing   \eqref{eq:alpha-L} 
 by \eqref{eq:box-min}.\end{remark}

\subsection{On Level 1}\label{sect:L1}

Here, we prove that Level~1 of the learning
scheme \((\scrL\!\scrS)_{{TV\!}_\omega}\)
admits a solution provided we consider a stopping criterion as in Definition~\ref{def:stop}. 
We start by checking that the box constraint \eqref{eq:boxc} yields
such a stopping criterion, after which we establish the converse
statement. We then explore alternative stopping criteria. 
 
To prove that the box constraint \eqref{eq:boxc} yields
 a stopping criterion for the refinement of the admissible partitions, we first recall the existence of a smallness condition
on the tuning parameter under which the restored image given by the TV model is constant.  

\begin{proposition}\label{prop:Ja257}
There exists a positive constant, \(C_Q\), depending only on \(Q\),
such that for any dyadic cube \(L\subset Q\) and for all \(\alpha\geq
C_Q \Vert u_\eta\Vert_{L^2(L)}\), the solution \(u_{\alpha,L}\) of
\eqref{eq:ROF} is constant, with \(u_{\alpha,L} \equiv [ u_\eta]_L\).
\end{proposition}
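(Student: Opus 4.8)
The plan is to characterize the minimizer of \eqref{eq:ROF} through its optimality conditions and show that the constant $[u_\eta]_L$ satisfies them once $\alpha$ is large enough. First I would recall that $u_{\alpha,L}$ is the unique minimizer of the strictly convex functional $F_\alpha[u] = \int_L |u_\eta-u|^2\dd x + \alpha TV(u,L)$ over $BV(L)$, so it suffices to verify that $u \equiv [u_\eta]_L$ satisfies the Euler--Lagrange inclusion: by convexity, $u_{\alpha,L} \equiv [u_\eta]_L$ if and only if $2([u_\eta]_L - u_\eta) \in \alpha\,\partial TV([u_\eta]_L, L)$ as elements of (a suitable subspace of) $L^2(L)$. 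Since a constant function has $TV \equiv 0$, its subdifferential at a constant is the set of all $w \in L^2(L)$ with $\int_L w\,v\dd x \leq TV(v,L)$ for every $v \in BV(L)$; equivalently, by duality, $w = \Div\varphi$ weakly for some $\varphi \in L^\infty(L;\RR^2)$ with $\|\varphi\|_\infty \leq 1$ and $\varphi\cdot\nu = 0$ on $\partial L$ in the appropriate trace sense. So the task reduces to: given $g := 2([u_\eta]_L - u_\eta) \in L^2(L)$ with $\int_L g\dd x = 0$, find $\varphi \in L^\infty(L;\RR^2)$ with $\Div\varphi = g/\alpha$, zero normal trace, and $\|\varphi\|_\infty \leq 1$, which is possible as soon as $\alpha$ dominates some norm of $g$.

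The key step is to solve the divergence equation with an $L^\infty$ (in fact better) bound on the solution. On the square $L$ (say $L = a + (0,\ell)^2$ with $\ell = 2^{-\kappa}$), I would take $\varphi = \nabla\psi$ where $\psi$ solves the Neumann problem $-\Delta\psi = -g/\alpha$ in $L$, $\partial_\nu\psi = 0$ on $\partial L$ (solvable since $\int_L g = 0$); then $\Div\varphi = \Delta\psi = g/\alpha$ and the normal trace of $\nabla\psi$ vanishes by construction. It remains to bound $\|\nabla\psi\|_{L^\infty(L)}$. By elliptic regularity, $\|\nabla\psi\|_{L^\infty(L)} \le C\|\psi\|_{W^{2,p}(L)} \le C_p \|g/\alpha\|_{L^p(L)}$ for $p > 2$ (using that $\nabla\psi$ has zero mean and Sobolev embedding $W^{1,p} \embed L^\infty$ in two dimensions for $p>2$), or alternatively one can work with the $H^2$ estimate on the square plus the two-dimensional embedding $H^2 \embed L^\infty$ (which holds, with a logarithmic gap avoided by using $W^{2,p}$, $p$ slightly above $2$). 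The crucial point is scale-invariance: on the unit square, $\|\nabla\psi\|_{L^\infty} \le C\|g/\alpha\|_{L^2}$ by the $H^2$-elliptic estimate for the Neumann Laplacian together with a careful embedding argument (or simply $W^{2,p}$, $p$ slightly larger than $2$, and then relating $\|g\|_{L^p}$ to $\|g\|_{L^2}$ is not scale-free, so one should stay with $L^2$ if possible); then rescaling $L$ to the unit square shows the constant can be taken independent of $\ell$ because of how the $L^2$ norm, the gradient, and the Laplacian scale. Finally, $\|g\|_{L^2(L)} = 2\|u_\eta - [u_\eta]_L\|_{L^2(L)} \le 2\|u_\eta\|_{L^2(L)}$, so choosing $C_Q$ to be twice the constant from the elliptic estimate gives: for $\alpha \ge C_Q\|u_\eta\|_{L^2(L)}$ we get $\|\varphi\|_{L^\infty(L)} \le 1$, hence $g/\alpha \in \partial TV([u_\eta]_L,L)$, hence $u_{\alpha,L} \equiv [u_\eta]_L$.

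The main obstacle I anticipate is making the $L^\infty$ bound on $\nabla\psi$ genuinely scale-invariant in dimension two: the naive Sobolev embedding $H^2 \embed C^0$ fails by a logarithm in $2$D, so one must either use $W^{2,p}$ for some fixed $p>2$ (and then check the scaling of the $L^p \to L^2$ passage, which is clean on a square since $L$ has comparable volume to $\ell^2$ and the inequality $\|g\|_{L^p(L)} \le |L|^{1/p - 1/2}\|g\|_{L^2(L)}$ carries the right power — actually this reintroduces an $\ell$-dependence, so care is needed), or, more cleanly, invoke the known sharp result that on a square the solution of the Neumann problem satisfies $\|\nabla\psi\|_{L^\infty} \le C\|g\|_{L^2}$ with $C$ absolute (this is essentially the statement that the Green's operator for the Neumann Laplacian maps $L^2$ into $W^{1,\infty}$ on a square, which can be proven via explicit Fourier series on the square or cited). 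Since the paper explicitly emphasizes in Section~\ref{sect:glo} that the two-dimensional setting is chosen precisely for the scale-invariance of the Poincaré--Wirtinger constant in $BV$, I would expect the authors' argument to route through exactly such a two-dimensional Poincaré-type inequality rather than through elliptic regularity; in that case the statement $u_{\alpha,L}\equiv[u_\eta]_L$ follows by a direct energy comparison: for any competitor $u$, $F_\alpha[u] - F_\alpha[[u_\eta]_L] \ge \alpha TV(u,L) - \|u - [u_\eta]_L\|_{L^2(L)}^2 - 2\|u_\eta - [u_\eta]_L\|_{L^2(L)}\|u - [u_\eta]_L\|_{L^2(L)}$, and one bounds $\|u - [u_\eta]_L\|_{L^2(L)}$ — here is where one needs the scale-invariant Sobolev/Poincaré inequality $\|u - [u]_L\|_{L^2(L)} \le C_Q TV(u,L)$ valid in $2$D — to make the right-hand side nonnegative when $\alpha \ge C_Q\|u_\eta\|_{L^2(L)}$. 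This energy-comparison route is the one I would actually pursue, as it avoids normal-trace subtleties entirely.
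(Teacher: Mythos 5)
Your final, committed route (energy comparison via a scale-invariant Poincar\'e--Sobolev inequality) is the right idea and is essentially what underlies the paper's one-line proof, which simply invokes \cite[Proposition~2.5.7]{Ja12} together with the scale invariance of the two-dimensional Poincar\'e--Wirtinger constant in $BV$. But as written your comparison does not close, for two reasons. First, expanding the square gives
\begin{equation*}
F_\alpha[u]-F_\alpha\big[[u_\eta]_L\big]=\Vert u-[u_\eta]_L\Vert_{L^2(L)}^2-2\int_L(u_\eta-[u_\eta]_L)(u-[u_\eta]_L)\dd x+\alpha\, TV(u,L),
\end{equation*}
so the quadratic term enters with a \emph{plus} sign (you wrote a minus) and can simply be kept or discarded; with your sign it would have to be absorbed, which is impossible. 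Second, and more importantly, the quantity you propose to bound, $\Vert u-[u_\eta]_L\Vert_{L^2(L)}$, is \emph{not} controlled by $C_Q\,TV(u,L)$: the Poincar\'e inequality controls $\Vert u-[u]_L\Vert_{L^2(L)}$, and the discrepancy $|L|^{1/2}\,\absb{[u]_L-[u_\eta]_L}$ is completely independent of $TV(u,L)$ (take $u$ a large constant). The repair is to observe that $u_\eta-[u_\eta]_L$ has zero mean, so in the cross term $u-[u_\eta]_L$ may be replaced by $u-[u]_L$; then Cauchy--Schwarz, $\Vert u_\eta-[u_\eta]_L\Vert_{L^2(L)}\leq\Vert u_\eta\Vert_{L^2(L)}$, and the scale-invariant inequality $\Vert u-[u]_L\Vert_{L^2(L)}\leq C_Q^{BV}\,TV(u,L)$ of \eqref{eq:sobineqbv} give $F_\alpha[u]-F_\alpha\big[[u_\eta]_L\big]\geq \Vert u-[u_\eta]_L\Vert_{L^2(L)}^2+\big(\alpha-2C_Q^{BV}\Vert u_\eta\Vert_{L^2(L)}\big)TV(u,L)$, which is nonnegative for $\alpha\geq 2C_Q^{BV}\Vert u_\eta\Vert_{L^2(L)}$ and vanishes only for $u\equiv[u_\eta]_L$.

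The first two thirds of your proposal should be dropped altogether, not merely deprioritized: the claim that the Neumann Green's operator on a square maps $L^2$ into $W^{1,\infty}$ is false in two dimensions (the gradient of the Green's function has the non-square-integrable singularity $|x-y|^{-1}$), and, as you yourself note, the $W^{2,p}$ detour with $p>2$ reintroduces a dependence on the side length of $L$ that destroys exactly the uniformity over dyadic squares the proposition requires. There is in fact no need to exhibit a vector field $\varphi$ with $\Vert\varphi\Vert_{L^\infty}\leq 1$ at all: if one insists on the dual formulation, the inclusion $2(u_\eta-[u_\eta]_L)/\alpha\in\partial TV\big([u_\eta]_L\big)$ amounts to $\frac{2}{\alpha}\int_L(u_\eta-[u_\eta]_L)\,v\dd x\leq TV(v,L)$ for all $v\in BV(L)\cap L^2(L)$, which follows from the same zero-mean trick and the same Poincar\'e inequality, with no elliptic regularity needed.
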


\begin{proof}
The proof is a simple consequence of \cite[Proposition~2.5.7]{Ja12} combined with the scaling invariance of the constant in the 2-dimensional Poincar\'e--Wirtinger inequality
in \(BV\) (see \cite[Remark~3.50]{AmFuPa00}). 
\end{proof}

\begin{theorem}\label{thm:solL1} Consider the learning
scheme \((\scrL\!\scrS)_{{TV\!}_\omega}\)
in \eqref{lsTVomega} with  \eqref{eq:alpha-L} 
replaced by \eqref{eq:box-min}. 
Then, there exist \(\kappa\in \NN\) and \(\mathscr{L}_1, ...
, \mathscr{L}_\kappa \in \mathscr{P}\) such that
\begin{equation}\label{eq:finmin}
\begin{aligned}
\argmin\left\{\int_Q|u_c-u_{\mathscr{L}}|^2\dd x:\,\mathscr{L}\in
\mathscr{P}\right\} = \argmin\left\{\int_Q|u_c-u_{\mathscr{L}_i}|^2\dd x:\,i\in
\{1,...,\kappa\}\right\}.
\end{aligned}
\end{equation}
\end{theorem}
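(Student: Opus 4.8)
The plan is to exploit the box constraint \eqref{eq:boxc}: once it is in force, refining an admissible partition below a \emph{fixed} dyadic scale cannot change the weight $\omega_\scrL$, hence cannot change the denoised image $u_\scrL$ produced in Level~2, nor the Level~1 cost $\int_Q|u_c-u_\scrL|^2$; since there are only finitely many partitions whose cubes stay above that scale, the outer minimization over $\scrP$ collapses to a finite one. To locate that scale, note that $Q\subset\RR^2$ forces $u_\eta\in BV(Q)\subset L^2(Q)$, so $|u_\eta|^2\in L^1(Q)$ and, by absolute continuity of the integral, there is $\delta>0$ with $\int_E|u_\eta|^2\dd x\le (c_0/C_Q)^2$ whenever $|E|<\delta$, where $C_Q$ is the constant of Proposition~\ref{prop:Ja257}. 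Fix $\kappa_0\in\NN$ with $4^{-\kappa_0}<\delta$. Then any dyadic cube $L\subset Q$ of side $\le 2^{-\kappa_0}$ has $|L|\le 4^{-\kappa_0}<\delta$, hence $C_Q\Vert u_\eta\Vert_{L^2(L)}\le c_0$, so Proposition~\ref{prop:Ja257} gives $u_{\alpha,L}\equiv[u_\eta]_L$ for \emph{every} $\alpha\in[c_0,\tfrac1{c_0}]$. Consequently the Level~3 cost $\alpha\mapsto\int_L|u_c-u_{\alpha,L}|^2\dd x$ is constant on $[c_0,\tfrac1{c_0}]$, its $\argmin$ is the whole interval, and \eqref{eq:box-min} yields $\bar\alpha_L=c_0$ for every such $L$.

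Next comes the geometric step. Given $\scrL\in\scrP$, I construct its coarsening $\scrL'$: keep every $L\in\scrL$ of side $\ge 2^{-\kappa_0}$; each $L\in\scrL$ of side $<2^{-\kappa_0}$ sits inside a unique dyadic cube $\widehat L$ of side exactly $2^{-\kappa_0}$, and since $\scrL$ is a dyadic partition with $L\subsetneq\widehat L$, every cube of $\scrL$ meeting $\widehat L$ must have side $<2^{-\kappa_0}$ (a cube of side $\ge 2^{-\kappa_0}$ meeting $\widehat L$ would contain $\widehat L$, hence contain $L$, contradicting disjointness), so $\widehat L$ is subdivided in full; replace all those pieces by $\widehat L$. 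Then $\scrL'\in\scrP$ and all its cubes have side $\ge 2^{-\kappa_0}$. I claim $\omega_{\scrL'}=\omega_\scrL$ a.e.\ in $Q$: on the kept cubes the two weights coincide by construction, while on each merged cube $\widehat L$ the previous paragraph gives $\omega_\scrL\equiv c_0$ (every small sub-cube of $\scrL$ inside $\widehat L$ carries $\bar\alpha=c_0$) and likewise $\omega_{\scrL'}\equiv\bar\alpha_{\widehat L}=c_0$ (as $\widehat L$ itself has side $2^{-\kappa_0}$).

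To conclude, Corollary~\ref{cor:L2} shows that the Level~2 minimizer depends on $\scrL$ only through $\omega_\scrL$, so $\omega_{\scrL'}=\omega_\scrL$ a.e.\ forces $u_{\scrL'}=u_\scrL$, whence $\int_Q|u_c-u_{\scrL'}|^2\dd x=\int_Q|u_c-u_\scrL|^2\dd x$ for every $\scrL\in\scrP$. The family $\bar\scrP$ of all admissible partitions whose cubes all have side $\ge 2^{-\kappa_0}$ is finite (each is a choice of leaves in the finite dyadic tree of depth $\kappa_0$); enumerate it as $\scrL_1,\dots,\scrL_\kappa$. Since each $\scrL\in\scrP$ has the same Level~1 cost as $\scrL'\in\bar\scrP$, the infimum over $\scrP$ equals $\min_{1\le i\le\kappa}\int_Q|u_c-u_{\scrL_i}|^2\dd x$, which is attained; moreover the coarsening map $\scrL\mapsto\scrL'$ carries minimizers over $\scrP$ into minimizers lying in $\bar\scrP$ and is the identity on $\bar\scrP$, which yields \eqref{eq:finmin} (so \eqref{eq:boxc} is indeed a stopping criterion in the sense of Definition~\ref{def:stop}, with $\bar\scrP=\medcup_{i=1}^\kappa\{\scrL_i\}$).

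The main obstacle is the coarsening construction of the second paragraph: one must verify that it is well defined and stays in $\scrP$ — i.e.\ that a level-$\kappa_0$ cube touched by a strictly smaller cube of $\scrL$ is automatically refined in full by $\scrL$ — and, crucially, that on every merged piece the saturated parameter equals exactly $c_0$ rather than merely being bounded by $\tfrac1{c_0}$; this is what makes $\omega_{\scrL'}=\omega_\scrL$ hold on the nose. Everything else is either bookkeeping or a direct appeal to Proposition~\ref{prop:Ja257} and Corollary~\ref{cor:L2}.
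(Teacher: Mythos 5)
Your proof is correct and takes essentially the same route as the paper's: both use Proposition~\ref{prop:Ja257} together with absolute continuity of $\int|u_\eta|^2$ to find a threshold dyadic scale below which the Level~3 cost is constant in $\alpha$ on the box, so that \eqref{eq:box-min} forces $\bar\alpha_L=c_0$, and then coarsen any over-refined partition without changing $\omega_\scrL$ or the Level~2 minimizer. The only (cosmetic) difference is that you merge all sub-threshold cubes into their level-$\kappa_0$ ancestors in one step, whereas the paper merges the smallest cubes into their immediate parents and iterates; your verification that the ancestor is fully tiled by cubes of $\scrL$ and that the saturated parameter equals exactly $c_0$ on every merged piece covers the points the paper's iteration handles implicitly.
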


\begin{proof}
We use
Proposition~\ref{prop:Ja257} to prove that if a partition
 contains dyadic squares of side length smaller than a certain threshold, then it can be replaced by a partition of dyadic squares of side
length greater than that threshold without changing the minimizer
at Level~2.

 Let \(\bar\epsi\in(0,1)\) be such that
for every measurable set \(E\subset Q\) with \(|E|\leq \bar\epsi\),
we have
\begin{equation}\label{eq:equii}
\begin{aligned}
 \Vert u_\eta\Vert_{L^2(E)} \leq \frac{c_0}{C_Q},
\end{aligned}
\end{equation}
where \(c_0\) is the constant in  \eqref{eq:box-min} and \(C_Q\)
is the constant given by Proposition~\ref{prop:Ja257}. Set
\begin{equation*}
\begin{aligned}
\bar k:= \min \Big\{k\in\NN\!: \, \frac{1}{4^{k}}\leq \bar \epsi\Big\}\quad \text{and} \quad \bar{\mathscr{P}}:= 
\Big\{\mathscr{L}\in \mathscr{P}\!:\, |L|\geq\frac{1}{4^{\bar
k}} \text{ for all \(L\in
\mathscr{L}\)}\Big\}.
\end{aligned}
\end{equation*}
Note that \(\bar{\mathscr{P}}\) has finite cardinality. Finally, define 
\[\mathscr{P}^*:= \mathscr{P} \setminus \bar{\mathscr{P}}.\]

Fix \(\mathscr{L}^*\in
\mathscr{P}^*\), and let
\begin{equation*}
\begin{aligned}
\mathscr{L}^*_-:=\{L^*\in \mathscr{L}^*\!:\,  |\tilde L^*|\geq |L^*| \text{ for all \(\tilde L^*\in \mathscr{L}^*\}
\)}
\end{aligned}
\end{equation*}
be the collection of all  dyadic squares with
the smallest side length in \(\mathscr{L}^*\). Then, there exists
 \(k^*\in\NN\), with \(k^*> \bar
k\),  such that  \(|L^*| =
\frac{1}{4^{k^*}}\)   for all \(L^*\in\mathscr{L}^*_- \). 
Moreover, by construction of our admissible partitions,
we can write
\begin{equation*}
\begin{aligned}
\mathscr{L}^*_-= \medcup_{j=1}^\ell\{L^*_{j,i}\}_{i=1}^4 \enspace \text{
for some } \ell\in\NN,
\end{aligned}
\end{equation*}
where, for each \(j\in\{1,...,\ell\}\), 
\begin{equation*}
\begin{aligned}
\medcup_{i=1}^4L^*_{j,i} =:\bar   L^*_{j} \enspace  \text{ is a dyadic
square with } |\bar L^*_{j}|=\frac{1}{4^{k^*-1}}.
\end{aligned}
\end{equation*}

Note that \(k^*-1\geq \bar
k\). Then, for any \(\alpha\in
[c_0,1/c_0]\),  Proposition~\ref{prop:Ja257} and \eqref{eq:equii}
yield %
\begin{equation*}
\begin{aligned}
\int_{L^*_{j,i}} |u_c-u_{\alpha,L^*_{j,i}}|\dd x = \int_{L^*_{j,i}} |u_c- [ u_\eta]_{L^*_{j,i}}|\dd x
\quad \text{and} \quad
\int_{\bar L^*_{j}} |u_c-u_{\alpha,\bar L^*_{j}}|\dd x = \int_{\bar L^*_{j}} |u_c-
[ u_\eta]_{\bar L^*_{j}}|\dd x
\end{aligned}
\end{equation*}
for all \(j\in\{1,...,\ell\}\) and \(i\in\{1,...,4\}\). Thus,
by \eqref{eq:box-min},
\begin{equation*}
\begin{aligned}
\alpha_{L^*_{j,i}} = \alpha_{\bar L^*_{j}}=c_0
\end{aligned}
\end{equation*}
for all \(j\in\{1,...,\ell\}\) and \(i\in\{1,...,4\}\). Consequently
(see Figure~\ref{fig:partition}),
defining
\begin{equation*}
\begin{aligned}
\bar{\mathscr{L}^*}:=(\mathscr{L}^*\setminus\mathscr{L}^*_-)
\cup \medcup_{j=1}^\ell \bar L^*_{j},
\end{aligned}
\end{equation*}
we have \(\bar{\mathscr{L}^*} \in\mathscr{P} \) and, recalling
Level~2, %
\begin{equation*}
\begin{aligned}
\omega_{\bar{\mathscr{L}^*}} \equiv \omega_{{\mathscr{L}^*}}
\quad \text{and} \quad u_{\omega_{\bar{\mathscr{L}^*}}} \equiv u_{\omega_{{\mathscr{L}^*}}}.
\end{aligned}
\end{equation*}
\begin{figure}
      \includegraphics[scale=.28]{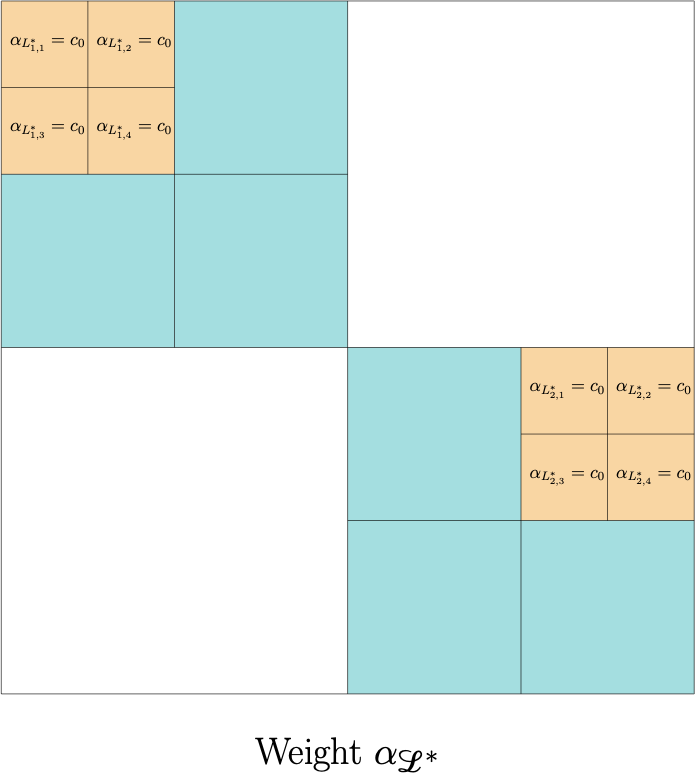}
         \hskip15mm\includegraphics[scale=.28]{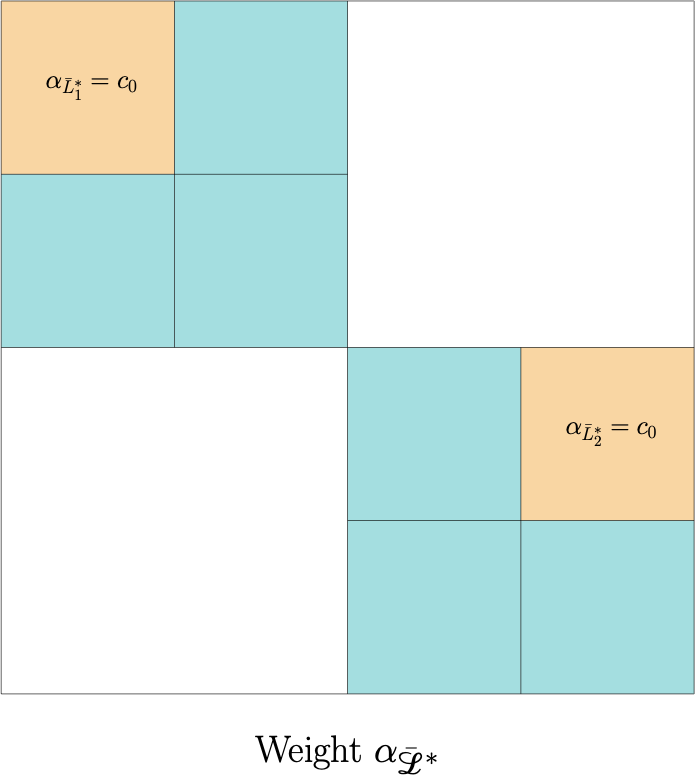}
\caption{Example of two partitions, \(\scrL^*\) and \(\bar\scrL^*\),
that yield the same solution at Level~2.}\label{fig:partition}
\end{figure}

Note also that \(|\bar L^*|\geq\frac{1}{4^{k^*-1}} \) for all
\(\bar L^*\in\bar{\mathscr{L}^*} \). If \(k^*-1 = \bar k\), we conclude that \(\bar{\mathscr{L}^*} \in \bar{\mathscr{P}} \).
Otherwise, if \(k^*-1 > \bar k\), we repeat the construction above
\(  k^*-1-\bar k \) times to obtain a partition  \(\hat{\mathscr{L}}^* \in \bar{\mathscr{P}} \) for which   \[u_{\omega_{\hat{\mathscr{L}}^*}} \equiv
u_{\omega_{{\mathscr{L}^*}}}.\]

Repeating this argument for each  \(\mathscr{L}^*\in
\mathscr{P}^*\), and recalling that  \(\bar{\mathscr{P}}\) has finite cardinality, we deduce \eqref{eq:finmin}.  
\end{proof}

\begin{remark}\label{rmk:boxtresh}
We have shown in the previous proof that the box constraint
condition yields a threshold on the minimum  side length of the dyadic squares of the possible optimal partitions \(\scrL\) of
\(Q\). In other words, the box constraint
condition yields the following stopping criterion for the refinement of the admissible partitions:

\smallskip
\((\scrS)\) There exists \(\kappa\in\NN\) such that \(|L|\geq
\frac1{4^\kappa}\) for all \(L\in\scrL\).\\
\smallskip
In the next subsection, we establish the converse of this implication (see the proof of Theorem~\ref{thm:equiv}).
\end{remark}

We conclude this section by proving Theorem~\ref{thm:TVomega}
that shows  the existence of an optimal solution to the learning
scheme \((\scrL\!\scrS)_{{TV\!}_\omega}\).

\begin{proof}[Proof of Theorem~\ref{thm:TVomega}]
This result is an immediate consequence of the results of Subsection~\ref{sect:TVL3}, Corollary~\ref{cor:L2},
and Theorem~\ref{thm:solL1}. 
\end{proof}

\subsection{Stopping Criteria and Box Constraint}\label{sect:box}
In this subsection, we provide different examples of 
 stopping criteria for
the refinement of the admissible partitions, which notion was
 introduced in Definition~\ref{def:stop}, and we prove Theorem~\ref{thm:equiv}.
The latter is based on the following theorem that yields a natural
box constraint for the optimal parameter \(\alpha\) associated
with the TV model, provided the training data satisfy some
mild conditions. The proof of \eqref{eq:minIalpha} in Theorem~\ref{thm:onalpha}
below uses arguments from \cite{DFKH22}
that are alternative to those in \cite{DeScVa16}. 

\begin{theorem}\label{thm:onalpha}
Let \(\Omega\subset \RR^2\) be a bounded, Lipschitz domain and, for each \(\alpha\in(0,+\infty)\), let \(u_\alpha\in BV(\Omega)\) be  given by \eqref{eq:ROF} with \(L\) replaced by \(\Omega\). Assume that the two following conditions on the training data
hold: 
\begin{itemize}
\item [\textit{i)}] $TV(u_c,\Omega) < TV(u_\eta,\Omega)$;

\item [\textit{ii)}] $\displaystyle \Vert u_\eta - u_c\Vert^2_{L^2(\Omega)}
<\Vert[  u_\eta]_\Omega - u_c\Vert^2_{L^2(\Omega)} $.
\end{itemize}
 Then, there exists \( \alpha^*_\Omega\in (0,+\infty)\) such that
\begin{equation}
\label{eq:minIalpha}
\begin{aligned}
I( \alpha^*_\Omega)=\min_{\alpha\in(0,+\infty)} I(\alpha) 
\quad \hbox{ where } \quad
I(\alpha):=\int_\Omega
|u_c-u_\alpha|^2\dd x \text{ for } \alpha\in(0,+\infty).
\end{aligned}
\end{equation}
Moreover, there exist positive constants \(c_\Omega\) and \(C_\Omega\), such that any
minimizer, \(\alpha^*_\Omega\), of \(I\) over \((0,+\infty)\) satisfies
\( c_\Omega\leq \alpha^*_\Omega < C_\Omega\Vert u_\eta\Vert_{L^2(\Omega)}\). Furthermore, if
 \(\Omega=L\) with   \(L\subset Q\)  a dyadic square,
then  there exists a positive constant \(c_L\) such that any
minimizer, \(\alpha^*_L\) of \(I\) over \((0,+\infty)\) satisfies
\( c_L\leq \alpha^*_L < C_Q\Vert u_\eta\Vert_{L^2(L)}\), where
\(C_Q\) is the constant given by Proposition~\ref{prop:Ja257}.
In particular, 
\(\alpha^*_L\to0\)  as \(|L|\to0\).
\end{theorem}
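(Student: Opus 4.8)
The plan is to study the function $I(\alpha)=\int_\Omega|u_c-u_\alpha|^2\dd x$ on $(0,+\infty)$ by first understanding the behaviour of $u_\alpha$, the minimizer of the ROF energy $F_\alpha$, as $\alpha\to 0^+$ and as $\alpha\to+\infty$. The two extreme cases are governed by the energies $F_0$ and $F_\infty$ introduced in the glossary: as $\alpha\to 0^+$ the minimizers converge to the data, $u_\alpha\to u_\eta$ in $L^2(\Omega)$ (this is the classical fact that the reconstruction tends to the noisy image, cf.\ \cite{DeScVa16}), whereas as $\alpha\to+\infty$ the regularization forces $u_\alpha\to[u_\eta]_\Omega$, the mean of the data (by Proposition~\ref{prop:Ja257} this convergence is in fact exact for $\alpha$ large enough on dyadic cubes, and in general follows from $\Gamma$-convergence of $F_\alpha$ to $F_\infty$). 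Combined with lower semicontinuity of $I$ (or rather of its extension $\widehat I$, cf.\ \eqref{eq:lscenvI} and Corollary~\ref{cor:Icont}), this gives $\widehat I(0)=\|u_\eta-u_c\|_{L^2(\Omega)}^2$ and $\widehat I(+\infty)=\|[u_\eta]_\Omega-u_c\|_{L^2(\Omega)}^2$.

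Next I would use the two hypotheses to rule out the infimum being attained at the endpoints. Hypothesis \textit{ii)}, $\|u_\eta-u_c\|_{L^2}^2<\|[u_\eta]_\Omega-u_c\|_{L^2}^2$, immediately says $\widehat I(0)<\widehat I(+\infty)$, so the infimum is not attained (only) at $+\infty$. To exclude $\alpha=0$, the key is hypothesis \textit{i)}, $TV(u_c,\Omega)<TV(u_\eta,\Omega)$: I would show that for small $\alpha$, $I$ is strictly decreasing, or at least that $I(\alpha)<I(0^+)=\|u_\eta-u_c\|_{L^2}^2$ for some $\alpha>0$. The natural tool is a first-variation/comparison argument: test the ROF minimality of $u_\alpha$ against the competitor $u_c$, obtaining
\begin{equation*}
\|u_\eta-u_\alpha\|_{L^2}^2+\alpha\, TV(u_\alpha,\Omega)\leq \|u_\eta-u_c\|_{L^2}^2+\alpha\, TV(u_c,\Omega),
\end{equation*}
and then expand $\|u_c-u_\alpha\|_{L^2}^2=\|u_\eta-u_\alpha\|_{L^2}^2-\|u_\eta-u_c\|_{L^2}^2+2\int(u_\eta-u_c)(u_c-u_\alpha)$ to get, after rearranging,
\begin{equation*}
I(\alpha)=\|u_c-u_\alpha\|_{L^2}^2\leq \alpha\big(TV(u_c,\Omega)-TV(u_\alpha,\Omega)\big)+2\int_\Omega(u_\eta-u_c)(u_c-u_\alpha)\dd x.
\end{equation*}
Since $u_\alpha\to u_\eta$ in $L^2$ as $\alpha\to 0^+$, the last integral tends to $2\int(u_\eta-u_c)(u_c-u_\eta)=-2\|u_\eta-u_c\|_{L^2}^2<0$, and by lower semicontinuity of $TV$, $\liminf_{\alpha\to0^+}TV(u_\alpha,\Omega)\geq TV(u_\eta,\Omega)>TV(u_c,\Omega)$, so the bracket is eventually negative as well. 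Hence $\limsup_{\alpha\to0^+}\big(I(\alpha)-\widehat I(0)\big)\leq 0$ with the correction terms showing $I(\alpha)$ dips strictly below $\widehat I(0)$ for small $\alpha>0$; more carefully, one gets $I(\alpha)\le \widehat I(0)-c\alpha+o(\alpha)$ for small $\alpha$, so $\alpha=0$ is not a minimizer of $\widehat I$.

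From continuity of $\widehat I$ on the compact interval $[0,+\infty]$ (Corollary~\ref{cor:Icont}) and the fact that its minimum over $[0,+\infty]$ is attained neither at $0$ nor at $+\infty$, I conclude the minimum is attained at some $\alpha^*_\Omega\in(0,+\infty)$, which proves \eqref{eq:minIalpha}. For the quantitative bounds: the upper bound $\alpha^*_\Omega<C_\Omega\|u_\eta\|_{L^2(\Omega)}$ follows because for $\alpha\ge C_\Omega\|u_\eta\|_{L^2(\Omega)}$ the minimizer is already the constant $[u_\eta]_\Omega$ (Proposition~\ref{prop:Ja257} in the dyadic case, with the general Lipschitz-domain version using the Poincar\'e--Wirtinger constant of $\Omega$), so $I$ is constant $=\widehat I(+\infty)$ there, and we have just shown this value is not the minimum. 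The lower bound $\alpha^*_\Omega\ge c_\Omega>0$ follows from the dip estimate above: there is an interval $(0,c_\Omega)$ on which $I(\alpha)<\widehat I(0)\le\min_{[c_\Omega,\infty]}$... — more precisely, one uses that $I$ is continuous and $I(\alpha)$ stays strictly below a level achieved only for $\alpha$ bounded away from $0$; choosing $c_\Omega$ from the explicit constant $c$ in $I(\alpha)\le\widehat I(0)-c\alpha+o(\alpha)$ gives a positive lower bound. Finally, specializing $\Omega=L$ a dyadic square: the same argument applies with $C_Q$ (scale-invariant) in place of $C_\Omega$ by Proposition~\ref{prop:Ja257}, giving $c_L\le\alpha^*_L<C_Q\|u_\eta\|_{L^2(L)}$, and since $\|u_\eta\|_{L^2(L)}\to0$ as $|L|\to0$ (absolute continuity of the integral), we get $\alpha^*_L\to0$.

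\textbf{Main obstacle.} The delicate point is making the ``dip below $\widehat I(0)$'' quantitative in $\alpha$ uniformly enough to extract a lower bound $c_\Omega>0$ that is genuinely positive — the comparison inequality only gives $I(\alpha)-\widehat I(0)\le \alpha(TV(u_c,\Omega)-TV(u_\alpha,\Omega))+2\int(u_\eta-u_c)(u_c-u_\alpha)$, and one must control the rate at which $u_\alpha\to u_\eta$ and at which $TV(u_\alpha,\Omega)\to TV(u_\eta,\Omega)$ (the latter being only a $\liminf$ inequality) to ensure the right-hand side is $\le -c\alpha$ for an explicit $c>0$ on a definite interval $(0,\delta)$. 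I expect this is handled by combining the energy bound $\alpha TV(u_\alpha,\Omega)\le\|u_\eta-u_c\|_{L^2}^2+\alpha TV(u_c,\Omega)$ (which shows $u_\alpha$ stays bounded in $BV$ and hence $u_\alpha\to u_\eta$ with a quantitative $L^2$ rate from compactness plus uniqueness of the limit) with the strict inequality in \textit{i)} to absorb error terms; the endpoint-continuity statement $\widehat I\in C([0,+\infty])$ is assumed from Corollary~\ref{cor:Icont}, so the remaining work is precisely this quantitative estimate near $\alpha=0$.
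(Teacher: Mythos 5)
Your overall architecture (extend $I$ to $\widehat I$ on $[0,+\infty]$, identify $\widehat I(0)$ and $\widehat I(+\infty)$, rule out the endpoints using \textit{i)} and \textit{ii)}, then use compactness plus Proposition~\ref{prop:Ja257} for the bounds) matches the paper. But the step that rules out $\alpha=0$ — the only place where hypothesis \textit{i)} enters — contains a genuine gap. First, your polarization identity is wrong: the correct expansion is
\begin{equation*}
\Vert u_c-u_\alpha\Vert_{L^2}^2=\Vert u_\eta-u_\alpha\Vert_{L^2}^2-\Vert u_\eta-u_c\Vert_{L^2}^2+2\int_\Omega(u_\eta-u_c)(u_\alpha-u_c)\dd x,
\end{equation*}
i.e.\ the cross term carries the opposite sign from the one you wrote (test it with $u_\alpha=u_\eta$: your version gives $-3\Vert u_\eta-u_c\Vert^2$ for the left-hand side). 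With the correct sign, the cross term tends to $+2\Vert u_\eta-u_c\Vert_{L^2}^2$ as $\alpha\to0^+$, not to $-2\Vert u_\eta-u_c\Vert_{L^2}^2$, so your combined estimate reads $I(\alpha)\leq 2\widehat I(0)+\alpha\bigl(TV(u_c,\Omega)-TV(u_\alpha,\Omega)\bigr)+o(1)$, which never dips below $\widehat I(0)$. Second, and more structurally, the plain energy comparison $F_\alpha[u_\alpha]\leq F_\alpha[u_c]$ only controls $\Vert u_\eta-u_\alpha\Vert_{L^2}$ and cannot by itself compare $\Vert u_c-u_\alpha\Vert_{L^2}$ with $\Vert u_c-u_\eta\Vert_{L^2}$. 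The paper instead uses the first-order optimality condition $2(u_\eta-u_\alpha)\in\partial\bigl(\alpha TV\bigr)(u_\alpha)$ and the subgradient inequality evaluated at $u_c$, which yields the genuinely stronger estimate
\begin{equation*}
\Vert u_\eta-u_c\Vert_{L^2(\Omega)}^2-\Vert u_\alpha-u_c\Vert_{L^2(\Omega)}^2\geq \alpha\bigl(TV(u_\alpha,\Omega)-TV(u_c,\Omega)\bigr);
\end{equation*}
combined with $TV(u_\alpha,\Omega)\nearrow TV(u_\eta,\Omega)>TV(u_c,\Omega)$ as $\alpha\searrow0$ (which requires a small monotonicity argument, not just lower semicontinuity), this gives the strict dip. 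You need this variational inequality, or an equivalent monotonicity/nonexpansiveness property of the proximal map, to make Step~1 work.

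One further remark: the quantitative expansion $I(\alpha)\leq\widehat I(0)-c\alpha+o(\alpha)$ that you flag as the main obstacle is not actually needed for the lower bound $c_\Omega$. Once you know that $\min_{[0,+\infty]}\widehat I<\widehat I(0)$ and that $\widehat I$ is lower semicontinuous, a sequence of minimizers $\alpha_j^*\to0$ would force $\widehat I(0)\leq\liminf_j\widehat I(\alpha_j^*)=\min\widehat I<\widehat I(0)$, a contradiction; this soft argument is exactly what the paper uses, so no rate of convergence of $u_\alpha\to u_\eta$ or of $TV(u_\alpha,\Omega)\to TV(u_\eta,\Omega)$ is required. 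The upper bound via Proposition~\ref{prop:Ja257} and the conclusion $\alpha_L^*\to0$ as $|L|\to0$ are handled correctly in your proposal.
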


\begin{remark}\label{rmk:cts}
The constants \(C_\Omega\) and \(C_Q\) characterizing
the upper bound for the optimal parameters in Theorem~\ref{thm:onalpha}
depend only on the domains, \(\Omega\) and \(Q\), respectively
(cf.~Proposition~\ref{prop:Ja257}). On the other hand, the constants
\(c_\Omega\) and \(c_L\) providing a lower bound depend not only on the
corresponding domain, but also on \(u_c\) and \(u_\eta\).
\end{remark}
 
The proof of Theorem~\ref{thm:onalpha} is hinged on the next lemma of continuity with respect to the parameter in the ROF functional, including the limit cases where the parameter vanishes or tends to $+\infty$.

\begin{lemma}\label{lem:ConvMinTV}
Let \(\Omega\subset \RR^2\) be a bounded, Lipschitz domain and, for each
\(\alpha\in(0,+\infty)\), let \(u_\alpha\in BV(\Omega)\) be  given by \eqref{eq:ROF}
with \(L\) replaced by \(\Omega\). Consider the family of  functionals \((F_{\bar \alpha})_{\bar\alpha\in[0,+\infty]}\),
where \(F_{\bar \alpha}:L^2(\Omega)\to[0,+\infty]\) is defined by
\begin{equation*}
\begin{aligned}
&F_\alpha [u]:=\begin{cases}
\int_{\Omega}|u_\eta-u|^2\dd x+\alpha TV(u,\Omega) &\text{if
} u\in BV(\Omega),\\
+\infty &\text{otherwise,}
\end{cases}\quad \text{ for } \bar\alpha=\alpha\in(0,+\infty),\\
&F_0 [u]:= \int_{\Omega}|u_\eta-u|^2\dd x \quad \text{ for } \bar\alpha=0,\\
&F_{\infty} [u]:=\begin{cases}
\int_{\Omega}|u_\eta-c|^2\dd x    &\text{if
} u\equiv c\in\RR, \text{ }\\
+\infty &\text{otherwise,}
\end{cases}\quad \text{ for } \bar\alpha=+\infty,
\end{aligned}
\end{equation*}
and denote by \(u_{\bar\alpha}:= \argmin_{u\in
L^2(\Omega)}F_{\bar \alpha}[u]\) their unique minimizers, given by
\begin{equation}\label{eq:ubaralpha}
\begin{aligned}
u_{\bar \alpha} = \begin{cases}
u_\alpha &\text{if } \bar\alpha=\alpha,\\
u_\eta &\text{if } \bar\alpha=0,\\
[ u_\eta]_\Omega &\text{if } \bar\alpha=+\infty.\\
\end{cases}
\end{aligned}
\end{equation}
Let \((\alpha_j)_{j\in\NN} \subset (0,+\infty)\) and \(\bar
\alpha\in [0,\infty]\) be such that \(\alpha_j\to \bar \alpha\)
in \([0,+\infty]\). Then we have that $u_{\alpha_j} \to u_{\bar \alpha}$ strongly in $L^2(\Omega)$.
\end{lemma}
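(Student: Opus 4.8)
The strategy is to prove convergence of the minimizers via $\Gamma$-convergence of the family $(F_{\bar\alpha})$ together with equi-coercivity, and then upgrade from the natural $L^1$ (or weak-$BV$) convergence to strong $L^2$ convergence using the quadratic fidelity term. I would treat the three cases $\bar\alpha\in(0,+\infty)$, $\bar\alpha=0$, and $\bar\alpha=+\infty$ uniformly where possible, but expect the boundary cases to need separate, short arguments.

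\textbf{Step 1 (equi-coercivity and compactness).} Fix a sequence $\alpha_j\to\bar\alpha$. Since $F_{\alpha_j}[u_{\alpha_j}]\le F_{\alpha_j}[0]=\|u_\eta\|_{L^2(\Omega)}^2$, the minimizers satisfy $\|u_{\alpha_j}\|_{L^2(\Omega)}^2+\alpha_j TV(u_{\alpha_j},\Omega)\le\|u_\eta\|_{L^2(\Omega)}^2$. When $\bar\alpha>0$ (including $\bar\alpha=+\infty$), for $j$ large $\alpha_j$ is bounded below by a positive constant, so $TV(u_{\alpha_j},\Omega)$ is bounded and $\{u_{\alpha_j}\}$ is bounded in $BV(\Omega)\cap L^2(\Omega)$; by $BV$ compactness and reflexivity of $L^2$, up to a subsequence $u_{\alpha_j}\weaklystar u$ in $BV$, $u_{\alpha_j}\weakly u$ in $L^2$, and $u_{\alpha_j}\to u$ in $L^1(\Omega)$. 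When $\bar\alpha=0$ we only get an $L^2$ bound, hence weak $L^2$ compactness to some limit $u$.

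\textbf{Step 2 (identifying the limit).} I would show $u=u_{\bar\alpha}$ by a liminf/limsup comparison. For the $\liminf$: lower semicontinuity of $u\mapsto\int_\Omega|u_\eta-u|^2$ under weak $L^2$ convergence, together with lower semicontinuity of $TV$ under $L^1$ convergence and $\alpha_j\to\bar\alpha$, gives $F_{\bar\alpha}[u]\le\liminf_j F_{\alpha_j}[u_{\alpha_j}]$ when $\bar\alpha\in[0,\infty)$; for $\bar\alpha=+\infty$ one instead notes that boundedness of $\alpha_j TV(u_{\alpha_j},\Omega)$ with $\alpha_j\to\infty$ forces $TV(u,\Omega)=0$, i.e. $u\equiv c$, and then $\int_\Omega|u_\eta-u|^2\le\liminf_j\int_\Omega|u_\eta-u_{\alpha_j}|^2$. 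For the $\limsup$ (recovery): test minimality of $u_{\alpha_j}$ against the fixed competitor $u_{\bar\alpha}$, giving $F_{\alpha_j}[u_{\alpha_j}]\le F_{\alpha_j}[u_{\bar\alpha}]\to F_{\bar\alpha}[u_{\bar\alpha}]$ — here $F_{\alpha_j}[u_{\bar\alpha}]=\int_\Omega|u_\eta-u_{\bar\alpha}|^2+\alpha_j TV(u_{\bar\alpha},\Omega)$ converges since $u_{\bar\alpha}\in BV(\Omega)$ in every case (for $\bar\alpha=0$, $u_\eta$ need not be $BV$, but then one uses that $F_0[u_\eta]$ is trivially the infimum and $\int_\Omega|u_\eta-u_{\alpha_j}|^2\le F_{\alpha_j}[u_{\alpha_j}]\le\|u_\eta\|_{L^2}^2$, combined with $\liminf_j\int_\Omega|u_\eta-u_{\alpha_j}|^2\ge\int_\Omega|u_\eta-u|^2\ge 0=\ldots$). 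Combining, $F_{\bar\alpha}[u]\le\liminf_j F_{\alpha_j}[u_{\alpha_j}]\le\limsup_j F_{\alpha_j}[u_{\alpha_j}]\le F_{\bar\alpha}[u_{\bar\alpha}]$, and by uniqueness of the minimizer $u=u_{\bar\alpha}$; moreover $F_{\alpha_j}[u_{\alpha_j}]\to F_{\bar\alpha}[u_{\bar\alpha}]$. Since the limit is independent of the subsequence, the whole sequence converges.

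\textbf{Step 3 (upgrading to strong $L^2$).} This is the main point. From Step 2, $\int_\Omega|u_\eta-u_{\alpha_j}|^2+\alpha_j TV(u_{\alpha_j},\Omega)\to\int_\Omega|u_\eta-u_{\bar\alpha}|^2+\bar\alpha TV(u_{\bar\alpha},\Omega)$ (with the convention $\bar\alpha TV=0$ if $\bar\alpha=0$, and $=\lim\alpha_j TV(u_{\alpha_j})$ if $\bar\alpha=\infty$). I would argue $\alpha_j TV(u_{\alpha_j},\Omega)\to\bar\alpha TV(u_{\bar\alpha},\Omega)$ separately: the $\ge$ direction is lower semicontinuity (for $\bar\alpha<\infty$) or the vanishing-$TV$ argument (for $\bar\alpha=\infty$), and it cannot strictly overshoot because then the fidelity part would have to undershoot its lower-semicontinuous limit, a contradiction. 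Hence $\int_\Omega|u_\eta-u_{\alpha_j}|^2\to\int_\Omega|u_\eta-u_{\bar\alpha}|^2$. Combined with $u_{\alpha_j}\weakly u_{\bar\alpha}$ in $L^2$, i.e. $u_\eta-u_{\alpha_j}\weakly u_\eta-u_{\bar\alpha}$ in $L^2$, convergence of the norms plus weak convergence yields strong convergence $u_\eta-u_{\alpha_j}\to u_\eta-u_{\bar\alpha}$ in $L^2(\Omega)$, hence $u_{\alpha_j}\to u_{\bar\alpha}$ strongly in $L^2(\Omega)$, as claimed.

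\textbf{Expected main obstacle.} The delicate part is disentangling the two nonnegative contributions to the energy so as to conclude that \emph{each} converges, not just their sum; the clean way is to use lower semicontinuity of both pieces against the common limit of the sum, which forces equality in each — but one must handle the $\bar\alpha=0$ and $\bar\alpha=+\infty$ endpoints with care since there the $TV$ term either disappears or blows up in a controlled way, and since $u_\eta\notin BV(\Omega)$ is allowed when $\bar\alpha=0$.
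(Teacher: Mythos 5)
Your route is essentially a $\Gamma$-convergence argument (equi-coercivity, liminf/limsup identification of the limit, convergence of energies, then Radon--Riesz to upgrade weak to strong $L^2$), and this is genuinely different from the paper's proof. The paper treats the three regimes by three ad hoc arguments: for $\bar\alpha\in(0,+\infty)$ it writes the Euler--Lagrange inclusions $u_\eta-u_{\bar\alpha}=\bar\alpha p_{\bar\alpha}$, $u_\eta-u_{\alpha_j}=\alpha_j p_{\alpha_j}$ with $p\in\partial TV$, and uses monotonicity of the subdifferential to derive the \emph{quantitative} estimate $\|u_{\bar\alpha}-u_{\alpha_j}\|_{L^2(\Omega)}\leq \frac{|\alpha_j-\bar\alpha|}{\bar\alpha}\|u_\eta\|_{L^2(\Omega)}$; for $\bar\alpha=+\infty$ it first shows $[u_{\alpha_j}]_\Omega=[u_\eta]_\Omega$ and then combines $TV(u_{\alpha_j},\Omega)\to 0$ with the Poincar\'e inequality. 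Your approach trades the explicit Lipschitz rate in $\alpha$ for a softer but more uniform argument; your Step 3 (splitting the convergence of the total energy into convergence of each nonnegative piece via the ``no overshoot against two lower-semicontinuous terms'' device, then norm convergence plus weak convergence in the Hilbert space $L^2$) is correct and is in fact the mechanism the paper uses elsewhere for the TGV analogue, so it buys a template that generalizes more readily.

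There is, however, one step that as written does not close: the case $\bar\alpha=0$. Testing minimality against the fixed competitor $u_{\bar\alpha}=u_\eta$ is not available because $TV(u_\eta,\Omega)$ may be $+\infty$, and the chain of inequalities you offer instead ($\int_\Omega|u_\eta-u_{\alpha_j}|^2\dd x\leq\|u_\eta\|_{L^2(\Omega)}^2$ together with $\liminf_j\int_\Omega|u_\eta-u_{\alpha_j}|^2\dd x\geq 0$) only bounds the fidelity term; it does not force it to vanish. What is needed is a recovery sequence: pick $\hat u_\kappa\in C^\infty(\overline\Omega)$ with $\hat u_\kappa\to u_\eta$ in $L^2(\Omega)$ and diagonalize so that the resulting sequence $(u_j)$ satisfies $\alpha_j TV(u_j,\Omega)\to 0$ (e.g.\ $TV(u_j,\Omega)\leq\alpha_j^{-1/2}$); then $\|u_{\alpha_j}-u_\eta\|_{L^2(\Omega)}^2\leq F_{\alpha_j}[u_{\alpha_j}]\leq\|u_j-u_\eta\|_{L^2(\Omega)}^2+\alpha_j TV(u_j,\Omega)\to 0$. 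This is exactly the construction the paper carries out, and it is the only missing ingredient; you flag the issue in your closing remark but do not supply the diagonalization, so you should add it explicitly.
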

\begin{proof} We treat the cases $\bar \alpha \in (0, +\infty)$, $\bar \alpha = 0$, and $\bar \alpha = +\infty$ separately.

Let us first assume that $\bar\alpha \in (0,+\infty)$. The proof of this case essentially follows the computations in \cite[Thm.~2.4.20]{Ja12}, but since our notation and focus are different, we present a complete proof adapted to our setting. Being $u_{\alpha_j}$  a minimizer of $F_{\alpha_j}[u]$ and $u_{\bar \alpha}$  a minimizer of $F_{\bar \alpha}[u]$, we get that 
\begin{align*}
u_\eta - u_{\bar \alpha} &= \bar \alpha p_{\bar \alpha} \quad \text{ with } p_{\bar \alpha} \in \partial TV[u_{\bar \alpha}],\\
u_\eta - u_{\alpha_j} &= \alpha_j p_{\alpha_j} \quad \text{ with } p_{\alpha_j} \in \partial TV[u_{\alpha_j}],
\end{align*}
where $\partial TV$ denotes the subdifferential in $L^2(\Omega)$ of $TV$ (extended to be $+\infty$ on $L^2(\Omega)\setminus BV(\Omega)$). Multiplying the first equality by $\alpha_j / \bar \alpha$ and subtracting the second one from it, we obtain
\begin{align*}
\alpha_j (p_{\bar \alpha} - p_{\alpha_j}) & = \frac{\alpha_j}{\bar \alpha} (u_\eta - u_{\bar \alpha}) - (u_\eta - u_{\alpha_j})\\
& =\left(\frac{\alpha_j}{\bar\alpha}-1\right)(u_\eta - u_{\bar \alpha}) + u_{\alpha_j} - u_{\bar \alpha}.
\end{align*}
Multiplying the preceding identity by $u_{\bar \alpha} - u_{\alpha_j}$, integrating over \(\Omega\), and using the monotonicity of $\partial TV$, we obtain
\[
0 \leq \left(\frac{\alpha_j}{\bar\alpha}-1\right) \int_\Omega (u_\eta - u_{\bar \alpha})(u_{\bar \alpha} - u_{\alpha_j})\dd x - \|u_{\bar \alpha} - u_{\alpha_j}\|^2_{L^2(\Omega)}.
\]
Consequently, using Cauchy--Schwarz's inequality, and reorganizing the terms, it follows that
\[
\|u_{\bar \alpha} - u_{\alpha_j}\|_{L^2(\Omega)} \leq \frac{|\alpha_j - \bar{\alpha}|}{\bar\alpha} \|u_\eta - u_{\bar \alpha}\|_{L^2(\Omega)}.\]
On the other hand,  taking into account $u_{\bar \alpha} = \argmin_{L^2(\Omega)} F_{\bar \alpha}$, we have that 
\[\|u_{\bar \alpha} - u_\eta\|^2_{L^2(\Omega)} \leq \|u_{\bar \alpha} - u_\eta\|^2_{L^2(\Omega)} + \bar\alpha TV(u_{\bar \alpha},\Omega) =F_{\bar \alpha}[u_{\bar \alpha}] \leq F_{\bar \alpha}[0] = \|u_\eta\|^2_{L^2(\Omega)},\]
which, together with the preceding estimate, yields
\[\|u_{\bar \alpha} - u_{\alpha_j}\|_{L^2(\Omega)} \leq \frac{|\alpha_j - \bar{\alpha}|}{\bar\alpha} \|u_\eta\|_{L^2(\Omega)}.\]

We now consider the $\bar \alpha=0$ case. Because \(\Omega\) is a bounded, Lipschitz domain, we can find
a sequence \((\hat u_\kappa)_{\kappa\in\NN}\in C^\infty(\overline\Omega)\subset BV(\Omega)\)
such that \(\hat u_\kappa\to u_\eta\) in \(L^2(\Omega)\). Since \((\alpha_j)^{-\frac12}\to\infty\), we can modify \((\hat u_\kappa)_{\kappa\in\NN}\) by repeating each of its elements as (finitely) many times as necessary so that the resulting sequence, denoted by \((u_j)_{j\in\NN}\), satisfies \(TV(u_j,\Omega)\leq(\alpha_j)^{-\frac12}\) for all \(j\in\NN\) large enough. Thus,  \(u_j\to u_\eta\) in \(L^2(\Omega)\) and \( \lim_{j\to\infty}\alpha_jTV(u_j,\Omega)=0\). Using this sequence in the minimality of $u_{\alpha_j}$ results in
\[\|u_{\alpha_j} - u_\eta\|^2_{L^2(\Omega)} + \alpha_j TV(u_{\alpha_j}) \leq \|u_{j} - u_\eta\|^2_{L^2(\Omega)} + \alpha_j TV(u_{j}), \]
Because both terms on the right-hand side converge to zero, we conclude that $(u_{\alpha_j})_{j \in \N}$ converges to $u_\eta$ strongly in $L^2(\Omega)$, as well.

We are left to treat the $\bar \alpha = +\infty$ case. First, we claim that $[u_{\alpha_j}]_\Omega = [u_{\eta}]_\Omega$ for all $j\in\NN$. To see this, we use $u_{\alpha_j} = \argmin_{u \in BV(\Omega)} F_{\alpha_j}[u]$ to get for any $c \in \R$ that
\[\|u_{\alpha_j} - u_\eta\|^2_{L^2(\Omega)} + \alpha_j TV(u_{\alpha_j},\Omega) \leq \|u_{\alpha_j} - u_\eta - c\|^2_{L^2(\Omega)} + \alpha_j TV(u_{\alpha_j},\Omega).\]
Thus, \(\|u_{\alpha_j} - u_\eta\|^2_{L^2(\Omega)}  \leq\|u_{\alpha_j} - u_\eta - c\|^2_{L^2(\Omega)}  \). Moreover, we also know that
\[[u_{\alpha_j} - u_\eta]_\Omega = \argmin_{c \in \R} \|u_{\alpha_j} - u_\eta - c\|^2_{L^2(\Omega)}\]
with only one minimizer by strict convexity, which would lead to a contradiction with the previous inequality unless $[u_{\alpha_j} - u_\eta]_\Omega = 0$. In other words, we must have  $[u_{\alpha_j}] = [u_\eta]_\Omega$ for all $j\in\NN$. To conclude, we use the estimate $F_{\alpha_j}[u_{\alpha_j}] \leq \|u_\eta\|^2_{L^2(\Omega)}$ as above, which by the definition of $F_{\alpha_j}$ implies that
\[\lim_{j \to \infty} TV(u_{\alpha_j},\Omega) = 0.\]
Moreover, by the Poincar\'e inequality, we have that
\[\|u_{\alpha_j} - [u_\eta]_{\Omega}\|_{L^2(\Omega)} = \|u_{\alpha_j} - [u_{\alpha_j}]_{\Omega}\|_{L^2(\Omega)} \leq C \, TV(u_{\alpha_j},\Omega).\]
Thus, $(u_{\alpha_j})_{j \in \N}$ converges to $[u_\eta]_{\Omega}$ strongly in $L^2(\Omega)$.
\end{proof}

From the preceding lemma, we immediately deduce the following corollary.

\begin{corollary}\label{cor:Icont}
Let \(\Omega\subset \RR^2\) be a bounded, Lipschitz domain, and
let \(I:(0,+\infty)\to[0,+\infty)\) be the function defined
in \eqref{eq:minIalpha}. Then, $I$ can be extended continuously to a function \(\widehat I:[0,+\infty]\to[0,+\infty]\) defined for \(\bar\alpha\in
[0,+\infty]\) by
\begin{equation}
\label{eq:Icont}
\begin{aligned}
\widehat I (\bar\alpha) = \begin{cases}
I(\alpha)=\Vert u_\alpha - u_c\Vert^2_{L^2(\Omega)} & \text{if
} \bar \alpha= \alpha\in(0,+\infty),\\
\Vert u_\eta - u_c\Vert^2_{L^2(\Omega)} & \text{if } \bar\alpha=0,\\
\Vert [ u_\eta]_\Omega - u_c\Vert^2_{L^2(\Omega)}  & \text{if
} \bar\alpha=+\infty.
\end{cases}
\end{aligned}
\end{equation}
\end{corollary}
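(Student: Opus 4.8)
The statement to prove is Corollary~\ref{cor:Icont}, which asserts that $I$ extends continuously to $\widehat I$ on $[0,+\infty]$ with the indicated boundary values. The plan is to deduce this directly from Lemma~\ref{lem:ConvMinTV} together with the dominated convergence theorem, or rather a uniform-bound argument, since the only content is transferring the strong $L^2$ convergence of minimizers into convergence of the cost values.

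First I would recall that Lemma~\ref{lem:ConvMinTV} gives, for any sequence $\alpha_j \to \bar\alpha$ in $[0,+\infty]$, that $u_{\alpha_j} \to u_{\bar\alpha}$ strongly in $L^2(\Omega)$, where $u_{\bar\alpha}$ is $u_\eta$ if $\bar\alpha = 0$ and $[u_\eta]_\Omega$ if $\bar\alpha = +\infty$. Then for $\bar\alpha \in (0,+\infty)$ I would simply observe that $\alpha \mapsto u_\alpha$ is continuous from $(0,+\infty)$ into $L^2(\Omega)$ by the lemma (the $\bar\alpha \in (0,+\infty)$ case, which in fact gives the explicit Lipschitz-type estimate $\|u_{\bar\alpha} - u_{\alpha_j}\|_{L^2(\Omega)} \leq \frac{|\alpha_j - \bar\alpha|}{\bar\alpha}\|u_\eta\|_{L^2(\Omega)}$), so that $I(\alpha) = \|u_\alpha - u_c\|_{L^2(\Omega)}^2$ is continuous on $(0,+\infty)$ as a composition of the continuous map $\alpha \mapsto u_\alpha - u_c$ with the continuous norm-squared functional.

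Next I would handle the two boundary points. Define $\widehat I(0) := \|u_\eta - u_c\|_{L^2(\Omega)}^2$ and $\widehat I(+\infty) := \|[u_\eta]_\Omega - u_c\|_{L^2(\Omega)}^2$. To show continuity at $\bar\alpha = 0$: for any $\alpha_j \to 0^+$, the lemma gives $u_{\alpha_j} \to u_\eta$ in $L^2(\Omega)$, hence $u_{\alpha_j} - u_c \to u_\eta - u_c$ in $L^2(\Omega)$, and by continuity of $\|\cdot\|_{L^2(\Omega)}^2$ on $L^2(\Omega)$ we get $I(\alpha_j) = \|u_{\alpha_j} - u_c\|_{L^2(\Omega)}^2 \to \|u_\eta - u_c\|_{L^2(\Omega)}^2 = \widehat I(0)$. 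The case $\bar\alpha = +\infty$ is identical with $u_\eta$ replaced by $[u_\eta]_\Omega$; here one should note $[u_\eta]_\Omega \in L^2(\Omega)$ since $\Omega$ is bounded and $u_\eta \in L^2(\Omega)$. Since every sequence converging to a boundary point in the two-point compactification $[0,+\infty]$ is eventually in a neighborhood where the above applies, and since the topology on $[0,+\infty]$ is metrizable, sequential continuity suffices to conclude that $\widehat I$ is continuous on all of $[0,+\infty]$, and moreover $\widehat I$ is exactly the lower-semicontinuous envelope of $I$ described in \eqref{eq:lscenvI} because a continuous extension of $I$ from the dense subset $(0,+\infty)$ to the compact closure $[0,+\infty]$, when it exists, is unique and automatically coincides with that envelope.

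There is essentially no obstacle here: the entire difficulty was already absorbed into Lemma~\ref{lem:ConvMinTV}, and the corollary is a soft consequence. The only minor point to be careful about is that $\|\cdot\|_{L^2(\Omega)}^2 \colon L^2(\Omega) \to [0,+\infty)$ is continuous (Lipschitz on bounded sets), which combined with strong $L^2$ convergence of $u_{\alpha_j}$ immediately transfers convergence to the cost values; and that one should phrase the argument sequentially since $[0,+\infty]$ is a metric space so sequential continuity is genuine continuity. Thus the proof is: invoke the lemma in each of its three regimes, compose with the continuous norm-squared map, and identify the resulting continuous extension with $\widehat I$.
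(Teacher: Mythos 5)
Your proposal is correct and follows exactly the route the paper intends: the paper states this corollary as an immediate consequence of Lemma~\ref{lem:ConvMinTV}, deducing convergence of the cost values from the strong $L^2$ convergence of the minimizers $u_{\alpha_j}\to u_{\bar\alpha}$ in each of the three regimes, just as you do. Your additional remarks on sequential continuity in the metrizable space $[0,+\infty]$ and on the identification with the lower-semicontinuous envelope of \eqref{eq:lscenvI} are accurate and only make explicit what the paper leaves implicit.
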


\begin{remark}

We observe that the only continuity condition on \(\widehat I\) needed for our analysis to hold is 
 that of lower semicontinuity of $\widehat I$, as given by \eqref{eq:lscenvI}. However, because it is not hard to prove continuity on the whole of $[0, +\infty]$ in the TV case, we have done so in the results above, which we believe to be of interest on their own.
\end{remark}

\begin{proof}[Proof of  Theorem~\ref{thm:onalpha}]
 We will proceed in three steps.

\medskip
{\it{\uline{Step~1.}} We prove that if condition \textit{i)} in  the statement holds (i.e.,  $ TV(u_\eta,\Omega)- TV(u_c,\Omega) >0$), then
there exists \(\alpha\in(0,+\infty)\) such that
\begin{equation}
\label{eq:notat0}
\begin{aligned}
\Vert u_\alpha - u_c\Vert^2_{L^2(\Omega)}<\Vert u_\eta - u_c\Vert^2_{L^2(\Omega)}.
\end{aligned}
\end{equation}
}

To show \eqref{eq:notat0}, we first recall (see \cite{ChLi97}) that for any \(\alpha\in(0,+\infty)\),
there exists a unique \(u_\alpha\in BV(\Omega) \subset L^2(\Omega)\) such that
\begin{equation}\label{eq:mincond}
\begin{aligned}
u_\alpha = \argmin_{u\in L^1(\Omega)} F_\alpha[u] = \argmin_{u\in L^2(\Omega)} F_\alpha[u],  
\end{aligned}
\end{equation}
which allow us to regard \(F_\alpha\) as a sum of two convex functionals
on \(L^2(\Omega)\) with values in \([0,+\infty]\). Precisely, %
\begin{equation*}
\begin{aligned}
F_\alpha[u]= F_\alpha^1[u] + F_\alpha^2[u],  
\end{aligned}
\end{equation*}
where, for \(u\in L^2(\Omega)\), 
\begin{equation*}
\begin{aligned}
F_\alpha^1[u] := \Vert u-u_\eta\Vert^2_{L^2(\Omega)} \quad \text{
and }\quad F_\alpha^2 [u]:=\begin{cases}
\alpha TV(u,\Omega) &\text{if
} u\in BV(\Omega),\\
+\infty &\text{otherwise.}
\end{cases}
\end{aligned}
\end{equation*}

Denoting by
\(\partial F(v) \in (L^2(\Omega))'\cong L^2(\Omega)\) the subdifferential of a  convex functional \(F:L^2(\Omega) \to [0,+\infty]\) at
\(v\in L^2(\Omega)\), we conclude from \eqref{eq:mincond} that
\begin{equation*}
\begin{aligned}
0\in \partial F_\alpha(u_\alpha)   \enspace \text{ or, equivalently,
}\enspace 2(u_\eta - u_\alpha) \in \partial F_\alpha^2(u_\alpha).
\end{aligned}
\end{equation*}
Consequently,
\begin{equation*}
\begin{aligned}
0&\geq F^2_\alpha[ u_\alpha] - F^2_\alpha[ u_c] +\int_\Omega 2(u_\eta - u_\alpha)(u_c
- u_\alpha)\dd x\\ 
& \geq F^2_\alpha[ u_\alpha]- F^2_\alpha[ u_c]+\int_\Omega 2(u_\eta
- u_\alpha)(u_c
- u_\alpha)\dd x -\Vert
u_\alpha - u_\eta\Vert^2_{L^2(\Omega)}  
 \\&= \alpha\big( TV(u_\alpha,\Omega) -  TV(u_c,\Omega)\big)+\Vert u_\alpha - u_c\Vert^2_{L^2(\Omega)} 
 - \Vert u_\eta - u_c\Vert^2_{L^2(\Omega)}.
\end{aligned}
\end{equation*}
Hence,
\begin{equation}\label{eq:notat0'}
\begin{aligned}
\Vert u_\eta - u_c\Vert^2_{L^2(\Omega)} -\Vert
u_\alpha - u_c\Vert^2_{L^2(\Omega)} \geq \alpha\big( TV(u_\alpha,\Omega) -  TV(u_c,\Omega)\big). 
\end{aligned}
\end{equation}

We claim that %
\begin{equation}
\label{eq:mon}
\begin{aligned}
TV(u_\alpha,\Omega)\nearrow\ TV(u_\eta,\Omega)
\enspace \text{ as } \enspace \alpha\searrow 0.
\end{aligned}
\end{equation}
Assuming that the preceding claim holds, the condition $TV(u_\eta,\Omega)- TV(u_c,\Omega)
>0$ allows us to  find \(\tilde \alpha \in
(0,+\infty)\) for which the left-hand side of \eqref{eq:notat0'} with \(\alpha=\tilde \alpha\) is strictly
positive.  Thus, \(\Vert u_\eta - u_c\Vert^2_{L^2(\Omega)} >\Vert
u_{\tilde\alpha} - u_c\Vert^2_{L^2(\Omega)}\), which proves \eqref{eq:notat0}. 

To conclude Step~1, we are left to prove \eqref{eq:mon}. Using
\eqref{eq:mincond}, for all \(\alpha\), \(\beta \in
(0,+\infty)\) with \(\alpha<\beta\),
we have that%
\begin{equation*}
\begin{aligned}
&\beta TV(u_\beta,\Omega) \leq F_\beta[ u_\beta] \leq F_\beta[ u_\eta]=
\beta TV(u_\eta,\Omega)
\end{aligned}
\end{equation*}
and
\begin{equation*}
\begin{aligned}
  \Vert u_\alpha-u_\eta\Vert^2_{L^2(\Omega) } + \alpha TV(u_\alpha,\Omega)
&\leq \Vert u_\beta-u_\eta\Vert^2_{L^2(\Omega) } + \alpha TV(u_\beta,\Omega)\\&= \Vert u_\beta-u_\eta\Vert^2_{L^2(\Omega) } + \beta TV(u_\beta,\Omega)
+ (\alpha - \beta)TV(u_\beta,\Omega)  \\&\leq\Vert u_\alpha-u_\eta\Vert^2_{L^2(\Omega) } + \beta TV(u_\alpha,\Omega) + (\alpha - \beta)TV(u_\beta,\Omega),
\end{aligned}
\end{equation*}
from which we get that 
\begin{equation*}
\begin{aligned}
\beta TV(u_\beta,\Omega) \leq \beta TV(u_\eta,\Omega) \quad \text{and} \quad (\alpha - \beta) TV(u_\alpha,\Omega) \leq(\alpha - \beta)TV(u_\beta,\Omega). \end{aligned}
\end{equation*}
Hence, recalling that \(\beta>0\) and  \(\alpha-\beta<0\), it follows that \(TV(u_\beta,\Omega)\leq  TV(u_\eta,\Omega)\) and \( TV(u_\alpha,\Omega)
\geq  TV(u_\beta,\Omega)\). Finally, using the first of these estimates and Lemma \ref{lem:ConvMinTV} with an arbitrary decreasing
sequence
\((\beta_j)_{j\in\NN}\) converging to 0, the lower-semicontinuity
of the total variation with respect to the strong convergence
in \(L^1\) yields%
\begin{equation*}
\begin{aligned}
TV(u_\eta,\Omega) \geq \limsup_{j\to\infty} TV(u_{\beta_j},\Omega)
\geq\liminf_{j\to\infty} TV(u_{\beta_j},\Omega)\geq TV(u_\eta,\Omega).
\end{aligned}
\end{equation*}
This concludes the proof of \eqref{eq:mon}.

\medskip
{\it{\uline{Step~2.}} We prove that if condition \textit{ii)}
in  the statement holds, (i.e., $ \Vert u_\eta - u_c\Vert^2_{L^2(\Omega)}
<\Vert[  u_\eta]_\Omega - u_c\Vert^2_{L^2(\Omega)}$), then
there exits \(\alpha\in(0,+\infty)\) such that
\begin{equation}
\label{eq:notati}
\begin{aligned}
\Vert u_\alpha - u_c\Vert^2_{L^2(\Omega)}<\Vert[  u_\eta ]_\Omega- u_c\Vert^2_{L^2(\Omega)}.
\end{aligned}
\end{equation}
}

Using Corollary~\ref{cor:Icont} with \(\bar\alpha=0\) together with \textit{ii)}, we obtain
\begin{equation*}
\begin{aligned}
\limsup_{j\to\infty} \Vert  u_{\alpha_j} - u_c\Vert_{L^2(\Omega)}
&\leq \limsup_{j\to\infty}\Big( \Vert  u_{\alpha_j} -  u_\eta
\Vert_{L^2(\Omega)} +\Vert  u_\eta
- u_c\Vert_{L^2(\Omega)} \Big) \\&= \Vert u_\eta - u_c\Vert_{L^2(\Omega)}
<\Vert[  u_\eta]_\Omega - u_c\Vert_{L^2(\Omega)}, \end{aligned}
\end{equation*}
from which \eqref{eq:notati} follows.

\medskip
{\it{\uline{Step~3.}} We conclude the proof of Theorem~\ref{thm:onalpha}.
}

We first show  \eqref{eq:minIalpha}. Because  \(\widehat I\) is a lower-semicontinuous function on the
compact set \([0,+\infty]\), \(\widehat I\) attains a minimum on
\([0,+\infty]\). By \eqref{eq:Icont}, \eqref{eq:notat0}, and \eqref{eq:notati},
we conclude
that  \(\widehat I\) attains its minimum
at some 
\(\alpha^*\in(0,+\infty)\). Thus,  using \eqref{eq:Icont} once more,
\begin{equation}\label{eq:minI'}
\begin{aligned}
I(\alpha^*)=\widehat I(\alpha^*)=\min_{\bar\alpha \in [0,+\infty]} \widehat I(\bar
\alpha)=\min_{\alpha \in (0,+\infty)}
\widehat I(\alpha)=\min_{\alpha \in (0,+\infty)}
I(\alpha),
\end{aligned}
\end{equation}
which yields  \eqref{eq:minIalpha}.

Next, to prove the existence of \(c_\Omega\) as stated, assume that there exist
\((\alpha^*_j)_{j\in\NN}\subset(0,+\infty)\) such that \(\alpha^*_j\to0\)
and \eqref{eq:minI'} holds with \(\alpha^*=\alpha^*_j\). Then,
using the lower semi-continuity of \(\widehat I\) on \([0,+\infty]\),%
\begin{equation*}
\begin{aligned}
\min_{\bar\alpha \in [0,+\infty]}
\widehat I(\bar \alpha) \leq \widehat I(0) \leq \liminf_{j\to\infty}
\widehat I( \alpha^*_j) =\min_{\bar\alpha \in [0,+\infty]}
\widehat I(\bar \alpha),
\end{aligned}
\end{equation*}
which is false by  \eqref{eq:notat0}. This establishes the existence of the  constant \(c_\Omega\).
 
 On the other hand,   as mentioned in the proof of Proposition~\ref{prop:Ja257},
 \cite[Proposition~2.5.7]{Ja12} yields a positive constant, \(C_\Omega\),  such that \(u_\alpha\equiv [ u_\eta]_\Omega\) for all \(\alpha\geq
C_\Omega \Vert u_\eta\Vert_{L^2(\Omega)}\). This fact,  
\eqref{eq:notati}, and \eqref{eq:minI'} show that we must have 
\(  \alpha^*_\Omega < C_\Omega\Vert u_\eta\Vert_{L^2(\Omega)}\).
Finally, the  \(\Omega=L\)  case follows from Proposition~\ref{prop:Ja257}. 
\end{proof}

Next, we  prove Theorem~\ref{thm:equiv}.

\begin{proof}[Proof of  Theorem~\ref{thm:equiv}]
In view of Theorem~\ref{thm:solL1} (also see Remark~\ref{rmk:boxtresh}), 
the statement in (a) follows. Conversely, the statement in (b)
can be proved arguing as in  Subsection~\ref{sect:TVL3}
and defining
\[c_0:= \min\bigg\{\min_{L\in \scrL \in \bar\scrP }
c_L, \big(c_Q\Vert u_\eta\Vert_{L^2(Q)}^2\big)^{-1}\bigg\},\]   where \(c_L\) and \(C_Q\) are the constants given by Theorem~\ref{thm:onalpha}.
\end{proof}

We conclude this section with some  examples of 
 stopping criteria for
the refinement of the admissible partitions as  defined in Definition~\ref{def:stop}.   

\begin{example}\label{ex:localsubdiv} Here, we give an example of a 
 stopping criterion that, heuristically, means that we only refine
a given dyadic square \(L\), if the distance of the restored image in
\(L\) to the clean image is greater than or equal to the sum of the distances of the restored
images in each of the
 subdivisions of \(L\) to the clean image, modulo a threshold
that is  determined by the user.

To make this idea precise, we introduce some
notation. Given  a dyadic square \(
L^{(1)}\subset Q\) of  side length
\(\frac1{2^{k+1}}\), we can find three other  dyadic squares,
which we denote by \( 
L^{(2)}\), \(L^{(3)}\),  and \(L^{(4)}\), of  side length
\(\frac1{2^{k+1}}\) and such that \(L:= \medcup_{i=1}^4
  L^{(i)}\) is a dyadic square of side length
\(\frac1{2^k}\). We observe further that \( 
L^{(2)}\), \( L^{(3)}\), and  \(L^{(4)}\) are uniquely determined
by the requirement that \(L\) is a dyadic square.
Using this notation, and setting $u_L=u_{\alpha_L}$ (see \eqref{eq:alpha-L}), we fix \(\delta>0\) and  set up an admissible criteria
as follows:
\begin{align}
(\scrS)\quad  (i)\enspace &\text{\(Q\) is admissible;}\notag\\
 (ii)\enspace &\text{If \(L\subset Q\) is an admissible dyadic square, then
each dyadic square \(L^{(i)}\subset L\),} \notag\\ &\text{with \(i\in\{1,...,4\}\) and \(\medcup_{i=1}^4
L^{(i)} = L\), is admissible if }\notag\\
& \hskip30mm \Vert u_c - u_{  L}\Vert_{L^2( 
L)}^2 \geq \sum_{i=1}^4 \Vert u_c - u_{L^{(i)}}\Vert_{L^2( 
L^{(i)})}^2  + \delta. \label{eq:scragf}
\end{align}

As we prove next, %
\begin{equation*}
\begin{aligned}
\bar\scrP:=\big\{\scrL\in\scrP\!:\, L \text{ satisfies \((\scrS)\)
for all \(L\in\scrL\)} \big\}
\end{aligned}
\end{equation*}
has finite cardinality, which shows that  \((\scrS)\) as above provides a
 stopping criteria  for
the refinement of the admissible partition.

To show that \(\bar \scrP\) has finite cardinality, we first
observe that if \(L\) satisfies \((\scrS)\), then we can find
\(k\) dyadic squares, \(L_1, ..., L_k \), where \(k\in\NN\) is
such that \(|L|=\frac1{4^{k}}\), satisfying
\begin{equation*}
\begin{aligned}
Q=L_1\supset .... \supset L_k \supset L, \qquad 
|L_k|=\frac1{4^{k-1}}, \qquad L_k \text{ satisfies }(\scrS).
\end{aligned}
\end{equation*}
Then, using \eqref{eq:scragf}, we conclude that 
\begin{equation*}
\begin{aligned}
\Vert u_c - u_{Q}\Vert_{L^2(Q)}^2 = \Vert u_c - u_{ L_1}\Vert_{L^2(
L_1)}^2 \geq c_k + k\delta
\end{aligned}
\end{equation*}
for some positive constant \(c_k\), which can only  hold true if  \(k\) is
small enough. In other words, there  exists \(k_\delta
\in\NN\) such that if  \(L\) satisfies \((\scrS)\), then  \(|L|\geq\frac1{4^{k_\delta}}\).
Hence,  \(\bar \scrP\) has finite cardinality. 
 \end{example}

\section{Analysis of the Regularized Weighted-TV and   Weighted-Fidelity
learning schemes \texorpdfstring{\((\scrL\!\scrS)_{{TV\!}_{\omega_\epsi}}\)}{TV-we} and    
 \texorpdfstring{\((\scrL\!\scrS)_{{TV-Fid}_\omega}\)}{TV-Fid-w}}
\label{sect:regTV}

The results proved in the preceding section for the weighted-TV learning scheme can be easily adapted to the case of the regularized weighted-TV and   the weighted-fidelity learning schemes,  \((\scrL\!\scrS)_{{TV\!}_{\omega_\epsi}}\) and    
\((\scrL\!\scrS)_{{TV-Fid}_\omega}\). For the former, we prove here only  Proposition~\ref{thm:Gconvergence} and  provide
an example of  a sequence of regularized weights satisfying 
the conditions assumed in this result. Moreover, we highlight a question that is intimately related to the convergence of the solutions to \((\scrL\!\scrS)_{{TV\!}_{\omega_\epsi}}\) as \(\epsi\to0^+\) (see Subsection~\ref{subs:reg} below).  Regarding
    \((\scrL\!\scrS)_{{TV-Fid}_\omega}\), and for completeness, we state the analogue existence and equivalence results for the weighted-fidelity
learning scheme (see Subsection~\ref{subs:wfid} below).
    
\subsection{The \texorpdfstring{\((\scrL\!\scrS)_{{TV\!}_{\omega_\epsi}}\)}{TV-we} learning scheme}\label{subs:reg}

Next, we prove Proposition~\ref{thm:Gconvergence} and  provide
an example of   a sequence \((\omega_\scrL^\epsi)_\epsi \)
as in \eqref{eq:regweight}.

\begin{proof}[Proof of Proposition~\ref{thm:Gconvergence}]
We  show that
\begin{equation}\label{eq:EepsileqE}
E_\scrL^\epsi[u] \leq E_\scrL[u]
\end{equation}
for all \(u\in L^1(Q)\), from which  \eqref{eq:Glimsup} 
 follows.

Let  \(u\in L^1(Q)\) be such that \(E_\scrL[u]< \infty\).  Then,
\(u\in BV_{\omega_\scrL}(Q)\) and recalling the definition
and properties of the space of weighted \(BV\)-function discussed
in Section~\ref{sect:L2}, we have that \(u\in BV_{\omega_\scrL^\epsi}(Q)\)
with \(TV_{\omega_\scrL^\epsi}(u,Q)
\leq   TV_{\omega_\scrL}(u,Q)\), using the estimate \(\omega_\scrL^\epsi \leq {\omega_\scrL}\)
a.e.~in \(Q\) in \eqref{eq:regweight}. Thus,  \eqref{eq:EepsileqE}
holds.
\end{proof}

\begin{example}\label{eg:onregweights}
An example of a sequence \((\omega_\scrL^\epsi)_\epsi \)
as in \eqref{eq:regweight} can be constructed combining a diagonalization
argument with a mollification of a Moreau--Yosida type approximation of
\(\omega^{sc^-}_\scrL\). Precisely, for each 
 \(k\in\NN\),
 let \(\omega_k:Q\to (0,\infty)\)   be given by
\begin{equation}\label{eq:defwk}
\begin{aligned}
\omega_k(x):=\inf \big\{\omega^{sc^-}_\scrL(y) + k|x-y|\!: \, y\in Q\big\}\quad \text{for
\(x\in Q\)}.
\end{aligned}
\end{equation}
We recall that each \(\omega_k\) is a \(k\)-Lipschitz function,
and we have (see \cite[Theorem~2.1.2]{Ca08} for instance)
\begin{equation}
\label{eq:monotwk}
\begin{aligned}
\omega_k\nearrow \omega^{sc^-}_\scrL \quad \text{pointwise everywhere
in \(Q\)}.
\end{aligned}
\end{equation}

Moreover, as we show next,
\begin{equation}\label{eq:cuMY}
\begin{aligned}
&\lim_{k\to\infty} \Vert\omega_k -\omega^{sc^-}_\scrL\Vert_{L^\infty(K)} = 0
\end{aligned}
\end{equation}
for any compact set \(K\) such that \(K\subset \inte(L)\), where \(L\in\scrL \) is arbitrary. 

In fact, let  \(L\in\scrL
\) and let   \(K\)  be a compact set such that \(K\subset \inte(L)\). Fix \(\tau>0\) and  set \(\delta:=\frac{\dist(K,\partial L)}{2}\). Note that $\delta>0$ and 
 \begin{equation}\label{eq:onwrlx}
 \omega^{sc^-}_\scrL(x) = \alpha_L\quad \text{for all \(x\in \inte(L)\)}
 \end{equation}
 because \(\omega_\scrL(x) = \alpha_L\) for all \(x\in L\). Moreover, using \eqref{eq:defwk}, given \(\bar x\in K\) we can find \( y_k\in Q\) such that
 \begin{equation}\label{eq:estYM}
 \omega_k(\bar x) + \tau \geq \omega^{sc^-}_\scrL( y_k) +\ k|\bar x-y_k|.
 \end{equation}
Hence, using \eqref{eq:monotwk} and nonnegativity of $\omega^{sc^-}$, we obtain
 \begin{equation*}
|\bar x-y_k|\leq \frac{\omega_k(\bar x) + \tau - \omega^{sc^-}_\scrL(y_k) }{k} \leq \frac{\Vert \omega^{sc^-}_\scrL\Vert_{L^\infty(Q)} + \tau}{k} <\delta    \end{equation*} 
 for all \(k\geq k_0\) and for some \(k_0\in\NN\) that is independent of \(\bar x\). Then, \(y_k\in \inte(L)\)  for all  \(k\geq k_0\). Consequently,   \eqref{eq:onwrlx}--\eqref{eq:estYM} then yield 
\begin{equation*}
\omega_k(\bar x) + \tau \geq \omega^{sc^-}_\scrL( y_k) = \alpha_L =\omega^{sc^-}_\scrL(\bar x) 
\end{equation*}
for all \(k\geq k_0\). Hence,
\begin{equation*}
0\leq \omega^{sc^-}_\scrL(\bar
x)-  \omega_k(\bar x) \leq  \tau 
\end{equation*}
for all \(k\geq k_0\). Taking the supremum on \(\bar x \in K\) in the preceding estimate yields \eqref{eq:cuMY}.

On the other hand, for each \(k\in\NN\), a standard mollification argument yields a sequence \((\omega^{(k)}_\epsi)_\epsi \subset C^\infty(\overline Q)\) such that
\begin{equation}\label{eq:cuMY1}
\begin{aligned}
&\lim_{\epsi\to0^+} \Vert \omega^{(k)}_\epsi -\omega_k\Vert_{L^\infty(Q)} = 0.
\end{aligned}
\end{equation}

Finally, denoting by $Q(x,\delta)$ the open square centered at $x\in\RR^2$ and side-length $\delta$, we  can write \(\scrL=\medcup_{i=1}^\ell
L_i\)  with  \(\inte (L_i)=Q(x_i,\delta_i)\), for  some  \(\ell\in\NN\), \(x_i\in L_i\), and  \(\delta_i >0\). Then, exploiting the countability of the family
\begin{equation}\label{eq:famQ}
\begin{aligned}
\mathcal{K}:=\medcup_{i=1}^\ell \{K_i:=\overline {Q(x_i, r_i)}\!: \, r_i \in \mathbb{Q}\cap (0,\delta_i)\}
\end{aligned}
\end{equation} 
and a diagonalization argument together with  \eqref{eq:cuMY} and \eqref{eq:cuMY1}, we can find a sequence
\((\omega_\scrL^\epsi)_\epsi \) such that
\begin{equation}\label{eq:cuMY2}
\begin{aligned}
&\lim_{\epsi\to 0^+} \Vert\omega_\scrL^\epsi -\omega^{sc^-}_\scrL\Vert_{L^\infty(K)} = 0
\end{aligned}
\end{equation}
for all compact set \(K\in \mathcal{K}\). From the definition
of \(\mathcal{K}\) in \eqref{eq:famQ}, we get that \eqref{eq:cuMY2} also
holds for all compact set \(K\subset \inte(L)\)  and for any   \(L\in\scrL
\). Furthermore, using the fact that mollification preserves
monotonicity, we deduce from \eqref{eq:monotwk} and \eqref{eq:cuMY1} that \(\omega_\scrL^\epsi \nearrow \omega^{sc^-}_\scrL\) everywhere
in \(Q\).

To conclude that \eqref{eq:regweight} also holds, it suffices to observe that \(\omega^{sc^-}_\scrL\leq \omega_\scrL\) in $Q$, \(\omega^{sc^-}_\scrL\equiv \omega_\scrL\) in $\medcup_{i=1}^\ell
\inte (L_i)$, $|Q\setminus \medcup_{i=1}^\ell
\inte (L_i)| =0$.

\end{example}

\begin{remark}[Existence of solutions to the learning scheme
 \((\scrL\!\scrS)_{{TV\!}_{\omega_\epsi}}\)]\label{rmk:solepsi}
For fixed \(\epsi\), we can apply the results proved in Section~\ref{sect:wTV}.
In particular, there exists an  optimal solution
\(u^*_\epsi\) to the learning scheme \((\scrL\!\scrS)_{{TV\!}_{\omega_\epsi}}\)
in \eqref{lsTVomegae} with  \eqref{eq:alpha-LL} 
replaced by \eqref{eq:box-min} (cf.~Theorem~\ref{thm:TVomega}).
 
\end{remark}

\begin{remark}\label{Q:opnebp} An interesting question is whether condition
 \eqref{eq:regweight} yields the   convergence 
\begin{equation}
\label{eq:openpb}
\begin{aligned}
\lim_{\epsi\to0^+} TV_{\omega_\scrL^\epsi}(u,Q) =    TV_{\omega_\scrL}(u,Q)
\end{aligned}
\end{equation}
for all 
\(u\in BV_{\omega_\scrL}(Q)\). 
Because sets of zero Lebesgue measure may  not have zero $|Du|$ measure, we do not expect \eqref{eq:openpb} to hold unless the almost everywhere pointwise convergence in \eqref{eq:regweight} is replaced by everywhere pointwise convergence.

To the best of our knowledge,
the closest result in this direction is \cite[Lemma~2.1.4]{Ca08},
which shows the following. If \(\tilde \omega\geq 0\) is lower semi-continuous
in \(Q\) and \(u:Q\to\RR\) is measurable, then we can find a
sequence of Lipschitz weights, \((\tilde \omega_k^{(u)})_{k\in\NN}\), depending
on \(u\), such 
that \(\tilde \omega^{(u)}_k\nearrow\tilde \omega\) pointwise everywhere
in \(Q\) and \eqref{eq:openpb} holds (with \(\omega_\scrL^\epsi\)
and \(\omega_\scrL\) replaced by \(\tilde \omega^{(u)}_k\) and
\(\tilde \omega\), respectively). 
\end{remark}

\subsection{The \texorpdfstring{\((\scrL\!\scrS)_{{TV-Fid}_\omega}\)}{TV-Fid-w} learning scheme}\label{subs:wfid}

Given a dyadic square \(L\subset Q\) and \(\alpha\in(0,\infty)\), we have 
\begin{equation*}
\begin{aligned}
 &\argmin\left\{\frac1\alpha\int_{L}|u_\eta-u|^2\dd x+ TV(u,L)\!:\,u\in
BV(L)\right\}\\ &\qquad = \argmin\left\{\int_{L}|u_\eta-u|^2\dd x+ \alpha TV(u,L)\!:\,u\in
BV(L)\right\}.
\end{aligned}
\end{equation*}
Consequently, Proposition~\ref{prop:Ja257} and Theorem~\ref{thm:onalpha} remain unchanged if we replace \eqref{eq:ROF} by \eqref{eq:ROFf}. These two results are the main tools to prove   Theorems~\ref{thm:equiv} and \ref{thm:TVomega}. Using this observation, the arguments used in Section~\ref{sect:wTV} can be reproduced here for the weighted-fidelity
learning scheme to conclude the two following theorems. 
 
\begin{theorem}[Existence of solutions to \((\scrL\!\scrS)_{{TV-Fid}_\omega}\)]\label{thm:FIDomega}
There exists an  optimal solution
\(u^*\) to the learning scheme \((\scrL\!\scrS)_{{TV-Fid}_\omega}\)
in \eqref{lsFido} with  \eqref{eq:alpha-Lf} 
replaced by \eqref{eq:box-min}.
\end{theorem}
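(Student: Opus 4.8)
The plan is to follow, essentially line by line, the treatment of $(\scrL\!\scrS)_{{TV\!}_\omega}$ in Section~\ref{sect:wTV}, exploiting the rescaling identity recorded immediately before the statement: for every dyadic square $L\subset Q$ and every $\alpha\in(0,\infty)$, the minimizer of $\frac1\alpha\int_L|u_\eta-u|^2\dd x+TV(u,L)$ over $BV(L)$ is exactly the function $u_{\alpha,L}$ of \eqref{eq:ROF}. Hence \textbf{Level~3} of $(\scrL\!\scrS)_{{TV-Fid}_\omega}$, with \eqref{eq:alpha-Lf} replaced by \eqref{eq:box-min}, is \emph{verbatim} \textbf{Level~3} of $(\scrL\!\scrS)_{{TV\!}_\omega}$ under \eqref{eq:box-min}; in particular Proposition~\ref{prop:Ja257}, Theorem~\ref{thm:onalpha} and Corollary~\ref{cor:Icont} apply without change, the optimal local parameters satisfy $\bar\alpha_L\in[c_0,1/c_0]$ for all $L\in\scrL$ and all $\scrL\in\scrP$, and therefore the weight $\omega_\scrL$ in \textbf{Level~2} obeys $c_0\le\omega_\scrL\le 1/c_0$ in $Q$, whence also $c_0\le 1/\omega_\scrL\le 1/c_0$ in $Q$.

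Next I would establish that \textbf{Level~2} admits a unique solution $u_\scrL$ by the direct method. Since $\omega_\scrL$ is bounded between positive constants, $BV_{\omega_\scrL}(Q)=BV(Q)$ (with equivalent norms), so \textbf{Level~2} reduces to minimizing $u\mapsto\int_Q\frac1{\omega_\scrL}|u_\eta-u|^2\dd x+TV(u,Q)$ over $BV(Q)$, where $u_\eta\in BV(Q)\subset L^2(Q)$. A minimizing sequence $(u_n)_n$ is bounded in $L^2(Q)$ (using $1/\omega_\scrL\ge c_0$) and has bounded total variation, hence is bounded in $BV(Q)$; up to a subsequence, $u_n\weaklystar u_\scrL$ in $BV(Q)$ and $u_n\to u_\scrL$ in $L^1(Q)$ and a.e.\ in $Q$. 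Lower semicontinuity of $TV(\cdot,Q)$ under $L^1$-convergence together with Fatou's lemma applied to the nonnegative integrand $\frac1{\omega_\scrL}|u_\eta-u_n|^2$ show that $u_\scrL$ is a minimizer, and strict convexity of $t\mapsto t^2$ forces uniqueness. This is the analogue of Theorem~\ref{thm:S2.1} and Corollary~\ref{cor:L2}, but simpler, since here the regularizer is the unweighted total variation and no weighted-$BV$ theory is required.

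Finally, for \textbf{Level~1} I would reproduce the stopping-criterion argument of Theorem~\ref{thm:solL1} essentially word for word. Pick $\bar\epsi\in(0,1)$ with $\|u_\eta\|_{L^2(E)}\le c_0/C_Q$ whenever $|E|\le\bar\epsi$ and $\bar k\in\NN$ with $4^{-\bar k}\le\bar\epsi$, and set $\bar\scrP:=\{\scrL\in\scrP:|L|\ge 4^{-\bar k}\text{ for all }L\in\scrL\}$, a finite family. If $\scrL^*\in\scrP\setminus\bar\scrP$ contains dyadic squares of minimal, too-small side length, then for every $\alpha\in[c_0,1/c_0]$ Proposition~\ref{prop:Ja257} (through the rescaling identity) gives $u_{\alpha,L^*}\equiv[u_\eta]_{L^*}$ on each such square and also on its dyadic parent (which still has measure $\le\bar\epsi$), so by \eqref{eq:box-min} all these cells carry the parameter $c_0$; merging the four children into their parent therefore leaves $\omega_\scrL$, hence $1/\omega_\scrL$, hence the \textbf{Level~2} minimizer, unchanged. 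Iterating until all cells have side length $\ge 4^{-\bar k}$ replaces any $\scrL^*$ by some $\hat\scrL^*\in\bar\scrP$ with the same $u_\scrL$, so $\argmin\{\int_Q|u_c-u_\scrL|^2\dd x:\scrL\in\scrP\}=\argmin\{\int_Q|u_c-u_{\scrL_i}|^2\dd x:\scrL_i\in\bar\scrP\}$, a minimum over a finite set, which is attained and yields the desired $u^*$. I do not expect a genuine obstacle: everything is transported from the $(\scrL\!\scrS)_{{TV\!}_\omega}$ analysis via the rescaling identity, and the only point needing a moment's care is \textbf{Level~2}, where the weight now sits in the fidelity term — but since $1/\omega_\scrL$ is bounded above and below by positive constants, the direct method runs exactly as for the standard ROF model.
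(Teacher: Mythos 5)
Your proposal is correct and follows essentially the same route as the paper, which likewise reduces the scheme to the analysis of Section~\ref{sect:wTV} via the rescaling identity and then reruns the Level~3/Level~2/Level~1 arguments (Proposition~\ref{prop:Ja257}, Theorem~\ref{thm:onalpha}, and the merging argument of Theorem~\ref{thm:solL1}). You also correctly identify the one point where the transfer is not literal --- Level~2 now has the weight in the fidelity term rather than in the regularizer, so Theorem~\ref{thm:S2.1} does not apply verbatim --- and your direct-method argument there, using $c_0\le 1/\omega_{\scrL}\le 1/c_0$, is exactly the routine adaptation the paper has in mind.
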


As before, the previous existence theorem  holds true under
any stopping criterion   for
the refinement of the admissible partitions
provided that the training data satisfies suitable conditions, as stated in the next result.

\begin{theorem}[Equivalence between box constraint and
stopping criterion]\label{thm:equivF}
Consider the learning scheme  \((\scrL\!\scrS)_{{TV-Fid}_\omega}\)
in \eqref{lsFido}. The two following conditions hold:\begin{itemize}
\item[(a)] If we replace    \eqref{eq:alpha-Lf} 
 by \eqref{eq:box-min}, then there exists a
stopping criterion \((\scrS)\)  for
the refinement of the admissible partitions as in Definition~\ref{def:stop}.

\item[(b)] Assume that
there exists a
stopping criterion \((\scrS)\)  for
the refinement of the admissible partitions as in Definition~\ref{def:stop}
such that the training data
  satisfies for all \(L\in\medcup_{\scrL\in\bar\scrP} \scrL\), with
  \(\bar\scrP\) as in Definition~\ref{def:stop}, the   conditions
\begin{itemize}[leftmargin=12mm]
\item[(i)] $TV(u_c,L) < TV(u_\eta,L)$;

\item[(ii)] $\displaystyle \Vert u_\eta - u_c\Vert^2_{L^2(L)}
<\Vert[  u_\eta]_L - u_c\Vert^2_{L^2(L)} $.
\end{itemize}
Then, there exists \(c_0\in\RR^+\) such that the optimal
solution \(u^*\) provided by  \((\scrL\!\scrS)_{{TV-Fid}_\omega}\)
with  \(\scrP\) replaced by  \(\bar \scrP\)
coincides with the optimal solution
\(u^*\) provided by   \((\scrL\!\scrS)_{{TV-Fid}_\omega}\)
with     \eqref{eq:alpha-Lf} 
 replaced by \eqref{eq:box-min}.
   \end{itemize}

\end{theorem}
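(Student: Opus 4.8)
The plan is to transcribe the proof of Theorem~\ref{thm:equiv} to the weighted-fidelity setting, the only new input being the observation already recorded in Subsection~\ref{subs:wfid}: for every dyadic square $L\subset Q$ and every $\alpha\in(0,\infty)$ the minimizer $u_{\alpha,L}$ of \eqref{eq:ROFf} coincides with the minimizer of \eqref{eq:ROF} for the same $\alpha$. Hence both Proposition~\ref{prop:Ja257} (constancy $u_{\alpha,L}\equiv[u_\eta]_L$ once $\alpha\ge C_Q\Vert u_\eta\Vert_{L^2(L)}$) and Theorem~\ref{thm:onalpha} (existence of a Level-3 optimum together with the bounds $c_L\le\alpha^*_L<C_Q\Vert u_\eta\Vert_{L^2(L)}$ under (i)--(ii)) carry over verbatim to Level~3 of $(\scrL\!\scrS)_{{TV-Fid}_\omega}$. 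One must also record that Level~2 here — minimizing $\int_Q\omega_\scrL^{-1}|u_\eta-u|^2+TV(u,Q)$ — is well posed: under \eqref{eq:box-min} the Level-3 analysis gives $c_0\le\omega_\scrL\le 1/c_0$ on $Q$, so $\omega_\scrL^{-1}$ is bounded above and below; the weighted fidelity term is then coercive and strictly convex on $L^2(Q)$, and the direct method with lower semicontinuity of $TV$ produces a unique $u_\scrL\in BV(Q)$ depending only on $\omega_\scrL$. This is exactly the point where the box constraint is indispensable.

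For part (a) I would follow the proof of Theorem~\ref{thm:solL1} step by step. Fixing $c_0\in(0,1)$, pick $\bar\epsi\in(0,1)$ so that $|E|\le\bar\epsi$ forces $\Vert u_\eta\Vert_{L^2(E)}\le c_0/C_Q$, put $\bar k:=\min\{k\in\NN: 4^{-k}\le\bar\epsi\}$ and $\bar\scrP:=\{\scrL\in\scrP: |L|\ge 4^{-\bar k}\text{ for all }L\in\scrL\}$, a finite family. For any dyadic $L$ with $|L|\le\bar\epsi$ and any $\alpha\in[c_0,1/c_0]$ one has $\alpha\ge c_0\ge C_Q\Vert u_\eta\Vert_{L^2(L)}$, so Proposition~\ref{prop:Ja257} gives $u_{\alpha,L}\equiv[u_\eta]_L$; the Level-3 cost is then constant in $\alpha$ and $\bar\alpha_L=c_0$ by \eqref{eq:box-min}. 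Replacing the finest cells of an overrefined partition by their dyadic parents therefore leaves $\omega_\scrL$, hence $u_\scrL$, unchanged, and after finitely many such replacements every $\scrL\in\scrP$ is traded for one in $\bar\scrP$ with the same Level-2 solution. This yields precisely the stopping criterion of Remark~\ref{rmk:boxtresh} in the sense of Definition~\ref{def:stop}.

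For part (b) I would argue as in Subsection~\ref{sect:TVL3}. Enlarging if necessary the finite family of the given stopping criterion, one may assume $\bar\scrP$ is of threshold form, so $\medcup_{\scrL\in\bar\scrP}\scrL$ is the finite set of all dyadic squares of size $\ge 4^{-\bar k}$; each such $L$ satisfies (i)--(ii) by hypothesis, so Theorem~\ref{thm:onalpha} gives $c_L\le\alpha_L<C_Q\Vert u_\eta\Vert_{L^2(L)}\le C_Q\Vert u_\eta\Vert_{L^2(Q)}$ for the unconstrained Level-3 optimum $\alpha_L$. Setting
\[
c_0:=\min\Big\{\min_{L\in\medcup_{\scrL\in\bar\scrP}\scrL}c_L,\ \big(C_Q\Vert u_\eta\Vert_{L^2(Q)}\big)^{-1},\ \tfrac12\Big\},
\]
the box $[c_0,1/c_0]$ is inactive on every such $L$, i.e.\ $\bar\alpha_L=\alpha_L$, so the weights $\omega_\scrL$ — hence the Level-2 minimizers $u_\scrL$ and the Level-1 cost $\int_Q|u_c-u_\scrL|^2$ — coincide in the unconstrained and box-constrained schemes for every $\scrL\in\bar\scrP$. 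Since by part (a) the box-constrained scheme also satisfies a stopping criterion, its Level-1 infimum over $\scrP$ is attained inside a finite threshold family; shrinking $c_0$ further if needed so that this family contains $\bar\scrP$ (shrinking $c_0$ only widens the box, hence deepens the allowed refinement) and combining the coincidence of costs on $\bar\scrP$ with the stopping property of the unconstrained scheme on $\bar\scrP$, one gets that both schemes share the same optimal value and the same optimal restored image $u^*$.

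The main obstacle is this last matching in part (b): reconciling the finite family $\bar\scrP$ attached to the hypothesized stopping criterion for the unconstrained scheme with the finite family produced by part (a) for the box-constrained one, and checking that no intermediate-scale partition lying in the latter but not the former can strictly lower the Level-1 cost. This is exactly the delicate point already present in Theorem~\ref{thm:equiv} and is handled by the freedom to shrink $c_0$ together with the coarsening structure of part (a); everything else is a routine transcription of Section~\ref{sect:wTV} via the identity between the minimizers of \eqref{eq:ROFf} and \eqref{eq:ROF}.
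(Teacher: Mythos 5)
Your proposal is correct and follows essentially the same route as the paper: the authors likewise reduce everything to the observation that the minimizers of \eqref{eq:ROFf} and \eqref{eq:ROF} coincide for each fixed $\alpha$, so that Proposition~\ref{prop:Ja257} and Theorem~\ref{thm:onalpha} transfer verbatim, and then invoke the arguments of Section~\ref{sect:wTV} (Theorem~\ref{thm:solL1} for part (a), and the choice $c_0:=\min\{\min_L c_L,\,(C_Q\Vert u_\eta\Vert_{L^2(Q)})^{-1}\}$ as in the proof of Theorem~\ref{thm:equiv} for part (b)). Your additional remark on the well-posedness of Level~2 under the box constraint matches the paper's own comment that the box constraint is essential precisely for this purpose.
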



\section{Analysis of the Weighted-TGV learning scheme 
\texorpdfstring{\((\scrL\!\scrS)_{{TGV\!}_\omega}\)}{TGV-w}}
\label{sect:wTGV}
This section is devoted to proving the existence of solutions to the training scheme \((\scrL\!\scrS)_{{TGV\!}_\omega}\) described in \eqref{lsTGVomega}. 
We begin by providing the precise definition of the quantities $\scrV_{\omega_{\mathscr{L}}^0}$ and $\scrV_{\omega_{\mathscr{L}}^1}$ in \eqref{eq:TGVomega-def}, which are particular instances of the general definition of the  weighted variation of a Radon measure introduced in Section~\ref{sect:glo} (see \eqref{eq:VarWeight}). 
\begin{definition}
\label{def:omegaBD}
Let $\Omega$ be an open set in $\RR^n$ and  $\omega:\Omega\to [0,+\infty)$ a locally integrable function. Given $u\in L^1_{\omega,\rm loc}(\Omega)$ and $v\in L^1_{\omega,\rm loc}(\Omega;\RR^n)$ (see \eqref{eq:L1wloc}), we set
\begin{equation}
\label{eq:varBVvw}
\begin{aligned}
\scrV_{\omega}(Du-v,\Omega):=\sup\left\{\int_\Omega (u\, {\mathrm{div}}\,\varphi+v\cdot\varphi)\dd
x:\,\varphi\in {\mathrm{Lip}}_c(\Omega;\RR^n),\,|\varphi|\leq \omega\right\}
\end{aligned}
\end{equation}
and
\begin{equation}
\label{eq:varBDw}
\begin{aligned}
\scrV_{\omega}(\Ecal v,\Omega):=\sup\left\{\int_\Omega (v\cdot {\mathrm{div}}\,
\xi)\dd x:\,\xi\in {\mathrm{Lip}}_c(\Omega;\mathbb{R}^{n\times n}_{\textrm
sym}),\,|\xi|\leq \omega\right\},
\end{aligned}
\end{equation}
where $({\mathrm{div}}\, \xi)_j = \sum_{k=1}^n \frac{\partial \xi_{jk}}{\partial x_k}$  for each $j\in\{1,...,n\}$.

\end{definition}

\begin{remark}\label{rmk:onVar}
Recalling  \eqref{eq:VarWeight}, we are using an abuse of notation in the preceding definition
 as we are not requiring \(Du\) nor \(\Ecal v\) to be Radon measures. However, if \(u\in BV(\Omega) \), then \eqref{eq:varBVvw} is the \(\omega\)-weighted variation of the Radon measure \(Du-v:= Du-v\Lcal^n\lfloor\Omega
\in \Mcal(\Omega;\RR^n)\) in the sense of \eqref{eq:VarWeight}. Similarly, if \(v\in BD(\Omega)\), then \eqref{eq:varBDw} is 
the \(\omega\)-weighted variation of the
Radon measure \(\Ecal v\in \Mcal(\Omega;\mathbb{R}^{n\times n}_{\textrm
sym})\) in the sense of \eqref{eq:VarWeight}. 

\end{remark}

Analogously to the   \((\scrL\!\scrS)_{{TV\!}_\omega}\) case, we analyze each level of \((\scrL\!\scrS)_{{TGV\!}_\omega}\) in a dedicated subsection. 

To prove existence of a solution to the learning scheme \((\scrL\!\scrS)_{{TGV\!}_\omega}\) in \eqref{lsTGVomega}, we  argue by a box-constraint approach
in which we replace the requirement \(\alpha=(\alpha_0,\alpha_1)\in\RR^+\times \RR^+\) by the stricter condition \eqref{eq:bc-TGV}. 
In this case, we replace  \eqref{eq:alpha-Lg-TGV} 
by
\begin{equation}
\label{eq:box-min-TGV}
\bar \alpha_L=\inf\left\{\argmin\left\{\int_L |u_c-u_{\alpha,L}|^2\dd x:\,\alpha\in
\big[c_0,\tfrac{1}{c_0}\big] \times \big[c_1,\tfrac{1}{c_1}\big]\right\}\right\}.
\end{equation}

Throughout this section, for $u\in L^2(\Omega)$, we denote by $\langle u\rangle_\Omega$ the affine projection of $u$ given  by the unique solution to the minimization problem
\begin{equation}
\label{eq:profAff}
\begin{aligned}
\min\left\{\int_\Omega |u-v|^2\dd x:\,v\text{ is affine in }\Omega\right\},
\end{aligned}
\end{equation}
which will play an analogous role to the average $[u]_\Omega$ in the $TV$ case treated in Section \ref{sect:wTV}. Note that we have the orthogonality property
\begin{equation}
\label{eq:affine-proj}
\int_{\Omega} (u-\langle u\rangle_\Omega)\langle u\rangle_\Omega \dd x=0
\end{equation}
for every $u\in L^2(\Omega)$, since $\langle u\rangle_\Omega$ is the Hilbert projection of $u$ onto a finite dimensional subspace of $L^2(\Omega)$.

\subsection{On Level 3}\label{sect:TGVL3}
We provide here an analysis of Level 3, and minor variants thereof,  of the learning scheme 
\((\scrL\!\scrS)_{{TGV\!}_\omega}\)  in \eqref{lsTGVomega}.

As in the weighted $TV$ scheme case, the parameter \(\alpha_L\) in Level 3 of \((\scrL\!\scrS)_{{TGV\!}_\omega}\) (see \eqref{eq:alpha-Lg-TGV}) is uniquely determined by definition, and it satisfies \(\alpha_L\in [0,+\infty]^2\). In view of Theorem~\ref{thm:onalpha-TGV} (see Subsection~\ref{subs:stop-TGV}), if \(L\in\scrL\) is such that
\begin{equation}
\begin{aligned}\label{eq:dataonLTGV}
TGV_{\hat{\alpha}_0,\hat{\alpha}_1}(u_c,L) < TGV_{\hat{\alpha}_0,\hat{\alpha}_1}(u_\eta,L) \quad \text{ and } \quad \Vert u_\eta - u_c\Vert^2_{L^2(L)}
<\Vert\langle  u_\eta\rangle_L - u_c\Vert^2_{L^2(L)}
\end{aligned}
\end{equation}
 for some
$\hat{\alpha} = (\hat{\alpha}_0,\hat{\alpha}_1)$, then
\begin{equation*}
\begin{aligned}
\arginf &\left\{\int_L |u_c-u_{\alpha,L}|^2\,\dd x\!:\,\alpha\in
\RR^+ \times \RR^+\right\} = \\ &\argmin \left\{\int_L |u_c-u_{\alpha,L}|^2\,\dd x\!:\, \alpha\in
\RR^+ \times \RR^+ \text{ is s.t. } c_L\leq \min\{\alpha_0,\alpha_1\} < C_Q\Vert u_\eta\Vert_{L^2(L)} \right\},
\end{aligned}
\end{equation*}
where \(c_L\) and \(C_Q\) are positive constants, with \(c_Q\)  depending only on \(Q\). 
Furthermore, because each partition
$\mathscr{L\in \scrP}$ is finite, it follows that if \eqref{eq:dataonLTGV} holds for all \(L\in \scrL\), then 
\begin{equation}\label{eq:optcases-TGV}
\displaystyle\alpha_L\in \Big[\min_{L\in\scrL} c_L, +\infty\Big]\times \Big[\min_{L\in\scrL} c_L, +\infty\Big] \setminus \{(+\infty,+ \infty)\}.
\end{equation}
Moreover, if we consider Level~3 with \eqref{eq:alpha-Lg-TGV} 
replaced by \eqref{eq:box-min-TGV}, then the minimum
\begin{equation*}
\begin{aligned}
\min_{\alpha\in   [c_0,\frac{1}{c_0}] \times [c_1,\frac{1}{c_1}]} \int_L |u_c-u_{\alpha,L}|^2\,\dd x
\end{aligned}
\end{equation*}
exists  as the minimum of a lower semicontinuous function (see Lemma~\ref{lem:Ilsc-TGV} in Subsection~\ref{subs:stop-TGV}) on a compact set. In particular, \(\bar \alpha_L\) in \eqref{eq:box-min-TGV} is uniquely determined and belongs to the set in \eqref{eq:bc-TGV}.

\subsection{On Level 2}
\label{subs: Level 2-TGV}
In this subsection, we discuss the existence of solutions to \eqref{eq:minprS2g}. In what follows, let $\Omega\subset \RR^n$ be an open set and  $\omega:\Omega\to [0,\infty)$  a locally integrable function. Recalling the definition of $L^1_{\omega, {\rm loc}}(\Omega)$ and $\|\cdot\|_{L^1_\omega(\Omega)}$ in Subsection \ref{sect:L2}, as well as \eqref{eq:varBDw},  we define the space $BD_\omega(\Omega)$ of \(\omega\)-weighted $BD$ functions in $\Omega$ by
\begin{equation*}
BD_\omega(\Omega):=\big\{v\in L^1_{\omega}(\Omega;\RR^n):\,\scrV_{\omega}(\Ecal v,\Omega)<\infty \big\},
\end{equation*}
and we endow it with the semi-norm 
\begin{equation*}
\|v\|_{BD_\omega(\Omega)}:=\|v\|_{L^1_\omega(\Omega;\RR^n)}+\scrV_\omega(\Ecal v,\Omega).
\end{equation*}
Note that if $\essinf_\Omega\omega>0$, the semi-norm above is actually a norm, and that \(BD_\omega\) with $\omega\equiv 1$  coincides with the classical space of functions with bounded deformation, cf.~\cite{Te83} for instance. The instrumental properties of $BD_\omega$   for our analysis are collected in the ensuing result.

\begin{theorem}
\label{thm:ppBVw-tgv}
Let \(\Omega\subset \RR^n\) be an open set and   
\(\omega:\Omega\to [0,\infty)\)    a  locally integrable function. Then, the following statements hold:
\begin{itemize}
\item[(i)] If \(\inf_\Omega\omega>0\), then the map \(v\mapsto \scrV_{\omega}(\Ecal v,\Omega)\) is lower-semicontinuous with respect to the (strong) convergence in \(L^1_{\omega,\loc}(\Omega;\R^n)\).

\item[(ii)] Given  \(v\in
L^1_{\omega,\loc}(\Omega;\RR^n)\), we have \(\scrV_{\omega}(\Ecal v,\Omega)=\scrV_{\omega^{sc^-}}(\Ecal v,\Omega)\), where  \(\omega^{sc^-}\) denotes the lower-semicontinuous envelope
of $\omega$.

\item[(iii)] Assume \(\omega\) is lower-semicontinuous
and strictly positive. Then, we have  \(v\in
L^1_{\loc}(\Omega;\RR^n)\) and \(\scrV_{\omega}(\Ecal v,\Omega)<\infty\) if and only if \(v\in BD_{\rm loc}(\Omega)\) and \(\omega\in L^1(\Omega;\vert \Ecal v\vert)\). If any of these two equivalent conditions hold, we have
\begin{equation*}
\begin{aligned}
\scrV_{\omega}(\Ecal v,B)= \int_B \omega(x)\dd|\Ecal v|(x) 
\end{aligned}
\end{equation*}
for every Borel set \(B\subset \Omega\).
\item[(iv)] If \(\omega \in L^\infty_{\loc}(\Omega)\) is  lower-semicontinuous and strictly positive, then all bounded sequences in $BD_{\omega}(\Omega)$ are precompact in the strong $L^1_{\omega,{\rm loc}}$-topology. 
\end{itemize}
\end{theorem}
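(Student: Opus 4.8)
The four items are the $BD$-analogues of the corresponding parts of Theorem~\ref{thm:ppBVw}, and my plan is to transcribe the arguments developed for $BV_\omega$ in \cite{Ca08} to the symmetrized-gradient setting; the only genuinely $BD$-specific ingredients will be the integration-by-parts formula for $BD_{\loc}$ functions and the compact embedding $BD(U)\embed\embed L^1(U;\RR^n)$ for bounded Lipschitz $U$, both available from \cite{Te83}.

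I would start with (ii), since it requires no positivity of $\omega$. Any competitor $\xi$ in \eqref{eq:varBDw} is continuous, so $|\xi|$ is lower-semicontinuous and satisfies $|\xi|\leq\omega$; by the very definition of the lower-semicontinuous envelope this forces $|\xi|\leq\omega^{sc^-}$ pointwise. Hence every competitor for $\scrV_\omega(\Ecal v,\Omega)$ is also a competitor for $\scrV_{\omega^{sc^-}}(\Ecal v,\Omega)$, giving $\scrV_\omega(\Ecal v,\Omega)\leq\scrV_{\omega^{sc^-}}(\Ecal v,\Omega)$, while the reverse inequality is immediate from $\omega^{sc^-}\leq\omega$. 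For (i), I would fix $\xi\in\Lip_c(\Omega;\RR^{n\times n}_{sym})$ with $|\xi|\leq\omega$, set $K:=\supp\xi$, and note that on $K$ the norms $\|\cdot\|_{L^1}$ and $\|\cdot\|_{L^1_\omega}$ are equivalent because $\inf_\Omega\omega>0$; since $\Div\xi\in L^\infty(\Omega;\RR^n)$ is supported in $K$, the linear functional $v\mapsto\int_\Omega v\cdot\Div\xi\dd x$ is continuous for the strong $L^1_{\omega,\loc}$-convergence, and $v\mapsto\scrV_\omega(\Ecal v,\Omega)$, being a supremum of such functionals, is lower-semicontinuous; this is the $BD$-version of \cite[Proposition~1.3.1]{Ca08}.

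The core of the theorem is (iii), assuming $\omega$ lower-semicontinuous and strictly positive. For $v\in BD_\loc(\Omega)$ the integration-by-parts formula gives $\int_\Omega v\cdot\Div\xi\dd x=-\int_\Omega\xi\cdot\dd\Ecal v$ for $\xi\in\Lip_c$, so $\bigl|\int_\Omega v\cdot\Div\xi\dd x\bigr|\leq\int_\Omega|\xi|\dd|\Ecal v|\leq\int_\Omega\omega\dd|\Ecal v|$, whence $\scrV_\omega(\Ecal v,\Omega)\leq\int_\Omega\omega\dd|\Ecal v|$. For the reverse bound I would use the polar decomposition $\Ecal v=\sigma\,|\Ecal v|$ with $|\sigma|=1$ $|\Ecal v|$-a.e., approximate $\omega$ from below by Lipschitz minorants $\omega_k\nearrow\omega$ (available since $\omega$ is lower-semicontinuous; see \cite[Theorem~2.1.2]{Ca08} and Example~\ref{eg:onregweights}) and $\sigma$ by continuous matrix fields in $L^1(\Omega;|\Ecal v|)$, and then mollify and cut off to obtain admissible $\xi$'s with $\int_\Omega v\cdot\Div\xi\dd x$ arbitrarily close to $\int_\Omega\omega\dd|\Ecal v|$; the same construction localized to open sets yields $\scrV_\omega(\Ecal v,B)=\int_B\omega\dd|\Ecal v|$ first for open $B$ and then, since both sides are Borel measures on $\Omega$ agreeing on open sets, for all Borel $B\subset\Omega$. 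Conversely, if $v\in L^1_\loc(\Omega;\RR^n)$ and $\scrV_\omega(\Ecal v,\Omega)<\infty$, then for every open $U\Subset\Omega$ the number $c_U:=\min_{\overline U}\omega$ is strictly positive (a lower-semicontinuous function attains its infimum on the compact $\overline U$), so rescaling test fields gives $\bigl|\int_U v\cdot\Div\xi\dd x\bigr|\leq c_U^{-1}\scrV_\omega(\Ecal v,\Omega)\,\|\xi\|_{L^\infty}$ for all $\xi\in\Lip_c(U;\RR^{n\times n}_{sym})$; by density this means $v|_U\in BD(U)$, and since $U\Subset\Omega$ was arbitrary, $v\in BD_\loc(\Omega)$, after which the first part gives $\int_\Omega\omega\dd|\Ecal v|=\scrV_\omega(\Ecal v,\Omega)<\infty$, i.e.\ $\omega\in L^1(\Omega;|\Ecal v|)$. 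This is the $BD$-analogue of \cite[Theorem~2.1.5]{Ca08}.

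Finally, for (iv) I would fix an exhaustion $\Omega=\bigcup_m U_m$ by bounded Lipschitz open sets with $U_m\Subset U_{m+1}\Subset\Omega$. On each $U_m$ one has $0<\min_{\overline{U_m}}\omega\leq\esssup_{U_m}\omega<\infty$ (lower-semicontinuity and strict positivity give the lower bound on $\overline{U_m}$, while $\omega\in L^\infty_\loc$ gives the upper bound), so a bounded sequence $(v_j)_j$ in $BD_\omega(\Omega)$ is bounded in $L^1(U_m;\RR^n)$ and, by (iii), satisfies $|\Ecal v_j|(U_m)\leq(\min_{\overline{U_m}}\omega)^{-1}\scrV_\omega(\Ecal v_j,\Omega)$ uniformly in $j$; hence $(v_j)_j$ is bounded in $BD(U_m)$. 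The compact embedding $BD(U_m)\embed\embed L^1(U_m;\RR^n)$ from \cite{Te83}, together with a diagonal extraction over $m$, produces a subsequence converging in $L^1_\loc(\Omega;\RR^n)$, which by the two-sided bounds on $\omega$ over compacts is the same as convergence in $L^1_{\omega,\loc}(\Omega;\RR^n)$. The main obstacle I anticipate is the lower bound in (iii): the mollification that regularizes the polar direction $\sigma$ of $\Ecal v$ could push $|\xi|$ above the merely lower-semicontinuous weight $\omega$, which is precisely why one first passes to Lipschitz minorants $\omega_k$ and only lets $k\to\infty$ at the very end; the remaining steps are routine adaptations of the $BV_\omega$ theory.
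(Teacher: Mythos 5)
Your proposal is correct and follows essentially the same route as the paper, which disposes of (i)--(iii) by adapting \cite[Proposition~1.3.1, Proposition~2.1.1, Theorem~2.1.5]{Ca08} to symmetric-matrix-valued test fields and proves (iv) via local two-sided bounds on $\omega$ combined with the compact embedding of $BD$ into $L^1$ from \cite{Te83}. You supply more of the details that the paper delegates to the cited references (in particular the Lipschitz minorants $\omega_k\nearrow\omega$ for the lower bound in (iii) and the exhaustion by Lipschitz open sets with a diagonal extraction in (iv)), and you correctly identify why $\inf_\Omega\omega>0$ is needed in (i), in line with the paper's own remark that the coarea-based argument of the $BV$ case is unavailable here.
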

\begin{proof}
Accounting for the fact that test functions here take values in $\mathbb{R}^{n\times
n}_{\rm sym}$, the proof of \((i)\), \((ii)\), and \((iii)\) may be obtained by mimicking that of 
\cite[Proposition 1.3.1]{Ca08},  \cite[Proposition 2.1.1]{Ca08}, and  \cite[Theorem~2.1.5]{Ca08}, respectively.

 To prove $(iv)$, we observe that for each compact set \(K\subset \Omega\), there exists a positive constant \(c_K\) such that 0<  $\tfrac1{c_K} \leq \omega\leq c_K$ in $K$ because \(\omega \in L^\infty_{\loc}(\Omega)\) and strictly positive lower-semicontinuous
functions are locally bounded away from zero.   Then, using  $(iii)$, we have    for every $v\in BD_\omega(\Omega)$ that
\begin{align*}
&\scrV_{\omega}(\mathcal{E}v,K)=\int_K \omega(x)\dd|\mathcal{E}v|(x) 
\begin{cases}
\geq \tfrac1{c_K}  |\mathcal{E}v|(K),\\
\leq c_K |\mathcal{E}v|(K),
\end{cases}\\
&\|v\|_{L^1_{\omega}(K;\RR^n)}=\int_\Omega |v(x)|\,\omega(x)\dd x
\begin{cases}
\geq \tfrac1{c_K}  \|v\|_{L^1(K;\mathbb{R}^n)},\\
\leq c_K \|v\|_{L^1(K;\mathbb{R}^n)}.
\end{cases}
\end{align*}
The preceding estimates and    the compact embedding of $BD(K)$ into $L^1(K;\mathbb{R}^n)$ (cf. \cite{Te83}) yield \((iv)\).
\end{proof}

\begin{remark}\label{rmk:oniv} If \(\omega:\Omega\to (0,\infty)\) is a lower-semicontinuous function satisfying \(0<c\leq \inf_\Omega \omega \leq \sup_\Omega \omega\leq  c^{-1}\) for some positive constant \(c\), then the  arguments in the preceding proof show that Theorem~\ref{thm:ppBVw-tgv}~\((iv)\) holds globally in \(\Omega\). In other words,  bounded
sequences in $BD_{\omega}(\Omega)$ are precompact in the strong $L^1_{\omega}(\Omega;\RR^n)$-topology. \end{remark}

\begin{remark}
Differently from the  weighted-TV case  (cf.~Theorem~\ref{thm:ppBVw}), we need the weights $\omega$  in Theorem \ref{thm:ppBVw-tgv}   to be bounded from below away from zero for item $(i)$ to hold. This is because one cannot resort
to arguments based on coarea formulas in the  symmetrized gradient case, which prevents us to adapt  the arguments in \cite[Remark 1.3.2 and Theorem 3.1.13]{Ca08} to this framework.
\end{remark}
The next result collects some basic properties of the quantity $\scrV_{\omega}(Du - v,\Omega)$ given by \eqref{eq:varBVvw}.

\begin{theorem}
\label{lem:tv-cont}
Let \(\Omega\subset \RR^n\) be an open set and   
\(\omega:\Omega\to [0,\infty)\)    a  locally integrable function. Let  $u\in BV_\omega(\Omega)$. Then, the following statements hold:
\begin{itemize}
\item[(i)] The map $v\to \scrV_{\omega}(Du-v,\Omega)$ is lower semicontinuous with respect to the strong convergence in $L^1_{\omega,{\rm loc}} (\Omega;\RR^n)$.
\item[(ii)] Given  \(v\in
L^1_{\omega,\loc}(\Omega;\RR^n)\), we have \(\scrV_{\omega}(Du- v,\Omega)=\scrV_{\omega^{sc^-}}(Du- v,\Omega)\), where  \(\omega^{sc^-}\) denotes the lower-semicontinuous envelope
of $\omega$.
\item[(iii)] If \(v\in L^1_{\omega,{\rm loc}} (\Omega;\RR^n) \) and $\omega\in L^1(\Omega; |Du-v|)$ is  lower-semicontinuous
and strictly positive, then
\begin{equation}
\label{eq:intrepDu-v}
\begin{aligned}
\scrV_\omega(Du-v,B)=\int_B \omega(x)\dd|Du-v|(x)
\end{aligned}
\end{equation}
for every Borel set $B\subset\Omega$.
\end{itemize}
\end{theorem}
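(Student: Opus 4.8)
The plan is to obtain all three items by adapting the corresponding parts of Theorem~\ref{thm:ppBVw} (and of \cite{Ca08}) to the presence of the extra linear term $v\cdot\varphi$ in the definition \eqref{eq:varBVvw}. The structural point behind $(i)$ is that, for each fixed admissible field $\varphi\in\Lip_c(\Omega;\R^n)$ with $|\varphi|\le\omega$, the map $v\mapsto\int_\Omega(u\,\Div\varphi+v\cdot\varphi)\dd x$ is affine, its $u$-part does not depend on $v$, and its $v$-part is continuous for the $L^1_{\omega,\loc}$-topology: since $\varphi$ is supported on a compact $K\subset\Omega$ and $|\varphi|\le\omega$ there, one has $\bigl|\int_\Omega v\cdot\varphi\dd x\bigr|\le\int_K|v|\,\omega\dd x=\|v\|_{L^1_\omega(K;\R^n)}$, so $v_j\to v$ in $L^1_{\omega,\loc}$ implies $\int_\Omega v_j\cdot\varphi\dd x\to\int_\Omega v\cdot\varphi\dd x$. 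Thus $v\mapsto\scrV_\omega(Du-v,\Omega)$ is a supremum of $L^1_{\omega,\loc}$-continuous functionals and hence lower semicontinuous. Note that, in contrast with Theorem~\ref{thm:ppBVw-tgv}~$(i)$, no lower bound on $\omega$ is required here, precisely because all the $v$-dependence lies in the linear term, which is controlled by $\omega$.

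For $(ii)$ I would argue exactly as in \cite[Proposition~2.1.1]{Ca08}: the admissible test fields $\varphi$ are continuous, hence $|\varphi|$ is continuous and equals its own lower semicontinuous envelope, so $|\varphi|\le\omega$ forces $|\varphi|=(|\varphi|)^{sc^-}\le\omega^{sc^-}$. Therefore the feasible set $\{\varphi\in\Lip_c(\Omega;\R^n):|\varphi|\le\omega\}$ coincides with $\{\varphi\in\Lip_c(\Omega;\R^n):|\varphi|\le\omega^{sc^-}\}$, and since the objective in \eqref{eq:varBVvw} does not see $\omega$, the two suprema are equal. The term $v\cdot\varphi$ plays no role in this reasoning, so the argument transfers without change.

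For $(iii)$ I would first upgrade $u$ to $BV_\loc(\Omega)$. Since $\omega$ is lower semicontinuous and strictly positive, it is bounded below by a positive constant on every compact subset of $\Omega$, so $u\in L^1_\omega(\Omega)$ gives $u\in L^1_\loc(\Omega)$, and Theorem~\ref{thm:ppBVw}~$(iii)$ then yields $u\in BV_\loc(\Omega)$ with $\omega\in L^1(\Omega;|Du|)$. In particular $Du$ is an $\R^n$-valued Radon measure, $\mu:=Du-v\,\Lcal^n\lfloor\Omega$ is a Radon measure, and $\omega\in L^1(\Omega;|\mu|)$ by hypothesis. Integrating by parts turns $\int_\Omega(u\,\Div\varphi+v\cdot\varphi)\dd x$ into $-\int_\Omega\varphi\cdot\dd\mu$, and — using a standard mollification to pass from $\Lip_c$ to $C_0$ test fields, together with the symmetry of the constraint under $\varphi\mapsto-\varphi$ — one identifies $\scrV_\omega(Du-v,\Omega)$ with the $\omega$-weighted variation of the measure $\mu$ in the sense of \eqref{eq:VarWeight}. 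The identity $\scrV_\omega(\mu,B)=\int_B\omega\dd|\mu|$ for Borel $B\subset\Omega$, valid for a Radon measure $\mu$ with lower semicontinuous strictly positive weight $\omega\in L^1(|\mu|)$, is then the analogue for $\mu$ of \cite[Theorem~2.1.5]{Ca08}, and localizing to Borel subsets gives \eqref{eq:intrepDu-v}.

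The routine parts are $(i)$ and $(ii)$, which are immediate variants of Theorem~\ref{thm:ppBVw}; the only place calling for real care is $(iii)$, where one must make sure that the hypotheses genuinely force $Du-v$ to be a Radon measure (this is what the embedding $BV_\omega\hookrightarrow BV_\loc$ under the stated assumptions on $\omega$ provides) and that the $\Lip_c$-supremum in \eqref{eq:varBVvw} actually computes the $C_0$-duality weighted variation, so that the measure-theoretic identity of \cite{Ca08} becomes applicable.
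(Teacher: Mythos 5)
Your proposal is correct and follows essentially the same route as the paper's proof: item $(i)$ via writing $\scrV_\omega(Du-\cdot,\Omega)$ as a supremum of functionals that are continuous for the $L^1_{\omega,\loc}$-topology (the $v$-part being controlled by $\int_{\supp\varphi}|v_k-v|\,\omega\dd x$), item $(ii)$ from the observation that continuity of $|\varphi|$ makes the constraints $|\varphi|\le\omega$ and $|\varphi|\le\omega^{sc^-}$ define the same feasible set, and item $(iii)$ by first upgrading $u$ to $BV_{\loc}(\Omega)$ so that $Du-v$ is a Radon measure, proving the easy inequality $\scrV_\omega(Du-v,\Omega)\le\int_\Omega\omega\dd|Du-v|$, and obtaining the converse from the weighted-variation identity for Radon measures. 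The only cosmetic difference is in $(iii)$: you invoke the measure-theoretic identity as a cited analogue of \cite[Theorem~2.1.5]{Ca08}, whereas the paper re-derives its nontrivial half explicitly by testing with $\omega_k\psi$, where $(\omega_k)_k$ is the increasing Lipschitz (Moreau--Yosida) approximation of $\omega$, and passing to the limit by dominated convergence.
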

\begin{proof}
To prove $(i)$, let $(v_k)_{k\in \N}\subset L^1_{\omega, {\rm loc}}(\Omega;\RR^n)$ be a sequence such that $v_k\to v$ strongly in $L^1_{\omega,{\rm loc}}(\Omega;\R^n)$. Then, by Definition \ref{def:omegaBD}, 
$$\scrV_{\omega}(Du-v_k,\Omega)\geq \int_\Omega \big(u\,{\rm div}\,\varphi+v_k \cdot \varphi\big) \dd{x}$$
for every $\varphi\in {\rm Lip}_c(\Omega;\R^n)$ with $|\varphi|\leq \omega$ in $\Omega$. Moreover, for all such \(\varphi\),
\begin{equation*}
\begin{aligned}
\int_\Omega |v_k - v||\varphi|\dd x\leq \int_{\supp\varphi} |v_k - v|\,\omega \dd x \to 0 \text{ as } k\to+\infty.
\end{aligned}
\end{equation*}
Hence, 
$$\liminf_{k\to +\infty}\scrV_{\omega}(Du-v_k,\Omega)\geq \int_\Omega \big(u\,{\rm div}\,\varphi+v\cdot \varphi\big) \dd{x},$$
from which the conclusion follows by taking the supremum over all  test functions $\varphi\in {\rm Lip}_c(\Omega;\R^n)$ with $|\varphi|\leq \omega$ in $\Omega$. 
 
The proof of $(ii)$ follows  by Definition \ref{def:omegaBD}, observing that every map $\varphi\in {\rm Lip}_c(\Omega;\R^n)$ with $|\varphi|\leq \omega$ in $\Omega$ also satisfies $|\varphi|\leq (\omega^{\rm sc})^-$ in $\Omega$. 

As we discuss next, the proof of $(iii)$ is an adaptation of \cite[Theorem 2.1.5]{Ca08}. In fact, because $u\in BV_\omega(\Omega)$
and  strictly positive lower-semicontinuous
functions are locally bounded away from zero, we have \(u\in
BV_{\rm loc}(\Omega)\). Then,   for every  $\varphi\in {\rm Lip}_c(\Omega;\R^n)$
with $|\varphi|\leq \omega$ in $\Omega$, we have that $$\int_\Omega \big(u\,{\rm div}\,\varphi+v\cdot \varphi\big) \dd{x}\leq \int_\Omega \omega \dd|Du-v|; $$
 hence, $\scrV_\omega(Du-v,\Omega)\leq \int_\Omega \omega \dd|Du-v|$. 
Conversely, since $\omega\in L^1(\Omega;|Du-v|)$, we infer that
\begin{equation}
\label{eq:yyy}
\begin{aligned}
\int_{\Omega}\omega \dd|Du-v|=|\omega (Du-v)|(\Omega)=\sup\left\{\int_\Omega
\omega\,\psi\cdot \dd(Du-v):\,\psi\in {\rm Lip}_c(\Omega;\R^n),\,|\psi|\leq
1\right\}.
\end{aligned}
\end{equation}
Let \((\omega_k)_{k\in\NN}\) be an increasing sequence  of \(k\)-Lipschitz
functions converging to \(\omega\) in \(\Omega\) as in Example~\ref{eg:onregweights}
(see also  \cite[Theorem
2.1.2]{Ca08}). Then, for every $\psi\in {\rm Lip}_c(\Omega;\R^n)$
with $|\psi|\leq
1$ in $\Omega$, we have   $\omega_k \, \psi
 \in {\rm Lip}_c(\Omega;\R^n)$ with $|\omega_k \, \psi|\leq
\omega_k \leq \omega$ in $\Omega$; thus,  
  using the Lebesgue dominated convergence theorem and recalling
 \eqref{eq:varBVvw},
we find that
\begin{align*}\int_{\Omega} \omega\,\psi\cdot \dd(Du-v)&=\lim_{k\to\infty}
\int_{\Omega} \omega_k\,\psi\cdot \dd(Du-v)\\&=-\lim_{k\to\infty}\int_{\Omega} \big(u\,{\rm div}\,(\omega_k\,\psi)+\omega_k\,\psi\cdot v\big)\,\dd x\leq \scrV_\omega(Du-v,\Omega).\end{align*}
 From this estimate and  \eqref{eq:yyy}, we deduce that $ \int_\Omega \omega \dd|Du-v| \leq \scrV_\omega(Du-v,\Omega)$, which concludes
the proof of \eqref{eq:intrepDu-v} when \(B=\Omega\). The proof that this identity holds for every Borel set \(B\subset\Omega\) can be done exactly
as in 
\cite[Theorem 2.1.5]{Ca08}.
 \end{proof}

We proceed by showing that the infimum in  
\begin{equation}
\label{eqTGV-precise}
TGV_{\omega_0,\omega_1}(u,Q):=\inf_{v\in BD_{\omega_1}(Q)}\big\{\scrV_{\omega_0}(Du-v,Q)+{\scrV_{\omega_1}}(\Ecal
v,Q)\big\},
\end{equation}
where $\omega_0,\, \omega_1 :Q\to (0,+\infty)$ are bounded functions and \(u \in L^1_{\omega_0}(Q)\),  is actually a minimum, and that the contributions due to $\scrV_{\omega_0}$ and $\scrV_{\omega_1}$ can be expressed in a simplified way in terms of the lower semicontinuous envelopes of the weights $\omega_0$ and $\omega_1$. 
We begin with a technical lemma.

\begin{lemma}
\label{lem:bd-L1}
Let $c_0>0$ be a positive constant. For $i\in \{0,1\},$ let $\omega_i:Q\to (0,+\infty)$ be such that $c_0<\inf_Q \omega_i<\sup_Q \omega_i <\frac{1}{c_0}$, and  let $u\in L^1_{\omega_0,{\rm loc}}(Q)$. Then, for every $v\in L^1_{\omega_1}(Q;\R^n)$, we have
\begin{equation}
\label{eq:est-l1-w}
\|v\|_{L^1_{\omega_1}(Q;\RR^n)}\leq \frac{1}{c_0^2}
\big(\scrV_{\omega_0}(Du-v,Q)+TV_{\omega_0}(u,Q)\big).
\end{equation}
\end{lemma}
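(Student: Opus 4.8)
The plan is to reduce \eqref{eq:est-l1-w} to a purely unweighted estimate, exploiting that both weights are pinched between $c_0$ and $1/c_0$. First, since $\sup_Q\omega_1<1/c_0$ one has the trivial bound
\[
\|v\|_{L^1_{\omega_1}(Q;\RR^n)}=\int_Q |v|\,\omega_1\dd x\leq \frac1{c_0}\,\|v\|_{L^1(Q;\RR^n)},
\]
and in particular $v\in L^1(Q;\RR^n)$ because $v\in L^1_{\omega_1}(Q;\RR^n)$. It therefore suffices to establish $c_0\,\|v\|_{L^1(Q;\RR^n)}\leq \scrV_{\omega_0}(Du-v,Q)+TV_{\omega_0}(u,Q)$.

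To this end I would observe that every $\varphi\in\Lip_c(Q;\RR^n)$ with $|\varphi|\leq c_0$ automatically satisfies $|\varphi|\leq \omega_0$ in $Q$ (as $\omega_0\geq\inf_Q\omega_0>c_0$); hence $\varphi$ is admissible in the supremum \eqref{eq:varBVvw} defining $\scrV_{\omega_0}(Du-v,Q)$, and $-\varphi$ is admissible in the supremum \eqref{eq:TVwei} defining $TV_{\omega_0}(u,Q)$. Writing
\[
\int_Q v\cdot\varphi\dd x=\int_Q\big(u\,\diverg\varphi+v\cdot\varphi\big)\dd x-\int_Q u\,\diverg\varphi\dd x
\]
and bounding the first term by $\scrV_{\omega_0}(Du-v,Q)$ and the second from below by $-TV_{\omega_0}(u,Q)$ (using $-\varphi$ as a test field in the latter), we get $\int_Q v\cdot\varphi\dd x\leq \scrV_{\omega_0}(Du-v,Q)+TV_{\omega_0}(u,Q)$ for every such $\varphi$. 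Note that both quantities on the right are nonnegative (take $\varphi\equiv0$), so no indeterminate form arises even if one of them is infinite, in which case \eqref{eq:est-l1-w} is trivially true.

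It then remains to take the supremum over $\varphi\in\Lip_c(Q;\RR^n)$ with $|\varphi|\leq c_0$. Substituting $\varphi=c_0\psi$, this supremum equals $c_0\sup\{\int_Q v\cdot\psi\dd x:\,\psi\in\Lip_c(Q;\RR^n),\ |\psi|\leq1\}=c_0\,\|v\|_{L^1(Q;\RR^n)}$ by the standard duality between $L^1$ and $L^\infty$, which is valid since $v\in L^1(Q;\RR^n)$; concretely one approximates the field $v/|v|$ (extended by $0$ on $\{v=0\}$) in $L^1(Q;|v|\dd x;\RR^n)$ by smooth compactly supported fields of norm $\leq1$, which is possible because $|v|\dd x$ is a finite Radon measure on the bounded open set $Q$. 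Combining the three displayed estimates yields $\|v\|_{L^1_{\omega_1}(Q;\RR^n)}\leq\tfrac1{c_0}\cdot\tfrac1{c_0}\big(\scrV_{\omega_0}(Du-v,Q)+TV_{\omega_0}(u,Q)\big)$, which is exactly \eqref{eq:est-l1-w}. The only nonroutine ingredient is this last duality identity; everything else is elementary manipulation of the definitions together with the two-sided bounds on $\omega_0$ and $\omega_1$, so I do not anticipate a genuine obstacle.
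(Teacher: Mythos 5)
Your argument is correct and is essentially the paper's own proof: the same two-sided use of the bounds on $\omega_0,\omega_1$, the same decomposition $\int_Q v\cdot\varphi\,\dd x=\int_Q(u\,\diverg\varphi+v\cdot\varphi)\,\dd x-\int_Q u\,\diverg\varphi\,\dd x$ bounded by $\scrV_{\omega_0}(Du-v,Q)+TV_{\omega_0}(u,Q)$, and the same duality identity $\|v\|_{L^1(Q;\RR^n)}=\sup\{\int_Q v\cdot\psi\,\dd x:\psi\in\Lip_c(Q;\RR^n),\,|\psi|\leq 1\}$ after the rescaling $\varphi=c_0\psi$. Your added remarks (justifying the duality by approximation of $v/|v|$, and noting nonnegativity of both right-hand terms) only make explicit what the paper leaves implicit.
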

\begin{proof}
Fix $v\in L^1_{\omega_1}(Q;\R^2)$. Note that  the uniform bounds on \(\omega_1\) yield
\begin{equation}\label{eq:est-l1-w1}
\begin{aligned}
c_0\int_Q |v(x)| \dd x \leq  \int_Q \omega_1(x) |v(x)| \dd x = \|v\|_{L^1_{\omega_1}(Q;\RR^n)}   \leq \frac{1}{c_0}
\int_Q |v(x)| \dd x.
\end{aligned}
\end{equation}
In particular, 
  \(v \in L^1(Q;\RR^2)\); thus,%
\begin{equation}\label{eq:est-l1-w2}
\begin{aligned}
c_0\int_Q |v(x)| \dd x &= c_0\sup\bigg\{ \int_Q \psi(x) \cdot v(x)\dd x: \psi \in 
{\rm Lip}_{\rm c}(Q;\R^2), \,\, \Vert \psi\Vert_{L^\infty (Q;\RR^2)} \leq 1 \bigg\} \\
& =  \sup\bigg\{ \int_Q \tilde\psi(x) \cdot v(x)\dd x:
\tilde \psi \in 
{\rm Lip}_{\rm c}(Q;\R^2), \,\, \Vert\tilde \psi\Vert_{L^\infty (Q;\RR^2)} \leq
c_0 \bigg\} \\
& \leq \sup\bigg\{\int_Q \varphi(x) \cdot v(x)\dd x:\,\varphi\in
{\rm Lip}_{\rm c}(Q;\R^2),\,\,  |\varphi|\leq \omega_0\bigg\}\\
&\leq \scrV_{\omega_0}(Du-v,Q)+TV_{\omega_0}(u,Q),
\end{aligned}
\end{equation}
where we used Definition~\ref{def:omegaBD} together with the subadditivity of the supremum in the last estimate, and the bound \(c_0 \leq \inf_Q \omega_0 \) in the preceding one. We then obtain \eqref{eq:est-l1-w} by combining \eqref{eq:est-l1-w1} and \eqref{eq:est-l1-w2}.
\end{proof}

Under the same assumptions of Lemma \ref{lem:bd-L1}, the infimum problem in \eqref{eq:TGVomega-def} is actually a minimum.

\begin{lemma}
\label{lem:wTGV-min}
Let $c_0>0$ be a positive constant. For $i\in \{0,1\},$ let $\omega_i:Q\to
(0,+\infty)$ be such that $c_0<\inf_Q \omega_i<\sup_Q \omega_i <\frac{1}{c_0}$, and 
 let $u\in L^1(Q)$. Then, there exists $u^\ast\in BD_{\omega_1}(Q)$ such that
\begin{equation}
\label{eq:minTGV}
\begin{aligned}
TGV_{\omega_0,\omega_1}(u,Q)=\scrV_{\omega_0}(Du-u^\ast, Q)+\scrV_{\omega_1}(\mathcal{E}u^\ast,
Q).
\end{aligned}
\end{equation}
\end{lemma}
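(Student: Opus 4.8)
The plan is to run the direct method of the calculus of variations on the infimum in \eqref{eqTGV-precise}. If $TGV_{\omega_0,\omega_1}(u,Q)=+\infty$ there is nothing to prove, as every $v\in BD_{\omega_1}(Q)$ (for instance $v=0$) attains that value; so assume henceforth that $TGV_{\omega_0,\omega_1}(u,Q)<+\infty$.

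The first step, which also resolves an apparent circularity when invoking Lemma~\ref{lem:bd-L1}, is to check that $u\in BV_{\omega_0}(Q)$. By finiteness of the $TGV$ there is some $v\in BD_{\omega_1}(Q)$ with $\scrV_{\omega_0}(Du-v,Q)<\infty$; since $\omega_1>c_0$ such $v$ belongs to $L^1(Q;\RR^n)$, and for every $\varphi\in\Lip_c(Q;\RR^n)$ with $|\varphi|\leq\omega_0$ the identity $\int_Q u\,\diverg\varphi=\int_Q(u\,\diverg\varphi+v\cdot\varphi)-\int_Q v\cdot\varphi$, together with $|\varphi|\leq\|\omega_0\|_{L^\infty(Q)}$, yields $TV_{\omega_0}(u,Q)\leq\scrV_{\omega_0}(Du-v,Q)+\|\omega_0\|_{L^\infty(Q)}\|v\|_{L^1(Q;\RR^n)}<\infty$. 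Hence $u\in BV_{\omega_0}(Q)$, so that Theorem~\ref{lem:tv-cont} and Lemma~\ref{lem:bd-L1} apply.

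Next, I would pick a minimizing sequence $(v_k)_{k\in\NN}\subset BD_{\omega_1}(Q)$, so that $M:=\sup_k\big(\scrV_{\omega_0}(Du-v_k,Q)+\scrV_{\omega_1}(\Ecal v_k,Q)\big)<\infty$; in particular $\scrV_{\omega_0}(Du-v_k,Q)\leq M$ and $\scrV_{\omega_1}(\Ecal v_k,Q)\leq M$ for all $k$. Lemma~\ref{lem:bd-L1} then gives $\|v_k\|_{L^1_{\omega_1}(Q;\RR^n)}\leq c_0^{-2}\big(M+TV_{\omega_0}(u,Q)\big)$, hence $\|v_k\|_{L^1(Q;\RR^n)}$ is bounded. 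For the symmetrized gradients, applying Theorem~\ref{thm:ppBVw-tgv}(ii)--(iii) to the lower semicontinuous, strictly positive weight $\omega_1^{sc^-}$ (which satisfies $\inf_Q\omega_1^{sc^-}=\inf_Q\omega_1>c_0$) yields $\scrV_{\omega_1}(\Ecal v_k,Q)=\scrV_{\omega_1^{sc^-}}(\Ecal v_k,Q)=\int_Q\omega_1^{sc^-}\dd|\Ecal v_k|\geq(\inf_Q\omega_1)\,|\Ecal v_k|(Q)$, so $|\Ecal v_k|(Q)\leq M/\inf_Q\omega_1$. Thus $(v_k)$ is bounded in $BD(Q)$, and by the compact embedding $BD(Q)\hookrightarrow L^1(Q;\RR^n)$ of \cite{Te83} --- alternatively by Remark~\ref{rmk:oniv} applied to $\omega_1^{sc^-}$ --- a non-relabeled subsequence converges strongly in $L^1(Q;\RR^n)$ to some $u^\ast$.

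Finally, since $\omega_0$ and $\omega_1$ are bounded above, this convergence also holds in $L^1_{\omega_0,\loc}$ and $L^1_{\omega_1,\loc}$, so Theorem~\ref{lem:tv-cont}(i) and Theorem~\ref{thm:ppBVw-tgv}(i) (the latter using $\inf_Q\omega_1>0$) give $\scrV_{\omega_0}(Du-u^\ast,Q)\leq\liminf_k\scrV_{\omega_0}(Du-v_k,Q)$ and $\scrV_{\omega_1}(\Ecal u^\ast,Q)\leq\liminf_k\scrV_{\omega_1}(\Ecal v_k,Q)\leq M$. In particular $u^\ast\in L^1_{\omega_1}(Q;\RR^n)$ with $\scrV_{\omega_1}(\Ecal u^\ast,Q)<\infty$, i.e.\ $u^\ast\in BD_{\omega_1}(Q)$ is admissible; adding the two lower semicontinuity estimates and using $\liminf_k a_k+\liminf_k b_k\leq\liminf_k(a_k+b_k)$ shows $\scrV_{\omega_0}(Du-u^\ast,Q)+\scrV_{\omega_1}(\Ecal u^\ast,Q)\leq TGV_{\omega_0,\omega_1}(u,Q)$, while the reverse inequality is the definition of the infimum, giving \eqref{eq:minTGV}. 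The main obstacle is the preliminary reduction to $u\in BV_{\omega_0}(Q)$, which is what makes the bound in Lemma~\ref{lem:bd-L1} usable; after that the argument is a routine compactness-plus-lower-semicontinuity scheme, the only delicate point being that, $\omega_1$ not being assumed lower semicontinuous, the compactness bound on $|\Ecal v_k|(Q)$ must be routed through $\omega_1^{sc^-}$ via Theorem~\ref{thm:ppBVw-tgv}(ii).
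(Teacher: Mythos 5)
Your proof is correct and follows essentially the same route as the paper's: the direct method applied to the infimum in \eqref{eqTGV-precise}, with the preliminary reduction to $u\in BV_{\omega_0}(Q)$, the $L^1_{\omega_1}$ bound from Lemma~\ref{lem:bd-L1}, compactness in $BD$, and the lower semicontinuity statements of Theorems~\ref{thm:ppBVw-tgv} and~\ref{lem:tv-cont}. The only cosmetic difference is that you extract the bound on $|\Ecal v_k|(Q)$ explicitly and invoke the classical compact embedding of $BD(Q)$ into $L^1$, whereas the paper packages the same step as Theorem~\ref{thm:ppBVw-tgv}~(iv) and Remark~\ref{rmk:oniv} applied to $BD_{(\omega_1)^{\mathrm{sc}^-}}(Q)$.
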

\begin{proof} We claim that  $TGV_{\omega_0,\omega_1}(u,Q)$ is finite if and only if $u\in
BV_{\omega_0}(Q)$. In fact, choosing $v=0$ as a competitor in \eqref{eqTGV-precise}, we infer that $TGV_{\omega_0,\omega_1}(u,Q)\leq TV_{\omega_0}(u,Q)$. On the other hand, recalling ~\eqref{eq:TVwei},
we have for any \( v\in BD_{\omega_1}(Q) \) that
\begin{align*}
TV_{\omega_0}(u,Q)&=\sup\left\{\int_Q (u \, {\mathrm{div}}\,\varphi+v \cdot \varphi
-v\cdot \varphi) \dd x:\,\varphi\in {\mathrm{Lip}}_c(\Omega;\mathbb{R}^2),\,|\varphi|\leq \omega_0\right\}\\
&\leq \scrV_{\omega_0}(Du-v, Q)+\|v\|_{L^1_{\omega_0}(Q;\mathbb{R}^2)}\\
&\leq \scrV_{\omega_0}(Du-v, Q)+\frac{1}{c_0^2}\|v\|_{L^1_{\omega_1}(Q;\mathbb{R}^2)},
\end{align*}
where  we used the subadditivity of the supremum combined with Definition \ref{def:omegaBD} in the first inequality, and the bounds on the two weights  in the second inequality. Thus, \(TV_{\omega_0}(u,Q) \leq \max\{1, c_0^{-2}\} \, TGV_{\omega_0,\omega_1}(u,Q)\), which concludes the proof of the claim.

To show \eqref{eq:minTGV}, we may assume without loss of generality that $TGV_{\omega_0,\omega_1}(u,Q)<\infty$,  in which case   $u\in BV_{\omega_0}(Q)$. Moreover, we may find a sequence   $(v_n) \subset BD_{\omega_1}(Q)$  such that
 \begin{equation}
 \label{eq:minimality-tgv}TGV_{\omega_0,\omega_1}(u,Q)=\lim_{n\to +\infty} \big\{\scrV_{\omega_0}(Du-v_n,Q)+\scrV_{\omega_1}(\Ecal v_n,Q)\big\} \leq C
 \end{equation}
 for some positive constant \(C\). From Lemma \ref{lem:bd-L1} and \eqref{eq:minimality-tgv} we infer  that \(\sup_{n\in \N}\|v_n\|_{BD_{\omega_1}(Q)}<+\infty\).
 Using the uniform bounds on \(\omega_1\), which are inherited by its lower semicontinuous envelope \((\omega_1)^{\mathrm{sc}^-}\),  and Theorem \ref{thm:ppBVw-tgv}~\((ii)\), also
\begin{equation*}
\begin{aligned}
\sup_{n\in \N}\|v_n\|_{BD_{(\omega_1)^{\mathrm{sc}^-}}(Q)}<+\infty.
\end{aligned}
\end{equation*}
 Moreover, by Theorem~\ref{thm:ppBVw-tgv}~\((i)\), \((ii)\), and \((iv)\) (also see Remark~\ref{rmk:oniv}),   there exists $u^\ast\in BD_{\omega_1}(Q) \cap BD_{(\omega_1)^{\mathrm{sc}^-}}(Q)$ such that
\begin{equation}\label{eq:xxx}
 \begin{aligned}
& v_n\to u^\ast\quad\text{strongly in }L^1_{(\omega_1)^{\mathrm{sc}^-}}(Q;\RR^2),\\
& \scrV_{\omega_1}(\Ecal u^*,\Omega)= \scrV_{(\omega_1)^{\mathrm{sc}^-}}(\Ecal u^*,\Omega)\leq \liminf_{n\to +\infty} \scrV_{(\omega_1)^{\mathrm{sc}^-}} (\Ecal v_n,\Omega) = \liminf_{n\to +\infty}\scrV_{\omega_1}(\Ecal v_n,\Omega).
 \end{aligned}
\end{equation}
 Using the uniform bounds on both weights once more, we also have \(v_n \to u^* \) strongly in $L^1_{\omega_0}(Q;\RR^2)$. The minimality of $u^\ast$ is then a direct consequence of Theorem~\ref{lem:tv-cont}~\((i)\), \eqref{eq:xxx},   and  \eqref{eq:minimality-tgv}.
\end{proof} 
The next result provides a characterization of the infimum problem in Level 2 of our learning scheme.
\begin{proposition}
\label{prop:L2tgv}
Let $\phi\in L^2(Q)$, and let $c_0>0$ be a positive constant.  For $i\in\{0,1\}$, let $\omega_i:Q\to [0,+\infty)$ be such that $c_0<\inf_Q \omega_i<\sup_Q \omega_i <\frac{1}{c_0}$. Then, there exists a unique $\bar{u}\in BV_{\omega_0}(Q)$ such that
\begin{align*}
\int_Q |\phi-\bar{u}|\,\dd x+ TGV_{\omega_0,\omega_1}(\bar{u},Q)=\min_{u\in BV_{\omega_0}(Q)}\left\{\int_Q |\phi-u|^2\,\dd x+TGV_{\omega_0,\omega_1}(u,Q)\right\}.
\end{align*}
Moreover, denoting by $(\omega_i)^{\mathrm{sc}^-}$ the lower semicontinuous envelope of $\omega_i$, $i\in\{0,1\}$, we have
$\bar{u}\in BV(Q)\cap BV_{(\omega_0)^{\mathrm{sc}^-}}(Q)$, and
\begin{align*}
TGV_{\omega_0,\omega_1}(\bar{u})=\int_Q (\omega_0)^{\mathrm{sc}^-}\dd |D\bar{u}-u^\ast|+\int_Q (\omega_1)^{\mathrm{sc}^-}\dd |\mathcal{E}u^\ast|,
\end{align*}
where $u^\ast \in BD_{\omega_1}(Q)\cap BD_{(\omega_1)^{\mathrm{sc}^-}}(Q)$ is a minimizer of \eqref{eqTGV-precise}
 associated to $\bar{u}$.
 \end{proposition}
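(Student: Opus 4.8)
The plan is to follow the same strategy as in the proof of Theorem~\ref{thm:S2.1}, namely the direct method of the calculus of variations, but now with the functional $G[u] := \int_Q |\phi - u|^2 \dd x + TGV_{\omega_0,\omega_1}(u,Q)$ over $BV_{\omega_0}(Q)$, using the structural results established above for the weighted $TGV$ (Lemmas~\ref{lem:bd-L1} and \ref{lem:wTGV-min}, and Theorems~\ref{thm:ppBVw-tgv} and \ref{lem:tv-cont}). First I would note that $G$ is proper: choosing $u \equiv 0$ (or the constant $[\phi]_Q$) gives $G[u] \leq \|\phi\|_{L^2(Q)}^2 < \infty$, since $TGV_{\omega_0,\omega_1}$ vanishes on constants (take $v=0$ in \eqref{eqTGV-precise}, and $Du = 0$). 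Let $\mathscr{m} := \inf_{u \in BV_{\omega_0}(Q)} G[u] \in [0, \|\phi\|_{L^2(Q)}^2]$ and take a minimizing sequence $(u_n)_n$.

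The key step is to extract compactness. For each $n$ (large), $G[u_n] \leq \mathscr{m}+1$; by Lemma~\ref{lem:wTGV-min} there is $v_n \in BD_{\omega_1}(Q)$ with $TGV_{\omega_0,\omega_1}(u_n,Q) = \scrV_{\omega_0}(Du_n - v_n, Q) + \scrV_{\omega_1}(\mathcal{E}v_n, Q)$. The proof of Lemma~\ref{lem:wTGV-min} also shows $TV_{\omega_0}(u_n,Q) \leq \max\{1, c_0^{-2}\}\, TGV_{\omega_0,\omega_1}(u_n,Q)$, so $TV_{\omega_0}(u_n,Q)$ is bounded; combined with the $L^2$-fidelity bound $\|u_n\|_{L^2(Q)} \leq \|\phi\|_{L^2(Q)} + \sqrt{\mathscr{m}+1}$ (hence $\|u_n\|_{L^1_{\omega_0}(Q)} \leq c_0^{-1}|Q|^{1/2}\|u_n\|_{L^2(Q)}$ is bounded too, using $\omega_0 \leq 1/c_0$), the sequence is bounded in $BV_{\omega_0}(Q)$. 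By Theorem~\ref{thm:ppBVw}~(iii) and the bounds $c_0 \leq \omega_0^{sc^-} \leq 1/c_0$, this gives a uniform bound on $|Du_n|(Q)$, so $(u_n)_n$ is bounded in $BV(Q)$; by the usual $BV$ compactness (and weak $L^2$ compactness from the fidelity bound) we pass to a subsequence with $u_n \to \bar u$ in $L^1(Q)$, $u_n \weakly \bar u$ in $L^2(Q)$, and $u_n \weaklystar \bar u$ in $BV(Q)$, with $\bar u \in BV(Q) \cap BV_{\omega_0}(Q) \cap BV_{(\omega_0)^{sc^-}}(Q)$. Next, lower semicontinuity of $TGV_{\omega_0,\omega_1}(\cdot,Q)$ under $L^1$-convergence must be checked: pick for each $n$ a minimizer $v_n$ as above; by Lemma~\ref{lem:bd-L1}, $\|v_n\|_{L^1_{\omega_1}(Q)}$ is bounded by $c_0^{-2}(\scrV_{\omega_0}(Du_n - v_n,Q) + TV_{\omega_0}(u_n,Q))$, hence bounded, and together with the $\scrV_{\omega_1}(\mathcal{E}v_n,Q)$ bound, $(v_n)_n$ is bounded in $BD_{\omega_1}(Q)$; passing to the lower semicontinuous envelope $(\omega_1)^{sc^-}$ via Theorem~\ref{thm:ppBVw-tgv}~(ii), and using Theorem~\ref{thm:ppBVw-tgv}~(i), (ii), (iv) and Remark~\ref{rmk:oniv}, extract $v_n \to u^*$ strongly in $L^1_{(\omega_1)^{sc^-}}(Q;\RR^2)$ (hence in $L^1_{\omega_1}$ and $L^1_{\omega_0}$ by the uniform bounds) with $u^* \in BD_{\omega_1}(Q) \cap BD_{(\omega_1)^{sc^-}}(Q)$. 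Then Theorem~\ref{lem:tv-cont}~(i) gives $\scrV_{\omega_0}(D\bar u - u^*, Q) \leq \liminf_n \scrV_{\omega_0}(Du_n - v_n, Q)$ and Theorem~\ref{thm:ppBVw-tgv}~(i) gives $\scrV_{\omega_1}(\mathcal{E}u^*, Q) \leq \liminf_n \scrV_{\omega_1}(\mathcal{E}v_n, Q)$, whence
\[
TGV_{\omega_0,\omega_1}(\bar u, Q) \leq \scrV_{\omega_0}(D\bar u - u^*, Q) + \scrV_{\omega_1}(\mathcal{E}u^*, Q) \leq \liminf_{n\to\infty} TGV_{\omega_0,\omega_1}(u_n, Q).
\]
Combined with lower semicontinuity of the fidelity term under weak $L^2$-convergence, $G[\bar u] \leq \liminf_n G[u_n] = \mathscr{m}$, so $\bar u$ is a minimizer. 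Uniqueness follows from the strict convexity of $\int_Q |\phi - u|^2 \dd x$ (and convexity of $TGV_{\omega_0,\omega_1}(\cdot,Q)$, which is an infimum over $v$ of a jointly convex integrand), exactly as in Theorem~\ref{thm:S2.1}. The representation formula for $TGV_{\omega_0,\omega_1}(\bar u)$ follows from applying Lemma~\ref{lem:wTGV-min} to $\bar u$ (to get an associated minimizer $u^*$, which lies in $BD_{(\omega_1)^{sc^-}}(Q)$ by the argument just described) and then Theorem~\ref{lem:tv-cont}~(iii) and Theorem~\ref{thm:ppBVw-tgv}~(iii) applied to the lower semicontinuous envelopes $(\omega_0)^{sc^-}$, $(\omega_1)^{sc^-}$ (which are lower semicontinuous and strictly positive by the bounds), yielding $\scrV_{\omega_0}(D\bar u - u^*, Q) = \int_Q (\omega_0)^{sc^-} \dd|D\bar u - u^*|$ and $\scrV_{\omega_1}(\mathcal{E}u^*, Q) = \int_Q (\omega_1)^{sc^-} \dd|\mathcal{E}u^*|$.

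The main obstacle I anticipate is the joint lower semicontinuity of $u \mapsto TGV_{\omega_0,\omega_1}(u,Q)$: unlike in the $TV$ case, the infimum over the auxiliary field $v$ has to be controlled, which is precisely why Lemma~\ref{lem:bd-L1} (uniform $L^1_{\omega_1}$-bound on near-optimal $v$'s in terms of the $\scrV_{\omega_0}$-term and $TV_{\omega_0}(u)$) and the compactness in $BD_\omega$ from Theorem~\ref{thm:ppBVw-tgv}~(iv) are needed; one must be careful that the chosen minimizing sequences $(u_n)$ and the associated $(v_n)$ are handled consistently, and that all convergences take place in topologies compatible with the lower semicontinuity statements of Theorems~\ref{thm:ppBVw-tgv}~(i) and \ref{lem:tv-cont}~(i), which require passing through the lower semicontinuous envelope $(\omega_1)^{sc^-}$ and using the strict positivity and boundedness of the weights throughout. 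A secondary point requiring care is confirming $u^* \in BD_{(\omega_1)^{sc^-}}(Q)$ in the representation part, but this is exactly the content of the compactness argument in the proof of Lemma~\ref{lem:wTGV-min}, which can be quoted.
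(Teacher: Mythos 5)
Your proposal is correct and follows essentially the same direct-method architecture as the paper's proof: take a minimizing sequence, use Lemma~\ref{lem:wTGV-min} to select optimal auxiliary fields $v_n$, obtain compactness, pass to the limit via the lower-semicontinuity statements of Theorems~\ref{thm:ppBVw-tgv} and~\ref{lem:tv-cont} (routed through the lower-semicontinuous envelopes of the weights), conclude uniqueness from strict convexity of the fidelity, and read off the integral representation from items~$(iii)$ of those theorems.

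The one place where you genuinely diverge is the compactness step. The paper obtains the uniform $BV(Q)$- and $BD(Q)$-bounds on $(u_n)$ and $(u_n^\ast)$ by an argument by contradiction borrowed from the classical $TGV$ setting (citing \cite[Proposition~5.3]{DaFoLi23}), which exploits the finite-dimensionality of the kernel of the $TGV$-type seminorm together with the $L^2$-fidelity bound. You instead derive the bounds directly: the inequality $TV_{\omega_0}(u_n,Q)\leq \max\{1,c_0^{-2}\}\,TGV_{\omega_0,\omega_1}(u_n,Q)$ from the proof of Lemma~\ref{lem:wTGV-min}, combined with the $L^2$-bound from the fidelity term, controls $(u_n)$ in $BV_{\omega_0}(Q)$ and hence in $BV(Q)$, while Lemma~\ref{lem:bd-L1} then controls $(v_n)$ in $BD_{\omega_1}(Q)$. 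Your route is more self-contained and quantitative (it reuses only machinery already established in Section~\ref{sect:wTGV}), at the price of leaning on an inequality that the paper states only inside the proof of Lemma~\ref{lem:wTGV-min} rather than as a standalone result; the paper's route avoids that dependence but imports an external compactness argument. Both are legitimate, and the remaining steps — including the implicit use of joint lower semicontinuity of $(u,v)\mapsto\scrV_{\omega_0}(Du-v,Q)$ under $L^1$-convergence of both arguments, which the stated item~$(i)$ of Theorem~\ref{lem:tv-cont} covers only for fixed $u$ but whose proof extends verbatim — are handled at the same level of detail in both versions.
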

 \begin{proof}
For $u\in BV_{\omega_0}(Q)$, we define
$$H[u]:= \int_Q |\phi-u|^2\dd x+TGV_{\omega_0,\omega_1}(u,Q),$$
and we set
$$\mu:=\inf_{u\in BV_{\omega_0}(Q)}H[u].$$
We have $0\leq \mu\leq F[0]=\|\phi\|_{L^2(Q)}^2,  $ and we may take a sequence $(u_n)_{n\in \N}\subset BV_{\omega_0}(Q)$ such that
$$\mu=\lim_{n\to +\infty}H[u_n].$$
Moreover, the boundedness assumptions on the weights $\omega_i$, $i\in\{0,1\}$, yield for all $x\in Q$ that$$c_0\leq (\omega_i)^{{\rm sc}^{-}}(x)\leq \frac{1}{c_0}.$$
Thus, by Lemma~\ref{lem:wTGV-min} and Theorems  \ref{thm:ppBVw-tgv} and \ref{lem:tv-cont}, we find for all \(n\in\NN\) large enough  that
\begin{align*}
&\mu+1\geq H[u_n]=\int_Q |\phi-u_n|^2\dd x+\scrV_{\omega_0}(Du_n-u_n^\ast,Q)+\scrV_{\omega_1}(\mathcal{E}u_n^\ast,Q)\\
&\quad=\int_Q |\phi-u_n|^2\dd x+\scrV_{(\omega_0)^{{\rm sc}^-}}(Du_n-u_n^\ast,Q)+\scrV_{(\omega_1)^{{\rm sc}^-}}(\mathcal{E}u_n^\ast,Q)\\
&\quad=\int_Q |\phi-u_n|^2\dd x+\int_Q (\omega_0)^{{\rm sc}^-}\dd|Du_n-u_n^\ast|+\int_Q (\omega_1)^{{\rm sc}^-}\dd|\mathcal{E}u_n^\ast|\\
&\quad\geq \int_Q |\phi-u_n|^2\dd x+{c_0}|Du_n-u_n^\ast|(Q)+{c_0}|\mathcal{E}u_n^\ast|(Q).
\end{align*}
An argument by contradiction as in the  classical $TGV$ case and variants thereof (see, e.g., \cite[Proposition~5.3]{DaFoLi23}) yields that the sequences $(u_n^\ast)_{n\in \N}$ and $(u_n)_{n\in \N}$ are uniformly bounded in $BD(Q)$ and $BV(Q)$, respectively. Thus, there exist $\bar{u}^\ast\in BD(Q)$ and $u\in BV(Q)$ such that, up to  extracting  a not relabelled subsequence, 
\begin{align*}
& u_n\weaklystar \bar{u}\quad\text{weakly* in }BV(Q),\\
& u_n^\ast\weaklystar \bar{u}^\ast\quad\text{weakly* in }BD(Q).
\end{align*}
By the bounds on the weights, and their lower-semicontinuous envelopes, and  Theorems \ref{thm:ppBVw-tgv} and \ref{lem:tv-cont},
we deduce that $\bar{u}\in BV_{(\omega_0)^{{\rm sc}^-}}(Q) \cap BV_{\omega_0}(Q) \cap BV(Q)$ and $\bar{u}^\ast\in BD_{(\omega_1)^{{\rm sc}^-}}(Q) \cap BD_{\omega_1}(Q)
\cap BD(Q)$, with
\begin{align}
\label{eq:ineq-mu}
&\mu\leq H[\bar{u}]\leq \int_Q |\phi-\bar{u}|^2\dd x+\scrV_{(\omega_0)^{{\rm sc}^-}}(D\bar{u}-\bar{u}^\ast,Q)+\scrV_{(\omega_1)^{{\rm sc}^-}}(\mathcal{E}\bar{u}^\ast,Q)\\
&\nonumber\quad \leq \lim_{n\to +\infty}H[u_n]=\mu.
\end{align}
Because of the strict convexity of the $L^2$-norm, we infer the uniqueness of $\bar{u}$. Finally, by \eqref{eq:ineq-mu}, 
\begin{align*}
&TGV_{\omega_0,\omega_1}(\bar{u}, Q)=\scrV_{(\omega_0)^{{\rm sc}^-}}(D\bar{u}-\bar{u}^\ast,Q)+\scrV_{(\omega_1)^{{\rm sc}^-}}(\mathcal{E}\bar{u}^\ast,Q).
\end{align*}
 The last part of the statement is then a consequence of Theorems 
\ref{thm:ppBVw-tgv} and \ref{lem:tv-cont}.
 \end{proof}

\subsection{On Level \texorpdfstring{$1$}{1}}
\label{sub-l1tgv}
As we address next, and similarly to the \((\scrL\!\scrS)_{{TV\!}_{\omega}}\) case, the box constraint provides a stopping criterion for the \(TGV\)-learning scheme. 

To proceed as in Theorem \ref{thm:solL1}, we need an analog to Proposition \ref{prop:Ja257}, which we now prove. Recalling that  $L$ represents a cell in a dyadic partition of $Q$,  
 we will use the Sobolev inequality   in $BV(L)$ yielding for every \(u\in BV(L)\) that 
\begin{equation}\label{eq:sobineqbv}
\|u - [u]_L\|_{L^2(L)} \leq C^{BV}_Q |Du|(L),
\end{equation}
where $[u]_L \in \R$ is the average of $u$ in $L$, and the constant $C^{BV}_Q$ depends only on the shape of $Q$ because of scale invariance of the embedding \(BV\) in \(L^2\) in dimension $d=2$. Moreover, we also have for any $w \in BD(L)$ that
\begin{equation}\label{eq:sobineqbd}
\|w - R_{M_w} - v_w\|_{L^2(L)} \leq C^{BD}_Q |\Ecal w|(L),
\end{equation}
where $v_w \in \R^2$, $M_w$ is a skew-symmetric matrix (that is, with $M^\top + M=0$, the set of which we denote by $\R^{2 \times 2}_{\mathrm{skew}}$), and $R_{M_w}$ denotes the function defined for $M_w \in \R^{2 \times 2}$ by $R_{M_w}(x) = M_w x$.
\begin{lemma}\label{lem:rotangle}
Let \(L\subset Q\) be a dyadic square. Then, there is a constant $C_Q^{rot} > 0$ such that for every $u \in BV(L)$  and for every skew-symmetric matrix \(M\in \RR^{2\times 2}_{\mathrm{skew}}\), we have
\begin{equation}\label{eq:rotineq}
C_Q^{rot} |Du|(L) \leq |Du - R_M|(L).
\end{equation}
\end{lemma}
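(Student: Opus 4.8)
The plan is to exploit the elementary fact that \(Du\), being the distributional gradient of a scalar function, is curl-free, whereas for \(M\neq0\) the vector field \(R_M\) has a nonzero (constant) distributional curl; hence the finite \(\RR^2\)-valued measure \(Du-R_M:=Du-R_M\Lcal^2\lfloor L\) must detect the full size of \(R_M\) on \(L\), after which a triangle inequality concludes. Concretely, since \(\RR^{2\times2}_{\mathrm{skew}}\) is one-dimensional, write \(Jx:=(-x_2,x_1)\) for its generator, so that every \(M\in\RR^{2\times2}_{\mathrm{skew}}\) is \(M=\theta J\) for some \(\theta\in\RR\) and \(R_M(x)=\theta(-x_2,x_1)\). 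The idea is to test \(Du-R_M\) against fields of the form \(\nabla^\perp\psi:=(-\partial_2\psi,\partial_1\psi)\) with \(\psi\in C^\infty_c(L)\); note that \(\nabla^\perp\psi\in C^\infty_c(L;\RR^2)\subset\Lip_c(L;\RR^2)\).

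For such \(\psi\), because \(\Div(\nabla^\perp\psi)\equiv0\), the defining property of the distributional gradient gives \(\int_L\nabla^\perp\psi\cdot\di(Du)=-\int_Lu\,\Div(\nabla^\perp\psi)\dd x=0\). On the other hand \(\nabla^\perp\psi\cdot R_M=\theta\,x\cdot\nabla\psi\), so an integration by parts (with no boundary term, since \(\psi\) is compactly supported in \(L\)) gives \(\int_L\nabla^\perp\psi\cdot R_M\dd x=\theta\int_Lx\cdot\nabla\psi\dd x=-2\theta\int_L\psi\dd x\). Subtracting these, using \(\bigl|\int_L\varphi\cdot\di\mu\bigr|\le\|\varphi\|_{L^\infty}\,|\mu|(L)\) for every finite measure \(\mu\), and \(|\nabla^\perp\psi|=|\nabla\psi|\), one obtains
\[
2|\theta|\,\Bigl|\int_L\psi\dd x\Bigr|=\Bigl|\int_L\nabla^\perp\psi\cdot\di\bigl(Du-R_M\Lcal^2\lfloor L\bigr)\Bigr|\le\|\nabla\psi\|_{L^\infty(L)}\,|Du-R_M|(L).
\]

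To conclude, fix once and for all a profile \(\phi_0\in C^\infty_c\bigl((-\tfrac12,\tfrac12)^2\bigr)\) with \(\phi_0\ge0\) and \(\int\phi_0>0\); for the dyadic square \(L\) of side length \(\ell\) and center \(c_L\), set \(\psi(x):=\phi_0\bigl((x-c_L)/\ell\bigr)\), so that \(\int_L\psi\dd x=\ell^2\int\phi_0\) and \(\|\nabla\psi\|_{L^\infty}=\ell^{-1}\|\nabla\phi_0\|_{L^\infty}\). The displayed inequality then yields \(|\theta|\le c_1\,\ell^{-3}\,|Du-R_M|(L)\) with \(c_1:=\|\nabla\phi_0\|_{L^\infty}/(2\int\phi_0)\) universal. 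Since \(J\) is orthogonal and \(L\subset Q=(0,1)^2\), \(|R_M\Lcal^2\lfloor L|(L)=|\theta|\int_L|x|\dd x\le\sqrt2\,|\theta|\,\ell^2\le\sqrt2\,c_1\,\ell^{-1}\,|Du-R_M|(L)\), and therefore
\[
|Du|(L)\le|Du-R_M\Lcal^2\lfloor L|(L)+|R_M\Lcal^2\lfloor L|(L)\le\bigl(1+\sqrt2\,c_1\,\ell^{-1}\bigr)\,|Du-R_M|(L),
\]
so the lemma holds with \(C_Q^{rot}:=\bigl(1+\sqrt2\,c_1\,\ell^{-1}\bigr)^{-1}>0\).

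The main thing to be careful about is recognizing that \(C_Q^{rot}\) necessarily depends on the cell \(L\), degenerating like \(\ell(L)\) as \(\ell(L)\to0\): on a tiny cell far from the origin, \(R_M\) is close to a nonzero constant vector field, hence close to a gradient, so \(|Du-R_M|(L)\) can be of order \(\ell^3\) while \(|Du|(L)\) is of order \(\ell^2\); thus no scale-invariant constant is available here, in contrast with the two-dimensional Poincar\'e--Wirtinger constant in \(BV\). Apart from this observation and the bookkeeping of signs in the integration by parts, every step is routine, and the same curl duality, with the power of \(\ell\) tracked, is what feeds the quantitative estimates of Proposition~\ref{prop:loc-affine}.
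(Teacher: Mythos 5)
Your curl-duality argument is correct, and it is genuinely different from the paper's proof. The paper argues by compactness and contradiction for a \emph{fixed} $L$: normalizing $|Du_n|(L)=1$ with $|Du_n-R_{M_n}|(L)<1/n$, it extracts limits $u_\infty$ and $M_\infty$ and derives a contradiction either from $\|R_{M_n}\|_{L^1(L)}\to 0$ (if $M_\infty=0$) or from $\curl Du_\infty=0\neq \curl R_{M_\infty}$ (if $M_\infty\neq 0$); it then claims the constant is independent of the size of $L$ via the rescaling $x\mapsto rx$. You use the same curl obstruction, but in dual, quantitative form, and obtain an explicit constant. The price is that your constant degenerates like $\ell(L)$ --- and on this point of disagreement you are right: the paper's final step is flawed, because the dilation $x\mapsto rx$ only identifies the constants of squares that are dilates of one another \emph{about the origin}, whereas a general dyadic square of $Q$ is a translated dilate of $(0,1]^2$ and the inequality is not translation invariant. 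Indeed, taking $u(x)=(Mc_L)\cdot x$ on a square $L$ of side $\ell$ centered at $c_L$ gives $|Du|(L)=|Mc_L|\,\ell^2$, while $Du-R_M=M(c_L-x)\,\Lcal^2\lfloor L$ has total variation of order $|M|\,\ell^3$, so the optimal constant for such an $L$ is $O(\ell/|c_L|)$ and tends to zero along, e.g., the dyadic squares $(\tfrac12,\tfrac12+2^{-k}]^2$. Your bound $C\gtrsim \ell$ is therefore essentially sharp for the statement as written, and the lemma with a constant depending only on $Q$ is false.

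The good news is that your own computation shows how to repair the statement so that a universal constant survives and Proposition \ref{prop:loc-affine} is unaffected. Replace $R_M(x)=Mx$ by the recentered rotation $R_M(x)=M(x-c_L)$; this is all that the proof of Proposition \ref{prop:loc-affine} needs, since $M_{w_u}x+v_{w_u}=M_{w_u}(x-c_L)+(v_{w_u}+M_{w_u}c_L)$ and the extra constant can be absorbed into $v_u$. Your estimate $|\theta|\leq c_1\ell^{-3}|Du-R_M|(L)$ is unchanged by the recentering, because $\int_L\nabla^\perp\psi\,\dd x=0$ makes any constant shift of the field invisible to the test function; but your step bounding $\|R_M\|_{L^1(L)}$ improves to $|\theta|\int_L|x-c_L|\,\dd x\leq |\theta|\,\ell^3/\sqrt2$, the powers of $\ell$ cancel exactly, and the triangle inequality gives $|Du|(L)\leq (1+c_1/\sqrt2)\,|Du-R_M|(L)$ with a constant independent of $L$. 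I would record this recentered version: without it, the threshold \eqref{eq:l2boundforconst} acquires an extra factor of order $|L|^{1/2}$, which is harmless only under an additional hypothesis such as $u_\eta\in L^\infty(Q)$.
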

\begin{proof}Suppose that \eqref{eq:rotineq} does not hold; then,  we may find functions $u_n \in BV(L)$ with $|Du_n|(L) = 1$ and skew-symmetric matrices $M_n \in
\RR^{2\times 2}_{\mathrm{skew}}$ for which
\begin{equation}\label{eq:notrotineq}
\frac{1}{n}=\frac{1}{n}|Du_n|(L) > |Du_n - R_{M_n}|(L).
\end{equation}
Then, in particular,  $\|R_{M_n}\|_{L^1(L)}\leq 2$; consequently, since $\R^{2 \times 2}_{\mathrm{skew}}$ is a finite-dimensional set, we can assume that $R_{M_n} \to R_{M_\infty}$
for some skew-symmetric matrix $M_\infty$, up to taking
a not relabelled subsequence.

On the other hand, recalling \eqref{eq:sobineqbv}, there are constants $c_n \in \R$ satisfying
\[\|u_n - c_n\|_{L^2(L)} \leq C^{BV}_Q  |Du_n|(L);\]
thus, up to taking a  not relabelled further subsequence, we have that $u_n - c_n \stackrel{\ast}{\rightharpoonup} u_\infty \in BV(L)$ for some \(u_\infty \in BV(L)\). Using \eqref{eq:notrotineq} once more, we must have $Du_\infty = R_{M_\infty}$. At this point, we can distinguish two cases,  $M_\infty = 0$ or \(M_\infty\not=0\). 

If    $M_\infty
= 0$ , then
\[\frac{1}{n}=\frac{1}{n}|Du_n|(L) > |Du_n - R_{M_n}|(L) \to 1,\]
which cannot be. 

If $M_\infty \neq 0$, then, using the antisymmetry of  $DR_{M_\infty}=M_{\infty}$, we again arrive at a contradiction, since
\[\curl Du_\infty = 0 \ \text{ but }\  |\curl R_{M_\infty}| = \sqrt{2} |M_\infty| >0.\]
To see that the last equality holds, just notice that in the two dimensional case under consideration we must have
\[M_\infty = \begin{pmatrix}0 & a \\ -a & 0\end{pmatrix}\text{ for some }a \neq 0, \text{ which implies } \curl R_{M_\infty} = -2a.\]

 Thus, we have proved that there is a constant $C_L$, possibly depending on $L$, such that 
\[C_L |Du|(L) \leq |Du - R_M|(L) \quad\text{ for all }M \in \R^{2 \times 2}_{\mathrm{skew}}.\]
To see that $C_L$ is independent of the size of $L$, we just notice that this inequality holds for all $M$ and that upon rescaling $x \mapsto r x$ it is enough to replace $M$ by $M/r$ to maintain the inequality.
\end{proof}

The next proposition guarantees that if a dyadic square $L\subset Q$ is small enough, then a solution $u_{\alpha_0,\alpha_1}$ of Level $3$  of our \(TGV\) learning scheme in \eqref{lsTGVomega} is affine for every $(\alpha_0,\alpha_1) \in \big[c_0,\frac{1}{c_0}\big]\times\big[c_1,\frac{1}{c_1}\big]$. Let us remark that a related result is contained in \cite[Proposition 6]{PaVa15}, which we make quantitative and with a scaling that enables us to draw conclusions on the cell size.

\begin{proposition}\label{prop:loc-affine} 
Fix \(c_0\), \(c_1>0\)  and \(L\subset Q\) a dyadic square. Let \(\bar \alpha_L \) be the optimal parameter  given by \eqref{eq:box-min-TGV}, where \(u_{\alpha,L}\) is defined by \eqref{eq:TGV} and \eqref{eq:tgv} (with \(Q\) replaced by \(L\)), and let  \(C_Q^{BV}\), \(C_Q^{BD}\), and \(C_Q^{rot}\) be the constants in
\eqref{eq:sobineqbv}, \eqref{eq:sobineqbd}, and \eqref{eq:rotineq}, respectively. If
 \begin{equation}\label{eq:l2boundforconst}
\|u_\eta\|_{L^2(L)} < \min \left( c_0, \frac{c_1}{C_Q^{BD}|L|^{1/2}}\right)\frac{C_Q^{rot}}{C_Q^{BV}},
\end{equation}
 then  $\bar \alpha_L:=(\overline{\alpha}_0, \overline{\alpha}_1 ) = (c_0, c_1)$ and
  $u_{\bar \alpha_L }:=u_{(\overline{\alpha}_0, \overline{\alpha}_1),L}$
is affine on $L$, with \(u_{\bar \alpha_L } = \langle u_\eta\rangle_L\).
\end{proposition}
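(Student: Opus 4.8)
The plan is to mimic the proof of Proposition~\ref{prop:Ja257}, but now using the $TGV$ functional and the fact that affine functions are exactly the functions with vanishing $TGV$. Concretely, fix $(\alpha_0,\alpha_1)\in[c_0,1/c_0]\times[c_1,1/c_1]$ and let $u=u_{(\alpha_0,\alpha_1),L}$ be the minimizer in \eqref{eq:TGV}. Testing the minimality of $u$ against the competitor $\langle u_\eta\rangle_L$ (which is affine, hence $TGV_{\alpha_0,\alpha_1}(\langle u_\eta\rangle_L,L)=0$) gives
\begin{equation*}
\int_L |u_\eta-u|^2\dd x + TGV_{\alpha_0,\alpha_1}(u,L)\leq \int_L |u_\eta-\langle u_\eta\rangle_L|^2\dd x \leq \|u_\eta\|_{L^2(L)}^2,
\end{equation*}
where the last step uses that $\langle u_\eta\rangle_L$ is the $L^2$-projection of $u_\eta$ onto affine functions, together with the orthogonality \eqref{eq:affine-proj}, so in particular $\|u_\eta-\langle u_\eta\rangle_L\|_{L^2(L)}\leq\|u_\eta\|_{L^2(L)}$. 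Hence
\begin{equation*}
TGV_{\alpha_0,\alpha_1}(u,L)\leq \|u_\eta\|_{L^2(L)}^2.
\end{equation*}

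Next I would obtain a reverse, lower bound of the form $TGV_{\alpha_0,\alpha_1}(u,L)\geq \gamma\,\|u-\langle u\rangle_L\|_{L^2(L)}$ for a constant $\gamma$ depending on $c_0,c_1$ and the Sobolev/rotation constants. Let $v\in BD(L)$ realize the minimum in \eqref{eq:tgv} (it exists by the classical theory, or by Lemma~\ref{lem:wTGV-min} with constant weights). Using \eqref{eq:sobineqbd}, choose $M_v\in\R^{2\times2}_{\mathrm{skew}}$ and $v_w\in\R^2$ with $\|v-R_{M_v}-v_w\|_{L^2(L)}\leq C_Q^{BD}|\Ecal v|(L)$. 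Since $R_{M_v}+v_w$ is affine, $D(R_{M_v}+v_w)=M_v=R_{M_v}$ is the gradient of an affine function; subtracting it and applying Lemma~\ref{lem:rotangle} (with the skew matrix $M_v$) gives $C_Q^{rot}|Du-R_{M_v}|(L)\leq |Du-v|(L)+\|v-R_{M_v}-v_w\|_{L^1(L)}$, wait—more carefully: $|Du-R_{M_v}|(L)\leq |Du-v|(L)+\|v-R_{M_v}\|_{L^1(L)}\leq |Du-v|(L)+|L|^{1/2}\|v-R_{M_v}-v_w\|_{L^2(L)}+|L|^{1/2}|v_w||L|^{1/2}$; one must absorb the $v_w$ term, which can be done because $\langle u\rangle_L$ already removes the affine part of $u$, so WLOG $[u]_L$ and the affine part are adjusted. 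Then \eqref{eq:sobineqbv} applied to $u-(\text{affine part})$ controls $\|u-\langle u\rangle_L\|_{L^2(L)}\leq C_Q^{BV}|Du-R_{M_v}|(L)$. Chaining these with $\alpha_0\geq c_0$, $\alpha_1\geq c_1$, and tracking the factor $|L|^{1/2}$ coming from converting $L^2$ to $L^1$ norms on $L$, one arrives at
\begin{equation*}
\|u-\langle u\rangle_L\|_{L^2(L)}\leq \frac{C_Q^{BV}}{C_Q^{rot}}\Bigl(\frac{1}{c_0}|Du-v|(L)+\frac{C_Q^{BD}|L|^{1/2}}{c_1}|\Ecal v|(L)\Bigr)\leq \frac{C_Q^{BV}}{C_Q^{rot}}\max\!\Bigl(\frac1{c_0},\frac{C_Q^{BD}|L|^{1/2}}{c_1}\Bigr)\,TGV_{\alpha_0,\alpha_1}(u,L),
\end{equation*}
since $TGV_{\alpha_0,\alpha_1}(u,L)=\alpha_0|Du-v|(L)+\alpha_1|\Ecal v|(L)\geq c_0|Du-v|(L)+c_1|\Ecal v|(L)$.

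Now I would argue by contradiction: if $u$ were not affine on $L$, then $\|u-\langle u\rangle_L\|_{L^2(L)}>0$ and $TGV_{\alpha_0,\alpha_1}(u,L)>0$. Combining the two displayed inequalities gives $\|u-\langle u\rangle_L\|_{L^2(L)}\leq \frac{C_Q^{BV}}{C_Q^{rot}}\max(\ldots)\,\|u_\eta\|_{L^2(L)}^2$—this is not yet the desired contradiction; instead I need to also lower-bound the $L^2$-fidelity. The cleaner route: from the minimality inequality, $\|u_\eta-u\|_{L^2(L)}^2\leq \|u_\eta\|_{L^2(L)}^2$, and the Euler--Lagrange/subdifferential characterization gives $u_\eta-u\in\partial TGV_{\alpha_0,\alpha_1}(u)$, so $\langle u_\eta-u,\,u-\langle u\rangle_L\rangle_{L^2(L)}\geq TGV_{\alpha_0,\alpha_1}(u,L)-TGV_{\alpha_0,\alpha_1}(\langle u\rangle_L,L)=TGV_{\alpha_0,\alpha_1}(u,L)$ (using that $TGV$ vanishes on affine functions and is $1$-homogeneous, hence its subgradient inequality with the affine competitor). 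Wait, more directly: plug $\langle u_\eta\rangle_L$ as competitor precisely as above, but keep the fidelity term. One gets $\|u_\eta-u\|_{L^2(L)}^2+TGV_{\alpha_0,\alpha_1}(u,L)\leq \|u_\eta-\langle u_\eta\rangle_L\|_{L^2(L)}^2$. Then $TGV_{\alpha_0,\alpha_1}(u,L)\leq \|u_\eta-\langle u_\eta\rangle_L\|^2-\|u_\eta-u\|^2$. Expanding and using that $\langle u_\eta\rangle_L$ is the affine projection and $u-\langle u\rangle_L\perp$ affine functions, after some algebra the right side is bounded by $2\|u_\eta-\langle u_\eta\rangle_L\|_{L^2(L)}\|u-\langle u\rangle_L\|_{L^2(L)}\leq 2\|u_\eta\|_{L^2(L)}\|u-\langle u\rangle_L\|_{L^2(L)}$. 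Combining with the Poincaré-type lower bound $\|u-\langle u\rangle_L\|_{L^2(L)}\leq \frac{C_Q^{BV}}{C_Q^{rot}}\max(\frac1{c_0},\frac{C_Q^{BD}|L|^{1/2}}{c_1})\,TGV_{\alpha_0,\alpha_1}(u,L)$ yields, after cancelling $\|u-\langle u\rangle_L\|_{L^2(L)}$ (assumed nonzero), $1\leq 2\frac{C_Q^{BV}}{C_Q^{rot}}\max(\frac1{c_0},\frac{C_Q^{BD}|L|^{1/2}}{c_1})\|u_\eta\|_{L^2(L)}$, contradicting \eqref{eq:l2boundforconst} (up to absorbing the constant $2$ into the statement's constants, or keeping it). Hence $u$ is affine, so $TGV_{\alpha_0,\alpha_1}(u,L)=0$, and minimality against all affine competitors forces $u=\langle u_\eta\rangle_L$, independently of $(\alpha_0,\alpha_1)$. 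Since the cost $\int_L|u_c-u_{\alpha,L}|^2\dd x$ is then constant in $\alpha$, its arginf over the box with respect to the lexicographic order is the bottom-left corner $(c_0,c_1)$, giving $\bar\alpha_L=(c_0,c_1)$ and $u_{\bar\alpha_L}=\langle u_\eta\rangle_L$.

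The main obstacle I anticipate is the bookkeeping in the lower bound: carefully handling the affine/rigid-motion degrees of freedom so that Lemma~\ref{lem:rotangle} and the $BV$/$BD$ Sobolev inequalities \eqref{eq:sobineqbv}--\eqref{eq:sobineqbd} can be chained without losing scale invariance, and correctly tracking the single power $|L|^{1/2}$ that converts $L^1$ norms on $L$ to $L^2$ norms (this is what produces the $|L|^{1/2}$ in \eqref{eq:l2boundforconst}). A secondary subtlety is justifying the subdifferential/minimality manipulation that trades the squared fidelity for a linear term in $\|u-\langle u\rangle_L\|_{L^2(L)}$; this should follow from convexity of $TGV_{\alpha_0,\alpha_1}$, its $1$-homogeneity, and vanishing on affine functions, exactly as in the $TV$ argument of Proposition~\ref{prop:Ja257} and \cite[Proposition~2.5.7]{Ja12}, but one must be careful that the relevant Poincaré constant is the product $C_Q^{BV}/C_Q^{rot}$ (and its $BD$ analog), all of which are scale-invariant by Lemmas already proved.
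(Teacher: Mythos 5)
Your proposal follows essentially the same route as the paper: the Poincar\'e-type lower bound $TGV_{\alpha_0,\alpha_1}(u,L)\geq \min\left(c_0,\tfrac{c_1}{C_Q^{BD}|L|^{1/2}}\right)\tfrac{C_Q^{rot}}{C_Q^{BV}}\,\|u-\langle u\rangle_L\|_{L^2(L)}$ obtained by chaining \eqref{eq:sobineqbd}, Lemma~\ref{lem:rotangle}, and \eqref{eq:sobineqbv} (your $\max$ of reciprocals is exactly the reciprocal of the paper's $\min$), combined with an upper bound of the form $TGV_{\alpha_0,\alpha_1}(u,L)\leq \|u_\eta-u\|_{L^2(L)}\|u-\langle u\rangle_L\|_{L^2(L)}$ and the same contradiction and lexicographic-infimum conclusion. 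The one substantive difference is that the route you ultimately settle on for the upper bound (testing against $\langle u_\eta\rangle_L$ and expanding the squares) carries a factor of $2$, so it only yields a contradiction under half the threshold in \eqref{eq:l2boundforconst}; the paper avoids this by using one-homogeneity to write $TGV(u)=\int_L(u_\eta-u)\,u\,\dd x=\int_L(u_\eta-u)(u-A_{v,c})\,\dd x$ for every affine $A_{v,c}$ (since $u_\eta-u$ is $L^2$-orthogonal to affine functions) and then applying Cauchy--Schwarz together with $\|u_\eta-u\|_{L^2(L)}\leq\|u_\eta\|_{L^2(L)}$ --- which is precisely the ``cleaner route'' you sketch before switching away from it. Two smaller bookkeeping points: the constant vector $v_u$ from \eqref{eq:sobineqbd} is absorbed most cleanly by noting $|Du-R_u-v_u|(L)=|D(u-A_{v_u,0})-R_u|(L)$ rather than estimating $\|v_w\|_{L^1}$ separately, and the $1/c_0$, $C_Q^{BD}|L|^{1/2}/c_1$ factors should enter only at the final step (bounding $|Du-v|(L)+C_Q^{BD}|L|^{1/2}|\Ecal v|(L)$ by the $TGV$ value), otherwise your displayed intermediate inequality would produce squared constants.
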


\begin{proof} To simplify the notation in the proof, we omit the dependence of  $TGV_{\alpha_0,\alpha_1}$ and \( u_{{\alpha}_0, {\alpha}_1}\) on \(L\) by writing $TGV_{\alpha_0,\alpha_1}(\cdot)$ in place of $TGV_{\alpha_0,\alpha_1}(\cdot,L)$ and \( u_{({\alpha}_0, {\alpha}_1),L}\), respectively.

Fix  $(\alpha_0,\alpha_1)
\in \big[c_0,\frac{1}{c_0}\big]\times\big[c_1,\frac{1}{c_1}\big]$. The optimality condition for \eqref{eq:TGV} reads as
\[u_\eta - u_{\alpha_0, \alpha_1} \in \partial TGV_{\alpha_0, \alpha_1}(u_{\alpha_0, \alpha_1}).\]
Since $TGV_{\alpha_0,\alpha_1}$ is positively one-homogeneous, we have that
\[z \in \partial TGV_{\alpha_0,\alpha_1}(u_{\alpha_0, \alpha_1}) \text{ if and only if }z \in \partial TGV_{\alpha_0,\alpha_1}(0) \text{ and }\int_L z \,u_{\alpha_0, \alpha_1} \dd x = TGV_{\alpha_0, \alpha_1}(u_{\alpha_0, \alpha_1}).\]
Furthermore, by the definition of subgradient,
\[z \in \partial TGV_{\alpha_0, \alpha_1}(0) \text{ if and only if } \int_L z\, \overline{u} \dd x \leq TGV_{\alpha_0, \alpha_1}(\overline u) \text{ for all }\overline{u} \in L^2(L).\]

 Now, given $v \in \R^2$ and $c \in \R$, we denote by $A_{v,c}$ the affine function given by $A_{v,c}(x)=v\cdot x + c$. Because
\begin{equation}
\label{eq:TGVonAff}
\begin{aligned}
TGV_{\alpha_0, \alpha_1}(A_{v,c})=0, 
\end{aligned}
\end{equation}
we deduce from the above with \(z=u_\eta - u_{\alpha_0, \alpha_1}\) and  \(\overline{u}=\pm A_{v,c}\)  that \(\int_L(u_\eta - u_{\alpha_0, \alpha_1}) A_{v,c}  \dd x=0\) for any $v \in \R^2$ and $c \in \R$; moreover,
\begin{equation*}
\begin{aligned}
TGV_{\alpha_0,\alpha_1}(u_{\alpha_0, \alpha_1}) &= \int_L (u_\eta - u_{\alpha_0, \alpha_1}) u_{\alpha_0, \alpha_1} \dd x = \int_L (u_\eta - u_{\alpha_0, \alpha_1}) ( u_{\alpha_0, \alpha_1} - A_{v,c} ) \dd x \\& \leq \|u_\eta - u_{\alpha_0, \alpha_1}\|_{L^2(L)} \|u_{\alpha_0, \alpha_1} - A_{v,c}\|_{L^2(L)}.
\end{aligned}
\end{equation*}
Thus, taking the infimum over $v \in \R^2$ and $c \in \R$ and recalling \eqref{eq:profAff}, we conclude that
\begin{equation}
\label{eq:tgvupperbound}
\begin{aligned}
TGV_{\alpha_0,\alpha_1}(u_{\alpha_0, \alpha_1}) \leq \|u_\eta - u_{\alpha_0,
\alpha_1}\|_{L^2(L)}  \|u_{\alpha_0, \alpha_1}-\langle u_{\alpha_0, \alpha_1}\rangle_L\|_{L^2(L)}.
\end{aligned}
\end{equation}

On the other hand, since the infimum in the definition of $TGV_{\alpha_0, \alpha_1}$ is attained, there is a $w_u \in BD(L)$ for which
\begin{equation*}
\begin{aligned}
TGV_{\alpha_0,\alpha_1}(u_{\alpha_0, \alpha_1}) &= \inf_{w\in BD(L)} \Big\{\alpha_0 |Du_{\alpha_0, \alpha_1}-w|(L) + \alpha_1 |\Ecal w|(L)\Big\} \\ 
&= \alpha_0 |Du_{\alpha_0, \alpha_1}-w_u|(L) + \alpha_1 |\Ecal w_u|(L)\\
&\geq \alpha_0 |Du_{\alpha_0, \alpha_1}-w_u|(L) + \frac{\alpha_1}{C^{BD}_Q} \|w_u - R_{M_{w_u}} - v_{w_u}\|_{L^2(L)},
\end{aligned}
\end{equation*}
where we have used the inequality \eqref{eq:sobineqbd} for some skew-symmetric matrix $M_{w_u}\in\RR^{2\times 2}$ and vector $v_{w_u}\in\RR^2$. 
Setting $R_u := R_{M_{w_u}}$ and $v_u := v_{w_u}$, we get that
\begin{equation*}
\begin{aligned}
TGV_{\alpha_0,\alpha_1}(u_{\alpha_0, \alpha_1}) &\geq \alpha_0 |Du_{\alpha_0, \alpha_1}-w_u|(L) + \frac{\alpha_1}{C^{BD}_Q} \|w_u - R_u - v_u\|_{L^2(L)}\\
& \geq \alpha_0 |Du_{\alpha_0, \alpha_1}-w_u|(L) + \frac{\alpha_1}{C^{BD}_Q |L|^{1/2}} \|w_u - R_u - v_u\|_{L^1(L)} \\
&\geq \min\left( c_0, \frac{c_1}{C_Q^{BD}|L|^{1/2}}\right) \Big[ |Du_{\alpha_0, \alpha_1}-w_u|(L) +\|w_u - R_u - v_u\|_{L^1(L)}\Big]\\
&\geq\min \left( c_0, \frac{c_1}{C_Q^{BD}|L|^{1/2}}\right) |Du_{\alpha_0, \alpha_1}-R_u - v_u|(L)\\
&= \min\left( c_0, \frac{c_1}{C_Q^{BD}|L|^{1/2}}\right)  |D(u_{\alpha_0, \alpha_1}-A_{v_u, 0})-R_u|(L).
\end{aligned}
\end{equation*}
Now, we can apply Lemma \ref{lem:rotangle} to $u_{\alpha_0, \alpha_1}-A_{v_u, 0}$ and the Sobolev inequality \eqref{eq:sobineqbv} to obtain for some $c_u \in \R$ that
\begin{equation}\label{eq:tgvlowerbound3}
\begin{aligned}
TGV_{\alpha_0,\alpha_1}(u_{\alpha_0, \alpha_1}) &\geq \min\left( c_0, \frac{c_1}{C_Q^{BD}|L|^{1/2}}\right) C^{rot}_Q |D(u_{\alpha_0, \alpha_1}-A_{v_u, 0})|(L)\\
&\geq \min\left( c_0, \frac{c_1}{C_Q^{BD}|L|^{1/2}}\right) \frac{C^{rot}_Q}{C_Q^{BV}} \|u_{\alpha_0, \alpha_1}-A_{v_u, c_u}\|_{L^2(L)} \\
&\geq \min\left( c_0, \frac{c_1}{C_Q^{BD}|L|^{1/2}}\right)
\frac{C^{rot}_Q}{C_Q^{BV}} \|u_{\alpha_0, \alpha_1}-\langle u_{\alpha_0, \alpha_1}\rangle_L\|_{L^2(L)},
\end{aligned}
\end{equation}
where we used \eqref{eq:profAff} once more.  
Then, if $u_{\alpha_0, \alpha_1}$ was not affine, then \(\|u_{\alpha_0, \alpha_1}-\langle u_{\alpha_0, \alpha_1}\rangle_L\|_{L^2(L)}>0\), so
we could combine \eqref{eq:tgvlowerbound3} with the upper bound \eqref{eq:tgvupperbound} and minimality of $ u_{\alpha_0, \alpha_1}$ in \eqref{eq:TGV} to obtain
\[\min \left( c_0, \frac{c_1}{C_Q^{BD}|L|^{1/2}}\right)\frac{C^{rot}_Q}{C_Q^{BV}} \leq  \|u_\eta - u_{\alpha_0, \alpha_1}\|_{L^2(L)} \leq \|u_\eta\|_{L^2(L)},\]
which contradicts \eqref{eq:l2boundforconst}. Thus,  $u_{\alpha_0, \alpha_1}$ must be affine. 

Finally, using \eqref{eq:TGVonAff}, \eqref{eq:profAff}, and \(\langle u_\eta\rangle_L\) as a competitor in  \eqref{eq:TGV}, we conclude that \(u_{\alpha_0, \alpha_1} =\langle u_{\alpha_0, \alpha_1}\rangle_L =  \langle u_\eta\rangle_L\). Hence, \(\bar\alpha_L = (c_0,c_1)\), and this concludes the proof.  
\end{proof}
Owing to Proposition \ref{prop:loc-affine}, we are now in a position to reduce the minimum problem in Level $1$ of our training scheme to a minimization over a finite set of admissible partitions.
\begin{theorem}
\label{thm:finiteTGV}
Consider the learning scheme $(\scrL\!\scrS)_{{TGV\!}_{\omega}}$ in \eqref{lsTGVomega} with \eqref{eq:alpha-Lg-TGV} restricted by \eqref{eq:bc-TGV} (see \eqref{eq:box-min-TGV}). 
Then, there exist \(\kappa\in \NN\) and \(\mathscr{L}_1, ...
, \mathscr{L}_\kappa \in \mathscr{P}\) such that
\begin{equation*}
\begin{aligned}
\argmin\left\{\int_Q|u_c-u_{\mathscr{L}}|^2\dd x:\,\mathscr{L}\in
\mathscr{P}\right\} = \argmin\left\{\int_Q|u_c-u_{\mathscr{L}_i}|^2\dd x:\,i\in
\{1,...,\kappa\}\right\}.
\end{aligned}
\end{equation*}
\end{theorem}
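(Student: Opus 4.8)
The plan is to mirror the structure of the proof of Theorem~\ref{thm:solL1}, replacing the scalar ``image becomes constant'' threshold of Proposition~\ref{prop:Ja257} with the ``image becomes affine'' threshold of Proposition~\ref{prop:loc-affine}. First I would use the absolute continuity of $\|u_\eta\|_{L^2(\cdot)}$ with respect to Lebesgue measure to pick $\bar\epsilon\in(0,1)$ such that for every measurable $E\subset Q$ with $|E|\le\bar\epsilon$ one has
\begin{equation*}
\|u_\eta\|_{L^2(E)} < \min\left(c_0,\frac{c_1}{C_Q^{BD}\,\bar\epsilon^{\,1/2}}\right)\frac{C_Q^{rot}}{C_Q^{BV}},
\end{equation*}
which since $|E|\le\bar\epsilon$ also guarantees the sharper bound in \eqref{eq:l2boundforconst} with $|L|=|E|$ because $|E|^{1/2}\le\bar\epsilon^{1/2}$. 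Then I set $\bar k:=\min\{k\in\NN:\ 4^{-k}\le\bar\epsilon\}$ and $\bar\scrP:=\{\scrL\in\scrP:\ |L|\ge 4^{-\bar k}\ \text{for all }L\in\scrL\}$, which has finite cardinality, and put $\scrP^*:=\scrP\setminus\bar\scrP$.

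Next, I would carry out the same cell-merging argument as in Theorem~\ref{thm:solL1}. Fix $\scrL^*\in\scrP^*$, let $\scrL^*_-$ be its collection of smallest dyadic squares, say of measure $4^{-k^*}$ with $k^*>\bar k$, grouped into sibling quadruples $\{L^*_{j,i}\}_{i=1}^4$ with parents $\bar L^*_j$ of measure $4^{-(k^*-1)}$. Since $4^{-k^*}\le 4^{-(k^*-1)}\le 4^{-\bar k}\le\bar\epsilon$, Proposition~\ref{prop:loc-affine} applies on each $L^*_{j,i}$ and on each $\bar L^*_j$, so the local denoised images there are the affine projections $\langle u_\eta\rangle_{L^*_{j,i}}$ and $\langle u_\eta\rangle_{\bar L^*_j}$, respectively, and the optimal parameters given by \eqref{eq:box-min-TGV} all equal $(c_0,c_1)$. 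Replacing $\scrL^*_-$ by the parents $\bar L^*_j$ yields a coarser admissible partition $\bar{\scrL^*}$ with $\omega^0_{\bar{\scrL^*}}\equiv\omega^0_{\scrL^*}$ and $\omega^1_{\bar{\scrL^*}}\equiv\omega^1_{\scrL^*}$ — here I must use that Proposition~\ref{prop:loc-affine} gives \emph{the same constant weight value} $(c_0,c_1)$ on the fine cells and on their union, so the weight pair is genuinely unchanged as a function on $Q$ — and therefore, by Proposition~\ref{prop:L2tgv}, the same Level~2 minimizer $u_{\scrL^*}$. Iterating this coarsening at most $k^*-1-\bar k$ times produces $\hat{\scrL}^*\in\bar\scrP$ with $u_{\hat{\scrL}^*}=u_{\scrL^*}$. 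Ranging over all $\scrL^*\in\scrP^*$ and recalling that $\bar\scrP$ is finite gives the claimed reduction, with $\{\scrL_1,\dots,\scrL_\kappa\}$ an enumeration of $\bar\scrP$.

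The one genuinely new subtlety compared with the $TV$ case, and the step I expect to need the most care, is the appearance of $|L|^{1/2}$ inside the $\min$ in \eqref{eq:l2boundforconst}: the affineness threshold is \emph{not} scale invariant, because the constant $c_1$ multiplies the higher-order term $|\Ecal w|$. Concretely, the right-hand side of \eqref{eq:l2boundforconst} is smaller for smaller cells (through the factor $1/(C_Q^{BD}|L|^{1/2})$ when that term realizes the minimum), so one must check that the threshold $\bar\epsilon$ chosen at the \emph{parent} scale $4^{-(k^*-1)}$ still works, which is exactly why I phrase the defining inequality for $\bar\epsilon$ using $\bar\epsilon^{1/2}$ (an upper bound for $|L|^{1/2}$ over all cells with $|L|\le\bar\epsilon$) rather than $|L|^{1/2}$ itself; monotonicity of $t\mapsto \min(c_0, c_1/(C_Q^{BD}t^{1/2}))$ then transfers the bound from $\bar\epsilon$ down to every such cell. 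Everything else — absolute continuity of the $L^2$ mass, finiteness of $\bar\scrP$, and the combinatorics of dyadic sibling-merging — is identical to Theorem~\ref{thm:solL1}. A remark afterwards, analogous to Remark~\ref{rmk:boxtresh}, can record that \eqref{eq:bc-TGV} thus implies the stopping criterion $(\scrS)$: there exists $\kappa\in\NN$ with $|L|\ge 4^{-\kappa}$ for all $L\in\scrL$ and all optimal $\scrL$.
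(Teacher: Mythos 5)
Your proposal follows the same route as the paper, whose proof is precisely the cell-merging argument of Theorem~\ref{thm:solL1} with Proposition~\ref{prop:loc-affine} (affine local solutions, optimal parameter $(c_0,c_1)$, unchanged weights after merging) in place of Proposition~\ref{prop:Ja257}; your treatment of the $|L|^{1/2}$-dependence via $\bar\epsilon^{1/2}$ and monotonicity is correct and in fact supplies a detail the paper's sketch omits. One small slip in your prose: since $t\mapsto c_1/(C_Q^{BD}t^{1/2})$ is decreasing, the right-hand side of \eqref{eq:l2boundforconst} is \emph{larger} (not smaller) for smaller cells, which is exactly why the worst case among $|L|\le\bar\epsilon$ is $|L|=\bar\epsilon$ and your defining inequality for $\bar\epsilon$ does the job.
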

\begin{proof}
The proof is analogous to that of Theorem \ref{thm:solL1}, so we only provide a sketch of the argument. The only difference  here is that  instead of being a constant, the solution
 \(u_{\alpha,L}\)  of Level $1$ is affine for any $\alpha:=(\alpha_0,\alpha_1)\in \big[c_0,\frac{1}{c_0}\big]\times\big[c_1,\frac{1}{c_1}\big]$ on squares $L$ on which \eqref{eq:l2boundforconst} holds,  due to Proposition \ref{prop:loc-affine}. Moreover, 
 $TGV_{\alpha_0,\alpha_1}(u_{\alpha,L},L)=0$ and, recalling \eqref{eq:box-min-TGV}, the optimal parameter
given by \eqref{eq:box-min-TGV} is $\bar \alpha_L=(c_0,c_1)$. As in the proof of Theorem \ref{thm:solL1}, this observation allows us to replace any partition $\mathscr{L}^\ast$ containing such small dyadic squares with another partition $\overline{\mathscr{L}}^\ast$ whose dyadic squares have all side length above the threshold provided by \eqref{eq:l2boundforconst} without affecting the minimizer of Level $2$. We refer to Figure \ref{fig:partition} for a graphical idea of the argument and to Theorem \ref{thm:solL1} for the details of the proof. 
\end{proof}

We conclude this section by proving existence of an optimal solution to the learning
scheme \((\scrL\!\scrS)_{{TGV\!}_\omega}\).

\begin{proof}[Proof of Theorem \ref{thm:TGVomega}]
The result follows directly by combining the analysis in Subsection~\ref{sect:TGVL3},  Proposition~\ref{prop:L2tgv},
and Theorem ~\ref{thm:finiteTGV}. 
\end{proof}

\subsection{Stopping criteria and box constraint for TGV}
\label{subs:stop-TGV}

In this subsection, we prove a $TGV$-counterpart to Theorem \ref{thm:onalpha}. Our result reads as follows. 

\begin{theorem}\label{thm:onalpha-TGV}
Let \(\Omega\subset \RR^2\) be a bounded, Lipschitz domain and, for each \(\alpha\in(0,+\infty)^2\), let \(u_\alpha\in BV(\Omega)\) be  given by \eqref{eq:TGV} with \(L\) replaced by \(\Omega\). Assume that the two following conditions on the training data
hold: 
\begin{itemize}
\item [\textit{i)}] There exists $\hat{\alpha}\in (0,+\infty)^2$ such that $TGV_{\hat{\alpha}_0,\hat{\alpha}_1}(u_c,\Omega) < TGV_{\hat{\alpha}_0,\hat{\alpha}_1}(u_\eta,\Omega)$;

\item [\textit{ii)}] $\displaystyle \Vert u_\eta - u_c\Vert^2_{L^2(\Omega)}
<\Vert \langle u_\eta \rangle - u_c\Vert^2_{L^2(\Omega)} $.
\end{itemize}
 Then, there exists
\begin{equation}\label{eq:optalpha-generalTGV}\alpha^*_\Omega \in (0,+\infty)^2 \cup \big(\{+\infty\} \times (0, +\infty)\big) \cup \big((0, +\infty) \times \{+\infty\}\big)\end{equation}
 such that
\begin{equation}
\label{eq:minIalpha-TGV}
\widehat J( \alpha^*_\Omega)=\min_{\alpha\in[0,+\infty]^2} \widehat J(\alpha),
\end{equation}
where $\widehat J$ is a (lower semicontinuous) extension on $[0, +\infty]^2$ (see \eqref{eq:repIsc-TGV} in Lemma \ref{lem:Ilsc-TGV} below) of the function \(J:(0,+\infty)^2\to[0,+\infty)\) defined by
\begin{equation}
\label{eq:defJ-TGV}J(\alpha):=\int_\Omega |u_c-u_\alpha|^2\dd x \quad\text{for } \alpha\in (0,+\infty)^2.\end{equation}

Additionally, there exist  positive constants, \(c_\Omega\) and \(C_\Omega\), such that any
minimizer, \(\alpha^*_\Omega\), of \(\widehat J\) over \([0,+\infty]^2\) satisfies
\( c_\Omega\leq \min\{(\alpha^*_\Omega)_0,(\alpha^*_\Omega)_1\} < C_\Omega\Vert u_\eta\Vert_{L^2(\Omega)}\). 

In particular, if
 \(\Omega=L\) with   \(L\subset Q\) is a dyadic square,
then  there exists a positive constant, \(c_L\), such that any
minimizer, \(\alpha^*_L\), of \(\widehat J\) over \([0,+\infty]^2\) satisfies
\( c_L\leq \min\{(\alpha^*_L)_0,(\alpha^*_L)_1\} < C_Q\Vert u_\eta\Vert_{L^2(L)}\), where
\(C_Q\) is a constant given by Proposition~\ref{prop:loc-affine}.\end{theorem}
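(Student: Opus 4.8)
\textbf{Proof plan for Theorem~\ref{thm:onalpha-TGV}.}
The plan is to mirror the structure of the proof of Theorem~\ref{thm:onalpha}, replacing the total variation by $TGV$ and the average $[u]_\Omega$ by the affine projection $\langle u\rangle_\Omega$. The first ingredient I would establish is a $\Gamma$-convergence/continuity statement for the family $G_\alpha$ (introduced in Section~\ref{sect:glo}) as $\alpha\to\bar\alpha$ in $[0,+\infty]^2$; this is exactly the content of Lemma~\ref{lem:gammaTGV} and Lemma~\ref{lem:Ilsc-TGV}, which I would invoke to obtain that $J$ in \eqref{eq:defJ-TGV} extends to a lower semicontinuous function $\widehat J$ on the compact set $[0,+\infty]^2$. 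In particular, at the corners one has $u_{\bar\alpha}=u_\eta$ when $\bar\alpha=(0,0)$, $u_{\bar\alpha}=\langle u_\eta\rangle_\Omega$ when both components are $+\infty$, and the intermediate mixed cases produce limit functionals that I would read off from Lemma~\ref{lem:gammaTGV}; crucially, $\widehat J(0,0)=\|u_\eta-u_c\|^2_{L^2(\Omega)}$ and $\widehat J(+\infty,+\infty)=\|\langle u_\eta\rangle_\Omega-u_c\|^2_{L^2(\Omega)}$.

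Next I would prove the two strict-inequality statements analogous to Steps~1 and 2 of Theorem~\ref{thm:onalpha}. For the analog of \eqref{eq:notat0}, I would use the subdifferential characterization: minimality of $u_\alpha$ for $G_\alpha$ gives $2(u_\eta-u_\alpha)\in\partial\big(TGV_{\alpha_0,\alpha_1}\big)(u_\alpha)$, and testing against $u_c$ yields
\[
\|u_\eta-u_c\|^2_{L^2(\Omega)}-\|u_\alpha-u_c\|^2_{L^2(\Omega)}\ \geq\ TGV_{\alpha_0,\alpha_1}(u_\alpha,\Omega)-TGV_{\alpha_0,\alpha_1}(u_c,\Omega).
\]
Since $TGV_{\alpha_0,\alpha_1}$ scales linearly if one scales $(\alpha_0,\alpha_1)$ by a common factor $t$, I would pick $\hat\alpha$ from hypothesis \textit{i)} and consider the ray $t\hat\alpha$ as $t\searrow0$; a monotonicity argument as in \eqref{eq:mon}, together with the lower semicontinuity of $TGV$ and the $L^2$-convergence $u_{t\hat\alpha}\to u_\eta$ coming from the $\bar\alpha=(0,0)$ case of the $\Gamma$-convergence, shows $TGV_{\hat\alpha_0,\hat\alpha_1}(u_{t\hat\alpha},\Omega)\nearrow TGV_{\hat\alpha_0,\hat\alpha_1}(u_\eta,\Omega)$, so the right-hand side above becomes strictly positive for $t$ small. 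This gives $\widehat J<\widehat J(0,0)$ somewhere in the interior. For the analog of \eqref{eq:notati}, I would simply use continuity of $\widehat J$ on a neighborhood of $(0,0)$ together with hypothesis \textit{ii)} and the triangle inequality $\|u_\alpha-u_c\|\le\|u_\alpha-u_\eta\|+\|u_\eta-u_c\|$, exactly as in Step~2.

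With these in hand, the existence of a minimizer $\alpha^*_\Omega$ of $\widehat J$ on the compact set $[0,+\infty]^2$ follows from lower semicontinuity, and the two strict inequalities rule out the corner $(0,0)$ and (via hypothesis \textit{ii)}) any point with $\widehat J$-value equal to $\|\langle u_\eta\rangle_\Omega-u_c\|^2$, which by Proposition~\ref{prop:loc-affine}-type reasoning is attained on the ``both components large'' region; this gives \eqref{eq:optalpha-generalTGV} and \eqref{eq:minIalpha-TGV}. For the lower bound $c_\Omega\le\min\{(\alpha^*_\Omega)_0,(\alpha^*_\Omega)_1\}$, I would argue by contradiction as in Theorem~\ref{thm:onalpha}: if $\min\{(\alpha^*_j)_0,(\alpha^*_j)_1\}\to0$ along a minimizing sequence, then by the $\Gamma$-convergence analysis (the relevant mixed limit where the smaller parameter vanishes) $u_{\alpha^*_j}\to u_\eta$ in $L^2$, forcing $\min\widehat J\ge\widehat J$ evaluated at that limit $\ge\|u_\eta-u_c\|^2=\widehat J(0,0)$, contradicting the strict inequality just proved. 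For the upper bound, I would apply Proposition~\ref{prop:loc-affine} (in the form valid on a general Lipschitz domain, or on $L$ with the constant $C_Q$): once $\min\{\alpha_0,\alpha_1\}\ge C_\Omega\|u_\eta\|_{L^2(\Omega)}$ the minimizer $u_\alpha$ is affine, hence equals $\langle u_\eta\rangle_\Omega$, so $\widehat J(\alpha)=\|\langle u_\eta\rangle_\Omega-u_c\|^2$, which is strictly larger than the minimum by hypothesis \textit{ii)}; thus no minimizer can have $\min\{(\alpha^*_\Omega)_0,(\alpha^*_\Omega)_1\}\ge C_\Omega\|u_\eta\|_{L^2(\Omega)}$. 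The dyadic-square case $\Omega=L$ is identical with $C_Q$ in place of $C_\Omega$, using the scale-invariant constant from Proposition~\ref{prop:loc-affine}. The main obstacle I anticipate is bookkeeping the several mixed limit cases in the $\Gamma$-convergence (one component $0$, finite, or $+\infty$ while the other is finite or $+\infty$) and making sure the monotonicity argument for $TGV_{t\hat\alpha}(u_{t\hat\alpha})$ along the ray is clean; everything else is a faithful transcription of the $TV$ argument with $\langle\cdot\rangle_\Omega$ replacing $[\cdot]_\Omega$ and Proposition~\ref{prop:loc-affine} replacing Proposition~\ref{prop:Ja257}.
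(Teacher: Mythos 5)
Your proposal is correct and follows essentially the same route as the paper's proof: the same subdifferential inequality restricted to the ray $t\hat\alpha$ (exploiting the scaling $TGV_{t\hat\alpha_0,t\hat\alpha_1}=t\,TGV_{\hat\alpha_0,\hat\alpha_1}$) for the analogue of \eqref{eq:notat0}, the same limiting argument at $(0,0)$ for the analogue of \eqref{eq:notati}, lower semicontinuity of $\widehat J$ on the compact square $[0,+\infty]^2$ for existence, and the same contradiction arguments (lower semicontinuity for the lower bound $c_\Omega$, Proposition~\ref{prop:loc-affine} forcing $u_\alpha=\langle u_\eta\rangle_\Omega$ for the upper bound on $\min\{\alpha_0^*,\alpha_1^*\}$). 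The only cosmetic difference is that you invoke monotone convergence of $TGV_{\hat\alpha_0,\hat\alpha_1}(u_{t\hat\alpha},\Omega)$ along the ray as in \eqref{eq:mon}, whereas the paper gets by with plain lower semicontinuity of the $TGV$ seminorm under the strong $L^2$ convergence $u_{t\hat\alpha}\to u_\eta$; both suffice.
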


Owing to the orthogonality property \eqref{eq:affine-proj}, condition $ii)$ in the statement of the theorem is equivalent to requiring that
\(\Vert u_c-\langle u_c \rangle-u_\eta+\langle u_\eta \rangle\Vert_{L^2(\Omega)}^2\leq \Vert u_c-\langle u_c \rangle\Vert_{L^2(\Omega)}^2.\)
In other words, $ii)$ is satisfied provided that the perturbation which the noise causes on the non-affine portion of $u_c$ is small in the $L^2$-sense compared to the original non-affine component of $u_c$. This is the case, for example, if $\eta=u_\eta-u_c$ and  $\eta-\langle \eta\rangle$ has a small $L^2$-norm, regardless of the $L^2$-norm of $\langle \eta \rangle$.
 
We remark that the conclusion of the theorem in the general case is slightly weaker than the corresponding result for the $TV$-setting. Indeed, while we can show that both entries of optimal parameters must be uniformly bounded away from zero, we can only prove that their minimum is uniformly bounded from above but cannot prevent that just one of the entries blows up to infinity. This is due to the fact that, without additional conditions, the maps $u_\alpha$ are not necessarily affine if just one of the entries of $\alpha$ becomes infinity, cf. also \cite[Proposition 6]{PaVa15} for comparison. 

However, as a direct consequence of our result, we find a complete characterization for the case in which the analysis of $TGV$ reduces to a one-dimensional problem.

\begin{corollary}\label{thm:onalpha-TGV1}
Under the same assumption and with the same notation of Theorem~\ref{thm:onalpha-TGV}, setting $u_{\lambda}:=u_{\lambda(\hat\alpha_0,\hat\alpha_1)}$ for every $\lambda\in [0,+\infty]$,
there exists \( \lambda^*_\Omega\in (0,+\infty)\) such that
\begin{equation*}
\begin{aligned}
J( \lambda^\ast_\Omega(\hat\alpha_0,\hat\alpha_1))=\min_{\lambda\in(0,+\infty)} J(\lambda(\hat\alpha_0,\hat\alpha_1)). 
\end{aligned}
\end{equation*}

Additionally, there exist  positive constants, \(c_\Omega\) and \(C_\Omega\), such that any
minimizer \(\lambda^*_\Omega\) satisfies
\( c_\Omega\leq \lambda^*_\Omega < C_\Omega\Vert u_\eta\Vert_{L^2(\Omega)}\). 

In particular, if
 \(\Omega=L\) with   \(L\subset Q\) is a dyadic square,
then  there exists a positive constant, \(c_L\), such that any
minimizer \(\lambda^*_L\) satisfies
\( c_L\leq \lambda^*_L < C_Q\Vert u_\eta\Vert_{L^2(L)}\), where
\(C_Q\) is a constant given by Proposition~\ref{prop:loc-affine}. 
\end{corollary}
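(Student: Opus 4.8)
The plan is to exploit the positive one-homogeneity of $(\alpha_0,\alpha_1)\mapsto TGV_{\alpha_0,\alpha_1}(u,\Omega)$ so as to reduce the restriction of $J$ to the ray $\lambda\mapsto\lambda(\hat\alpha_0,\hat\alpha_1)$ to a genuinely one-parameter ROF-type problem, and then to transcribe the arguments of Lemma~\ref{lem:ConvMinTV} and Theorem~\ref{thm:onalpha} with $TV$ replaced by the convex, positively one-homogeneous, $L^2$-lower semicontinuous seminorm $T(\cdot):=TGV_{\hat\alpha_0,\hat\alpha_1}(\cdot,\Omega)$ (extended by $+\infty$ off $BV(\Omega)$), whose kernel is the space of affine functions. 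Concretely, from \eqref{eq:tgv} one has $TGV_{\lambda\hat\alpha_0,\lambda\hat\alpha_1}(u,\Omega)=\lambda\,T(u)$ for $\lambda\in(0,+\infty)$, so by \eqref{eq:TGV} (with $L$ replaced by $\Omega$) the map $u_\lambda:=u_{\lambda(\hat\alpha_0,\hat\alpha_1)}$ is the unique minimizer over $L^2(\Omega)$ of $G_\lambda[u]:=\int_\Omega|u_\eta-u|^2\dd x+\lambda\,T(u)$. This family plays exactly the role of $(F_\alpha)_\alpha$ in Lemma~\ref{lem:ConvMinTV}, the two endpoint functionals being $G_0[u]:=\int_\Omega|u_\eta-u|^2\dd x$, with minimizer $u_0=u_\eta$, and $G_\infty$, finite only on affine functions, with minimizer $u_\infty=\langle u_\eta\rangle_\Omega$ by \eqref{eq:profAff}.

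First I would prove the analogue of Lemma~\ref{lem:ConvMinTV}: if $\lambda_j\to\bar\lambda$ in $[0,+\infty]$ then $u_{\lambda_j}\to u_{\bar\lambda}$ strongly in $L^2(\Omega)$. For $\bar\lambda\in(0,+\infty)$ this is verbatim the subdifferential/monotonicity computation of Lemma~\ref{lem:ConvMinTV}, using $u_\eta-u_{\bar\lambda}\in\bar\lambda\,\partial T(u_{\bar\lambda})$ and $u_\eta-u_{\lambda_j}\in\lambda_j\,\partial T(u_{\lambda_j})$ together with the comparison $G_{\bar\lambda}[u_{\bar\lambda}]\le G_{\bar\lambda}[0]=\|u_\eta\|_{L^2(\Omega)}^2$. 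For $\bar\lambda=0$ one approximates $u_\eta$ in $L^2(\Omega)$ by functions of finite $T$ (e.g. smooth functions, for which $T(u)\le\hat\alpha_0|Du|(\Omega)<\infty$) and repeats the diagonal argument. For $\bar\lambda=+\infty$ one first shows $\langle u_{\lambda_j}\rangle_\Omega=\langle u_\eta\rangle_\Omega$ for all $j$ (adding an affine function does not change $T$, so minimality of $u_{\lambda_j}$ forces $\langle u_{\lambda_j}-u_\eta\rangle_\Omega=0$ by uniqueness of the $L^2$-affine projection), then uses $G_{\lambda_j}[u_{\lambda_j}]\le\|u_\eta\|_{L^2(\Omega)}^2$ to get $T(u_{\lambda_j})\to0$, and finally the Poincar\'e-type inequality $\|u-\langle u\rangle_\Omega\|_{L^2(\Omega)}\le C\,T(u)$ obtained by chaining \eqref{eq:sobineqbd}, \eqref{eq:rotineq} and \eqref{eq:sobineqbv} (exactly as in the proof of Proposition~\ref{prop:loc-affine}) to conclude $u_{\lambda_j}\to\langle u_\eta\rangle_\Omega$ in $L^2(\Omega)$. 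Consequently $\lambda\mapsto J(\lambda(\hat\alpha_0,\hat\alpha_1))=\|u_\lambda-u_c\|_{L^2(\Omega)}^2$ extends continuously to $[0,+\infty]$, with values $\|u_\eta-u_c\|_{L^2(\Omega)}^2$ at $0$ and $\|\langle u_\eta\rangle_\Omega-u_c\|_{L^2(\Omega)}^2$ at $+\infty$; by construction this extension is the restriction to the ray of the $\widehat J$ of Lemma~\ref{lem:Ilsc-TGV}.

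Next I would run Steps~1--3 of the proof of Theorem~\ref{thm:onalpha} along the ray. Using $0\in\partial G_\lambda(u_\lambda)$, the subgradient inequality against $u_c$, and the monotonicity $T(u_\lambda)\nearrow T(u_\eta)$ as $\lambda\searrow0$ (proved as \eqref{eq:mon}, via the endpoint continuity and the $L^1$-lower semicontinuity of $T$), assumption $i)$ --- that is, $T(u_c)<T(u_\eta)$ --- yields some $\lambda>0$ with $\|u_\lambda-u_c\|_{L^2(\Omega)}^2<\|u_\eta-u_c\|_{L^2(\Omega)}^2=\widehat J(0)$; assumption $ii)$ together with $u_{\lambda}\to u_\eta$ as $\lambda\to0^+$ gives some $\lambda>0$ with $\|u_\lambda-u_c\|_{L^2(\Omega)}^2<\|\langle u_\eta\rangle_\Omega-u_c\|_{L^2(\Omega)}^2=\widehat J(+\infty)$. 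Since $\widehat J$ is lower semicontinuous on the compact set $[0,+\infty]$ it attains its minimum, and the two strict inequalities force this minimum to occur at some $\lambda^*_\Omega\in(0,+\infty)$, which is \eqref{eq:minIalpha-TGV} read on the ray. The lower bound $c_\Omega\le\lambda^*_\Omega$ follows as in Step~3 of Theorem~\ref{thm:onalpha}: were there minimizers $\lambda^*_j\to0$, lower semicontinuity would give $\widehat J(0)\le\liminf_j\widehat J(\lambda^*_j)=\min\widehat J$, contradicting $i)$. For the upper bound I would reuse the core estimate in the proof of Proposition~\ref{prop:loc-affine} with $(c_0,c_1)=(\lambda\hat\alpha_0,\lambda\hat\alpha_1)$: it shows that $u_\lambda$ is affine --- hence $u_\lambda=\langle u_\eta\rangle_\Omega$ and $\widehat J(\lambda)=\widehat J(+\infty)$ --- as soon as $\lambda\ge C_\Omega\|u_\eta\|_{L^2(\Omega)}$ for a constant $C_\Omega$ built from $\hat\alpha_0,\hat\alpha_1,|\Omega|$ and the Sobolev/rotation constants of $\Omega$; combined with the second strict inequality above this gives $\lambda^*_\Omega<C_\Omega\|u_\eta\|_{L^2(\Omega)}$, and the specialization $\Omega=L$ a dyadic square uses the scale-invariant constants $C_Q^{BV},C_Q^{BD},C_Q^{rot}$ of Proposition~\ref{prop:loc-affine}.

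The main obstacle is really a bookkeeping point rather than a genuine difficulty: one must check that every step of the $TV$ analysis of Section~\ref{sect:wTV} survives the replacement of $TV$ by $T=TGV_{\hat\alpha_0,\hat\alpha_1}(\cdot,\Omega)$ and of constants by affine functions. The only place where this needs more than a literal transcription is the $\bar\lambda=+\infty$ limit, which requires the coercivity of $T$ modulo affine functions; this is supplied by the chain of inequalities \eqref{eq:sobineqbd}--\eqref{eq:rotineq}--\eqref{eq:sobineqbv} already assembled in the proof of Proposition~\ref{prop:loc-affine}. It is precisely the one-homogeneity reduction that restores here the full $TV$-type dichotomy --- both a lower and an upper bound on the optimal parameter --- in contrast with the genuinely two-parameter situation of Theorem~\ref{thm:onalpha-TGV}.
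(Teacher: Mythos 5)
Your proposal is correct and follows essentially the same route as the paper: the corollary is obtained by restricting the argument of Theorem~\ref{thm:onalpha-TGV} to the ray $\lambda\mapsto\lambda(\hat\alpha_0,\hat\alpha_1)$, where positive one-homogeneity of $TGV$ turns the problem into a scalar ROF-type problem with seminorm $T=TGV_{\hat\alpha_0,\hat\alpha_1}(\cdot,\Omega)$, the endpoints $\lambda=0$ and $\lambda=+\infty$ are excluded by conditions \textit{i)} and \textit{ii)}, and the upper bound comes from the Poincar\'e-modulo-affine estimate assembled in Proposition~\ref{prop:loc-affine}. Your writeup is a more self-contained transcription of the $TV$ analysis (with constants replaced by affine functions) than the paper's appeal to Steps~1--3 of Theorem~\ref{thm:onalpha-TGV} and Lemmas~\ref{lem:gammaTGV}--\ref{lem:Ilsc-TGV}, but the mathematical content is the same.
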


As in the case of the total variation, we proceed by first studying the limiting behavior of the sum of fidelity and $TGV$-seminorm in the sense of $\Gamma$-convergence. To describe the situation in which the tuning coefficients approach $+\infty$, it is useful to recall that $\mathcal{M}_b(\Omega;\mathbb{R}^d)$ denotes the set of bounded Radon measures on $\Omega$ with values in $\mathbb{R}^d$ and 
${\mathrm{Ker}}\,\mathcal{E}\,(\Omega;\R^d)$ is the set of all maps $\Phi:\Omega\to \mathbb{R}^d$ such that $\mathcal{E}\Phi=0$. In particular, $\Phi\in {\mathrm{Ker}}\,\mathcal{E}\,(\Omega;\R^d)$ if and only if there exists $M\in \mathbb{R}^{d\times d}_{\mathrm{skew}}$ and $m\in \mathbb{R}^d$ such that $\Phi(x)=Mx+m$ for every $x\in \Omega$.

We also recall the function
$$m_{\mathcal{E}}:\mathcal{M}_b(\Omega;\mathbb{R}^d)\to {\mathrm{Ker}}\,\mathcal{E}\,(\Omega;\R^d),$$
introduced in \cite[Proposition 3]{PaVa15}, and
defined as the solution to the minimum problem
\begin{equation}
\label{def:m-E}
|\mu-m_{\mathcal E}(\mu)|(\Omega)=\min\{ |\mu-\phi|(\Omega):\,\phi\in {\mathrm{Ker}}\,\mathcal{E}\,(\Omega;\R^d)\},
\end{equation}
for every $\mu\in \mathcal{M}_b(\Omega;\mathbb{R}^d)$. Recall that $BH(\Omega)$ denotes the space of functions with bounded Hessian on $\Omega$, namely maps $u\in BV(\Omega)$ such that $D^2 u\in \mathcal{M}_b(\Omega;\mathbb{R}^{d\times d})$.

\begin{lemma}
\label{lem:gammaTGV}

Let \(\Omega\subset \RR^2\) be a bounded, Lipschitz domain and, for each
\(\alpha\in(0,+\infty)^2\), let \(u_\alpha\in BV(\Omega)\) be  given by \eqref{eq:TGV} and \eqref{eq:tgv},
with \(L\) and \(Q\) replaced by \(\Omega\). Consider the family of  functionals \((G_{\bar \alpha})_{\bar\alpha\in[0,+\infty]^2}\),
where \(G_{\bar \alpha}:L^1(\Omega)\to[0,+\infty]\) is defined by
\begin{align*}
&G_\alpha [u]:=\begin{cases}
\int_{\Omega}|u_\eta-u|^2\dd x+ TGV_{\alpha_0,\alpha_1}(u,\Omega) &\text{if
} u\in BV(\Omega),\\
+\infty &\text{otherwise,}
\end{cases}\quad \text{ for } \bar\alpha=:\alpha=(\alpha_0,\alpha_1)\in(0,+\infty)^2,\\
&G_{0,\bar\alpha_1} [u]:=\begin{cases}
\int_{\Omega}|u_\eta-u|^2\dd x &\text{if
} u\in L^2(\Omega),\\
+\infty &\text{otherwise,}
\end{cases}\quad \text{ for } \bar\alpha_0=0\text{ and }\bar\alpha_1\in[0,+\infty],\\
&G_{\infty,\alpha_1} [u]:=\begin{cases}
\int_{\Omega}|u_\eta-u|^2\dd x+\alpha_1|D^2 u|(\Omega) &\text{if
} u\in BH(\Omega),\\
+\infty &\text{otherwise,}
\end{cases}\quad \text{ for }\begin{array}{l} \bar\alpha_0=+\infty,\\\bar\alpha_1=:\alpha_1\in(0,+\infty),\end{array}\\
&G_{\bar{\alpha}_0,0} [u]:=\begin{cases}
\int_{\Omega}|u_\eta-u|^2\dd x &\text{if
} u\in BV(\Omega),\\
+\infty &\text{otherwise,}
\end{cases}\quad \text{ for } \bar\alpha_0\in(0,+\infty]\text{ and }\bar\alpha_1=0,\\
&G_{\alpha_0,\infty} [u]:=\begin{cases}
\int_{\Omega}|u_\eta-u|^2\dd x +\alpha_0  |Du-m_{\mathcal E}(Du)|(\Omega)&\text{if
} u\in BV(\Omega),\\
+\infty &\text{otherwise,}
\end{cases}\quad \text{ for }\begin{array}{l}\bar\alpha_0=:\alpha_0\in(0,\infty),\\ \bar\alpha_1=+\infty,\end{array}\\
&G_{\infty,\infty} [u]:=\begin{cases}
\int_{\Omega}|u_\eta-u|^2\dd x &\text{if
} Du\in {\mathrm{Ker}}\,\mathcal{E}(\Omega;\mathbb{R}^d),\\
+\infty &\text{otherwise,}
\end{cases}\quad \text{ for } \bar\alpha_0=\bar\alpha_1=+\infty.
\end{align*}
Let \((\alpha_j)_{j\in\NN} \subset (0,+\infty)^2\) and \(\bar
\alpha\in [0,\infty]^2\) be such that \(\alpha_j\to \bar \alpha\)
in \([0,+\infty]^2\). Then, \((G_{\alpha_j})_{j\in\NN}\)   \(\Gamma\)-converges
to \(G_{\bar\alpha}\) in \(L^1(\Omega)\).
\end{lemma}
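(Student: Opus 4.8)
The plan is to establish the $\Gamma$-convergence result case by case, according to which of the seven regimes the limiting pair $\bar\alpha=(\bar\alpha_0,\bar\alpha_1)$ falls into. In each case I would verify separately the $\Gamma$-$\liminf$ inequality and the $\Gamma$-$\limsup$ inequality (recovery sequence); since all functionals contain the common continuous term $\int_\Omega|u_\eta-u|^2\dd x$, the analysis reduces to the $TGV$-part, and the $L^1$-convergence $u_j\to u$ together with the $L^2$-boundedness that the energy bound forces can be used freely. The interior case $\bar\alpha\in(0,+\infty)^2$ is essentially continuity of $(\alpha_0,\alpha_1)\mapsto TGV_{\alpha_0,\alpha_1}(u,\Omega)$ on compact parameter ranges: for the $\liminf$ one takes an almost-optimal $v$ for the limit parameter as a competitor and uses lower semicontinuity of the weighted variations (Theorems~\ref{thm:ppBVw-tgv} and \ref{lem:tv-cont}) together with the fact that $\alpha_j^{-1}\alpha_j$-type rescalings are bounded; for the $\limsup$ one uses the constant sequence $u_j=u$ and the monotonicity/continuity $TGV_{\alpha_j}(u)\to TGV_{\bar\alpha}(u)$, which follows because a minimizing $v$ for $\bar\alpha$ is admissible for $\alpha_j$ and the two weighted total variations are comparable up to the ratio $\alpha_j/\bar\alpha\to 1$.

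For the degenerate cases I would proceed as follows. When $\bar\alpha_0=0$ (any $\bar\alpha_1$), $G_{\bar\alpha}$ is just the fidelity, and since $TGV_{\alpha_j}\ge 0$ the $\liminf$ is immediate, while for the recovery sequence I would mollify $u$ to get smooth $u_\kappa\to u$ in $L^1$ with bounded $TGV_{\alpha_j}(u_\kappa)$ (via $v=0$, so $TGV_{\alpha_0,\alpha_1}(u_\kappa)\le\alpha_0\,TV(u_\kappa)$) and then diagonalize so that $(\alpha_j)_0\,TV(u_j)\to 0$, exactly as in the $\bar\alpha=0$ case of Lemma~\ref{lem:ConvMinTV}. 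The case $\bar\alpha_1=0$, $\bar\alpha_0\in(0,+\infty]$ is similar but now the target is $\int|u_\eta-u|^2$ restricted to $BV(\Omega)$: here one chooses $v$ with $\Ecal v=0$ approximating $Du$ in a suitable sense so that $\alpha_0|Du-v|(\Omega)$ stays bounded while $(\alpha_j)_1|\Ecal v|(\Omega)=0$; the $\liminf$ requires showing the limit $u$ lies in $BV(\Omega)$, which follows from the $L^1$-bound on $Du-v$ combined with $v\in\mathrm{Ker}\,\Ecal$. When $\bar\alpha_0=+\infty$ and $\bar\alpha_1\in(0,+\infty)$, the only competitors with bounded energy have $|Du_j-v_j|(\Omega)$ forced to zero, i.e. in the limit $v=Du$, so $TGV$ collapses to $\alpha_1|D^2u|(\Omega)$ on $BH(\Omega)$; I would run a compactness argument on the $v_j$ (bounded in $BD$ by Lemma~\ref{lem:bd-L1}-type estimates and the energy bound) to extract $v_j\to Du$, identify the limit, and pass to the $\liminf$ using lower semicontinuity of $|\Ecal\cdot|=|D^2u|$, while the recovery sequence is obtained by approximating $u\in BH(\Omega)$ by smooth functions and taking $v_\kappa=\nabla u_\kappa$. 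The case $\bar\alpha_1=+\infty$, $\bar\alpha_0\in(0,\infty)$ is the genuinely new one: here the optimal $v$ is driven into $\mathrm{Ker}\,\Ecal$, so one recognizes $\inf_{v\in\mathrm{Ker}\,\Ecal}|Du-v|(\Omega)=|Du-m_{\Ecal}(Du)|(\Omega)$ using the projection $m_{\Ecal}$ from \eqref{def:m-E}, and the analysis parallels a weighted-$TV$ argument with the ``quotiented'' variation; I expect to invoke \cite[Proposition~3]{PaVa15} for continuity properties of $m_{\Ecal}$ under weak-$*$ convergence of measures. Finally $\bar\alpha_0=\bar\alpha_1=+\infty$ forces both $|Du_j-v_j|(\Omega)\to0$ and $|\Ecal v_j|(\Omega)\to 0$, hence in the limit $Du\in\mathrm{Ker}\,\Ecal$, giving $G_{\infty,\infty}$; the recovery sequence uses that functions with $Du$ affine are dense enough for the constraint.

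The main obstacle I anticipate is the case $\bar\alpha_1\to+\infty$ with $\bar\alpha_0$ finite, and more generally any case where one tuning parameter stays bounded while the other blows up: there the functional does not simplify to a pure fidelity or a clean higher-order term, and one must carefully control the interplay between the two weighted measures $Du-v$ and $\Ecal v$ — in particular showing that a minimizing $v$ can be chosen to converge (after subtracting an element of $\mathrm{Ker}\,\Ecal$) without the kernel part escaping to infinity, which is precisely what $m_{\Ecal}$ and the bound in Lemma~\ref{lem:bd-L1} are designed to handle. A secondary technical point is the $\liminf$ inequality at the ``corner'' transitions (e.g. $\alpha_j\to(0,+\infty)$ along a curve), where one must check that the bound obtained is insensitive to the path; this is handled by noting that all the estimates above depend only on $\min\{(\alpha_j)_0,(\alpha_j)_1\}$ and $\max\{(\alpha_j)_0,(\alpha_j)_1\}$ and not on the ratio, so the limiting functional is determined solely by $\bar\alpha$. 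Throughout, I would lean on the integral representations \eqref{eq:intrepDu-v} and Theorem~\ref{thm:ppBVw-tgv}(iii) to replace the weighted variations by honest measure integrals whenever the weights are lower semicontinuous and strictly positive, and use Theorem~\ref{thm:ppBVw-tgv}(iv) and Remark~\ref{rmk:oniv} for the compactness needed in the $\liminf$ arguments.
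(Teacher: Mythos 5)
Your overall strategy coincides with the paper's: a case-by-case verification of the $\Gamma$-$\liminf$ and $\Gamma$-$\limsup$ inequalities in each of the parameter regimes, using compactness of the optimal $v_j$ in $BD$ extracted from the energy bound, lower semicontinuity of the (weighted) variations, the projection $m_{\Ecal}$ from \cite[Proposition 3]{PaVa15} for the regime $\bar\alpha_1=+\infty$, and mollification plus diagonalization for $\bar\alpha_0=0$. The interior case, the cases $\bar\alpha_0=0$, $\bar\alpha_0=+\infty$ with $\bar\alpha_1\in(0,+\infty)$, $\bar\alpha_1=+\infty$ with $\bar\alpha_0\in(0,+\infty)$, and $\bar\alpha_0=\bar\alpha_1=+\infty$ are all handled essentially as in the paper.

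There is, however, a concrete flaw in your treatment of the regime $\bar\alpha_1=0$, $\bar\alpha_0\in(0,+\infty]$. You propose to build the recovery sequence by choosing $v$ with $\Ecal v=0$ ``approximating $Du$'' so that $\alpha_0|Du-v|(\Omega)$ \emph{stays bounded}. This fails on two counts. First, $\mathrm{Ker}\,\Ecal$ consists only of the rigid fields $x\mapsto Mx+m$ with $M$ skew-symmetric, so such $v$ cannot approximate a general $Du$; second, and more importantly, the limit functional $G_{\bar\alpha_0,0}$ contains \emph{no} regularization term, so a recovery sequence must achieve $TGV_{(\alpha_j)_0,(\alpha_j)_1}(u_j,\Omega)\to 0$ — a merely bounded, non-vanishing contribution $\alpha_0|Du-v|(\Omega)$ leaves a strictly positive excess in the $\limsup$. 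The correct construction (the one used in the paper) is to approximate $u$ by smooth functions $u_k$ and take $v=\nabla u_k$ as competitor, so that the first $TGV$ term vanishes identically and the second equals $(\alpha_j)_1|\nabla^2 u_k|(\Omega)\to 0$ precisely because $(\alpha_j)_1\to 0$; a diagonalization as in your $\bar\alpha_0=0$ case then concludes. Relatedly, your $\liminf$ claim for this case — that $u\in BV(\Omega)$ follows from an ``$L^1$-bound on $Du-v$ combined with $v\in\mathrm{Ker}\,\Ecal$'' — is not available either: the optimal $v_j$ for the $j$-th problem are not in $\mathrm{Ker}\,\Ecal$, and since $(\alpha_j)_1\to 0$ the energy bound gives no control on $|\Ecal v_j|(\Omega)$, so no $BD$-compactness for $v_j$ and no $BV$-bound for $u_j$ can be extracted; the only lower bound that survives in this regime is that of the fidelity term via weak $L^2$ lower semicontinuity, which is exactly how the paper argues.
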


\begin{proof}
We first prove that if \((u_j)_{j\in\NN}\subset L^1(\Omega)\)
and \(u\in L^1(\Omega)\) are such that \(u_j\to u\) in \(L^1(\Omega)\),
then
\begin{equation}
\label{eq:liminfE-TGV}
\begin{aligned}
G_{\bar \alpha} [u] \leq \liminf_{j\to\infty} G_{\alpha_j} [u_j].
\end{aligned}
\end{equation}
Without loss of generality, we work under the assumptions that
\begin{equation*}
\begin{aligned}
\liminf_{j\to\infty} G_{\alpha_j} [u_j] = \lim_{j\to\infty} G_{\alpha_j}
[u_j]<+\infty \quad \text{and} \quad \sup_{j\in\NN}  G_{\alpha_j}
[u_j] <+\infty.
\end{aligned}
\end{equation*}
Then, \(u_j\in BV(\Omega)\) for all \(j\in\NN\), \(\sup_{j\in\NN}  \int_{\Omega}|u_\eta-u_j|^2\dd
x
 <+\infty\) and \(\sup_{j\in\NN}   TGV_{(\alpha_j)_0,(\alpha_j)_1}(u_j,\Omega)
 <+\infty\). Hence, \(u\in L^2(\Omega)\) and \(u_j \weakly u\) weakly in
\(L^2(\Omega)\).
For each $j\in \mathbb{N}$, let $u^\ast_j\in BD(\Omega)$ be such that
\begin{equation}
\label{eq:ujstar}TGV_{(\alpha_j)_0,(\alpha_j)_1}(u_j)=(\alpha_j)_0|Du_j-u_j^\ast|(\Omega)+(\alpha_j)_1|\mathcal{E}u_j^\ast|(\Omega).
\end{equation}
We now consider each limiting behavior of the sequence $(\alpha_j)_{j\in \mathbb{N}}$ separately.
\begin{itemize}
\item[(i)] If \(\bar\alpha=\alpha\in (0,+\infty)^2\), then an argument by contradiction as the classical $TGV$ case and variants thereof (see, e.g., \cite[Proposition 5.3]{DaFoLi23}) yields uniform bounds for  sequences $(u_j)_{j\in \N}$ and $(u_j^\ast)_{j\in \N}$ in  $BV(\Omega)$ and $BD(\Omega)$, respectively.
Thus, \(u\in
BV(\Omega)\) and \(u_j\weakly
u\) weakly-\(\star\) in \(BV(\Omega)\). Additionally, there exists $u^\ast\in BD(\Omega)$ such that, up to extracting a further subsequence, \(u_j^\ast\weakly
u^\ast\) weakly-\(\star\) in \(BD(\Omega)\),  from which  
\eqref{eq:liminfE-TGV} follows.

\item[(ii)] If \(\bar\alpha_0=0\), then
\eqref{eq:liminfE-TGV} holds by the lower-semicontinuity of the \(L^2\)-norm
with respect to the weak convergence in \(L^2(\Omega)\).

\item[(iii)] If \(\bar\alpha_0=+\infty\) and $\bar{\alpha}_1\in (0,+\infty)$, then
\((u_j^\ast)_{j\in\NN}\) is bounded in \(BD(\Omega)\). Thus, there exists $u^\ast\in BD(\Omega)$ such that, up to extracting a further subsequence, \(u_j^\ast\weakly
u^\ast\) weakly-\(\star\) in \(BD(\Omega)\). Additionally,  \(\lim_{j\to\infty} |Du_j-u_j^\ast|(\Omega)=0\).
Thus,   \(u_j\to
u\) strongly in \(BV(\Omega)\), \(u\in
BH(\Omega)\), and 
\eqref{eq:liminfE-TGV} holds by the lower-semicontinuity of the \(L^2\)-norm
with respect to the strong convergence in \(BV(\Omega)\).

\item[(iv)] If $\bar\alpha_0\in(0,\infty]$ and $\bar\alpha_1=0$, then the situation is analogous to (ii).

\item[(v)] If $\bar\alpha_1=+\infty$ and  $\bar{\alpha}_0\in (0,+\infty)$, then there exists $u^\ast$ affine and such that
$u^\ast_j\to u^\ast$ strongly in $BD(\Omega)$ and $(u_j)_{j\in \mathbb{N}}$ is uniformly bounded in $BV(\Omega)$, so that $u_j\weaklystar u$ weakly-\(\star\) in $BV(\Omega)$. The statement follows from the lower semicontinuity of the total variation with respect to the weak-\(\star\) convergence of measures, as well as from \eqref{def:m-E}.

\item[(vi)] If $\bar\alpha_0=\bar\alpha_1=+\infty$, then there exists $u^\ast\in {\mathrm{Ker}}\,\mathcal{E}(\Omega;\mathbb{R}^d)$ such that
$u^\ast_j\to u^\ast$ strongly in $BD(\Omega)$ and $Du_j\to u^\ast$ strongly in $\mathcal{M}_b(\Omega;\mathbb{R}^d)$. Thus, $Du=u^\ast$ and the statement follows.
\end{itemize}

Next, we show that for any  \(u\in L^1(\Omega)\), there exists
  \((u_j)_{j\in\NN}\subset L^1(\Omega)\)
 such that \(u_j\to u\) in \(L^1(\Omega)\)
and
\begin{equation}
\label{eq:limsupE-TGV}
\begin{aligned}
G_{\bar \alpha} [u] \geq \limsup_{j\to\infty} G_{\alpha_j} [u_j].
\end{aligned}
\end{equation}
Again, we detail the argument in each case separately.
\begin{itemize}
\item[(i)] If $\bar\alpha=\alpha\in (0,+\infty)^2$ then we can assume, without loss of generality, that $u\in BV(\Omega)$. The conclusion follows then by a classical argument relying on the continuity of $TGV$ with respect to its tuning parameters (see, e.g., \cite[Theorem 4.2]{DaFoLi23}).
\item[(ii)] If $\bar\alpha_0=0$, then we consider for every $u\in L^2(\Omega)$ an approximating sequence $(u_k)_{k\in \mathbb{N}}\subset C^\infty_c(\Omega)$ such that $u_k\to u$ strongly in $L^2(\Omega)$. Choosing the null function as a competitor in the definition of $TGV$, we find that
$$TGV_{(\alpha_j)_0,(\alpha_j)_1}(u_k)\leq (\alpha_j)_0 |D u_k|(\Omega).$$
Thus,
$$\lim_{j\to +\infty}G_{\alpha_j}[u_k] = G_{0,\bar{\alpha}_1}[u_k]$$
 for every $\bar{\alpha}_1\in [0,+\infty]$ and every $k\in \mathbb{N}$. The thesis follows then by a classical diagonalization argument.
\item[(iii)] If $\bar\alpha_0=+\infty\text{ and }\bar\alpha_1=\alpha_1\in(0,+\infty)$ then we can assume, without loss of generality, that $u\in BH(\Omega)$. In particular, $\nabla u \in BD(\Omega)$ which we can then use as a competitor in the definition of $TGV$ to infer that$$TGV_{(\alpha_j)_0,(\alpha_j)_1}(u)\leq (\alpha_j)_1 |D^2u|(\Omega).$$
Thus, 
$$\limsup_{j\to +\infty}G_{\alpha_j}[u]\leq \limsup_{j\to +\infty}\bigg(\int_\Omega|u_\eta-u|^2\dd x+(\alpha_j)_1|D^2 u|(\Omega)\bigg)=G_{\infty,\alpha_1} [u].$$

\item[(iv)] If $\bar\alpha_0\in (0,+\infty]$ and $\bar\alpha_1=0$, arguing by approximation as in case (ii), we can assume without loss of generality that $u\in C^\infty_c(\Omega)$. Then, choosing $\nabla u$ as a competitor in the definition of $TGV$, we find that
$$TGV_{(\alpha_j)_0,(\alpha_j)_1}(u)\leq (\alpha_j)_1 |\nabla^2 u|(\Omega).$$
Hence, arguing as in case (ii) once more, yields \eqref{eq:limsupE-TGV}.
\item[(v)] If $\bar\alpha_0=\alpha_0\in (0,+\infty)$ and $\bar\alpha_1=+\infty$, then we can assume that $u\in BV(\Omega)$. Choosing $m_{\mathcal E}(Du)$ in the definition of $TGV$, yields
$$TGV_{(\alpha_j)_0,(\alpha_j)_1}(u)\leq (\alpha_j)_0 |D u-m_\mathcal E(Du)|(\Omega).$$
Hence, arguing as in case (ii), we infer \eqref{eq:limsupE-TGV}.
\item[(vi)] If $\bar\alpha_0=\bar\alpha_1=+\infty$, then, without loss of generality, we can assume that $Du\in {\mathrm{Ker}}\,\mathcal{E}(\Omega;\mathbb{R}^d)$. Choosing $Du$ as a competitor in the definition of $TGV$ shows that $TGV_{(\alpha_j)_0,(\alpha_j)_1}(u)=0$ for every $j\in \mathbb{N}$, from which \eqref{eq:limsupE-TGV} follows.
\end{itemize}
The $\Gamma$-convergence of   \((G_{\alpha_j})_{j\in\NN}\)  
to \(G_{\bar\alpha}\) in \(L^1(\Omega)\) is then a direct consequence of \eqref{eq:liminfE-TGV} and \eqref{eq:limsupE-TGV}.
\end{proof}

As a consequence of the previous result, we provide a characterization of the unique minimizer $u_{\bar\alpha}$ of $G_{\bar\alpha}$. 
\begin{corollary}
\label{cor:min-TGV}
Under the same assumptions of Lemma~\ref{lem:gammaTGV}, let \(u_{\bar \alpha}:=\argmin_{u\in L^1(\Omega)} G_{\bar\alpha}[u]\) for \(\bar \alpha\in [0,+\infty]^2\). Then,
\begin{equation}
\label{eq:minim-TGV}
u_{\bar\alpha}=\begin{cases}
u_{\alpha}&\text{if }\bar\alpha=\alpha\in(0,+\infty)^2\\
u_\eta&\text{if }\bar\alpha_0=0\text{ or }\bar\alpha_1=0,\\
\langle u_\eta \rangle_\Omega&\text{if }\bar\alpha_0=\bar\alpha_1=+\infty.
\end{cases}
\end{equation}
Additionally, when just one among $\bar\alpha_0$ and $\bar\alpha_1$ is infinite, then $\langle u_{\bar\alpha}\rangle_\Omega=\langle u_{\eta}\rangle_\Omega$. In these regimes, if additionally $u_\eta=\langle u_{\eta}\rangle_\Omega$, then $u_{\bar\alpha}=\langle u_{\bar\alpha}\rangle_\Omega$.
\end{corollary}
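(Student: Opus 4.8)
The plan is to read off the minimizer of each $\Gamma$-limit $G_{\bar\alpha}$ directly from the explicit expressions furnished by Lemma~\ref{lem:gammaTGV}. In every regime the functional $G_{\bar\alpha}$ is strictly convex (the fidelity $u\mapsto\int_\Omega|u_\eta-u|^2\dd x$ is strictly convex and the remaining, positively one-homogeneous contribution is convex), so the minimizer $u_{\bar\alpha}$, once shown to exist, is unique; existence is part of the $\Gamma$-convergence framework, or alternatively follows from the direct method applied to the explicit forms, using the compactness properties of $BV(\Omega)$ and $BH(\Omega)$. It therefore suffices, in each case, to exhibit a competitor attaining the value claimed in \eqref{eq:minim-TGV}.

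If $\bar\alpha=\alpha\in(0,+\infty)^2$, the assertion is the definition of $u_\alpha$ in \eqref{eq:TGV}. If $\bar\alpha_0=0$, or symmetrically $\bar\alpha_1=0$ (which by Lemma~\ref{lem:gammaTGV} leaves only the fidelity term, posed over $L^2(\Omega)$ resp.\ $BV(\Omega)$), the functional is minimized exactly at $u=u_\eta$, which belongs to the relevant domain; this gives the second line of \eqref{eq:minim-TGV}. If $\bar\alpha_0=\bar\alpha_1=+\infty$, Lemma~\ref{lem:gammaTGV} gives $G_{\infty,\infty}[u]=\int_\Omega|u_\eta-u|^2\dd x$ subject to $Du\in\mathrm{Ker}\,\mathcal{E}(\Omega;\mathbb{R}^d)$. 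Since $\mathrm{Ker}\,\mathcal{E}$ consists of affine maps $x\mapsto Mx+m$ with $M\in\mathbb{R}^{d\times d}_{\mathrm{skew}}$, this constraint forces $\nabla u$ to be affine with $D^2u=M$; but $D^2u$ is symmetric while $M$ is skew-symmetric, so $M=0$ and $u$ is affine. Minimizing the fidelity over affine functions and invoking \eqref{eq:profAff} then yields $u_{\bar\alpha}=\langle u_\eta\rangle_\Omega$, the third line of \eqref{eq:minim-TGV}.

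The substantive case is when exactly one of $\bar\alpha_0,\bar\alpha_1$ equals $+\infty$. If the finite component is $0$, i.e.\ $(\bar\alpha_0,\bar\alpha_1)\in\{(0,+\infty),(+\infty,0)\}$, the limit is again pure fidelity and $u_{\bar\alpha}=u_\eta$, so the remaining claims are trivial. Otherwise the limit is $G_{\infty,\alpha_1}[u]=\int_\Omega|u_\eta-u|^2\dd x+\alpha_1|D^2u|(\Omega)$ on $BH(\Omega)$ with $\alpha_1\in(0,+\infty)$, or $G_{\alpha_0,\infty}[u]=\int_\Omega|u_\eta-u|^2\dd x+\alpha_0|Du-m_{\mathcal{E}}(Du)|(\Omega)$ on $BV(\Omega)$ with $\alpha_0\in(0,+\infty)$. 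In both cases the higher-order term and the admissible class are invariant under adding an affine function $A$ to $u$: for $|D^2u|$ this is immediate; for $|Du-m_{\mathcal{E}}(Du)|$ it follows because $\nabla A$ is constant, hence lies in $\mathrm{Ker}\,\mathcal{E}$, so the substitution $\phi\mapsto\phi-\nabla A$ in \eqref{def:m-E} shows $m_{\mathcal{E}}(Du+\nabla A)=m_{\mathcal{E}}(Du)+\nabla A$ (using that $m_{\mathcal{E}}$ is the \emph{unique} solution of \eqref{def:m-E}), whence $|D(u+A)-m_{\mathcal{E}}(D(u+A))|(\Omega)=|Du-m_{\mathcal{E}}(Du)|(\Omega)$. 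Writing $u_{\bar\alpha}=w+\langle u_{\bar\alpha}\rangle_\Omega$ with $w:=u_{\bar\alpha}-\langle u_{\bar\alpha}\rangle_\Omega$, so that $\langle w\rangle_\Omega=0$ by linearity and idempotency of the affine projection, the competitors $w+A$ (for $A$ affine) all carry the same higher-order energy as $u_{\bar\alpha}$; minimality of $u_{\bar\alpha}$ then forces $\langle u_{\bar\alpha}\rangle_\Omega$ to be the $L^2$-best affine approximation of $u_\eta-w$, i.e.\ by \eqref{eq:profAff} and linearity of $\langle\cdot\rangle_\Omega$, $\langle u_{\bar\alpha}\rangle_\Omega=\langle u_\eta-w\rangle_\Omega=\langle u_\eta\rangle_\Omega-\langle w\rangle_\Omega=\langle u_\eta\rangle_\Omega$. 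Finally, if moreover $u_\eta=\langle u_\eta\rangle_\Omega$, then $u_\eta$ is affine, lies in $BH(\Omega)$, $BV(\Omega)$ and $L^2(\Omega)$, and $G_{\bar\alpha}[u_\eta]=0$ because the fidelity vanishes and $D^2u_\eta=0$ (resp.\ $m_{\mathcal{E}}(Du_\eta)=Du_\eta$ since $Du_\eta$ is constant); hence $u_{\bar\alpha}=u_\eta=\langle u_\eta\rangle_\Omega=\langle u_{\bar\alpha}\rangle_\Omega$.

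The main obstacle is not a single deep estimate but the careful bookkeeping over all boundary regimes of $\bar\alpha$, together with the two technical points used in the case where exactly one component is infinite: that $Du\in\mathrm{Ker}\,\mathcal{E}$ forces $u$ affine --- which relies on $\mathrm{Ker}\,\mathcal{E}$ being made of affine maps, so that $u\in W^{2,\infty}_{\mathrm{loc}}(\Omega)$ and $D^2u$ is a genuine symmetric matrix field --- and the affine equivariance $m_{\mathcal{E}}(\mu+\nabla A)=m_{\mathcal{E}}(\mu)+\nabla A$, which requires the uniqueness in \eqref{def:m-E} recorded in \cite[Proposition~3]{PaVa15}. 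Existence and uniqueness of the $\argmin$, the invariance of $|D^2u|$ under affine translations, and the identifications in the finite and doubly-infinite regimes are routine.
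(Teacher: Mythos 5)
Your proof is correct and rests on the same mechanism as the paper's: the orthogonality property \eqref{eq:affine-proj} combined with the invariance of the higher-order terms under affine perturbations, with the three cases of \eqref{eq:minim-TGV} read off directly from the explicit limit functionals of Lemma~\ref{lem:gammaTGV}. The paper packages the mixed regime as an explicit orthogonal splitting of $G_{\infty,\bar\alpha_1}$ into an affine part and an affine-free part (declaring $G_{\alpha_0,\infty}$ ``analogous''), whereas you run a competitor argument over $w+A$; the only genuine addition on your side is spelling out the equivariance $m_{\mathcal E}(\mu+\nabla A)=m_{\mathcal E}(\mu)+\nabla A$ from the uniqueness in \eqref{def:m-E}, a detail the paper leaves implicit.
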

\begin{proof}
 The first claim follows directly from Lemma~\ref{lem:gammaTGV}. We show the second statement only in the case in which $\bar{\alpha}_0=\infty$ and $\bar{\alpha}_1$ is finite, being the case in which $\bar{\alpha}_1=\infty$  analogous. The characterization of minimizers is then a consequence of  the orthogonality property in \eqref{eq:affine-proj} which, in turn, yields $$G_{\infty,\bar{\alpha}_1}(u)=\int_\Omega |\langle u-u_\eta\rangle_\Omega|^2 \dd x +\int_\Omega \left[\left(u-\langle u\rangle_\Omega\right)-\left(u_\eta-\langle u_\eta\rangle_\Omega\right)\right]^2 \dd x+\bar{\alpha}_1|D^2 \left(u-\langle u\rangle_\Omega\right)|$$ for every $u\in BH(\Omega)$. 
\end{proof}

\begin{lemma}\label{lem:recseq-TGV}
Let \(\Omega\subset \RR^2\) be a bounded, Lipschitz domain and let
 \((G_{\bar \alpha})_{\bar\alpha\in[0,+\infty]}\) be
 the family of  functionals
 introduced in
Lemma~\ref{lem:gammaTGV}. Given   \(\bar
\alpha\in [0,\infty]^2\), set \(u_{\bar\alpha}:= \argmin_{u\in
L^1(\Omega)}  G_{\bar \alpha}[u]\).  Then, there exists a sequence of pairs 
of positive numbers,
 \((\alpha_j)_{j\in\NN} \subset (0,+\infty)^2\), such that \(\alpha_j\to
\bar \alpha\)
in \([0,+\infty]^2\)   as \(j\to\infty\) and 
\begin{equation}\label{eq:strongL2-TGV}
\begin{aligned}
\lim_{j\to\infty} \int_\Omega |u_{\alpha_j} - u_{\bar\alpha}|^2\dd
x=0,
\end{aligned}
\end{equation}
where   \(u_{\alpha_j}:= \argmin_{u\in
L^1(\Omega)}  G_{ \alpha_j}[u] \) for all \(j\in\NN\).\end{lemma}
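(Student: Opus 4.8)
The plan is to use the $\Gamma$-convergence established in Lemma~\ref{lem:gammaTGV} together with the strict convexity of the fidelity term to upgrade convergence of minimizers from $L^1$ to the stronger $L^2$ statement~\eqref{eq:strongL2-TGV}. First I would fix $\bar\alpha\in[0,+\infty]^2$ and choose \emph{any} sequence $(\alpha_j)_{j\in\NN}\subset(0,+\infty)^2$ with $\alpha_j\to\bar\alpha$ in $[0,+\infty]^2$; for instance, if $\bar\alpha_i=+\infty$ we take $(\alpha_j)_i=j$, if $\bar\alpha_i=0$ we take $(\alpha_j)_i=1/j$, and if $\bar\alpha_i\in(0,+\infty)$ we take $(\alpha_j)_i=\bar\alpha_i$. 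By Lemma~\ref{lem:gammaTGV}, $G_{\alpha_j}\xrightarrow{\Gamma} G_{\bar\alpha}$ in $L^1(\Omega)$. Each $G_{\alpha_j}$ has a unique minimizer $u_{\alpha_j}$ (uniqueness from strict convexity of $u\mapsto\int_\Omega|u_\eta-u|^2\dd x$ and convexity of the $TGV$-type term), and likewise $G_{\bar\alpha}$ has the unique minimizer $u_{\bar\alpha}$ characterized in Corollary~\ref{cor:min-TGV}.

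Next I would verify equicoercivity of the family $(G_{\alpha_j})_{j\in\NN}$ in $L^1(\Omega)$ along this particular sequence, so that the minimizers $u_{\alpha_j}$ are precompact in $L^1$. Since $G_{\alpha_j}[u_{\alpha_j}]\le G_{\alpha_j}[u_\eta]$ which is bounded (it equals $\int_\Omega TGV\text{-type terms of }u_\eta$, finite by the claim in Lemma~\ref{lem:wTGV-min}; in the degenerate cases $G_{\alpha_j}[u_\eta]=0$ or converges), the fidelity term $\int_\Omega|u_\eta-u_{\alpha_j}|^2\dd x$ is uniformly bounded, hence $(u_{\alpha_j})$ is bounded in $L^2(\Omega)$. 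The $TGV$ term then controls, via a Poincar\'e-type argument exactly as in the proof of Proposition~\ref{prop:L2tgv} and the coercivity arguments of Lemma~\ref{lem:gammaTGV}, the $BV$-seminorm of $u_{\alpha_j}$ (up to an affine or constant part, which is pinned down by the $L^2$-bound), so $(u_{\alpha_j})$ is bounded in $BV(\Omega)$ and thus precompact in $L^1(\Omega)$. By the fundamental theorem of $\Gamma$-convergence, any $L^1$-limit point of $(u_{\alpha_j})$ is a minimizer of $G_{\bar\alpha}$, hence equals $u_{\bar\alpha}$ by uniqueness; so the whole sequence converges, $u_{\alpha_j}\to u_{\bar\alpha}$ in $L^1(\Omega)$, and moreover $G_{\alpha_j}[u_{\alpha_j}]\to G_{\bar\alpha}[u_{\bar\alpha}]$.

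The final step is to pass from $L^1$ to $L^2$ convergence of the fidelity term. From $u_{\alpha_j}\to u_{\bar\alpha}$ in $L^1$ and boundedness in $L^2$, we get $u_{\alpha_j}\weakly u_{\bar\alpha}$ weakly in $L^2(\Omega)$, hence by weak lower semicontinuity of the $L^2$-norm, $\liminf_j\int_\Omega|u_\eta-u_{\alpha_j}|^2\dd x\ge\int_\Omega|u_\eta-u_{\bar\alpha}|^2\dd x$. On the other hand, the $TGV$-type terms are also weakly lower semicontinuous (this is exactly the content of the $\liminf$-inequality already proved inside Lemma~\ref{lem:gammaTGV}), so combining with $G_{\alpha_j}[u_{\alpha_j}]\to G_{\bar\alpha}[u_{\bar\alpha}]$ forces $\limsup_j\int_\Omega|u_\eta-u_{\alpha_j}|^2\dd x\le\int_\Omega|u_\eta-u_{\bar\alpha}|^2\dd x$; therefore $\int_\Omega|u_\eta-u_{\alpha_j}|^2\dd x\to\int_\Omega|u_\eta-u_{\bar\alpha}|^2\dd x$. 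Weak $L^2$-convergence together with convergence of norms in the Hilbert space $L^2(\Omega)$ yields strong convergence $u_\eta-u_{\alpha_j}\to u_\eta-u_{\bar\alpha}$ in $L^2(\Omega)$, which is~\eqref{eq:strongL2-TGV}. The main obstacle I anticipate is the bookkeeping in the equicoercivity step: the various limiting regimes in~\eqref{eq:minim-TGV} involve different natural function spaces ($BV$, $BH$, kernels of $\mathcal E$), so the Poincar\'e/Sobolev estimate that turns the energy bound into an $L^1$-precompactness statement must be stated uniformly, quotienting out the appropriate finite-dimensional space (constants, affine maps, or $\mathrm{Ker}\,\mathcal E$) in each case; but this is precisely the coercivity analysis already carried out in the proof of Lemma~\ref{lem:gammaTGV}, so it can be invoked rather than redone.
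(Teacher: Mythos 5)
Your overall strategy (fundamental theorem of $\Gamma$-convergence, then splitting the energy convergence into convergence of the fidelity and regularizer parts, then Radon--Riesz to upgrade weak $L^2$ convergence plus norm convergence to strong $L^2$ convergence) is sound and, in the regimes where it applies, arguably cleaner than the paper's case-by-case upgrades; the paper in fact uses essentially this splitting in the case $\bar\alpha_0\in(0,+\infty)$, $\bar\alpha_1=+\infty$, and elsewhere relies on strong $BV$ convergence together with the embedding $BV(\Omega)\subset L^2(\Omega)$ in dimension two. However, there is a genuine gap in your equicoercivity step. When $\bar\alpha_0=0$ or $\bar\alpha_1=0$, the bound $G_{\alpha_j}[u_{\alpha_j}]\leq C$ does \emph{not} yield a uniform $BV(\Omega)$ bound on the minimizers: the coefficient in front of $|Du_{\alpha_j}-v_j|(\Omega)$ (or of $|\Ecal v_j|(\Omega)$) degenerates to zero along the sequence, so the energy only controls the fidelity term, i.e.\ an $L^2$ bound. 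An $L^2$ bound on a bounded domain gives weak $L^2$ compactness but not strong $L^1$ precompactness, which is the topology in which Lemma~\ref{lem:gammaTGV} is stated; hence the fundamental theorem of $\Gamma$-convergence cannot be invoked in these regimes, and the coercivity analysis of Lemma~\ref{lem:gammaTGV} that you propose to reuse does not in fact establish $BV$ bounds there (its liminf inequality in those cases uses only weak $L^2$ lower semicontinuity). The paper circumvents this by a direct argument: since $G_{\bar\alpha}[u_\eta]=0$ in these cases, the recovery sequence for $u_\eta$ and minimality give $\limsup_j G_{\alpha_j}[u_{\alpha_j}]\leq 0$, and $G_{\alpha_j}[u_{\alpha_j}]\geq\|u_{\alpha_j}-u_\eta\|_{L^2(\Omega)}^2$ then yields \eqref{eq:strongL2-TGV} immediately. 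Your proof needs this (or an equivalent) separate treatment of the degenerate cases.

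A second, smaller inaccuracy: the competitor bound $G_{\alpha_j}[u_{\alpha_j}]\leq G_{\alpha_j}[u_\eta]=TGV_{(\alpha_j)_0,(\alpha_j)_1}(u_\eta,\Omega)$ is \emph{not} uniformly bounded when $(\alpha_j)_0\to+\infty$ (it is only controlled by $(\alpha_j)_0\,TV(u_\eta,\Omega)$); to get the uniform $L^2$ bound you should instead test with $u\equiv 0$ or with $\langle u_\eta\rangle_\Omega$, which makes the regularizer vanish. With these two repairs your argument goes through and recovers the paper's conclusion.
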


\begin{proof}
With the same notation as in the proof of Lemma~\ref{lem:gammaTGV}, we detail the argument for each case separately.
\begin{itemize}
\item[(i)] If $\bar{\alpha}=\alpha\in (0,+\infty)^2$, then the statement follows directly by choosing $\alpha_j=\alpha$ for every $j$.
\item[(ii)]
 If \(\bar\alpha_0=0\), then $u_{\bar{\alpha}}=u_\eta$ and $G_{\bar{\alpha}}[u_{\bar{\alpha}}]=G_{\bar{\alpha}}[u_{\eta}]=0$.
 In view of Lemma~\ref{lem:gammaTGV}, there exists a sequence $(u_\eta^j)_{j\in\mathbb{N}}\subset L^1(\Omega)$ such that
 $$\limsup_{j\to+\infty}G_{\alpha_j}[u^j_\eta]\leq G_{\bar\alpha}[u_\eta].$$
 
 Hence, for any sequence $(\alpha_j)_{j\in \mathbb{N}}\subset (0,+\infty)^2$ satisfying $\alpha_j\to\bar{\alpha}$,  the minimality of $u_{\alpha_j}$ yields
 \begin{equation*}
\begin{aligned}
  \limsup_{j\to\infty}
  \bigg(
\int_\Omega |u_{\alpha_j} - u_{\eta}|^2\dd
x+ TGV_{(\alpha_j)_0,(\alpha_j)_1}(u_{\alpha_j},\Omega)\bigg)&=\limsup_{j\to\infty}
G_{\alpha_j}[u_{\alpha_j}]\\&\leq  \limsup_{j\to\infty} G_{\alpha_j}[u^j_\eta]\leq
G_{\bar\alpha}[u_\eta]=0.
\end{aligned}
\end{equation*}
Thus, we infer \eqref{eq:strongL2-TGV}.
\item[(iii)] If $\bar\alpha_0=+\infty$ and $\bar\alpha_1=\alpha_1\in(0,+\infty)$, then $u_{\bar\alpha}\in BH(\Omega)$. For every sequence $(\alpha_0^j)_{j\in\mathbb{N}}$ such that $\alpha_0^j\to+\infty$ as $j\to+\infty$, setting $\alpha_j:=(\alpha_0^j,\alpha_1)$, from the minimality of $u_{\alpha_j}$ and choosing $\nabla u_{\bar\alpha}$ as a competitor in the definition of $TGV$, we find
\begin{align*}
&G_{\alpha_j}[u_{\alpha_j}]\leq G_{\alpha_j}[u_{\bar\alpha}]\leq\int_\Omega|u_{\bar\alpha}-u_\eta|^2\dd x+\alpha_1|D^2 u_{\bar\alpha}|(\Omega)=G_{\bar\alpha}[u_{\bar\alpha}].
\end{align*}
By the fundamental theorem of $\Gamma$-convergence (see \cite[Corollary~7.20 and  Theorem~7.8]{Da93}), the equi-coerciveness of the functionals $G_{\alpha_j}$ together with the uniqueness of minimizers yields 
that $u_{\alpha_j}\weakly u_{\bar{\alpha}}$ weakly in $L^2(\Omega)$. Property
\eqref{eq:strongL2-TGV} follows then by arguing as in item $(iii)$ in the first part of the proof of Lemma~\ref{lem:gammaTGV} and using the continuous embedding $BV(\Omega) \subset L^2(\Omega)$. 
\item[(iv)] If $\bar\alpha_0\in(0,+\infty]$ and $\bar\alpha_1=0$, then $u_{\bar\alpha}=u_\eta$. Let $(u_\eta^k)_{k\in\mathbb{N}}\subset C^\infty_c(\Omega)$ be such that $u_\eta^k\to u_\eta$ strongly in $L^2(\Omega)$. For every sequence $(\alpha_j)_{j\in\mathbb{N}} \subset (0,+\infty)^2$ satisfying $\alpha_j\to\bar\alpha$, we obtain
from the minimality of $u_{\alpha_j}$  that
\begin{align*}
G_{\alpha_j}[u_{\alpha_j}]\leq G_{\alpha_j}[u_\eta^k]\leq \int_\Omega |u_\eta^k-u_\eta|^2\,\dd x+(\alpha_1)_j\int_\Omega |\nabla^2 u_\eta^k|\,\dd x,
\end{align*}
where the latter inequality follows by choosing $\nabla u_\eta^k$ as a competitor in the definition of $TGV$. Thus, 
$$\limsup_{j\to+\infty}G_{\alpha_j}[u_{\alpha_j}]\leq \int_{\Omega}|u_\eta-u_\eta^k|^2\,\dd x$$
for every $k\in\mathbb{N}$. Passing to the limit as $k\to+\infty$, we infer that
$$\limsup_{j\to+\infty}G_{\alpha_j}[u_{\alpha_j}]=0.$$
In turn, this implies \eqref{eq:strongL2-TGV}.
\item[(v)] If $\bar\alpha_0=\alpha_0\in(0,+\infty)$ and $\bar\alpha_1=+\infty$, then $u_{\bar\alpha}\in BV(\Omega)$. For $(\alpha_1^j)_{j\in\mathbb{N}}\subset (0,+\infty)$ such that $\alpha_1^j\to+\infty$, and setting $\alpha_j:=(\alpha_0,\alpha_1^j)$, we deduce that 
\begin{align*}
G_{\alpha_j}[u_{\alpha_j}]\leq G_{\alpha_j}[u_{\bar\alpha}]\leq \int_{\Omega}|u_{\bar\alpha}-u_\eta|^2\dd x+\alpha_0|D u_{\bar{\alpha}}-m_{\mathcal{E}}(Du_{\bar\alpha})|(\Omega)=G_{\bar\alpha}[u_{\bar\alpha}].
\end{align*}
By the fundamental theorem of $\Gamma$-convergence, we infer that
$u_{\alpha_j}\to u_{\bar\alpha}$ strongly in $L^1(\Omega)$ and that $G_{\alpha_j}[u_{\alpha_j}]\to G_{\bar\alpha}[u_{\bar\alpha}].$ On the other hand, letting $u^\ast_{\alpha_j}$ be defined as in \eqref{eq:ujstar} with $u_j$ replaced by $u_{\alpha_j}$,
the same argument as in item $(v)$ in the first part of the proof of Lemma~\ref{lem:gammaTGV} yields 
$u_{\alpha_j}\weaklystar u$ weakly-\(\star\) in $BV(\Omega)$, and $u^\ast_{\alpha_j}\to u^\ast$ strongly in $BD(\Omega)$ with $u^\ast$ affine. By combining the above convergences, we deduce 
\begin{align*}
G_{\bar\alpha}[u_{\bar\alpha}]&\leq \int_\Omega|u_{\bar{\alpha}}-u_\eta|^2\dd x+\alpha_0|Du_{\bar{\alpha}}-u^\ast|(\Omega)\\
&\leq \liminf_{j\to +\infty}\int_\Omega|u_{\alpha_j}-u_\eta|^2\dd x+\alpha_0|Du_{\alpha_j}-u^\ast_{\alpha_j}|(\Omega)\leq \lim_{j\to +\infty}G_{\alpha_j}[u_{\alpha_j}]=G_{\bar\alpha}[u_{\bar\alpha}],
\end{align*}
where the first inequality follows by the definition of $m_{\mathcal E}$, cf. \eqref{def:m-E}, whereas the second one is a consequence of the lower semicontinuity of the $L^2$-norm with respect to the weak $L^2$-convergence, as well as of the lower semicontinuity of the total variation with respect to the weak-\(\star\) convergence of measures.

\item[(vi)] If $\bar\alpha_0=\bar\alpha_1=+\infty$, then $u_{\bar\alpha}$ is affine. Thus, for every sequence $(\alpha_j)_{j\in\mathbb{N}}
\subset (0,+\infty)^2$ satisfying $\alpha_j\to\bar\alpha$,
$$G_{\alpha_j}[u_{\alpha_j}]\leq G_{\alpha_j}[u_{\bar\alpha}]=\int_\Omega|u_{\bar\alpha}-u_\eta|^2\dd x=G_{\bar\alpha}[u_{\bar\alpha}].$$
Property \eqref{eq:strongL2-TGV} is once again obtained arguing by the fundamental theorem of $\Gamma$-convergence, as in  (iii).
\end{itemize}
\end{proof}
In view of the lemmas above, we obtain the following characterization of the lower semicontinuous envelope of $J$.

\begin{lemma}\label{lem:Ilsc-TGV}
Let \(\Omega\subset \RR^2\) be a bounded, Lipschitz domain, and
let \(J:(0,+\infty)^2\to[0,+\infty)\) be the function defined
in \eqref{eq:defJ-TGV}. Then, the extension \(\widehat J:[0,+\infty]^2\to[0,+\infty]\)
of \(J\)  to the closed interval \([0,+\infty]^2 \)  defined for \(\bar\alpha\in [0,+\infty]^2\) by%
\begin{equation}
\label{eq:Isc-TGV}
\begin{aligned}
\widehat J(\bar\alpha):= \inf\Big\{\liminf_{j\to\infty} J(\alpha_j)\!:\,
(\alpha_j)_{j\in\NN}\subset (0,+\infty)^2, \, \alpha_j \to \bar\alpha
\text{ in } [0,+\infty]^2\Big\},
\end{aligned}
\end{equation}
satisfies 
\begin{equation}
\label{eq:repIsc-TGV}
\begin{aligned}
\widehat J(\bar\alpha) = \begin{cases}
J(\alpha)=\Vert u_\alpha - u_c\Vert^2_{L^2(\Omega)} & \text{if
} \bar \alpha= \alpha\in(0,+\infty)^2,\\
\Vert u_\eta - u_c\Vert^2_{L^2(\Omega)} & \text{if } \bar\alpha_0=0 \text{ or }\bar\alpha_1=0,\\
\Vert \langle u_\eta \rangle -u_c\Vert^2_{L^2(\Omega)}  & \text{if }\bar\alpha_0=\bar\alpha_1=+\infty,\\
\Vert u_{\bar{\alpha}}-u_c\Vert^2_{L^2(\Omega)}\text{ with }\langle u_{\bar\alpha}\rangle= \langle u_{\eta}\rangle & \text{otherwise},
\end{cases}
\end{aligned}
\end{equation}
where $u_{\bar{\alpha}}$ is the unique minimizer of $G_{\bar\alpha}$, cf. Corollary~\ref{cor:min-TGV}.
\end{lemma}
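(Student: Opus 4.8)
The plan is to reduce the characterization of $\widehat J$ to the $\Gamma$-convergence analysis already in place, rather than re-analyze the ROF-type optimality conditions. Concretely, I would first prove the single identity
\[
\widehat J(\bar\alpha)=\Vert u_{\bar\alpha}-u_c\Vert^2_{L^2(\Omega)}\qquad\text{for every }\bar\alpha\in[0,+\infty]^2,
\]
where $u_{\bar\alpha}=\argmin_{u\in L^1(\Omega)}G_{\bar\alpha}[u]$ is the unique minimizer of the $\Gamma$-limit identified in Lemma~\ref{lem:gammaTGV} (recall that minimizing $G_{\alpha_j}$ over $L^1(\Omega)$ is the same as minimizing over $BV(\Omega)$, so this $u_{\bar\alpha}$ agrees with the $u_\alpha$ appearing in \eqref{eq:defJ-TGV} when $\bar\alpha\in(0,+\infty)^2$), and only at the end substitute the explicit description of $u_{\bar\alpha}$ furnished by Corollary~\ref{cor:min-TGV} to read off the four cases in \eqref{eq:repIsc-TGV}. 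The inequality $\widehat J(\bar\alpha)\leq\Vert u_{\bar\alpha}-u_c\Vert^2_{L^2(\Omega)}$ is the easy half: the sequence $(\alpha_j)_{j\in\NN}$ produced by Lemma~\ref{lem:recseq-TGV} satisfies $\alpha_j\to\bar\alpha$ and $u_{\alpha_j}\to u_{\bar\alpha}$ strongly in $L^2(\Omega)$, hence $J(\alpha_j)\to\Vert u_{\bar\alpha}-u_c\Vert^2_{L^2(\Omega)}$, and \eqref{eq:Isc-TGV} gives the bound at once.

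For the reverse inequality I would take an arbitrary sequence $(\alpha_j)_{j\in\NN}\subset(0,+\infty)^2$ with $\alpha_j\to\bar\alpha$, pass to a (not relabelled) subsequence realizing $\liminf_{j\to\infty}J(\alpha_j)$, and note that testing the minimality of $u_{\alpha_j}$ against the constant $0$ yields $G_{\alpha_j}[u_{\alpha_j}]\leq G_{\alpha_j}[0]=\Vert u_\eta\Vert^2_{L^2(\Omega)}$, so $\Vert u_{\alpha_j}-u_\eta\Vert_{L^2(\Omega)}\leq\Vert u_\eta\Vert_{L^2(\Omega)}$ and $(u_{\alpha_j})_j$ is bounded in $L^2(\Omega)$; extracting once more, $u_{\alpha_j}\weakly w$ weakly in $L^2(\Omega)$. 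Since $u\mapsto\Vert u-u_c\Vert^2_{L^2(\Omega)}$ is weakly lower semicontinuous, it suffices to identify $w=u_{\bar\alpha}$: then $\liminf_{j\to\infty}J(\alpha_j)\geq\Vert u_{\bar\alpha}-u_c\Vert^2_{L^2(\Omega)}$, and an infimum over all admissible sequences in \eqref{eq:Isc-TGV} closes the estimate.

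The identification $w=u_{\bar\alpha}$ splits into two regimes, and this is where the only genuine obstacle sits. When $\bar\alpha_0=0$ or $\bar\alpha_1=0$ the family $\{G_{\alpha_j}\}$ is not equi-coercive in $L^1(\Omega)$, so the fundamental theorem of $\Gamma$-convergence is not directly available; here I would argue by hand, taking smooth $v_k\to u_\eta$ in $L^2(\Omega)$ and using the competitor $v\equiv 0$ in the definition of $TGV$ when $\bar\alpha_0=0$, respectively $v=\nabla v_k$ when $\bar\alpha_1=0$, to get $\limsup_j G_{\alpha_j}[u_{\alpha_j}]\leq\limsup_j G_{\alpha_j}[v_k]\leq\Vert v_k-u_\eta\Vert^2_{L^2(\Omega)}$; letting $k\to\infty$ forces $G_{\alpha_j}[u_{\alpha_j}]\to0$, hence $u_{\alpha_j}\to u_\eta=u_{\bar\alpha}$ strongly in $L^2(\Omega)$ and $w=u_{\bar\alpha}$. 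In all remaining regimes — $\bar\alpha\in(0,+\infty)^2$, exactly one infinite component, or $\bar\alpha=(+\infty,+\infty)$ — one has $\min\{(\alpha_j)_0,(\alpha_j)_1\}\geq c>0$ for $j$ large, so $TGV_{(\alpha_j)_0,(\alpha_j)_1}(u)\geq c\,TGV_{1,1}(u)$, and combined with the $L^2$-bound above the sublevel sets of the $G_{\alpha_j}$ are bounded in $BV(\Omega)$, hence equi-precompact in $L^1(\Omega)$; the fundamental theorem of $\Gamma$-convergence (\cite[Corollary~7.20 and Theorem~7.8]{Da93}), the $\Gamma$-convergence $G_{\alpha_j}\to G_{\bar\alpha}$ of Lemma~\ref{lem:gammaTGV}, and uniqueness of the minimizer of $G_{\bar\alpha}$ then give $u_{\alpha_j}\to u_{\bar\alpha}$ in $L^1(\Omega)$, and since $(u_{\alpha_j})_j$ is also bounded in $L^2(\Omega)$ its weak-$L^2$ limit $w$ must coincide with $u_{\bar\alpha}$.

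Combining the two bounds yields $\widehat J(\bar\alpha)=\Vert u_{\bar\alpha}-u_c\Vert^2_{L^2(\Omega)}$ for every $\bar\alpha\in[0,+\infty]^2$, and inserting the values of $u_{\bar\alpha}$ from Corollary~\ref{cor:min-TGV} — namely $u_{\bar\alpha}=u_\alpha$ for $\bar\alpha=\alpha\in(0,+\infty)^2$, $u_{\bar\alpha}=u_\eta$ if $\bar\alpha_0=0$ or $\bar\alpha_1=0$, $u_{\bar\alpha}=\langle u_\eta\rangle$ if $\bar\alpha_0=\bar\alpha_1=+\infty$, and $\langle u_{\bar\alpha}\rangle=\langle u_\eta\rangle$ in the remaining mixed regimes — produces exactly \eqref{eq:repIsc-TGV}; lower semicontinuity of $\widehat J$ on the compact set $[0,+\infty]^2$ is then automatic, since \eqref{eq:Isc-TGV} exhibits $\widehat J$ as the relaxation of $J$. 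As flagged above, the main difficulty to watch is the loss of $L^1$-equi-coercivity when one tuning parameter degenerates to $0$; the explicit-competitor argument upgrading weak to strong $L^2$-convergence of the minimizers toward $u_\eta$ is what circumvents it.
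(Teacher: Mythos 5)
Your proposal is correct and follows essentially the same route as the paper's proof: the upper bound $\widehat J(\bar\alpha)\leq\Vert u_{\bar\alpha}-u_c\Vert^2_{L^2(\Omega)}$ via the recovery sequences of Lemma~\ref{lem:recseq-TGV}, and the lower bound by identifying the limit of $(u_{\alpha_j})_j$ along arbitrary sequences — through explicit competitors forcing $G_{\alpha_j}[u_{\alpha_j}]\to 0$ when a parameter vanishes, and through equi-coercivity plus the fundamental theorem of $\Gamma$-convergence (Lemma~\ref{lem:gammaTGV}, Corollary~\ref{cor:min-TGV}) otherwise. Your only organizational difference is proving the single identity $\widehat J(\bar\alpha)=\Vert u_{\bar\alpha}-u_c\Vert^2_{L^2(\Omega)}$ first and substituting the explicit minimizers at the end, whereas the paper runs the six parameter regimes separately; the mathematical content is the same.
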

\begin{proof}
We first note that the function  \(\widehat J \) in \eqref{eq:Isc-TGV} is lower-semicontinuous on \([0,+\infty]^2\)
and  \(\widehat J \leq J\) in \((0,+\infty)^2\).
Next, we denote by \(\widetilde J\)  the function on \([0,+\infty]^2\)
defined by the right-hand
side of \eqref{eq:repIsc-TGV}, and observe  that
\begin{equation*}
\begin{aligned}
\widetilde J (\bar\alpha) = \Vert u_{\bar\alpha} - u_c\Vert^2_{L^2(\Omega)},
\end{aligned}
\end{equation*}
where  \(u_{\bar\alpha}:= \argmin_{u\in
L^1(\Omega)} G_{\bar \alpha}(u)\) is given by \eqref{eq:minim-TGV}.
We want to show that \(\widehat J \equiv \widetilde J\). By Lemma~\ref{lem:recseq-TGV}, for all $\bar \alpha \in [0,+\infty]^2$ there exists a sequence 
 $(\alpha_j)_{j\in\NN}\subset (0,+\infty)$ such that $\alpha_j \to \bar \alpha$
and for which we have
\begin{equation}\label{eq:byS2}
\begin{aligned}
\widetilde J (\bar \alpha) =\Vert u_{\bar\alpha} - u_c\Vert^2_{L^2(\Omega)}
= \lim_{j\to\infty} \Vert u_{\alpha_j} - u_c\Vert^2_{L^2(\Omega)}
= \lim_{j\to\infty} J(\alpha_j). 
\end{aligned}
\end{equation}
Thus, \(\widetilde J(\bar \alpha)\geq \widehat J (\bar \alpha)\) for all $\bar \alpha \in [0,+\infty]^2$. It remains to prove the opposite inequality. For this, we distinguish several cases as in the proofs of Lemma \ref{lem:recseq-TGV}:

\begin{itemize}
\item[(i)] If $\bar{\alpha}=\alpha\in (0,+\infty)^2$, let $(\alpha_j)_{j\in\NN}\subset (0,+\infty)$ be any sequence such that $\alpha_j
\to \alpha$. As argued before, we observe that
the uniform bounds in \(BV(\Omega)\) proved in Lemma~\ref{lem:gammaTGV} assert that
\((G_{\alpha_j})_{j\in\NN}\)
  is an equi-coercive
sequence in \(L^1(\Omega)\). Thus, as before, by well-known properties of
\(\Gamma\)-convergence  on the convergence of minimizing sequences and energies
(see \cite[Corollary~7.20 and  Theorem~7.8]{Da93}), together with the uniqueness of minimizers
of \(G_{\alpha_j}\) and \(G_\alpha\), we have that  \(u_{\alpha_j}
\weakly
u_\alpha \) weakly-$\star$ in $BV(\Omega)$
and \(\lim_{j\to\infty} G_{\alpha_j} [u_{\alpha_j}] = G_{\alpha}
[u_{\alpha}]\).
In particular,
 \(u_{\alpha_j} \weakly
u_{\alpha}\) weakly in $L^2(\Omega)$.
Hence,
\begin{equation*}
\begin{aligned}
\widetilde J(\alpha) =\Vert u_{\alpha} - u_c\Vert^2_{L^2(\Omega)}
\leq \liminf_{j\to\infty} \Vert u_{\alpha_j} - u_c\Vert^2_{L^2(\Omega)}
= \liminf_{j\to\infty} J(\alpha_j). 
\end{aligned}
\end{equation*}
Taking the infimum of all such sequences 
 $(\alpha_j)_{j\in\NN}\subset (0,+\infty)$, we conclude that
 \(\widetilde J(\alpha)\leq \widehat J (\alpha)\). 
 
\item[(ii)] If $\bar \alpha_0 = 0$, we obtain by the corresponding case of Lemma \ref{lem:recseq-TGV}  that for any sequence $(\alpha_j)_{j\in \mathbb{N}}\subset (0,+\infty)^2$ such that $\alpha_j \to \bar \alpha$, we have
\begin{equation}\label{eq:limGzero}0 \leq \limsup_{j\to+\infty}G_{\alpha_j}[u_{\alpha_j}]=0,\end{equation}
which implies $u_{\alpha_j} \to u_\eta$ strongly in $L^2(\Omega)$, and in turn $\lim_{j \to \infty} J(\alpha_j) = \widetilde J(\bar \alpha)$. Thus, taking the infimum over all such sequences, we conclude that
\(\widehat J (\bar\alpha) = \widetilde J(\bar\alpha)\).
\item[(iii)] If $\bar\alpha_0=+\infty$ and $\bar\alpha_1=\alpha_1\in(0,+\infty)$, the thesis follows by observing that the same argument as in (iii) of Lemma \ref{lem:recseq-TGV} still holds for any sequence $(\alpha_0^j,\alpha_1^j)_{j\in \N}$ with $\alpha_0^j\to +\infty$ and $\alpha_1^j\to \alpha_1$ as $j\to +\infty$.
\item[(iv)]  If $\bar\alpha_0\in(0,+\infty]$ and $\bar\alpha_1=0$, we can proceed exactly as in  (ii) to conclude that  for any sequence $(\alpha_j)_{j\in \mathbb{N}}\subset (0,+\infty)^2$ such that $\alpha_j \to \bar \alpha$, we again have \eqref{eq:limGzero} by the corresponding case of Lemma~\ref{lem:recseq-TGV}.
\item[(v)] Analogously to (iii), if $\bar\alpha_0=\alpha_0\in(0,+\infty)$ and $\bar\alpha_1=+\infty$, the statement is a consequence of the fact that the same argument as in (v) of Lemma \ref{lem:recseq-TGV} still holds for any sequence $(\alpha_0^j,\alpha_1^j)_{j\in \N}$ with $\alpha_0^j\to \alpha_0$ and $\alpha_1^j\to +\infty$ as $j\to +\infty$.
\item[(vi)] If $\bar\alpha_0=\bar\alpha_1=+\infty$, by the proof item (vi) of Lemma \ref{lem:recseq-TGV}, we have for any sequence $(\alpha_j)_{j\in \mathbb{N}}\subset (0,+\infty)^2$ with $\alpha_j \to \bar \alpha$ that $G_{\alpha_j}[u_{\alpha_j}] \leq G_{\bar \alpha}[u_{\bar \alpha}]$, which, analogously to item (iii) of Lemma~\ref{lem:recseq-TGV}, provides that \(u_{\alpha_j} \weakly
u_{\alpha}\) weakly in $L^2(\Omega)$, and this in turn allows us to conclude as in item (i).\qedhere
\end{itemize}
\end{proof}

We are now in a position to prove Theorem~\ref{thm:onalpha-TGV}.

\begin{proof}[Proof of Theorem ~\ref{thm:onalpha-TGV}]
The proof is subdivided into three steps.

\medskip

{\it{\uline{Step~1.}}} We prove that if condition \textit{i)} in  the statement holds, namely 
$$ TGV_{\hat\alpha_0,\hat\alpha_1}(u_\eta,\Omega)- TGV_{\hat\alpha_0,\hat\alpha_1}(u_c,\Omega) >0$$ for some $\hat\alpha\in (0,+\infty)^2$, then
there exists \(\bar\alpha\in(0,+\infty)^2\) such that
\begin{equation}
\label{eq:notat0-TGV}
\begin{aligned}
\Vert u_{\bar\alpha} - u_c\Vert^2_{L^2(\Omega)}<\Vert u_\eta - u_c\Vert^2_{L^2(\Omega)}.
\end{aligned}
\end{equation}
From the convexity of the $TGV$-seminorm, arguing as in the proof of \eqref{eq:notat0}, we infer that
\begin{align*}
\|u_\eta-u_c\|_{L^2(\Omega)}^2-\|u_{\alpha}-u_c\|_{L^2(\Omega)}^2
\leq TGV_{\alpha_0,\alpha_1}(u_\alpha,\Omega)-TGV_{\hat\alpha_0,\hat\alpha_1}(u_c,\Omega)
\end{align*}
for every $\alpha\in (0,+\infty)^2$. Choosing $\alpha=\lambda\hat\alpha$, and denoting $u_{\lambda(\hat\alpha)}$ by $u_\lambda$, for simplicity, we find that
\begin{align*}
\|u_\eta-u_c\|_{L^2(\Omega)}^2-\|u_{\lambda}-u_c\|_{L^2(\Omega)}^2\leq \lambda \left(TGV_{\hat\alpha_0,\hat\alpha_1}(u_\lambda,\Omega)-TGV_{\hat\alpha_0,\hat\alpha_1}(u_c,\Omega)\right)
\end{align*}
for every $\lambda\in (0,+\infty)$. By the proof of case (ii) of Lemma~\ref{lem:recseq-TGV} and by Corollary \ref{cor:min-TGV}, it follows that, up to (non-relabelled) subsequences, $u_\lambda\to u_\eta$ strongly in $L^2(\Omega)$ as $\lambda\to 0$. Fix $\varepsilon>0$; by the lower-semicontinuity of the $TGV$-seminorms with respect to the strong $L^2$-convergence, we conclude that 
\begin{align*}
TGV_{\hat\alpha_0,\hat\alpha_1}(u_\lambda,\Omega)\geq TGV_{\hat\alpha_0,\hat\alpha_1}(u_\eta,\Omega)-\varepsilon(TGV_{\hat\alpha_0,\hat\alpha_1(u_\eta,\Omega)}-TGV_{\hat\alpha_0,\hat\alpha_1}(u_c,\Omega))
\end{align*}
for $\lambda$ small enough. Thus, 
\begin{align*}
\|u_\eta-u_c\|_{L^2(\Omega)}^2-\|u_\lambda-u_c\|_{L^2(\Omega)}^2\geq \lambda(TGV_{\hat\alpha_0,\hat\alpha_1}(u_\eta,\Omega)-TGV_{\hat\alpha_0,\hat\alpha_1}(u_c,\Omega))(1-\varepsilon)
\end{align*}
for $\lambda$ small enough. This implies that there exists $\bar\lambda\in (0,+\infty)$ for which
$$\|u_\eta-u_c\|_{L^2(\Omega)}^2>\|u_\lambda-u_c\|_{L^2(\Omega)}^2.$$
The preceding estimate yields the thesis by choosing $\bar\alpha=\bar\lambda(\hat\alpha_0,\hat\alpha_1)$.

\medskip
{\it{\uline{Step~2.}}} We prove that if condition \textit{ii)}
in  the statement holds, (i.e., $ \Vert u_\eta - u_c\Vert^2_{L^2(\Omega)}
<\Vert\langle  u_\eta\rangle - u_c\Vert^2_{L^2(\Omega)}$), then
there exits \(\bar\alpha\in(0,+\infty)^2\) such that
\begin{equation}
\label{eq:notati-TGV}
\begin{aligned}
\Vert u_{\bar\alpha} - u_c\Vert^2_{L^2(\Omega)}<\Vert\langle  u_\eta \rangle- u_c\Vert^2_{L^2(\Omega)}.
\end{aligned}
\end{equation}
In view of Step 1, 
\begin{align*}
\lim_{\lambda\to 0}\|u_\lambda-u_c\|_{L^2(\Omega)}=\|u_\eta-u_c\|_{L^2(\Omega)}< \|\langle u_\eta\rangle-u_c\|_{L^2(\Omega)}.
\end{align*}
By the proof of case (vi) of Lemma~\ref{lem:recseq-TGV} and by Corollary \ref{cor:min-TGV}, we obtain the existence of $\bar\lambda\in (0,+\infty)$ for which 
$$\|u_{\bar\lambda}-u_c\|_{L^2(\Omega)}<\|\langle u_\eta\rangle-u_c\|_{L^2(\Omega)}.$$
The claim follows by choosing $\bar\alpha=\bar\lambda(\hat\alpha_0,\hat\alpha_1)$.

\medskip
{\it{\uline{Step~3.}}} 
We conclude the proof by establishing the bounds on the parameters stated in Theorem ~\ref{thm:onalpha-TGV}.  From the lower semicontinuity of $\widehat J$, we infer that there exists  $\alpha^\ast\in [0,+\infty]^2$ where the minimum value is attained. By Corollary~\ref{cor:min-TGV} and by the previous steps, $\alpha^\ast$ satisfies \eqref{eq:optalpha-generalTGV} and 
\begin{equation}\label{eq:minI'-TGV}
\begin{aligned}
\widehat J(\alpha^*)=\min_{\bar\alpha \in [0,+\infty]^2} \widehat J(\bar\alpha).
\end{aligned}
\end{equation}

To prove the existence of the lower bound \(c_\Omega\), we argue by contradiction. We first assume that there exists a sequence
\((\alpha^*_j)_{j\in\NN}\subset(0,+\infty)^2\) such that $\alpha^\ast_j\to 0$ as $j\to +\infty$,
and \eqref{eq:minI'-TGV} holds for \(\alpha^*=\alpha^*_j\) for all $j\in \mathbb{N}$. In view of the lower semi-continuity of \(\widehat J\) on \([0,+\infty]^2\),%
\begin{equation*}
\begin{aligned}
\min_{\bar\alpha \in [0,+\infty]^2}
\widehat J(\bar \alpha) \leq \widehat J(0) \leq \liminf_{j\to\infty}
\widehat J( \alpha^*_j) =\min_{\bar\alpha \in [0,+\infty]^2}
\widehat J(\bar \alpha),
\end{aligned}
\end{equation*}
which is false by  \eqref{eq:notat0-TGV}. This proves the existence of a constant $\hat c_\Omega$ such that $|\alpha^\ast|\geq \hat c_\Omega$ for every minimizer $\alpha^\ast$ of \(\widehat J\). 
The existence of the constant $c_\Omega$ as in the statement of the theorem follows by observing that the above argument can be repeated by considering sequences $(\alpha_j^\ast)_{j\in\mathbb{N}}$ for which just one of the entries converges to zero.

The bound from above on $\min\{\alpha^\ast_0,\alpha^\ast_1\}$ follows directly by  Proposition~\ref{prop:loc-affine}. In fact, from \eqref{eq:l2boundforconst}, we infer the existence of a constant $C_\Omega$ such that $u_{\alpha^\ast}$ is affine if $C_\Omega\|u_\eta\|_{L^2(\Omega)}<\min\{\alpha^\ast_0,\alpha^\ast_1\}$. Now, assume by contradiction that there exists a sequence \((\alpha^*_j)_{j\in\NN}\subset(0,+\infty)^2\) such that both entries of $\alpha^\ast_j$ blow up to infinity as $j\to +\infty$,
and \eqref{eq:minI'-TGV} holds for \(\alpha^*=\alpha^*_j\) for all $j\in \mathbb{N}$. 
Using, once again, the lower semi-continuity of \(\widehat J\) on \([0,+\infty]^2\),  
we find that
\begin{equation*}
\begin{aligned}
\min_{\bar\alpha \in [0,+\infty]^2}
\widehat J(\bar \alpha) \leq \widehat J(+\infty,+\infty) \leq \liminf_{j\to\infty}
\widehat J( \alpha^*_j) =\min_{\bar\alpha \in [0,+\infty]^2}
\widehat J(\bar \alpha),
\end{aligned}
\end{equation*}
which is false by
Corollary~\ref{cor:min-TGV} and \eqref{eq:notati-TGV}.
\end{proof}

\subsection{The \texorpdfstring{\((\scrL\!\scrS)_{{TGV-Fid}_\omega}\)}{TGV-Fid-w} learning scheme}\label{subs:wfidTGV}

Given a dyadic square \(L\subset Q\) and \(\lambda\in(0,\infty)\), we have 
\begin{equation*}
\begin{aligned}
 &\argmin\left\{\lambda\int_{L}|u_\eta-u|^2\dd x+ TGV_{1,1}(u,L)\!:\,u\in
BV(L)\right\}\\ &\qquad = \argmin\left\{\int_{L}|u_\eta-u|^2\dd x+  TGV_{\frac1\lambda,\frac1\lambda}(u,L)\!:\,u\in
BV(L)\right\}.
\end{aligned}
\end{equation*}
The analysis in Subsections~\ref{sect:TGVL3}--\ref{sub-l1tgv} applies also to the weighted-fidelity
learning scheme and yields Theorem \ref{thm:FIDomegaTGV}. As before, the previous existence theorem  holds true under
any stopping criterion  for the refinement of the admissible partitions
provided that the training data satisfies suitable conditions. We summarize the situation in the next result, which follows directly by the discussions in the previous subsection, in particular Corollary \ref{thm:onalpha-TGV1}.

\begin{theorem}[Equivalence between box constraint and
stopping criterion]\label{thm:equivFTGV}
Consider the learning scheme \((\scrL\!\scrS)_{{TGV-Fid}_\omega}\)
in \eqref{lsTGVfidomega}. The two following conditions hold:\begin{itemize}
\item[(a)] If we replace  \eqref{eq:alpha-Lg-TGV} 
 by \eqref{eq:box-min-TGV}, then there exists a
stopping criterion \((\scrS)\)  for
the refinement of the admissible partitions as in Definition~\ref{def:stop}.

\item[(b)] Assume that
there exists a
stopping criterion \((\scrS)\)  for
the refinement of the admissible partitions as in Definition~\ref{def:stop}
such that the training data
  satisfies for all \(L\in\medcup_{\scrL\in\bar\scrP} \scrL\), with
  \(\bar\scrP\) as in Definition~\ref{def:stop}, the   conditions
\begin{itemize}[leftmargin=12mm]
\item[(i)] $TGV_{\alpha_0, \alpha_1}(u_c,L) < TGV_{\alpha_0, \alpha_1}(u_\eta,L)$,

\item[(ii)] $\displaystyle \Vert u_\eta - u_c\Vert^2_{L^2(L)}
<\Vert\langle  u_\eta\rangle_L - u_c\Vert^2_{L^2(L)} $.
\end{itemize}
Then, there exist \(c_0, c_1\in\RR^+\) such that the optimal
solution \(u^*\) provided by  \((\scrL\!\scrS)_{{TGV-Fid}_\omega}\)
with  \(\scrP\) replaced by  \(\bar \scrP\)
coincides with the optimal solution
\(u^*\) provided by \((\scrL\!\scrS)_{{TGV-Fid}_\omega}\)
with \eqref{eq:alpha-Lg-TGV} 
 replaced by \eqref{eq:box-min-TGV}.
   \end{itemize}
\end{theorem}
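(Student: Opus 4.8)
The plan is to mirror the structure of the weighted-$TV$ case, since Theorem~\ref{thm:equivFTGV} for $(\scrL\!\scrS)_{{TGV-Fid}_\omega}$ is the exact analogue of Theorem~\ref{thm:equivF} for $(\scrL\!\scrS)_{{TV-Fid}_\omega}$, with the role of Theorem~\ref{thm:onalpha} now played by Corollary~\ref{thm:onalpha-TGV1}. First I would record the scaling identity already noted at the start of Subsection~\ref{subs:wfidTGV}: for a dyadic square $L$ and $\lambda\in(0,\infty)$,
\begin{equation*}
\argmin\Big\{\lambda\!\int_L|u_\eta-u|^2\dd x+TGV_{1,1}(u,L):u\in BV(L)\Big\}=\argmin\Big\{\int_L|u_\eta-u|^2\dd x+TGV_{\frac1\lambda,\frac1\lambda}(u,L):u\in BV(L)\Big\},
\end{equation*}
so that the minimizer $u_{\lambda,L}$ in \eqref{lsTGVfidomega} coincides with the minimizer $u_{(1/\lambda,1/\lambda),L}$ of the one-parameter family $\lambda\mapsto(\hat\alpha_0,\hat\alpha_1)=(1,1)$ in Corollary~\ref{thm:onalpha-TGV1}. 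Consequently, Proposition~\ref{prop:loc-affine} and Corollary~\ref{thm:onalpha-TGV1} apply verbatim after this change of variable.

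For part~(a), I would follow the proof of Theorem~\ref{thm:finiteTGV} (which in turn follows Theorem~\ref{thm:solL1}). Fixing the box constraint $\lambda\in[c,1/c]$ means $1/\lambda\in[c,1/c]\times[c,1/c]$ in the $\alpha$-parametrization, so Proposition~\ref{prop:loc-affine} gives a threshold $\bar\epsi$ (depending on $c$ and on $Q$ via $C_Q^{BV},C_Q^{BD},C_Q^{rot}$ and $\|u_\eta\|_{L^2}$) such that on any dyadic square $L$ with $|L|\le\bar\epsi$ the denoised image $u_{\lambda,L}$ is affine and equals $\langle u_\eta\rangle_L$, with the optimal $\lambda_L$ pinned to the boundary value $c$. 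As in Theorem~\ref{thm:solL1}, affinity on the four children of a square, together with the fact that $\langle u_\eta\rangle$ on the four children and on the parent produce the same weight value, lets one coarsen any over-refined partition without changing $u_\scrL$ at Level~2; iterating over the finitely many partitions gives the stopping criterion $(\scrS)$: there exists $\kappa\in\NN$ with $|L|\ge 4^{-\kappa}$ for all $L\in\scrL$. This yields $\bar\scrP$ of finite cardinality as in Definition~\ref{def:stop}.

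For part~(b), I would argue as in the proof of Theorem~\ref{thm:equiv}(b) and Subsection~\ref{sect:TGVL3}. Given a stopping criterion $(\scrS)$ with finite $\bar\scrP$, and assuming conditions (i)--(ii) hold on every $L\in\medcup_{\scrL\in\bar\scrP}\scrL$, Corollary~\ref{thm:onalpha-TGV1} (applied with $\Omega=L$, $(\hat\alpha_0,\hat\alpha_1)=(1,1)$) provides, for each such $L$, constants $c_L>0$ and the $Q$-dependent constant $C_Q$ with $c_L\le\lambda_L^*<C_Q\|u_\eta\|_{L^2(L)}$ — a genuine two-sided bound here, not just a bound on a minimum, precisely because the one-dimensional reduction in Corollary~\ref{thm:onalpha-TGV1} makes the $TGV$ case behave like the $TV$ case. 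Taking
\begin{equation*}
c_0:=\min\Big\{\min_{L\in\scrL\in\bar\scrP}c_L,\ \big(C_Q\|u_\eta\|_{L^2(Q)}\big)^{-1}\Big\}
\end{equation*}
forces every $\lambda_L$ into $[c_0,1/c_0]$, so imposing the box constraint $\lambda\in[c_0,1/c_0]$ does not alter any optimal local parameter, hence does not alter $u_\scrL$ at Level~2 nor the Level~1 minimizer over $\bar\scrP$. I would also note that $c_1$ in the statement is redundant in this weighted-fidelity scheme (the $TGV$ parameters $\alpha_0,\alpha_1$ are fixed throughout \eqref{lsTGVfidomega} and only $\lambda$ is optimized), but if one prefers to phrase the box as in \eqref{eq:bc-TGV} one simply sets $c_1:=c_0$.

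The main obstacle is the same subtlety flagged in the introduction and in Subsection~\ref{sub-l1tgv}: in the genuinely two-parameter $TGV$ scheme $(\scrL\!\scrS)_{{TGV\!}_\omega}$, Proposition~\ref{prop:loc-affine} only bounds $\min\{(\alpha_L)_0,(\alpha_L)_1\}$, so one cannot rule out a single component blowing up and the equivalence degrades. Here, however, the weighted-fidelity scheme optimizes only the scalar $\lambda$, so the relevant object is $u_{\lambda(1,1)}$ and Corollary~\ref{thm:onalpha-TGV1} gives the full two-sided bound directly; thus the "hard part" of the general $TGV$ analysis is sidestepped, and the proof reduces to carefully transporting the $TV$-style argument through the scaling identity. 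I would keep the write-up short by explicitly citing Theorem~\ref{thm:finiteTGV}, Proposition~\ref{prop:loc-affine}, and Corollary~\ref{thm:onalpha-TGV1} for the technical content and only spelling out the change of variables and the choice of $c_0$.
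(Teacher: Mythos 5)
Your proposal is correct and follows essentially the same route as the paper, which proves this result by exactly the reduction you describe: the scaling identity of Subsection~\ref{subs:wfidTGV} turns the weighted-fidelity scheme into the one-parameter family $\lambda\mapsto\tfrac1\lambda(\alpha_0,\alpha_1)$, so that part (a) follows from Proposition~\ref{prop:loc-affine} via the Theorem~\ref{thm:finiteTGV} argument and part (b) from the genuine two-sided bound in Corollary~\ref{thm:onalpha-TGV1}, sidestepping the $\min\{(\alpha_L)_0,(\alpha_L)_1\}$ issue of the two-parameter scheme. The only cosmetic points are that the reduction should be stated with $\hat\alpha=(\alpha_0,\alpha_1)$ rather than $(1,1)$ (since the scheme fixes general $\alpha_0,\alpha_1$), and that the bounds from Corollary~\ref{thm:onalpha-TGV1} apply to $1/\lambda$ rather than $\lambda$, which only inverts the box but does not change your choice of $c_0$.
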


\section{Numerical Treatment and Comparison of the learning schemes  \texorpdfstring{\((\scrL\!\scrS)_{{TV\!}_{\omega}}\)}{TV-w},  \texorpdfstring{\((\scrL\!\scrS)_{{TV\!}_{\omega_\epsi}}\)}{TV-we},
 \texorpdfstring{\((\scrL\!\scrS)_{{TV-Fid}_\omega}\)}{TV-Fid-w}, and  \texorpdfstring{\((\scrL\!\scrS)_{{TGV-Fid}_\omega}\)}{TGV-Fid-w}}
\label{sect:numerics}

\subsection{Common numerical framework for all schemes}\label{sect:commonnumerics}
The focus of our article is on the use of space-dependent weights and, from the numerical point of view, our schemes  require  addressing  weights that are piecewise constant on dyadic partitions. This stands in contrast to most previous approaches for optimizing space-dependent parameters, which in most cases hinge on $H^1$-type penalizations of the weights, as done in \cite{ChDeSc16, HiRaWuLan17} for TV,  \cite{HiPaRaSu22} for TGV and \cite{PaPaRaVi22} for some more general convex regularizers. The piecewise constant setting makes it possible to work in a modular fashion, building upon any numerical methods that are able to compute solutions to denoising with a weight (Level 2) and finding constant optimal regularization parameters (Level 3).

In our numerical examples, we have used a basic first-order finite difference discretization of the gradient and symmetrized gradient, on the regular grid arising from the discrete input images. For solving $TV$ regularized denoising, either with constant or varying weights, we have opted for the standard primal-dual hybrid gradient (PDHG) descent scheme of \cite{ChPo11}. The optimization for optimal constant parameters $\alpha$ of Level 3 is done with the `piggyback' version of the same algorithm, which has been proposed in \cite{ChPo21} to learn finite difference discretizations of $TV$ with a high degree of isotropy, and further analyzed under smoothness assumptions on the energies in \cite{BoChPo22}. Essentially, it consists in evolving an adjoint state along with the main variables, to keep track of the sensitivity of the solution with respect to parameters. We remark that such sensitivity analysis in principle requires not just first but second derivatives of the lower-level cost functions involved, in our case TV or TGV denoising involving weighted $\ell^1$ norms and their Fenchel conjugates, which are only componentwise piecewise smooth. In any case, as already observed in \cite[Appendix A]{ChPo21}, we do achieve an adequate performance in practice. It is worth mentioning that other methods to handle the bilevel optimization problems of Level~3 in a nonsmooth setting have been introduced in \cite{De23, DeVi22, BrChHo22}. One could also use these in our subdivision scheme within Algorithm \ref{alg:subdivision} below, and in fact the authors of the cited papers optimize for adaptive weights on regular dyadic grids refined uniformly. In contrast, our focus here is on the adaptive subdivision scheme.

These PDHG methods are based on considering the discrete optimization problems
\[\min_{x \in \Xcal}\, \Gcal(x) + \Fcal(Ky)\]
through their corresponding saddle point formulation
\[\max_{y \in \Ycal} \min_{x \in \Xcal}\, \langle y, Kx \rangle_{\ell^2} + \Gcal(x) - \Fcal^\ast(y),\]
with $\Gcal$ representing the differentiable fidelity term and $\Fcal^\ast$ being the projection onto a convex set, arising as the Fenchel conjugate of an $\ell^1$-type norm. Denoting by $W=\R^{nm}$ the space of discrete scalar-valued functions,  these read
in the $TV$ case as\begin{gather*}\Xcal=W,\ \Ycal=W^2,\ K=\nabla,\ \Gcal(u)=\lambda \sum_{ij} \big( u^{ij} - u^{ij}_\eta \big)^2,\ \text{ and }\\ \Fcal^\ast(p)=\Ical_{Q_{TV}} \text{ with }Q_{TV} = \big\{ p \in \Ycal \,|\, (p^{ij}_1)^2 + (p^{ij}_2)^2 \leq \alpha \text{ for all }i,j\big\}.\end{gather*}
For the TGV case, following the approach used in \cite{Br14} and \cite{BrHo15}, we have used
\begin{gather*}\Xcal=W \times W^2,\ \Ycal=W^2 \times W^3,\ K=\begin{pmatrix} \nabla u & - \mathrm{Id} \\ 0 &  \Ecal \end{pmatrix},\\ \Gcal(u,v)=\lambda \sum_{ij} \big( u^{ij} - u^{ij}_\eta \big)^2,\ \text{ and } \Fcal^\ast(p)=\Ical_{Q_{TGV}}, \text{ where }\\ Q_{TGV} = \big\{ (p,q) \in \Ycal \,|\, (p^{ij}_1)^2 + (p^{ij}_2)^2 \leq \alpha_0, (q^{ij}_{11})^2 + 2(q^{ij}_{12})^2 + (q^{ij}_{22})^2 \leq \alpha_1\text{ for all }i,j\big\}.\end{gather*}
With this notation and denoting the subgradient by  $\partial$, the PDHG algorithm \cite[Algorithm 1]{ChPo11} can be written as
\begin{equation}\label{eq:primaldual}\begin{cases}y^{k+1}= (\mathrm{Id} + \sigma \partial \Fcal^\ast)^{-1}(y^k + \sigma K \bar{x}^{k}),\\ x^{k+1}=(\mathrm{Id} + \tau \partial \Gcal)^{-1}(x^k - \tau K^\ast y^{k+1}),\\ \bar{x}^{k+1} = x^{k+1} + \theta( x^{k+1} - x^k ),\end{cases}\end{equation}
where the descent parameters satisfy $\sigma \tau \|K\| \leq 1$. In the  $TV$ case, this operator norm of $\nabla$ can be bounded by $\sqrt{8}$  (cf. \cite[Theorem~3.1]{Ch04}), while in the $TGV$ case, we have $\|K\|^2 \leq (17+\sqrt{33})/2$ (cf. \cite[Section 3.2]{Br14}). The piggyback algorithm of \cite{ChPo21, BoChPo22} introduces one adjoint variable for each primal and dual variable above (denoted by $X \in \Xcal$, $Y \in \Ycal$, $U \in W$, $P \in W^2$, $Q \in W^3$) and performs the same kind of updates also on these new variables to optimize the values of a loss function $L$, resulting in
\begin{equation}\label{eq:piggyback}\begin{cases}Y^{k+1}= D\prox_{\sigma \Fcal^\ast}(y^k + \sigma K \bar{x}^{k})\cdot\big[Y^k + \sigma K \big(\bar{X}^{k}+ D_x L(x^k, y^k) \big)\big],\\ 
X^{k+1}=D\prox_{\tau \Gcal}(x^k - \tau K^\ast y^{k+1})\cdot\big[X^k - \tau K^\ast \big(Y^{k+1} + D_y L(x^k, y^k) \big)\big],\\ \bar{X}^{k+1} = X^{k+1} + \theta( X^{k+1} - X^k ),\end{cases}\end{equation}
where $\prox_{\tau \Gcal}=(\mathrm{Id} + \tau \partial \Gcal)^{-1}$ and $\prox_{\sigma F^\ast}=(\mathrm{Id} + \tau \partial F^\ast)^{-1}$ as appearing in \eqref{eq:primaldual}; the latter corresponds to a projection onto $Q_{TV}$ or $Q_{TGV}$ which, as already remarked, is not differentiable on the boundary of these sets.

In our case, we optimize the squared $L^2$ distance to $u_c$ by varying the fidelity parameter $\lambda = 1/\alpha$, so that
\begin{equation}\label{eq:trainingloss}L(u) = \frac12 \sum_{ij} (u^{ij} - u^{ij}_c)^2\ \text{ and }\ D_\lambda \Lcal(\lambda) = \lambda \sum_{ij} \hat{U}^{ij} (\hat{u}^{ij} - u^{ij}_\eta)\ \text{ for }\ \Lcal(\lambda) = L(\hat{u}(\lambda)),\end{equation}
where $\hat{u}$, $\hat{U}$ are the optimal image variable and corresponding adjoint obtained after convergence of \eqref{eq:primaldual} and \eqref{eq:piggyback}. We have then used the derivative $D_\lambda \Lcal$ to update $\lambda$ with gradient descent steps. We have chosen to not use line search, since with the piggyback algorithm evaluations of energy and of gradient for the solution of the lower level problem require a comparable amount of computational effort, that is, either performing \eqref{eq:primaldual} alone or together with \eqref{eq:piggyback} for the same number of lower level steps. We summarize this basic approach in Algorithm \ref{alg:piggybackgd}.

\begin{algorithm}[ht]
\begin{flushleft}
\hspace*{\algorithmicindent} \textbf{Input:} Restrictions of noisy image $u_\eta$ and clean (training) image $u_c$ to a dyadic square,\\ \qquad\qquad\quad initial parameter $\lambda_0$, initial timestep $\zeta$, damping factor $\nu \leq 1$, tolerance $\mathrm{Tol}$.
\\
\end{flushleft}
\begin{algorithmic}
\STATE 1. Set $k=0$.
\WHILE {\big($|\lambda_k - \lambda_{k-1}| > \mathrm{Tol}$ \textbf{or} $k=0$\big)}
\STATE 2. Set $k=k+1$.
\STATE 3. Compute $\hat{u}, \hat{U}$ by running the Piggyback PDHG iterations \eqref{eq:primaldual}-\eqref{eq:piggyback} to convergence.
\STATE 4. Update $\lambda_k = \lambda_{k-1} - \zeta D_\lambda \Lcal(\lambda_k)$, with $D_\lambda \Lcal$ from \eqref{eq:trainingloss}.
\STATE 5. Set $\zeta=\nu \zeta$.
\ENDWHILE
\end{algorithmic}
\caption{Numerical approach to Level 3 of \((\scrL\!\scrS)_{{TV-Fid}_\omega}\) and \((\scrL\!\scrS)_{{TGV-Fid}_\omega}\)}
\label{alg:piggybackgd}
\end{algorithm}

It is worth noting that we are optimizing only on the parameter $\lambda$ in front of the fidelity term. For the TV case and since this algorithm is applied to Level 3 with constant parameters, only the balance between the two energy terms is relevant and finding an optimal $\lambda_L$ is equivalent to finding an optimal $\alpha_L = 1/\lambda_L$, which can then be assembled over all $L$ into a weight $\omega$ for Level 2 of either \((\scrL\!\scrS)_{{TV-Fid}_\omega}\) or \((\scrL\!\scrS)_{{TV}_\omega}\). In the TGV setting, optimizing only over one parameter imposes a restriction, but we have chosen to do so to keep the simple approach of Algorithm \ref{alg:piggybackgd} and avoid more complicated behaviors of the costs when varying both $\alpha_0$ and $\alpha_1$ (or, equivalently, $\lambda$ together with either $\alpha_0$ and $\alpha_1$).

\subsection{Effect of parameter discontinuities in Level 2 of  \texorpdfstring{\((\scrL\!\scrS)_{{TV\!}_{\omega}}\)}{TV-w}, \texorpdfstring{\((\scrL\!\scrS)_{{TV\!}_{\omega_\epsi}}\)}{TV-we} and \texorpdfstring{\((\scrL\!\scrS)_{{TV-Fid}_\omega}\)}{TV-Fid-w}}\mbox{}\\

In Figure \ref{fig:parameterdiscont}, we present an example using large regularization parameters and a symmetric input image to demonstrate the effect of parameter discontinuities in Level 2 of the schemes \((\scrL\!\scrS)_{{TV\!}_{\omega}}\), \((\scrL\!\scrS)_{{TV\!}_{\omega_\epsi}}\) and \((\scrL\!\scrS)_{{TV-Fid}_\omega}\). In the weighted-$TV$ result, a jump in the weight results in a spurious discontinuity in the resulting image. Mollifying the weight smooths the transition slightly, and it shifts it to the side with lower weight. Using a weighted fidelity term does not introduce discontinuities besides those present in the input, but still creates visible artifacts near them.

\begin{figure}[htb]
     \begin{center}
         \mbox{} 
     \hfill
         \raisebox{-0.5\height}{\includegraphics[width=0.315\textwidth]{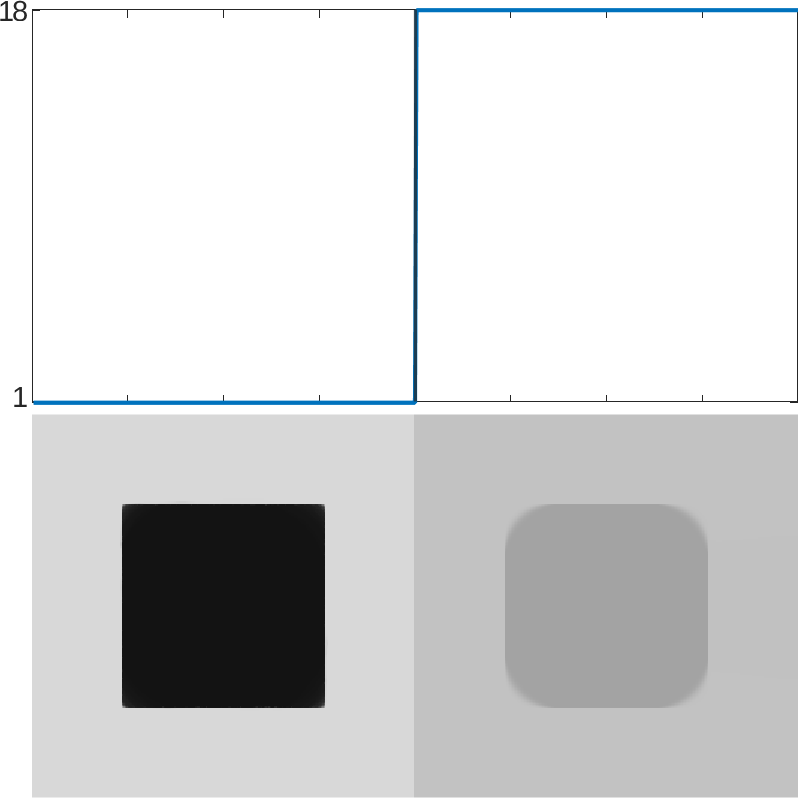}}
     \hspace{0.05cm}
         \raisebox{-0.5\height}{\includegraphics[width=0.315\textwidth]{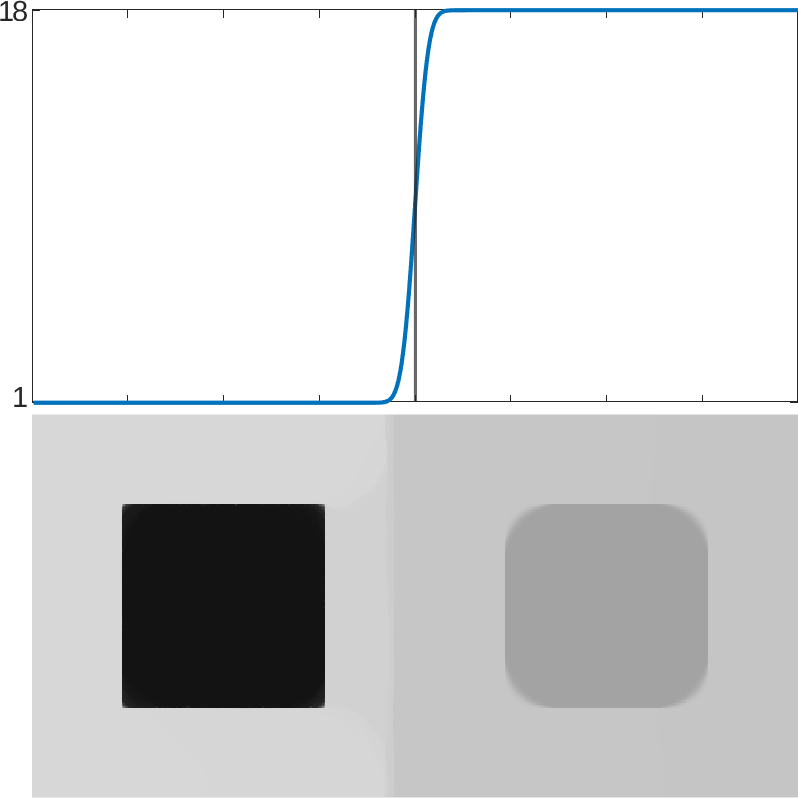}}
     \hspace{0.05cm}
         \raisebox{-0.5\height}{\includegraphics[width=0.315\textwidth]{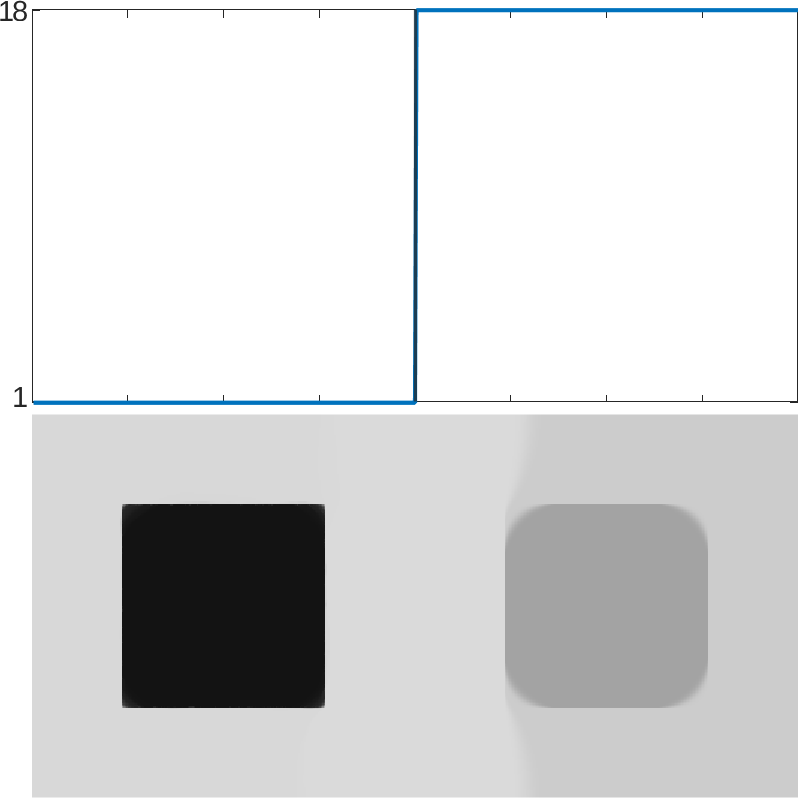}}
         \hfill
         \mbox{}
     \end{center}
     \caption{Oversmoothed denoising with a sharp change of weight and schemes corresponding to Level 2 of \((\scrL\!\scrS)_{{TV\!}_{\omega}}\), \((\scrL\!\scrS)_{{TV\!}_{\omega_\epsi}}\), and
\((\scrL\!\scrS)_{{TV-Fid}_\omega}\), from left to right. Top row: weights $\omega(x)=\omega(x_1)$. Bottom row: results with each denoising scheme and the corresponding (not optimal) weight.}\label{fig:parameterdiscont}
\end{figure}
\subsection{Dyadic subdivision approach to Level 1}\mbox{}\\
\begin{algorithm}
\begin{flushleft}
\hspace*{\algorithmicindent} \textbf{Input:} Noisy image $u_\eta$, clean (training) image $u_c$, subdivision tolerance $\rho$.
\\
\end{flushleft}
\begin{algorithmic}
\STATE 1. Set $\ell=0$ and $\scrL=\{(0,1)^2\}$.
\STATE 2. Compute the constant optimal $\omega_{(0,1)^2}\equiv \lambda_{(0,1)^2}$ or $\omega_{(0,1)^2}\equiv 1/\lambda_{(0,1)^2}$ using the numerical approach to Level 3 described in Section \ref{sect:commonnumerics}. Store the cost at the minimum as $c_{(0,1)^2}$.
\WHILE {$\ell < \ell_{\max}$}
\FOR {all $L \in \scrL$ with $\mathrm{side}(L)=2^{-\ell}$}
\STATE 2. Denote by $L_i$ for $i=1,\ldots,4$ the cells obtained by one dyadic subdivision of $L$.
\FOR {$i=1,\ldots,4$}
\STATE 3. Compute $\lambda_{L_i}$ with the approach to Level 3 of Algorithm \ref{alg:piggybackgd}, store local minimal training cost as $C_{L_i}:=\|u_c - u_{L_i}\|^2_{L^2(L_i)}$.
\vspace*{0.3em}
\ENDFOR
\IF{$C_{L_1}+C_{L_2}+C_{L_3}+C_{L_4} < \rho \,C_L$ (cf. \eqref{eq:scragf})}
\vspace*{0.3em}
\STATE 4. Replace $\scrL$ by $\big( \scrL \setminus \{L\} \big) \bigcup_{i=1}^4 \{L_i\}$.
\ENDIF
\ENDFOR
\STATE 5. Set $\omega_{\scrL}$ to be $\omega_{\scrL}=\lambda_L$ or $\omega_{\scrL}=1/\lambda_L$ on each $L \in \scrL$.
\STATE 6. Set $\ell = \ell + 1$.
\ENDWHILE
\STATE 7. Compute $u_{\scrL}$ with $\omega_{\scrL}$ and the numerical approach of Section \ref{sect:commonnumerics} to Level 2 of \((\scrL\!\scrS)_{{TGV-Fid}_{\omega}}\) or \((\scrL\!\scrS)_{{TV-Fid}_\omega}\).
\end{algorithmic}
\caption{Numerical approach to Level 1}
\label{alg:subdivision}
\end{algorithm}

In Algorithm \ref{alg:subdivision}, we summarize our approach to numerically treat Level 1. We remark that in comparison with the original formulations \((\scrL\!\scrS)_{{TV\!}_{\omega}}\) and \((\scrL\!\scrS)_{{TGV\!}_{\omega}}\) as formulated in the introduction, we do not search the entire space of partitions (which would be numerically intractable) and instead work by subdivision as in Example \ref{ex:localsubdiv}. This means that for any given cell $L$, we make a local decision whether to subdivide it or not, based on the training costs arising from it before and after subdividing it in four new cells. When performing this subdivision, the parameter from the original cell is used as initialization for the optimization on the newly created ones. Even though this approach strongly restricts the number of possible partitions considered, it still manages to achieve reasonable performance in practice. On a heuristic level, this indicates that if splitting one dyadic square once to add more detail on the parameter does not lead to better performance, then in most cases it is also not advantageous to consider further finer subdivisions of the same square.

\subsection{Numerical examples with the complete schemes \texorpdfstring{\((\scrL\!\scrS)_{{TV-Fid}_\omega}\)}{TV-Fid-w} and  \texorpdfstring{\((\scrL\!\scrS)_{{TGV-Fid}_{\omega}}\)}{TGV-Fid-w}}\mbox{}\\

In Figures \ref{fig:synthetic}, \ref{fig:lighthouse}, \ref{fig:cameraman}, and \ref{fig:parrot}, we present some illustrative examples resulting from the application of Algorithm \ref{alg:subdivision} with $\ell_{\max} = 4$ to several images, for both $TV$ and $TGV$ regularization and optimizing for one adaptive parameter in the fidelity term, which is also shown along with the partitions overlaid on the noisy input images. In these, we generally see that the adapted fidelity parameter $\lambda$ is higher in areas with finer details. Peak signal to noise ratios and SSIM values for each case are summarized in Table \ref{table:metrics}. In all cases, $TGV$ with adaptive fidelity produces the best results by these metrics, but there are several instances where the gains are very marginal or there are even ties with the corresponding adaptive $TV$ results. Nevertheless, it may be argued that even in these cases the $TGV$ results are more visually appealing due to reduced staircasing.

For the simple example of Figure \ref{fig:synthetic}, some more direct observations can be made. In it, we see that the spatially adaptive results manage to better preserve the fine structures inside the main object, while TGV greatly diminishes staircasing in regions where the original image is nearly linear. Observe that unlike the fine structures, the boundaries of the main object consisting of a sharp discontinuity along an interface with low curvature do not necessarily force further subdivision, as expected for $TV$ or $TGV$ regularization.

The synthetic image used in Figure \ref{fig:synthetic} was created by the authors for this article. The lighthouse and parrot examples in Figures \ref{fig:lighthouse} and \ref{fig:parrot} have been cropped and converted to grayscale from images in the Kodak Lossless Image Suite. The cameraman image of Figure \ref{fig:cameraman} is very widely used, but to our knowledge its origin is not quite clear.
\begin{figure}[htb]
     \begin{center}
     \mbox{} 
     \hfill
         \raisebox{-0.5\height}{\includegraphics[height=0.157\textheight]{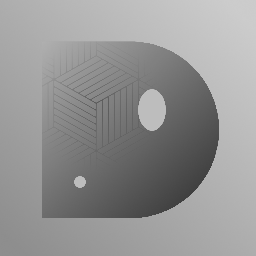}}
         \raisebox{-0.5\height}{\includegraphics[height=0.157\textheight]{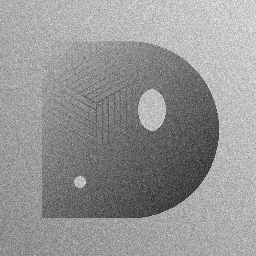}}
     \hfill 
     \mbox{}
     \\\vspace{0.09cm}
     \mbox{} 
     \hfill
         \raisebox{-0.5\height}{\includegraphics[height=0.157\textheight]{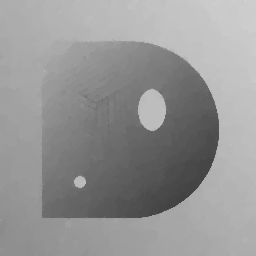}}
         \raisebox{-0.5\height}{\includegraphics[height=0.157\textheight]{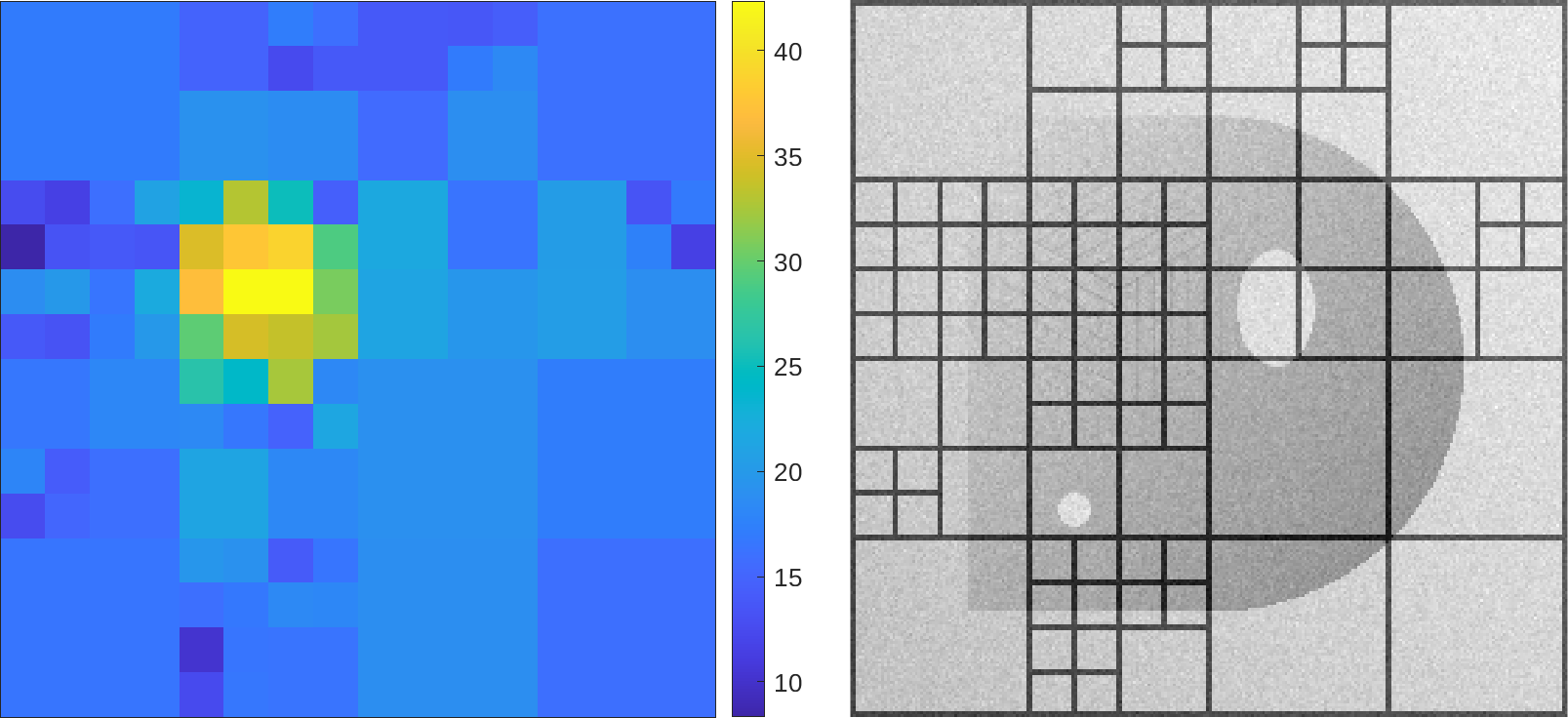}}
         \raisebox{-0.5\height}{\includegraphics[height=0.157\textheight]{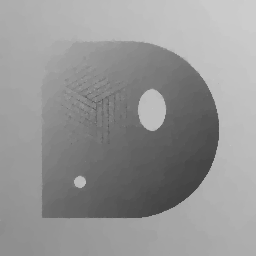}}
     \hfill 
     \mbox{}
     \\\vspace{0.09cm}
         \mbox{}          
     \hfill
         \raisebox{-0.5\height}{\includegraphics[height=0.157\textheight]{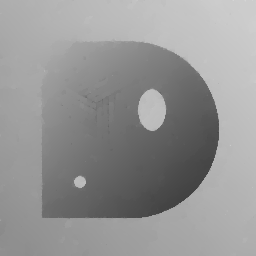}}
         \raisebox{-0.5\height}{\includegraphics[height=0.157\textheight]{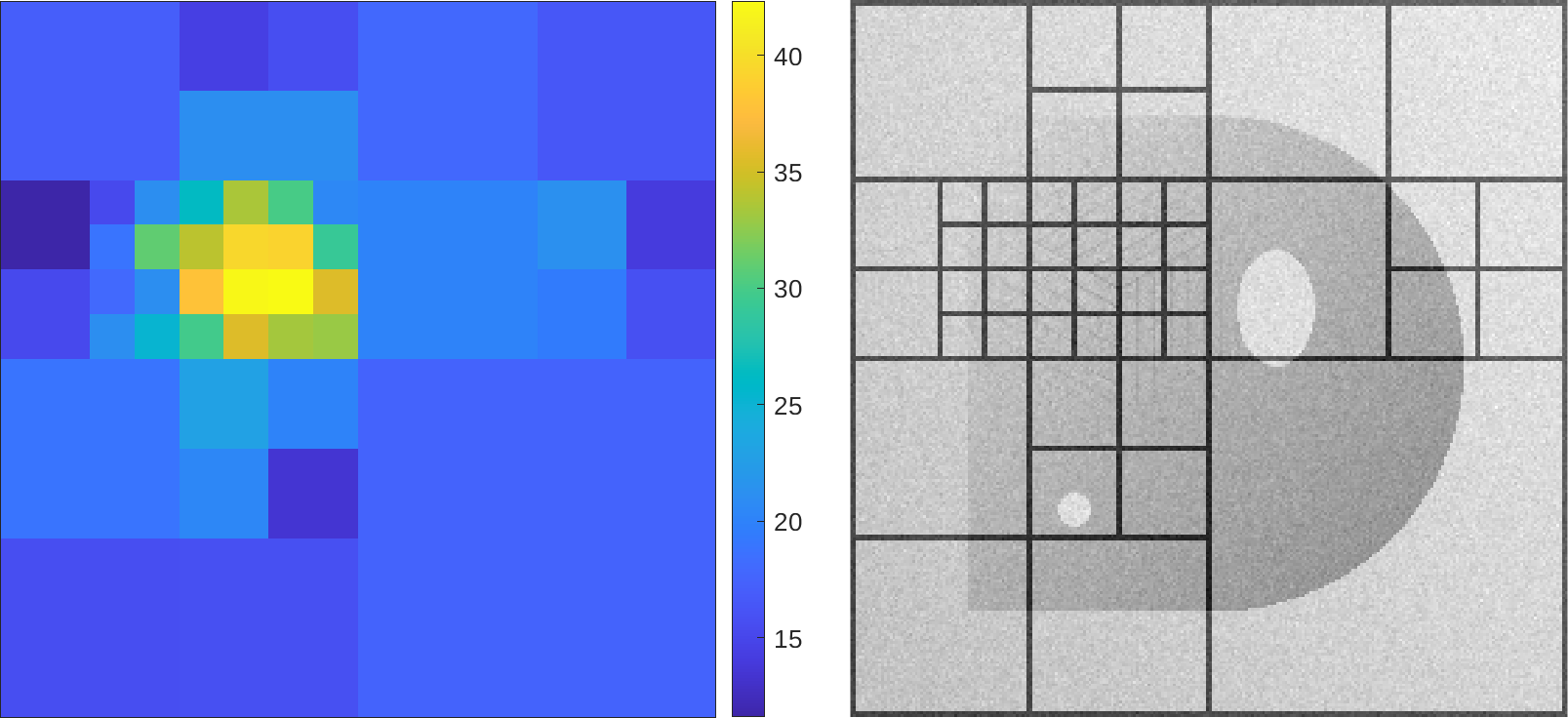}}
         \raisebox{-0.5\height}{\includegraphics[height=0.157\textheight]{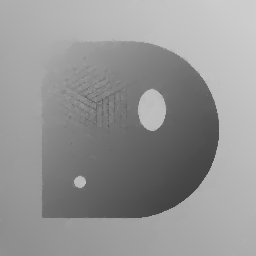}}
     \hfill
     \mbox{}
     \end{center}
     \caption{Synthetic example. Top row: Clean and noisy images $u_c, u_\eta$. Middle row, left to right: TV result with global parameter, partition and spatially-dependent $\lambda$ arising from Algorithm \ref{alg:subdivision}, and corresponding result with weighted fidelity. Bottom row: TGV results, same order as in the middle row and with $\alpha_0=1, \alpha_1 = 10$.}\label{fig:synthetic}
\end{figure}

\begin{figure}[htb]
     \begin{center}
     \mbox{} 
     \hfill
         \raisebox{-0.5\height}{\includegraphics[height=0.157\textheight]{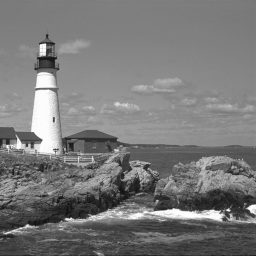}}
         \raisebox{-0.5\height}{\includegraphics[height=0.157\textheight]{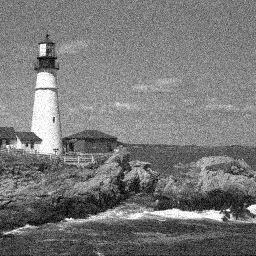}}
     \hfill 
     \mbox{}
     \\\vspace{0.09cm}
     \mbox{} 
     \hfill
         \raisebox{-0.5\height}{\includegraphics[height=0.157\textheight]{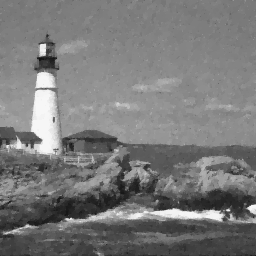}}
         \raisebox{-0.5\height}{\includegraphics[height=0.157\textheight]{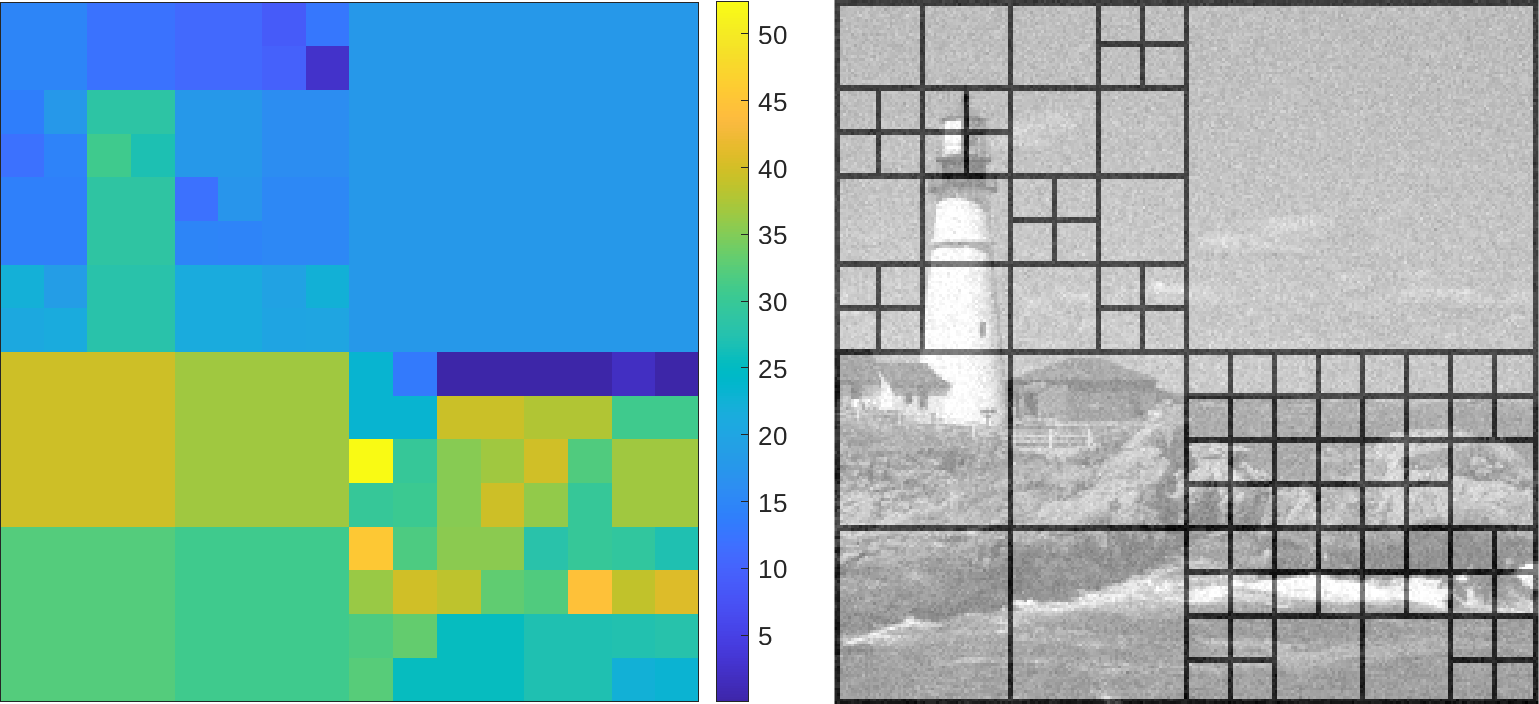}}
         \raisebox{-0.5\height}{\includegraphics[height=0.157\textheight]{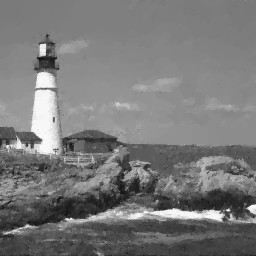}}
     \hfill 
     \mbox{}
     \\\vspace{0.09cm}
         \mbox{}          
     \hfill
         \raisebox{-0.5\height}{\includegraphics[height=0.157\textheight]{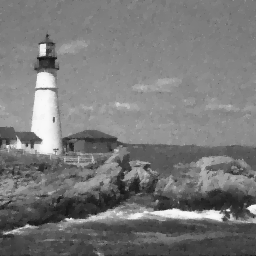}}
         \raisebox{-0.5\height}{\includegraphics[height=0.157\textheight]{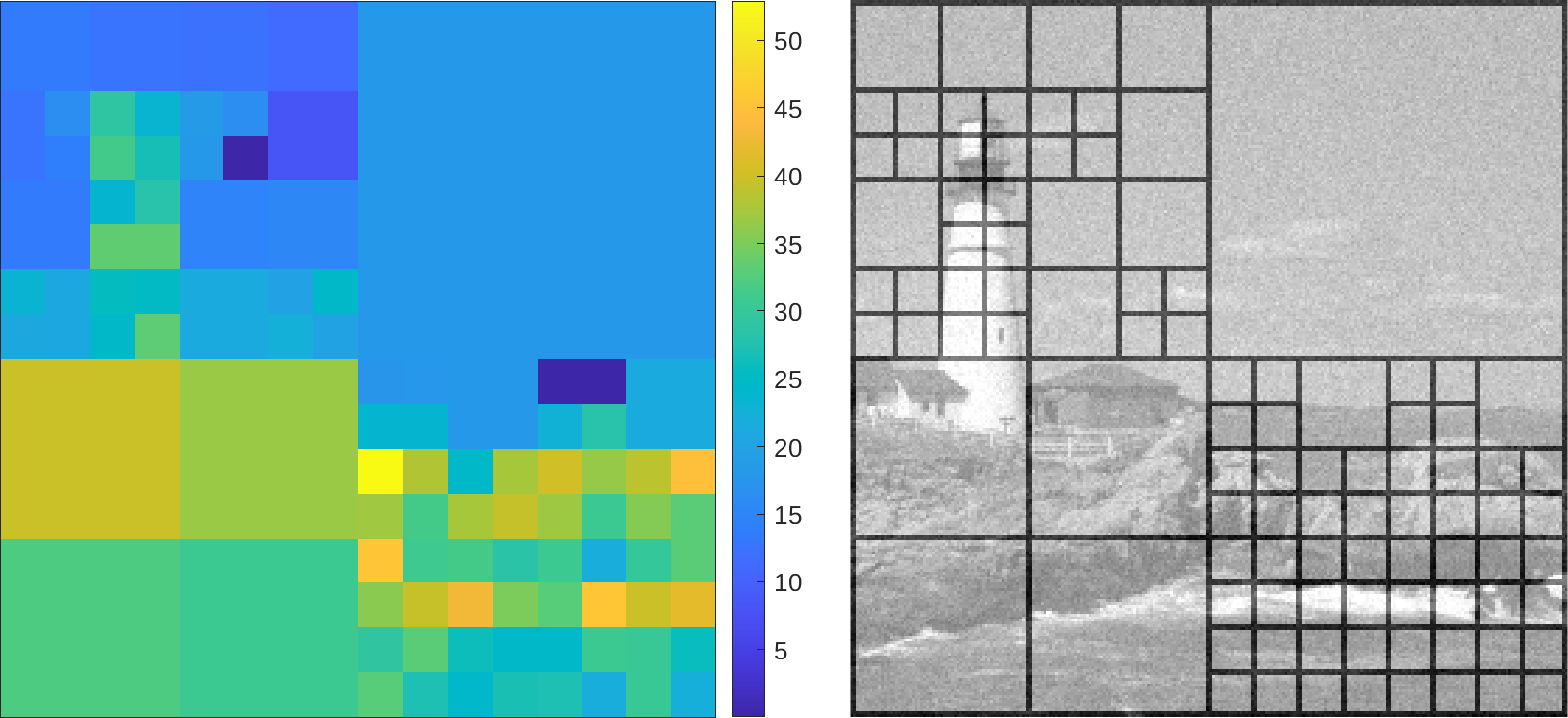}}
         \raisebox{-0.5\height}{\includegraphics[height=0.157\textheight]{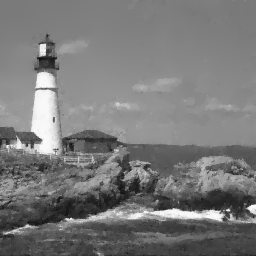}}
     \hfill
     \mbox{}
     \end{center}
     \caption{Lighthouse example. Top row: Clean and noisy images $u_c, u_\eta$. Middle row, left to right: TV result with global parameter, partition and spatially-dependent $\lambda$ arising from Algorithm \ref{alg:subdivision}, and corresponding result with weighted fidelity. Bottom row: TGV results, same order as in the middle row and with $\alpha_0=1, \alpha_1 = 2$.}\label{fig:lighthouse}
\end{figure}

\begin{figure}[htb]
     \begin{center}
     \mbox{} 
     \hfill
         \raisebox{-0.5\height}{\includegraphics[height=0.157\textheight]{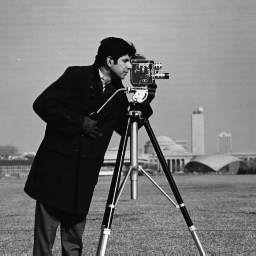}}
         \raisebox{-0.5\height}{\includegraphics[height=0.157\textheight]{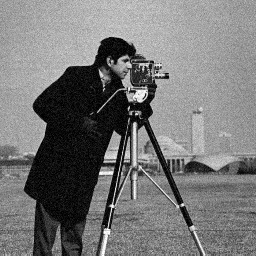}}
     \hfill 
     \mbox{}
     \\\vspace{0.09cm}
     \mbox{} 
     \hfill
         \raisebox{-0.5\height}{\includegraphics[height=0.157\textheight]{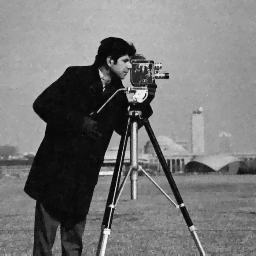}}
         \raisebox{-0.5\height}{\includegraphics[height=0.157\textheight]{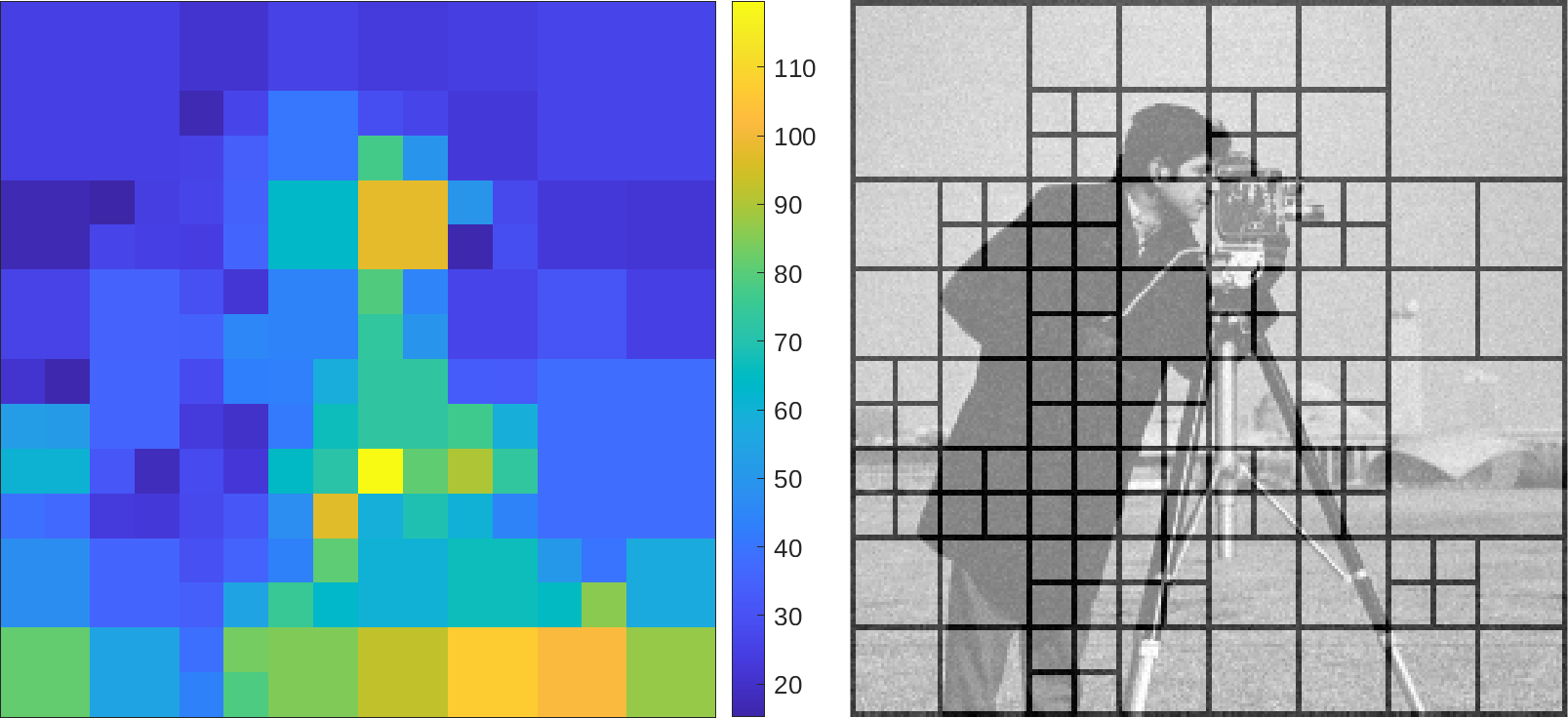}}
         \raisebox{-0.5\height}{\includegraphics[height=0.157\textheight]{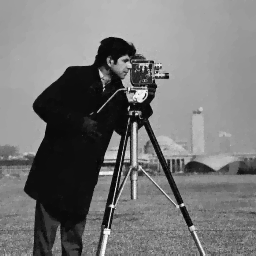}}
     \hfill 
     \mbox{}
     \\\vspace{0.09cm}
         \mbox{}          
     \hfill
         \raisebox{-0.5\height}{\includegraphics[height=0.157\textheight]{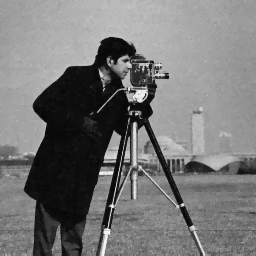}}
         \raisebox{-0.5\height}{\includegraphics[height=0.157\textheight]{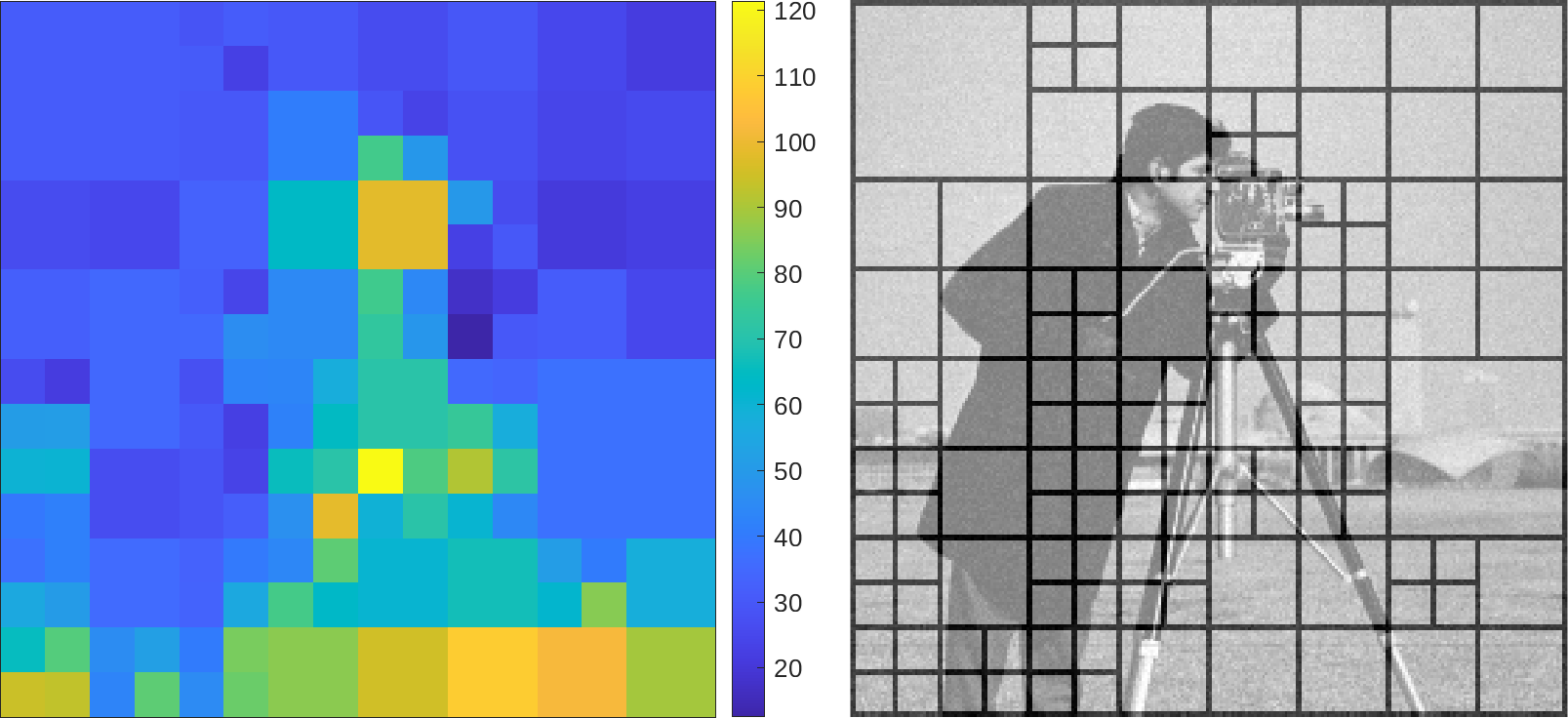}}
         \raisebox{-0.5\height}{\includegraphics[height=0.157\textheight]{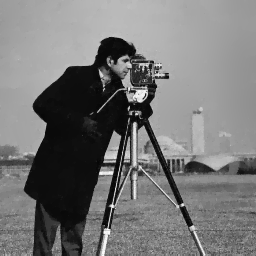}}
     \hfill
     \mbox{}
     \end{center}
     \caption{Cameraman example. Top row: Clean and noisy images $u_c, u_\eta$. Middle row, left to right: TV result with global parameter, partition and spatially-dependent $\lambda$ arising from Algorithm \ref{alg:subdivision}, and corresponding result with weighted fidelity. Bottom row: TGV results, same order as in the middle row and with $\alpha_0=1, \alpha_1 = 10$.}\label{fig:cameraman}
\end{figure}

\begin{figure}[htb]
     \begin{center}
     \mbox{} 
     \hfill
         \raisebox{-0.5\height}{\includegraphics[height=0.157\textheight]{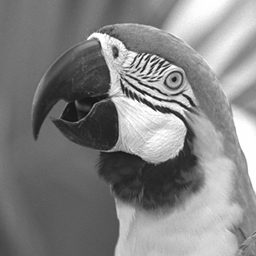}}
         \raisebox{-0.5\height}{\includegraphics[height=0.157\textheight]{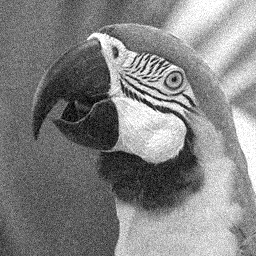}}
     \hfill 
     \mbox{}
     \\\vspace{0.09cm}
     \mbox{} 
     \hfill
         \raisebox{-0.5\height}{\includegraphics[height=0.157\textheight]{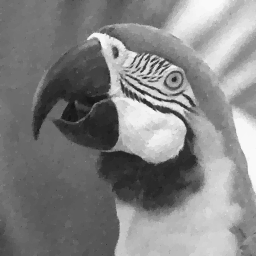}}
         \raisebox{-0.5\height}{\includegraphics[height=0.157\textheight]{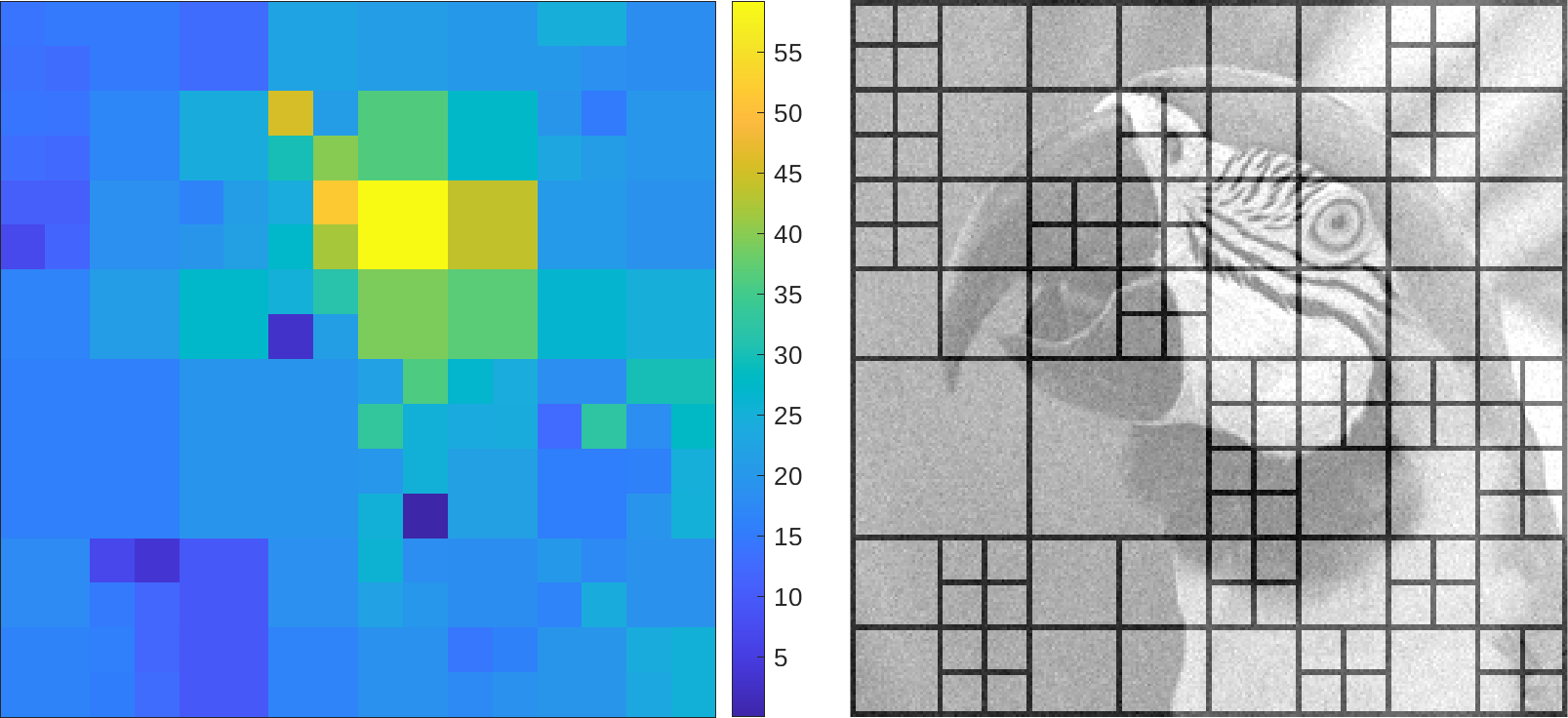}}
         \raisebox{-0.5\height}{\includegraphics[height=0.157\textheight]{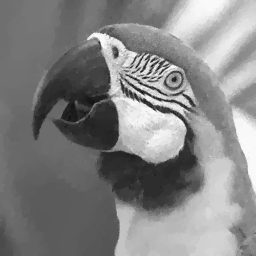}}
     \hfill 
     \mbox{}
     \\\vspace{0.09cm}
         \mbox{}          
     \hfill
         \raisebox{-0.5\height}{\includegraphics[height=0.157\textheight]{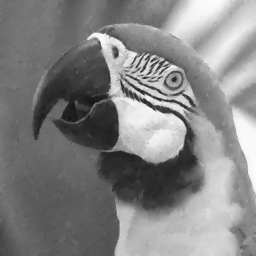}}
         \raisebox{-0.5\height}{\includegraphics[height=0.157\textheight]{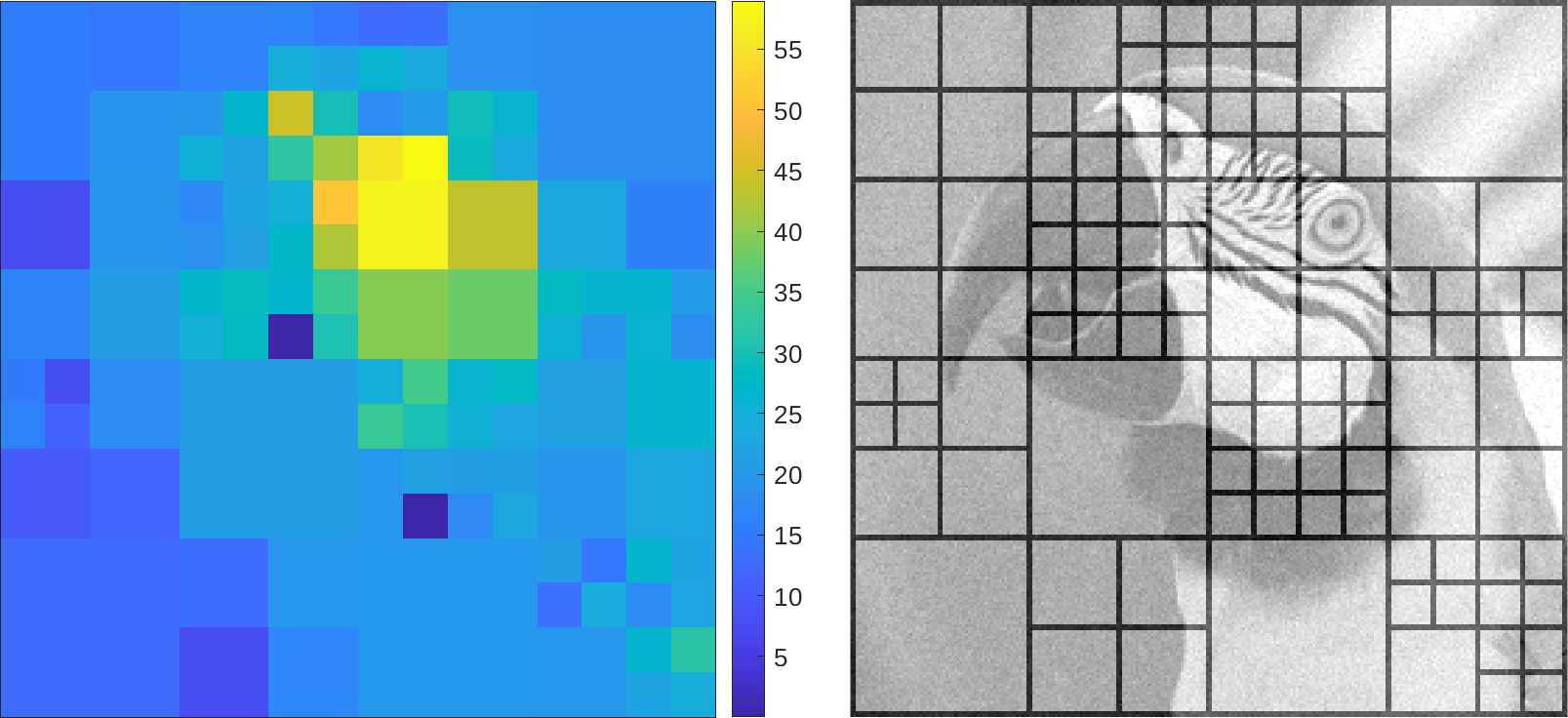}}
         \raisebox{-0.5\height}{\includegraphics[height=0.157\textheight]{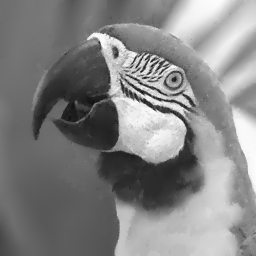}}
     \hfill
     \mbox{}
     \end{center}
     \caption{Parrot example. Top row: Clean and noisy images $u_c, u_\eta$. Middle row, left to right: TV result with global parameter, partition and spatially-dependent $\lambda$ arising from Algorithm \ref{alg:subdivision}, and corresponding result with weighted fidelity. Bottom row: TGV results, same order as in the middle row and with $\alpha_0=1, \alpha_1 = 2$.}\label{fig:parrot}
\end{figure}

\begin{table}
\begin{tabular}{| c | c | c | c | c | c |} 
 \cline{2-6}
 \multicolumn{1}{c|}{}& Noisy & $TV$ global & $TV$ adaptive & $TGV$ global & $TGV$ adaptive \\
 \hline
 Synthetic  & 26.05,\,0.349 & 38.74,\,0.946 & 39.41,\,0.957 & 39.02,\,0.949 & 39.80,\,0.961 \\ 
 \hline
 Lighthouse & 24.64,\,0.496 & 30.42,\,0.853 & 30.82,\,0.886 & 30.44,\,0.855 & 30.90,\,0.890 \\ 
 \hline
 Cameraman  & 28.40,\,0.642 & 32.86,\,0.893 & 33.54,\,0.925 & 32.86,\,0.893 & 33.56,\,0.925 \\ 
 \hline
 Parrot     & 24.67,\,0.447 & 31.86,\,0.880 & 32.37,\,0.898 & 32.10,\,0.889 & 32.72,\,0.909 \\ 
 \hline
\end{tabular} \vspace{1ex}
\caption{PSNR and SSIM values for the examples of Figures \ref{fig:synthetic}, \ref{fig:lighthouse}, \ref{fig:cameraman}, and \ref{fig:parrot}.}\label{table:metrics}
\end{table}

\section*{Acknowledgements}
The work of Elisa Davoli has been supported by the Austrian Science Fund (FWF) through grants 10.55776/F65, 10.55776/V 662, 10.55776/Y1292, and 10.55776/I 4052. Rita Ferreira was partially supported by King Abdullah University of Science and Technology (KAUST) baseline funds and KAUST OSR-CRG2021-4674. Irene Fonseca was partially supported under NSF-DMS1906238 and NSF-DMS2205627. A portion of this work was completed while Jos\'e A. Iglesias was employed at the Johann Radon Institute for Computational and Applied Mathematics (RICAM) of the Austrian Academy of Sciences (\"OAW), during which his work was partially supported by the State of Upper Austria. 

All authors are thankful to Ilaria Perugia for some insightful comments on the topic of mesh refinements, as well as to Carolin Kreisbeck for some interesting discussions on the topic of box constraints. 

\bibliographystyle{plain}
\bibliography{DFFI_LearningSchemes}

\end{document}